\DeclareMathAlphabet{\mathpzc}{OT1}{pzc}{m}{it}
\tikzset{
    dot/.style={circle,draw,fill,inner sep=1pt},
    arrow/.style={->,thick,shorten <=2pt,shorten >=2pt},
    every label/.append style = {font = \small}
    }
\newtheorem{theorem}{Theorem}[section]
\newtheorem*{theorem*}{Theorem}
\newtheorem{prop}[theorem]{Proposition}
\newtheorem*{prop*}{Proposition}
\newtheorem{cor}[theorem]{Corollary}
\newtheorem{lemma}[theorem]{Lemma}
\theoremstyle{definition}
\newtheorem{defn}[theorem]{Definition}
\newtheorem{definition}[theorem]{Definition}
\newtheorem*{defn*}{Definition}
\newtheorem*{const*}{Construction}
\newtheorem*{warning*}{Warning}
\newtheorem{ex}[theorem]{Example}
\newtheorem{remark}[theorem]{Remark}
\newtheorem{notation}[theorem]{Notation}
\newtheorem{construction}[theorem]{Construction}
\DeclareFontFamily{U}{min}{}
\DeclareFontShape{U}{min}{m}{n}{<-> udmj30}{}
\newcommand\cB{\mathscr B} 
\newcommand\cC{\mathscr C} 
\newcommand\cD{\mathscr D}
\newcommand\cE{\mathscr E} 
\newcommand\cF{\mathscr F}
\newcommand\cG{\mathscr G}
\newcommand\cH{\mathscr H}
\newcommand\cI{\mathscr I}
\newcommand\cL{\mathscr L}
\newcommand\cM{\mathscr M}
\newcommand\cO{\mathscr O}
\newcommand\cP{\mathscr P}
\newcommand\cS{\mathscr S}
\newcommand\cT{\mathscr T}
\newcommand\cV{\mathscr V}
\newcommand\cW{\mathscr W}
\newcommand\rB{\mathrm B}
\newcommand\NN{\mathbb N} \newcommand\bN\NN
\newcommand\bZ{\mathbb Z}
\newcommand\gpd{\mathrm{Gpd}}
\newcommand\id{\mathrm{id}}
\newcommand\mor{\mathrm{Hom}}
\newcommand\inj{\mathrm{inj}}
\newcommand\op{\mathrm{op}}
\newcommand\cat{\mathrm{Cat}}
\newcommand\comp{\mathrm{comp}}
\newcommand\ccat{\mathpzc{Cat}}
\newcommand\cfact{\mathpzc{Fact}}
\newcommand\fact{\mathrm{Fact}}
\newcommand\arr{\mathrm{Ar}}
\newcommand\spanc{\mathrm{Span}}
\newcommand\colim{\mathrm{colim}}
\newcommand\bydef{\overset{\mathrm{def}}{=}}
\newcommand\sq{\mathrm{Sq}}
\newcommand\modl{\mathrm{Mod}}
\newcommand\cart{\mathrm{coCart}}
\newcommand\lax{\mathrm{lax}}
\newcommand\wrr{{\overset{\sim}{\rightarrow}}}
\newcommand\inc{{\mathrm{incl}}}
\newcommand\act{\mathrm{act}}
\newcommand\Act{\mathrm{Act}}
\newcommand\inrt{\mathrm{int}}
\newcommand\el{\mathrm{el}}
\newcommand\pt{\mathrm{pt}}
\newcommand\ep{\mathrm{e.p.}}
\newcommand\fip{\mathrm{f.i.}}
\newcommand\ap{\mathrm{a.p.}}
\newcommand\orr{\mathrm{Or}}
\newcommand{\xinert}[1]{\overset{#1}{\rightarrowtail}}
\newcommand{\xactive}[1]{\overset{#1}{\twoheadrightarrow}}
\newcommand{\xhorizontal}[1]{\overset{#1}{\rightharpoonup}}
\newcommand{\xvertical}[1]{\overset{#1}{\rightharpoondown}}
\title{factorization systems in $\infty$-categories}
\author{Roman Kositsyn}
\date{May 2021}
\begin{document}

\maketitle
\begin{abstract}
    We extend the notion of a factorization system in a category to the realm of $\infty$-categories. To this end, we provide a description of the category of $\infty$-categories with factorization systems as the category of presheaves of spaces on a certain category that satisfy a version of the Segal condition. Additionally, we study the notion of distributive laws between monads and, more generally, lax functors in the context of categories with factorization systems. In particular, we also characterize categories with factorization systems as distributive laws in the bicategory of spans.
\end{abstract}
\section{Introduction}
The principal goal of this paper is to present a homotopy-coherent notion of a factorization system suitable for applications to $\infty$-categories. Roughly speaking, a factorization system on an ordinary category $C$ is the data of two subcategories $(E,M)$ such that every morphism $f$ in $C$ uniquely decomposes as $f=e\circ m$ with $e\in E$ and $m\in M$. Examples of factorization systems abound in mathematics, common general sources of them include vertical/cartesian factorization system on Grothendieck fibration and generic/free factorization systems associated to cartesian monads studied in  \cite{weber2004generic}. In the $\infty$-categorical context we also have factorization systems attached to Cartesian fibrations and Cartesian monads (this time using \cite{chu2019homotopy}), as well as, for example, factorization systems arising from $t$-structures as described in \cite{fiorenza2016t}.\par
Factorization systems were first defined in \cite{isbell1957some}, however numerous equivalent formalizations of the concept have appeared since. The definition closest to ours can be found in \cite{freyd1972categories}. According to op. cit. a factorization system on a category $C$ is given by a pair $(E,M)$ of classes of morphisms of $C$ that satisfy the following conditions:
\begin{enumerate}
    \item Both $E$ and $M$ contain all isomorphisms and are closed under composition;\label{itm:one}
    \item Every morphism $f$ of $C$ can be factored as $f=m\circ e$ with $m\in M$ and $e\in E$;\label{itm:two}
    \item The factorization is functorial in the following sense: for any morphism $(u,v)$ in the arrow category from $(m,e)$ to $(m',e')$ as shown below there is a unique dotted arrow $w$ making the diagram commute 
 \[ \begin{tikzcd}[row sep=huge, column sep=huge]
A\arrow[rr, "e"] \arrow[d, "u"] &{}
& B  \arrow[d, dashed, "w"] \arrow[rr,"m"] &{}
& C \arrow[d,"v"]\\
A'  \arrow[rr,"e'"] & {}
& B'  \arrow[rr,"m'"] &{}
& C'
\end{tikzcd}.\]\label{itm:three}
\end{enumerate}
Alternatively, a factorization system can be defined by lifting properties as in \cite{bousfield1977constructions} or as an algebra for a certain monad on $\cat$ as in \cite{korostenski1993factorization}. We define a factorization system on an $\infty$-category $\cC$ to be given by a pair of subcategories $\cH$ and $\cV$ that satisfy the following condition:
\begin{equation}
    \text{For any morphism $f\in\cC$ the space of factorizations 
    $\{f\cong h\circ v,h\in \cH,v\in \cV\}$
    is contractible.}\label{cond:factorization}\tag{$*$}
\end{equation}
Condition \eqref{cond:factorization} certainly implies \eqref{itm:two} and \eqref{itm:three} above, but it appears to contradict \eqref{itm:one} since it requires every isomorphism to admit a unique decomposition as well. However, this is not such a big problem, at least if we restrict ourselves to complete Segal spaces $\cC$, since in that case we simply consider all invertible morphisms as elements of the underlying space of objects. Alternative definitions of a factorization system for $\infty$-categories can be found in \cite[Definition 5.2.8.8.]{lurie2009higher} and \cite[Section 24]{joyal2008notes}, the only essential difference between those definitions and ours is that they require $\cH$ and $\cV$ to be closed under retracts. Additionally, \cite{loregian2017factorization} provides a treatment of factorization systems in the context of triangulated and stable categories. \par
Despite its simplicity, condition \eqref{cond:factorization} is not very useful in practice. To rectify this, we give an alternative description of $\infty$-categories with factorization systems as models for a certain theory $\cfact$. We use the term "theory" in the same way as in \cite{kositsyn2021completeness}, which is itself an adaptation of an earlier paper \cite{berger2012monads} to the formalism of $\infty$-categories. Roughly speaking, a theory gives us a way to represent the category of algebras for a certain monad as a category of presheaves satisfying a Segal-type condition. Objects that can be represented this way include Segal spaces, commutative monoids and symmetric operads to name a few. Our first main result is \cref{thm:factorization_model} that provides an equivalence between the $\infty$-category of $\infty$-categories with factorization systems and the $\infty$-category of models for the theory $\cfact$.\par
Another object of interest for us is the notion of a distributive law. They were originally defined in \cite{beck1969distributive}. By definition, a distributive law between monads $T_1$ and $T_2$ consists of a natural transformation $\alpha:T_1T_2\rightarrow T_2 T_1$ making the following diagrams commute
\[
\begin{tikzcd}[row sep=huge, column sep=huge]
T_1T_2T_2\arrow[d, "T_1m_2"]\arrow[r, "\alpha T_2"]&T_2T_1T_2\arrow[r, "T_2 \alpha"]&T_2T_2T_1\arrow[d, "m_2 T_1"]\\
T_1T_2\arrow[rr, "\alpha"]&{}&T_2T_1
\end{tikzcd},
\begin{tikzcd}
{}&T_1\arrow[dl, "T_1e_2"]\arrow[dr, "e_2T_1"]\\
T_1T_2\arrow[rr, "\alpha"]&{}&T_2T_1
\end{tikzcd}.
\]
The main motivation behind this notion is that it allows us to endow $T_2T_1$ with the structure of a monad with multiplication given by $T_2T_1T_2T_1\xrightarrow{\alpha}T_2T_2T_1T_1\xrightarrow{m_2*m_1}T_2T_1$. This definition, of course, cannot be used in the higher-categorical context as it lacks the necessary higher coherences. Instead, we conceptualize this notion as a special case of a "lax functor" from a category with factorization system. A lax functor from a category $C$ to a bicategory $B$ (as defined e.g. in \cite{street1972two}) is the data of a morphism $F:C\rightsquigarrow B$ of the  underlying graphs of $C$ and $B$ together with 2-morphisms $\alpha_{f,g}:F(g)\circ F(f)\rightarrow F(g\circ f)$ for every composable pair of morphisms in $C$ and $\beta_x:\id_{F(x)}\rightarrow F(\id_x)$ for every object of $x$ that satisfy certain associativity constraints. In particular, giving a lax functor from a point to $B$ is equivalent to giving a monad in $B$. The $\infty$-categorical generalization of this concept can be found for example in \cite{gagna2020gray}, \cite{gaitsgory2017study} and \cite{kositsyn2021completeness}. Our extension of this notion to categories with factorization systems is quite technical, but roughly speaking a lax functor from $\cF$ to $\cB$ contains all the data associated to the lax functor from the underlying categories $\cV$ and $\cH$ to $\cB$ together with additional 2-morphisms $\gamma_{h,v}:F(v)\circ F(h)\rightarrow F(\widetilde{h})\circ F(\widetilde{v})$ for every factorization $v\circ h\cong \widetilde{h}\circ \widetilde{v}$ in $\cF$. We then define a distributive law to be a lax functor from a singleton category considered as a category with a factorization system. Classical distributive laws can be characterized as monads in the category of monads, and the same is true for our notion of a distributive law by \cref{rem:distributive_equivalence}, so it follows that it reduces to the usual notion for the case of discrete categories. Distributive laws are not the only interesting case of lax functors, however. Our second main result \cref{thm:overcategory} provides an equivalence between the category of lax functors from an $\infty$-category with a factorization system $\cF$ to the flagged bicategory of spans and the overcategory $\fact_{/\cF}$ in the $\infty$-category of $\infty$-categories with factorization systems. In particular, the category $\fact$ itself can be identified with the category of distributive laws in spans (since $*$ is the terminal object in $\fact$, so $\fact_{/*}\cong\fact$). \par
We will now briefly go through the contents of each section. \Cref{sect:two} contains the description of the the theory $\cfact$ as well as \cref{thm:factorization_model} that provides an equivalence between the models of this theory and categories with factorization systems. \Cref{sect:three} contains the definition of lax functors from categories with factorization systems. Additionally, it contains the description of bicategories with factorization system in \cref{prop:bicategory_comparison} as well as an alternative construction of the Gray tensor product. The main result of this section is \cref{prop:lax_for_objects_of_fact} which explicitly describes lax functors with domains lying in $\cfact$. \Cref{sect:four} contains the definition and basic properties of complete categories with factorization systems. \Cref{sect:five} contains our second main result \cref{thm:overcategory} providing an equivalence between an overcategory $\fact_{/\cF}$ and a certain subcategory of lax functors from $\cF$ to $\spanc$.\par
\paragraph{\textbf{Notations and conventions}:} Our standard reference for $\infty$-categories is \cite{lurie2009higher}, however we will also use the equivalence between quasicategories and complete Segal spaces (see e.g. \cite{joyal2007quasi}) without further mention. We will typically call $\infty$-categories simply categories, except in cases where it may lead to confusion. We denote by $\cS$ the category of spaces and by $\cP(\cC)$ the category of presheaves on a category $\cC$. We assume the reader is familiar with the terminology and results of \cite{chu2019homotopy} and \cite{kositsyn2021completeness}. For a theory $\cO$ we will typically denote $\mathrm{O}$ the category $\modl_\cO(\cS)$ of models for this theory. In particular, we will denote by $\ccat$ the algebraic pattern of Segal spaces (i.e. $\Delta^\op$ with the usual active/inert factorization and with elementary objects given by $[0]$ and $[1]$) and by $\cat$ the category of Segal spaces. We will also sometimes refer to the object in $\mathrm{O}$ as "presheaves on $\cO^\op$ satisfying the Segal condition". We use the terms "$n$-fold Segal space" and "$n$-category" interchangeably. In particular, a category in our sense does not have to be a complete Segal space, instead we denote by $\cat^\comp$ the category of complete Segal spaces.
\section{Theory of categories with factorization systems}\label{sect:two}
Our basic objective in this section is to construct a theory $\cfact$ whose models are given by categories with factorization systems. More specifically, we define it to be the category whose objects are finite strings $S$ made of two characters. To such a string we attach a category $\sq(S)$, which can be thought of as a category containing all possible compositions and factorizations of the characters appearing in $S$, and define morphisms in $\cfact$ to be given by morphisms between $\sq(S)$ that preserve the factorization systems. Our first major result is \cref{prop:generators}, which gives a purely combinatorial description of $\cfact$, similar to the well-known description of $\Delta$ as a category generated by faces and degeneracies. This, in particular, allows us to endow $\cfact$ with the structure of a theory with arities in \cref{prop:factorization_theory}. It also contributes to the proof of our main result \cref{thm:factorization_model} that provides an equivalence between the category of models of $\cfact$ and the category of categories with factorization systems.
\begin{defn}\label{def:factorization_categories}
We say that a category $\cC$ admits an factorization system if there are subcategories $\cH$ and $\cV$ of $\cC$ with the same underlying space of objects such that for every $f\in \mathrm{Ar}(\cC)$ the space of decompositions
\[\{f\cong h\circ v,h\in \cH,v\in \cV\}\] 
is contractible. We will refer to the subcategory $\cH$ as the \textit{horizontal} subcategory and to $\cV$ as the \textit{vertical} subcategory of $\cC$, we will typically denote morphisms in $\cH$ by $c\rightharpoonup d$ and morphisms in $\cV$ by $c'\rightharpoondown d'$.
\end{defn}
\begin{definition}
Denote by $\cfact^\op$ the category whose objects are given by sequences \[S\bydef(0,1,...,i_0|0,1,...,j_0|i_0,i_0+1,...i_1|j_0,...,|i_k,i_k+1,...,n|j_s,j_s+1,...,m).\]
Every sequence like this can be identified with a string of length $(n+m)$ containing symbols $h$ and $v$, in which the first $i_0$ symbols are given by $h$, followed by $j_0$ symbols $v$, followed by $(i_1-i_0)$ symbols $h$ etc., and so we would often denote the object $S$ above by $(i_0|j_0|i_1-i_0|j_1-j_0|...|n-i_k|m-j_k)$. Note that to every object of this form we can associate a string of morphisms in the category $[n]\times[m]$ of maximal length.\par
To each chain $S$ we can associate a full subcategory $\sq(S)$ of $[n]\times [m]$ that contains all objects that lie above $S$ viewed as a path from $(0,0)$ to $(n,m)$ in $[n]\times[m]$. Observe that every morphism in $\sq(S)$ admits a decomposition $\overline{h}\circ \overline{v}$, where $\overline{v}$ is a morphism of the form $(\id,s)$ and $\overline{h}$ is a morphism of the form $(t,\id)$. We define a morphism from $S$ to $S'$ in $\cfact^\op$ to be a morphism of categories from $\sq(S)$ to $\sq(S')$ that preserves this factorization system, i.e. that takes vertical morphisms to vertical morphisms and horizontal morphisms to horizontal morphisms. 
\end{definition}
\begin{ex}\label{ex:simple}
The diagram below shows an example of $\sq(S)$ for $S=(1|1|2|1)$:
\[
\begin{tikzcd}[row sep=huge, column sep=huge]
(0,2)\arrow[r]&(1,2)\arrow[r]&(2,2)\arrow[r]&(3,2)\\
(0,1)\arrow[u]\arrow[r]&(1,1)\arrow[u]\arrow[r]&(2,1)\arrow[u]\arrow[r]&(3,1)\arrow[u]\\
(0,0)\arrow[u]\arrow[r]&(1,0)\arrow[u]
\end{tikzcd}.
\]
\end{ex}
\begin{lemma}\label{lem:restriction}
A morphism $F:\sq(S)\rightarrow\sq(S')$ in $\cfact^\op$ is uniquely determined by its restriction to $S\subset\sq(S)$.
\end{lemma}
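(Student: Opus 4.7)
The approach is to show that $F$ is determined on objects by its restriction to $S$; once this is settled, $F$ on morphisms follows automatically, since $\sq(S')$ is a full subcategory of the poset $[n']\times[m']$ (where $n',m'$ are the dimensions of $S'$) and so has at most one morphism between any two objects. The core observation is that every object of $\sq(S)\setminus S$ arises as the intermediate point of a vertical-then-horizontal factorization of a morphism whose source and target both lie on $S$.

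Fix $(a,b)\in\sq(S)\setminus S$. I would let $b^\ast$ be the largest $y$ with $(a,y)\in S$ (so $b>b^\ast$ by our assumption) and let $a_0$ be the smallest integer $a'\geq a$ with $(a',b)\in S$. The vertex $(a,b^\ast)$ exists because the path passes through every column from $0$ to $n$; $a_0$ exists because the path eventually reaches height $m\geq b$ at column $n$, and its minimality forces the path to enter column $a_0$ at a height strictly below $b$ while exiting at a height at least $b$ -- which is precisely the condition that $(a_0,b)$ is a vertex of $S$. The morphism $(a,b^\ast)\to(a_0,b)$ in $\sq(S)$ then factors as the vertical step $(a,b^\ast)\to(a,b)$ followed by the horizontal step $(a,b)\to(a_0,b)$.

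Since $F$ preserves the factorization system, applying $F$ to this composite yields a factorization of $F(a,b^\ast)\to F(a_0,b)$ in $\sq(S')$ as a vertical morphism followed by a horizontal morphism whose midpoint is $F(a,b)$. Both endpoints of this morphism are determined by $F|_S$, and the vertical-then-horizontal factorization in $\sq(S')$ is unique (again because $\sq(S')$ sits inside a poset), so the value $F(a,b)$ is forced. Together with the trivial case $(a,b)\in S$, this determines $F$ on all objects.

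The only nontrivial ingredient is the combinatorial verification that $(a_0,b)$ genuinely lies on $S$, which I would settle by a short monotonicity argument on the entry and exit heights of the path; everything else is a formal consequence of $F$ being a morphism of $\cfact^\op$, i.e.\ sending verticals to verticals and horizontals to horizontals.
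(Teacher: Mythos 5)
Your proof is correct, but it takes a genuinely more direct route than the paper. The paper argues by an increasing induction: it defines $\sq(S)_{\leq 1}$ by adjoining to $S$ the midpoints of the canonical factorizations of the composites $v_s\circ h_t$ of adjacent elementary morphisms of $S$, observes that $F$ is forced on $\sq(S)_{\leq 1}$, and then iterates ($\sq(S)_{\leq 2}$, $\sq(S)_{\leq 3}$, \dots) until the finite category $\sq(S)$ is exhausted. You instead prove the sharper combinatorial fact that \emph{every} object $(a,b)\in\sq(S)\setminus S$ is already the midpoint of the vertical-then-horizontal factorization of a single morphism $(a,b^\ast)\to(a_0,b)$ whose endpoints both lie on $S$, so $F$ is pinned down on all objects in one step; the extension to morphisms is then automatic because $\sq(S')$ is a poset. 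Your monotonicity argument for the existence of $(a_0,b)$ on the path is sound (the path exits every column in $[a,a_0-1]$ strictly below height $b$ and must cross height $b$ at column $a_0$), and your identification of the midpoint of the factorization of $(x,y)\to(x',y')$ as $(x,y')$ matches the paper's convention $f\cong\overline h\circ\overline v$. What your version buys is the elimination of the induction and an explicit description of which morphism witnesses each off-path vertex; what the paper's version buys is a scheme that it reuses verbatim later (the same staged extension reappears in the proof of the main comparison theorem), at the cost of leaving the termination to finiteness rather than exhibiting the witnesses directly.
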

\begin{proof}
Indeed, denote by $\sq(S)_{\leq1}$ the full subcategory of $\sq(S)$ containing the objects belonging to $S$ as well as those objects that appear in the horizontal/vertical factorization of morphisms of the form $v_s\circ h_t$, where $v_s$ (resp. $h_t$) is an elementary vertical (resp. horizontal) morphism in $S$ (in \cref{ex:simple} the category $\sq(S)_{\leq1}$ contains all objects except $(0,2)$ and $(1,2)$). Then since $F$ preserves the factorization we see that the value of $F$ on $\sq(S)_{\leq1}$ is uniquely determined by its value on $S$.\par
We can then define $\sq(S)_{\leq2}$ as the full subcategory of $\sq(S)$ containing the factorizations of the elementary morphisms in $\sq(S)_{\leq1}$ and extend $F$ to it, then define $\sq(S)_{\leq3}$ etc. Since $\sq(S)$ is a finite category, we will eventually get $\sq(S)_{\leq n}\cong\sq(S)$ for some $n$, which concludes the proof of our claim.
\end{proof}
\begin{construction}\label{constr:elementary morphisms}
We will describe certain morphism in the category $\cfact^\op$. First, denote by $\sigma^h_i$ for $0\leq i\leq n$ the morphism
\[(0,1,...,i,i+1,...,n|j_s,...,m)\rightarrow(0,1,...,i,...,n-1|j_s,...,m)\]
from the string $S$ to the string $S_{\widehat{i}}$ obtained from $S$ by deleting the $i$th elementary horizontal morphism. The morphism itself is given by sending all elementary morphisms in $S$ to the corresponding morphisms in $S_{\widehat{i}}$, except the $i$th horizontal morphism that is sent to the identity. We can also define $\sigma^v_j$ for $0\leq j\leq m$ for vertical morphisms.\par
Similarly, for $0\leq i< n+1$ we can define a morphism
\[(0,1,...,i,i+1,...,n|j_s,...,m)\rightarrow(0,1,...,i,i+1,i+2...,n+1|j_s,...,m)\]
from the string $S$ to the string $S_{\widetilde{i}}$ obtained from $S$ by adding an extra horizontal morphism between $i-1$ and $i$. The morphism itself send the string $S$ into $S_{\widetilde{i}}$ such that the image omits the $i$th object. We can similarly define $\delta^v_j$ for $0<j\leq m+1$.\par
Observe that the morphisms $\sigma^{v,h}_j$ and $\delta^{v,h}_i$ satisfy the usual simplicial identities
\begin{align}\label{eq:one}
\begin{split}
     \delta^{v,h}_i\circ \delta^{v,h}_j\cong \delta^{v,h}_{j+1}\circ \delta^{{v,h}}_i\;\text{    if }i\leq j\\
    \sigma^{{v,h}}_j\circ \sigma^{v,h}_i\cong \sigma^{v,h}_{i}\circ \sigma^{v,h}_{j+1}\; \text{ if }i\leq j\\
    \sigma^{v,h}_j\circ \delta^{v,h}_i\cong 
    \begin{cases}
        \delta^{v,h}_i\circ \sigma^{{v,h}}_{j-1}&\text{    if }i< j\\
        \id_S \;\text{    if }i=j&\text{ or }i=j+1\\
        \delta^{v,h}_{i-1}\circ \sigma^{{v,h}}_{j}&\text{    if }j+1< i\\
    \end{cases}.
\end{split}
\end{align}
Apart from that, they also satisfy the following commutativity relations:
\begin{align}\label{eq:two}
\begin{split}
    \sigma_j^v\circ\delta_i^h&\cong\delta^h_i\circ \sigma^v_j\\
    \sigma_j^h\circ\delta_i^v&\cong\delta^v_i\circ \sigma^h_j\\
    \delta_j^v\circ\delta_i^h&\cong\delta^h_i\circ \delta^v_j\text{ for $(i,j)\neq(0,0)$ and $(i,j)\neq(n+1,m+1)$}\\
    \sigma_j^v\circ\sigma_i^h&\cong\sigma^h_i\circ \sigma^v_j.
\end{split}
\end{align}\par
Finally, there are morphisms 
\[\gamma_{j,i}:(0,1,...,j|i,...,m)\rightarrow(0,1,...,i|j,...,n)\]
from a string $S$ in which the $j$th vertical morphism is followed by the $i$th horizontal morphism to the string $\widetilde{S}$ in which those morphisms switch places. The morphism itself sends all elementary morphisms in $S$ to the corresponding morphisms in $\widetilde{S}$, except for the $i$th horizontal and $j$th vertical morphisms that are sent to the horizontal/vertical decomposition of the corresponding morphisms in $\widetilde{S}$. The diagram below illustrates a particular example of this morphism:
\[
\begin{tikzcd}[row sep=huge, column sep=tiny]
{}&{}&{}&{}\\
{}\arrow[r, "..." description]&(j+1,i-1)\arrow[r]&(j+1,i)\arrow[u, "..." description]\arrow[r, "h_i"]&(j+1,i+1)\arrow[u, "..." description]\\
{}\arrow[r, "..." description]&(j,i-1)\arrow[u]\arrow[r,"h_{i-1}"]&(j,i)\arrow[u, "v_j"]
\end{tikzcd}\overset{\gamma_{j,i}}{\Rightarrow}
\begin{tikzcd}[row sep=huge]
{}&{}&{}&{}\\
{}\arrow[r, "..." description]&(j+1,i-1)\arrow[r]&(j+1,i)\arrow[u, "..." description]\arrow[r, "\gamma_{j,i}(h_i)"]&(j+1,i+1)\arrow[u, "..." description]\\
{}\arrow[r, "..." description]&(j,i-1)\arrow[u]\arrow[r, "\gamma_{j,i}(h_{i-1})"]&(j,i)\arrow[u, "\gamma_{j,i}(v_j)"]\arrow[r]&(j+1,i)\arrow[u]
\end{tikzcd}.
\]\par
Observe that different $\gamma_{j,i}$ commute with each other (in the following formula we implicitly assume that both compositions make sense)
\begin{align}\label{eq:three}
    \gamma_{j,i}\circ \gamma_{k,t}&\cong \gamma_{k,t}\circ \gamma_{j,i}.
\end{align}
Their interactions with various $\sigma_j^{v,h}$ and $\delta^{v,h}_i$ can be described by the following relations:
\begin{align}\label{eq:four}
\begin{split}
    \sigma_{i}^h&\cong \sigma_{i-1}^h\circ \gamma_{j,i}\\
    \sigma_j^v&\cong \sigma_j^v\circ \gamma_{j,i}\\
    \sigma_k^{h}\circ \gamma_{j,i}&\cong\gamma_{j,i-1}\circ \sigma^{h}_k\text{    for $k\neq i$}\\
    \sigma_k^{v}\circ \gamma_{j,i}&\cong\gamma_{j-1,i}\circ \sigma^{v}_k\text{    for $k\neq j$}\\
    \gamma_{i+1,j}\circ \gamma_{i,j}\circ \delta^h_i&\cong\delta^h_i\circ \gamma_{i,j}\text{  for $0<i<n+1$}\\
    \gamma_{i,j+1}\circ \gamma_{i,j}\circ \delta^v_j&\cong\delta^v_j\circ \gamma_{i,j}\text{  for $0<j<m+1$}\\
    \gamma_{i,j}\circ \delta^h_k&\cong\delta^h_k\circ \gamma_{i,j}\text{    for $k\neq i$}\\
    \gamma_{i,j}\circ \delta^v_k&\cong\delta^v_k\circ \gamma_{i,j}\text{    for $k\neq j$}.
\end{split}
\end{align}
Finally, we have the following special formulas for $\delta^{v,h}_0$ and $\delta^{v,h}_{m+1,n+1}$:
\begin{align}\label{eq:five}
\begin{split}
    \gamma_{1,0}\circ \delta^v_0\circ \delta^h_0&\cong \delta^h_0\circ\delta^v_0\\
    \gamma_{n+1,m}\circ \delta^h_{n+1}\circ \delta^v_{m+1}&\cong \delta^v_{m+1}\circ\delta^h_{n+1}.
\end{split}
\end{align}
\end{construction}
\begin{prop}\label{prop:generators}
The morphisms of the category $\cfact^\op$ are generated by the morphisms $\sigma^{v,h}_s$,$\delta^{v,h}_t$ and $\gamma_{i,j}$ and the relations between them described in \cref{constr:elementary morphisms}.
\end{prop}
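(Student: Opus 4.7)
The plan is to show that every morphism $F\colon\sq(S)\to\sq(S')$ in $\cfact^\op$ admits a normal-form decomposition in the listed generators, and that any two such decompositions are related by the listed relations. By \cref{lem:restriction}, $F$ is uniquely determined by the images $F|_S$, i.e.\ a sequence of composable horizontal or vertical morphisms in $\sq(S')$, some of which may be identities or composites of several elementary arrows.

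I would construct the decomposition in three stages, reading right to left in the composition. First, for each elementary arrow $a_\ell$ of $S$ with $F(a_\ell)=\id$, precompose with the corresponding $\sigma^{v,h}_\ell$ to pass to a reduced string $S_{\mathrm{red}}$ on which $F$ sends every elementary arrow to a nontrivial horizontal or vertical morphism of $\sq(S')$. Second, the types of these nontrivial images determine a path $T$ in $\sq(S')$, and the induced map from $S_{\mathrm{red}}$ to $T$ is a bijection that only swaps adjacent vertical/horizontal pairs; each such swap is a $\gamma_{j,i}$, and relations \eqref{eq:three}--\eqref{eq:four} make the product well-defined independently of the chosen order of swaps. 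Finally, $\sq(T)$ embeds into $\sq(S')$ by extending the path $T$ to $S'$ via successive elementary insertions, each of which is a $\delta^{v,h}$. Composing these three factors recovers $F$.

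The main technical work is completeness of the presentation, i.e.\ showing that the listed relations suffice to identify any two such decompositions. Relations \eqref{eq:one} are the classical simplicial identities, applied independently in the horizontal and vertical directions. Relations \eqref{eq:two} record that horizontal and vertical generators act on independent coordinates of $[n]\times[m]$ and therefore commute, modulo the two diagonal corners. Relations \eqref{eq:three} and \eqref{eq:four} govern the $\gamma$'s: disjoint $\gamma$'s commute freely, the mixed identities with $\sigma$ and $\delta$ describe how a $\gamma$ propagates past an insertion or deletion of the opposite type, and the two braid-type identities encode that propagating $\gamma$ past an inserted face of the same direction costs an additional $\gamma$-move.

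The hardest step will be the boundary analysis encoded in \eqref{eq:five}: when one of the extremal generators $\delta^{v,h}_0$ or $\delta^{v,h}_{n+1}, \delta^{v,h}_{m+1}$ is applied, the square $\sq(S)$ acquires a fresh corner that interacts with the rearrangement step in a non-generic way, forcing the extra $\gamma_{1,0}$ or $\gamma_{n+1,m}$ factor. I would verify these by explicit inspection of the associated squares. Once the relations have been checked, uniqueness of the normal form follows by a standard rewriting argument parallel to the presentation of $\Delta$, completing the proof.
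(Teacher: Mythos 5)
There is a genuine gap, concentrated in your stages two and three. A morphism in $\cfact^\op$ preserves the factorization system, so each elementary arrow of $S_{\mathrm{red}}$ is sent to a purely horizontal or purely vertical morphism of $\sq(S')$ of the \emph{same} type; the type sequence of the image path is just a letter-by-letter subdivision of that of $S_{\mathrm{red}}$, and no ``swaps of adjacent vertical/horizontal pairs'' occur at this stage. The generators $\gamma_{i,j}$ are therefore not doing what you assign to them. Their actual role is to move the image path off the distinguished boundary string: every composite of $\sigma$'s and $\delta$'s sends the source string onto a subset of the target string $S'\subset\sq(S')$, whereas a general $F$ has $F(S)$ a lattice path wandering through the interior of $\sq(S')$ (see the dotted path in the example following the proposition). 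Your proposed normal form $\delta\circ\gamma\circ\sigma$, with the $\gamma$'s acting on the short string $S_{\mathrm{red}}$ and the $\delta$'s providing the final embedding, cannot express such morphisms; already $\gamma_{i,j}\circ\delta^h_i$ admits no expression of that shape, since by \eqref{eq:four} transporting a $\gamma$ across a $\delta$ of the same index changes the number of $\gamma$'s ($\delta^h_i\circ\gamma_{i,j}\cong\gamma_{i+1,j}\circ\gamma_{i,j}\circ\delta^h_i$). The paper's normal form puts the $\gamma$'s leftmost, $f\cong\overline{\gamma}\circ\overline{i}\circ a^h\circ a^v\circ s^h\circ s^v$, and realizes it geometrically by factoring an injective $F$ through a maximal composable string $\widetilde{S}$ containing $F(S)$: the map onto $\sq(\widetilde{S})$ is the $\sigma$/$\delta$-part and the inclusion $\sq(\widetilde{S})\hookrightarrow\sq(S')$ is the $\gamma$-part.

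Separately, deferring uniqueness to ``a standard rewriting argument parallel to $\Delta$'' skips the real content. Because of \eqref{eq:five} the $\gamma$-word in a decomposition is not unique (the boundary insertions $\delta^{v,h}_0$, $\delta^v_{m+1}$, $\delta^h_{n+1}$ can absorb or emit $\gamma$'s), so the paper must both normalize the word --- eliminating all $\gamma_{i,j}$ with $i<n_0$ or $j>m_0$, where $n_0$, $m_0$ count the boundary insertions --- and match the normalized word with a canonical geometric datum, namely the \emph{minimal} maximal composable string $S_1$ containing $F(S)$. Without an analogue of this step your argument would at best show that the listed morphisms generate, not that the listed relations are complete.
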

\begin{proof}
Denote by $\widetilde{\cfact^\op}$ the category with the same objects as $\cfact^\op$, but with morphisms generated by morphisms in \cref{constr:elementary morphisms} and relations between them, then there is a natural morphism $F:\widetilde{\cfact^\op}\rightarrow\cfact^\op$ and we need to prove that it is bijective on morphisms. To do this, we first describe a canonical presentation for a morphism in $\widetilde{\cfact^\op}$. An arbitrary morphism $f$ in $\widetilde{\cfact^\op}$ is given by a composition of a string of morphisms containing various $\sigma^{v,h}_s$,$\delta^{v,h}_t$ and $\gamma_{i,j}$. First, observe that using \cref{eq:one}, \cref{eq:two} and \cref{eq:four} we can rewrite it as
\begin{align}\label{eq:six}
    f\cong \overline{\gamma}\circ \overline{i}\circ a^h\circ a^v\circ s^h\circ s^v,
\end{align}
where $s^h$ (resp. $s^v$) is given by a composition of various $\sigma^h_i$ (resp. $\sigma^v_j$), the morphism $a^h$ (resp. $a^v$) is a composition of $\delta^h_k$ (resp. $\delta^v_l$), the morphism $\overline{i}$ is a composition of morphisms of the form $\delta^{v,h}_0$ and $\delta^{v,h}_{m+1,n+1}$ and $\overline{\gamma}$ is a composition of $\gamma_{i,j}$. This presentation is generally not unique. Assume that there are $n_0$ terms of the form $\delta^h_0$ in $\overline{i}$ and $m_0$ terms of the form $\delta^v_{m+1}$, then we claim that there is a decomposition of the morphism $f$ of the form (\ref{eq:six}) in which there are no morphisms $\gamma_{i,j}$ with $i<n_0$ and $j>m_0$. Indeed, let $\overline{\gamma}\cong \gamma_{i_N,j_N}\circ\gamma_{i_{N-1},j_{N-1}}\circ...\circ \gamma_{i_0,j_0}$ and let $l$ be the minimal index for which $i_l<n_0$. It can only appear in $\overline{\gamma}$ if the $(n_0-i_l)$th morphism $\delta^h_0$ in $\overline{i}$ is followed by $\delta^v_0$. Then we can use the commutativity relations (\ref{eq:three}) and (\ref{eq:four}) as well as (\ref{eq:five}) to cancel them out. This reduces the number of $\gamma_{i,j}$ with $i<n_0$ by one, and we can continue the process by induction to completely get rid of them. The case of $\gamma_{i,j}$ with $j>m_0$ is similar. Finally, using \cref{eq:one} and \cref{eq:two} we can make it so that all the morphisms $\gamma_{i,j}$, $\delta_k$ and $\sigma_t$ are presented in the order of their coefficients, i.e. such that for example $s^h\cong \sigma_{t_M}\circ\sigma_{t_M-1}\circ...\circ \sigma_{t_0}$ with $t_0\leq t_1\leq...\leq t_M$. This decomposition is easily seen to be unique, and we call it \textit{canonical}.\par
Now assume we have a morphism $F:\sq(S)\rightarrow\sq(S')$ in $\cfact^\op$. To prove the proposition it suffices to demonstrate that it admits a unique canonical decomposition as above. First, we can decompose it as $F_i\circ F_s$, where $F_i$ is injective and $F_s$ is surjective. Observe that a surjective morphism $F_s:\sq(S)\rightarrow\sq(S_0)$ sends $S$ onto $S_0\subset\sq(S_0)$, in particular it is easy to see that it is equal to a composition of morphisms of the form $\sigma^{v,h}_j$, so it follows that it admits a canonical decomposition. We can now assume $F$ to be injective. Let $\widetilde{S}$ be some maximal composable string of morphisms in $\sq(S')$ that contains $F(S)$, then $F$ factors as 
\[\sq(S)\xrightarrow{a_{\widetilde{S}}}\sq(\widetilde{S})\xrightarrow{i_{\widetilde{S}}}\sq(S'),\]
where $i_{\widetilde{S}}$ is a natural inclusion and $a_{\widetilde{S}}$ sends $S$ into $\widetilde{S}\subset\sq(\widetilde{S})$. It is easy to see that $a_{\widetilde{S}}$ is given by a composition of $\delta^{v,h}_i$ and $i_{\widetilde{S}}$ by a composition of $\gamma_{i,j}$. However, there are generally more than one $\widetilde{S}$ that satisfy this condition. We endow the set of maximal composable chain in $\sq(S')$ with a partial order for which $S_1<S_2$ if $S_1$ lies below $S_2$ in $\sq(S')\subset[n']\times[m']$. Let $F(0)=(l_1,l_2)$ and $F(n,m)=(k_1,k_2)$, it is easy to see that for any $\widetilde{S}$ the morphism $a_{\widetilde{S}}$ can be further decomposed as 
\[\sq(S)\xrightarrow{b_{S_0}}\sq(S_0)\xrightarrow{j_{S_0}}\sq(\widetilde{S}),\]
where $S_0$ is the unique maximal composable chain in $\sq(S')$ connecting $(l_1,l_2)$ and $(k_1,k_2)$ and containing $F(S)$, $j_{S_0}$ is the natural inclusion and $b_{S_0}$ is an injective morphism whose decomposition only contains $\delta^{h}_i$ for $i\neq0$ and $\delta^v_j$ for $j\neq m'$.\par
It is now easy to see that the minimal (with respect to  the partial order described above) maximal composable string $S_1$ that contains $F(S)$ can be described as follows: it starts at $(0,0)$ and coincides with $S'$ until its horizontal coordinate reaches $l_1$, then it contains a vertical segment which continues until it hits $F(0)=(l_1,l_2)$, it then coincides with $S_0$ until $(k_1,k_2)$, then it contains a horizontal segment until it hits $S'$ again and coincides with $S'$ until the endpoint $(n',m')$. In particular, it follows that $i_{S_1}$ dies nor contain $\gamma_{i,j}$ with $i<l_1$ and $j>l_2$, so in particular $i_{S_0}\circ a_{S_0}\circ F_s$ admits a canonical decomposition, and it is easy to see that it is uniquely determined by it.
\end{proof}
\begin{ex}
The diagram below illustrates the canonical decomposition of a morphism  $F:(2|1|1)\rightarrow(1|1|3|1|1|3)$ such that $F(0,0)=F(1,0)=(2,2)$, $F(2,0)=(3,2)$, $F(2,1)=(3,4)$ and $F(3,1)=(4,4)$:
\[
\begin{tikzcd}
\bullet\arrow[r]&\bullet\arrow[r]&\bullet\arrow[r]&\bullet\arrow[r]&\bullet\arrow[r]&\bullet\\
\bullet\arrow[r]\arrow[u]&\bullet\arrow[r]\arrow[u]&\bullet\arrow[r]\arrow[u]&F(2,1)\arrow[r,dotted]\arrow[u]&F(3,1)\arrow[r,dotted]\arrow[u]&\bullet\arrow[u, dotted]\\
\bullet\arrow[r]\arrow[u]&\bullet\arrow[r]\arrow[u]&\bullet\arrow[r]\arrow[u]&\bullet\arrow[r]\arrow[u, dotted]&\bullet\arrow[r]\arrow[u]&\bullet\arrow[u]\\
\bullet\arrow[r]\arrow[u]&\bullet\arrow[r]\arrow[u]&F(0,0)=F(1,0)\arrow[r,dotted]\arrow[u]&F(2,0)\arrow[r]\arrow[u, dotted]&\bullet\arrow[r]\arrow[u]&\bullet\arrow[u]\\
\bullet\arrow[r]\arrow[u]&\bullet\arrow[r, dotted]\arrow[u]&\bullet\arrow[u, dotted]\arrow[r]&\bullet\arrow[r]\arrow[u]&\bullet\arrow[u]\\
\bullet\arrow[u]\arrow[r, dotted]&\bullet\arrow[u, dotted].
\end{tikzcd}
\]
The dotted line depicts the composable sequence $S_1$ in the notation of \cref{prop:generators}. The decomposition itself is given by
\[F\cong \gamma_{3,2}\circ \gamma_{2,2}\circ\gamma_{4,3}\circ\gamma_{3,3}\circ \gamma_{4,4}\circ\gamma_{3,4}\circ \delta^v_3\circ\delta^h_4\circ \delta^v_0\circ\delta^h_0\circ\delta^v_0\circ \delta^v_1\circ \sigma^h_0.\]
\end{ex}
\begin{notation}\label{not:factorization_arities}
We will call a morphism in $\cfact^\op$ \textit{active} if its canonical decomposition only contains morphisms of the form $\sigma^{v,h}_j$ and $\delta^h_i$ for $i\neq0$ and $i\neq n+1$ and $\delta^v_k$ for $k\neq 0$ and $k\neq m+1$. It follows from \cref{eq:one} and \cref{eq:two} that they are closed under composition and so form a subcategory, which we will denote by $\cfact^\act$. We will call a morphism \textit{inert} if its canonical decomposition only contains $\gamma_{i,j}$, $\delta^{v,h}_0$, $\delta^v_{m+1}$ and $\delta^h_{n+1}$. It once again follows from \cref{eq:one}, \cref{eq:two} and \cref{eq:five} that they form a subcategory $k:\cfact^\inrt\hookrightarrow\cfact$. We call an inert morphism an \textit{inclusion} if its canonical decomposition does not contain $\gamma_{i,j}$, and we denote the subcategory of inclusions by $j:\cfact^\inc\hookrightarrow\cfact^\inrt$. We will call an inert morphism a \textit{permutation} if its canonical decomposition only contains morphisms of the form $\gamma_{i,j}$.\par
We will call an object of $\cfact^\inrt$ \textit{elementary} if the corresponding string $S$ has length $l(S)\leq1$, we denote by $i:\cfact^\el\hookrightarrow\cfact^\inc$ the full subcategory on elementary objects. For an object $S\in\cfact$ we also denote by $\cfact^\el_{S/}$ the category $S/i$. Denote by $\cE_\cfact$ the full subcategory of $\mor_\cat(\cfact^\inrt,\cS)$ on those $\cF:\cfact^\inrt\rightarrow\cS$ for which 
\[j^*\cF\cong i_*i^*j^*\cF.\]
\end{notation}
\begin{prop}\label{prop:factorization_theory}
The active and inert morphisms form a factorization system on $\cfact$. Moreover, the subcategory $\cE_\cfact$ gives the pair $(k:\cfact^\inrt\hookrightarrow \cfact)$ the structure of a theory with arities. 
\end{prop}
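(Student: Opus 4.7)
The plan is to deduce both claims from the canonical form established in \cref{prop:generators}. For an arbitrary morphism $f$ in $\cfact^\op$ its canonical decomposition
\[ f \cong \overline{\gamma}\circ \overline{i}\circ a^h\circ a^v\circ s^h\circ s^v \]
groups the first two factors into an inert morphism and the last four into an active morphism, exactly matching the classes singled out in \cref{not:factorization_arities}. For part one I would verify four items separately: (i) active morphisms are closed under composition, reordering products of interior $\delta^{v,h}$'s and $\sigma^{v,h}$'s by \cref{eq:one} and \cref{eq:two}, where the side condition in the third line of \cref{eq:two} is automatic because the corner indices never appear in the active class; (ii) inert morphisms are closed under composition by \cref{eq:three} and \cref{eq:five}, since a product of corner $\delta$'s may acquire a $\gamma_{1,0}$ or $\gamma_{n+1,m}$ but remains within the inert class; (iii) existence of the (inert,active) factorization is immediate from the canonical form itself; and (iv) essential uniqueness of the factorization follows from uniqueness of the canonical decomposition in \cref{prop:generators}, since any two such factorizations assemble into two canonical forms of $f$ which must coincide.

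For part two, a theory with arities on the pair $(k\colon\cfact^\inrt\hookrightarrow\cfact)$ in the sense of \cite{kositsyn2021completeness} consists of the active/inert factorization system just obtained, the subcategory of elementaries $i\colon \cfact^\el\hookrightarrow \cfact^\inrt$, and a class of Segal presheaves. The defining condition $j^*\cF \cong i_*i^*j^*\cF$ of $\cE_\cfact$ asserts that the restriction of $\cF$ to inclusions is right Kan extended from elementaries, so the plan is to show that for every $S\in\cfact$ the inclusion $\cfact^\el_{S/}\hookrightarrow\cfact^\inc_{S/}$ is cofinal, whereby the Kan extension computes the expected iterated fibre product over the elementary edges of $\sq(S)$; and then to check that the permutations $\gamma_{i,j}$ act compatibly with this limit via the commutation relations of \cref{eq:four}, so that the Segal condition on inclusions extends coherently across all of $\cfact^\inrt$.

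The cofinality statement I would prove by induction on $\ell(S)$, using the filtration $\sq(S)_{\leq 1}\subset \sq(S)_{\leq 2}\subset\cdots\subset \sq(S)$ introduced in the proof of \cref{lem:restriction}; each stage adjoins a single elementary factorization square, and the base case $\ell(S)\le 1$ is immediate as $\sq(S)$ is then a single elementary edge. The main obstacle, and the technically delicate part of the argument, is the interplay between the permutations $\gamma_{i,j}$ and the Kan extension: because $\cfact^\inrt$ strictly contains $\cfact^\inc$, one has to verify that the extended presheaf transforms correctly under the equivariances dictated by \cref{eq:four} and \cref{eq:five}. This is exactly what motivates the precise form of those relations in \cref{constr:elementary morphisms}, and the verification amounts to a bookkeeping exercise reducing everything to the commutation of elementary squares in $\sq(S)$.
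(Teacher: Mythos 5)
Your handling of the first claim is fine and is essentially the paper's argument: the paper disposes of the factorization system in one sentence by appeal to \cref{prop:generators}, and your four-point verification (closure of each class, existence and uniqueness of the factorization from the canonical decomposition) is just a reasonable expansion of that.

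The second claim is where there is a genuine gap. Being a theory with arities in the sense of \cite[Definition 6.2]{kositsyn2021completeness} is not the assertion that $\cE_\cfact$ is well defined, nor that the right Kan extension $i_*i^*$ computes an iterated fibre product, nor that the Segal condition ``extends coherently across $\cfact^\inrt$'' --- the condition $j^*\cF\cong i_*i^*j^*\cF$ is the \emph{definition} of $\cE_\cfact$, so none of this needs proving. What actually has to be shown is that the free-model monad $k^*k_!$ on $\mor_\cat(\cfact^\inrt,\cS)$ preserves $\cE_\cfact$, i.e.\ that left Kan extension along $k$ followed by restriction sends Segal presheaves to Segal presheaves. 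Your proposal never engages with $k^*k_!$ at all, so the substantive content of the proposition is missing. Concretely, the paper's argument identifies $k^*k_!\cF(S)$ with a colimit of $\cF(S')$ indexed by the space $\Act_\cfact(S)$ of active morphisms $S'\twoheadrightarrow S$ (the active slice being cofinal in $k/S$ thanks to the factorization system from part one), then proves that $\Act_\cfact(S)\cong\Act_\ccat([n])\times\Act_\ccat([m])$ and that $\cfact^\el_{S/}$ decomposes as a gluing of the corresponding categories for $\ccat$, whence $\Act_\cfact(-)$ itself satisfies the Segal condition; after establishing a relative Segal condition for factorization squares, it commutes the limit over $\cfact^\el_{S/}$ past the colimit over active morphisms using \cite[Corollary 7.17]{chu2019homotopy}. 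The cofinality of $\cfact^\el_{S/}$ in $\cfact^\inc_{S/}$ and the $\gamma_{i,j}$-equivariance you propose to check are neither necessary nor sufficient for this: no amount of bookkeeping with \cref{eq:four} and \cref{eq:five} will produce the limit--colimit interchange on which the proof actually turns.
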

\begin{proof}
The fact that active and inert morphisms form a factorization system follows immediately from \cref{prop:generators}. To prove the second claim we need to show that the monad $k^*k_!$ om $\mor_\cat(\cfact^\inrt,\cS)$ preserves $\cE$. For a given objects $S\in\cfact$ denote by $\Act_\cfact(S)$ the space of active morphisms with target $S$. Using the active/inert factorization, it can be considered as a functor $\Act_\cfact(-):\cfact^\inrt\rightarrow\cS$, although we would be more interested in its restriction to $\cfact^\inc$, which we would use the same notation for. Observe that it is a full subcategory of $k/S$ on active morphisms $S'\twoheadrightarrow S$ and moreover, since active and inert morphisms form a factorization system, we see that this subcategory is cofinal. It follows that
\[k^*k_!\cF(S)\cong\underset{(S'\rightarrow S)\in(k/s)}{\colim}\cF(S')\cong\underset{(S'\twoheadrightarrow S)\in\Act_\cfact(S)}{\colim}\cF(S').\]
Now let $S\bydef(i_0|j_0|...|n-i_k|m-j_k)$, then it follows easily from the definition of active morphisms that
\begin{align}\label{eq:seven}
    \Act_\cfact(S)\cong\Act_\ccat([n])\times\Act_\ccat([m]).
\end{align}
 Observe that we also have the following isomorphism
 \begin{align}\label{eq:eight}
     \cfact^\el_{S/}\cong \ccat^\el_{[i_0]/}\coprod_{(i_0,0)}\ccat^\el_{[j_0]/}\coprod_{(i_0,j_0)}...\coprod_{(n-i_k,m-j_k)}\ccat^\el_{/[n-i_k]}\coprod_{(n,m-j_k)}\ccat^\el_{/[m-j_k]}.
 \end{align}
It follows from \cref{eq:seven}, \cref{eq:eight} and the basic properties of $\ccat$ that 
\[\Act_\cfact(S)\cong\underset{(S\rightarrowtail e)\in\cfact^\el_{s/}}{\lim}\Act_\cfact(e).\]
We can also use \cref{eq:eight} and the relative Segal condition for $\ccat$ to conclude that for any factorization square 
\[
\begin{tikzcd}[row sep=huge, column sep=huge]
S'\arrow[r, two heads, "a"]\arrow[d, tail, "a^* i"]&S\arrow[d, tail, "i"]\\
i_!S'\arrow[r, two heads,"i_! a"]&e
\end{tikzcd}
\]
in which $i$ is an inclusion and for every $\cF:\cfact^\el\rightarrow\cS$ we have
\[\underset{(S'\rightarrowtail e')\in\cfact^\el_{S'/}}{\lim}\cF(e)\cong \underset{(S\overset{i}{\rightarrowtail} e)\in\cfact^\el_{S/}}{\lim}\underset{(i_!S'\rightarrowtail e_0)\in\cfact^\el_{i_!S'/}}{\lim}\cF(e_0).\]
Putting everything together, we get for every $\cF\in\cE_\cfact$
\begin{align*}
    k^*k_!\cF(S)\cong &\underset{(S'\twoheadrightarrow S)\in\Act_\cfact(S)}{\colim}\cF(S')\\
    \cong & \underset{\underset{(S\overset{i}{\rightarrowtail} e)\in\cfact^\el_{S/}}{\lim}\Act_\cfact(e)}{\colim}\underset{(S'\rightarrowtail e')\in\cfact^\el_{S'/}}{\lim}\cF(e')\\
    \cong&\underset{\underset{(S\overset{i}{\rightarrowtail} e)\in\cfact^\el_{S/}}{\lim}\Act_\cfact(e)}{\colim}\underset{(S\overset{i}{\rightarrowtail} e)\in\cfact^\el_{S/}}{\lim}\underset{(i_!S'\rightarrowtail e_0)}{\lim}\cF(e_0)\\
    \cong&\underset{\underset{(S\rightarrowtail e)\in\cfact^\el_{S/}}{\lim}\Act_\cfact(e)}{\colim}\underset{(S\overset{i}{\rightarrowtail} e)\in\cfact^\el_{S/}}{\lim}\cF(i_! S')\\
    \cong& \underset{(S\overset{i}{\rightarrowtail} e)\in\cfact^\el_{S/}}{\lim}\underset{(i_!S\twoheadrightarrow e)\in\Act_\cfact(e)}{\colim}\cF(i_!S)\\
    \cong&\underset{(S\overset{i}{\rightarrowtail} e)\in\cfact^\el_{S/}}{\lim}k^*k_!\cF(e)\cong i_*i^*k^*k_!\cF(S),
\end{align*}
where the isomorphism in the next to last row follows from \cite[Corollary 7.17]{chu2019homotopy}.
\end{proof}
\begin{notation}
We will denote the category of models $\modl_\cfact(\cS)$ by $\fact$.
\end{notation}
\begin{cor}\label{cor:active_permutation}
The morphisms $f$ in $\cfact$ for which the inert part of the active/inert decomposition is a permutation form a subcategory, which we will denote by $\cfact^\ap$.
\end{cor}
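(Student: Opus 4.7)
The plan is to verify the two subcategory axioms: identities and closure under composition. The identity morphism has empty canonical decomposition and hence trivially belongs to $\cfact^\ap$.

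For closure under composition, I would start by extracting a clean combinatorial characterization of $\cfact^\ap$. By \cref{prop:generators}, every morphism has a canonical decomposition of the form $\overline{\gamma}\circ \overline{i}\circ a^h\circ a^v\circ s^h\circ s^v$, where $\overline{i}$ is precisely the part composed of the generators $\delta_0^{v,h}$, $\delta_{n+1}^h$, and $\delta_{m+1}^v$. Since $\overline{\gamma}\circ\overline{i}$ is exactly the inert factor of the active/inert factorization, the condition that this inert factor be a permutation (i.e. use only $\gamma_{i,j}$) is equivalent to $\overline{i}$ being empty. In other words, $f \in \cfact^\ap$ if and only if no generator from the ``excluded'' set $\{\delta_0^{v,h},\delta_{n+1}^h,\delta_{m+1}^v\}$ appears in the canonical decomposition of $f$.

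Given this characterization, let $f,g\in\cfact^\ap$ with $g\circ f$ defined. Concatenating their canonical decompositions yields an expression for $g\circ f$ as a word in the generators of \cref{constr:elementary morphisms} that uses no excluded generator. It suffices to check that the normalization procedure of \cref{prop:generators}, when applied to such a word, never introduces an excluded generator. Inspecting the available rewrite rules: the simplicial identities (\ref{eq:one}), the commutativity relations (\ref{eq:two}), and the permutation commutativity (\ref{eq:three}) each have the same multiset of generator \emph{types} on both sides, so they preserve the absence of excluded generators; the relations in (\ref{eq:four}) involving $\delta^{v,h}$ are explicitly restricted to indices $0<i<n+1$ and $0<j<m+1$, so neither side mentions an excluded generator; the remaining relations of (\ref{eq:four}) involve only $\sigma$'s and $\gamma$'s. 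Finally, both sides of each relation in (\ref{eq:five}) contain an excluded generator, so no such rewrite is applicable to a word composed solely of non-excluded generators. Hence the canonical form of $g\circ f$ again has no excluded generators, proving $g\circ f\in\cfact^\ap$.

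The only genuinely substantive step is the characterization of $\cfact^\ap$ in terms of the absence of excluded generators in the canonical form; after that, closure under composition reduces to a line-by-line inspection of \cref{eq:one}--\cref{eq:five}. I expect no further obstacles beyond this bookkeeping.
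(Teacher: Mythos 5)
Your proof is correct, and at bottom it performs the same computation as the paper's --- a case-by-case inspection of the relations of \cref{constr:elementary morphisms} --- but it is organized differently. The paper argues structurally: closure under composition is reduced, via the active/inert factorization system of \cref{prop:factorization_theory}, to showing that in a factorization square the pullback $a^*i$ of a permutation $i$ along an active morphism $a$ is again a permutation, and then by canonicity of decompositions to the case where $i$ is a single $\gamma_{i,j}$ and $a$ a single $\sigma^{v,h}_j$ or $\delta^{v,h}_i$, i.e.\ to one relation at a time. You instead argue syntactically: you characterize $\cfact^\ap$ as the class of morphisms whose canonical word contains no generator from $\{\delta^{v,h}_0,\delta^h_{n+1},\delta^v_{m+1}\}$ (which is a correct reading of \cref{not:factorization_arities} and the canonical form of \cref{prop:generators}) and then certify that every rewrite rule preserves this property. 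Both routes work; the paper's is leaner because it only ever has to refactor one elementary active generator past one $\gamma_{i,j}$, whereas yours must audit all of \cref{eq:one}--\cref{eq:five} at once. One caveat: your blanket justification that relations with ``the same multiset of generator types on both sides'' preserve the absence of excluded generators is too quick, because excludedness is a condition on the \emph{index} relative to the length of the ambient string, and the indices do shift in \cref{eq:one} (e.g.\ $\delta_j\mapsto\delta_{j+1}$ and $\delta_i\mapsto\delta_{i-1}$). You should check explicitly that $\delta_{j+1}$ can only land in the excluded range if $\delta_j$ already was excluded, and that in $\sigma^{v,h}_j\circ\delta^{v,h}_i\cong\delta^{v,h}_{i-1}\circ\sigma^{v,h}_j$ the side condition $j+1<i$ forces $i-1\geq 1$, so no $\delta_0$ is created. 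With that routine verification added, your argument is complete.
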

\begin{proof}
To prove the claim it suffice to prove that $\cfact^\ap$ is closed under composition. To prove it we need to demonstrate that if in a factorization square 
\[
\begin{tikzcd}[row sep=huge, column sep=huge]
S'\arrow[r, two heads, "a"]\arrow[d, tail, "a^* i"]&S\arrow[d, tail, "i"]\\
i_!S'\arrow[r, two heads,"i_! a"]&S''
\end{tikzcd}
\]
in $\cfact$ the morphism $i$ is a permutation, then so is $a^*i$. Since every permutation and every active morphism admits a canonical decomposition, it would suffice to prove the claim for $i$ of the from $\gamma_{i,j}$ and for $a$ equal to an elementary active morphism $\sigma^{v,h}_j$ or $\delta^{v,h}_i$, however in this case the claim follows immediately from the identities described in \cref{constr:elementary morphisms}. 
\end{proof}
\begin{prop}\label{prop:coverings}
For a given inert morphism $j:S\rightarrowtail S'$ denote by $\cfact^\inc/j$ the category whose objects are factorizations $j\cong i\circ j'$ in which $i$ is an inclusion and morphisms are commutative diagrams of the form 
\[
\begin{tikzcd}
{}&S_0\arrow[dr, tail, "i_0"]\arrow[dd,tail,  "i'"]\\
S\arrow[ur,tail,  "j_0"]\arrow[dr, tail, "j_1"]&{}&S'\\
{}&S_1\arrow[ur, tail, "i_1"]
\end{tikzcd}.
\]
Then this category admits an initial object.
\end{prop}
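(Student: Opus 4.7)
The plan is to exhibit the initial object explicitly via the canonical decomposition of $j$ and then verify its universal property. Writing $j\cong\overline{\gamma}\circ\overline{\delta}$ in the canonical form of \cref{prop:generators}, where $\overline{\delta}:S\rightarrowtail S_0$ is an inclusion and $\overline{\gamma}:S_0\rightarrowtail S'$ is a permutation, I would use the commutation relations \cref{eq:four} and \cref{eq:five} to rearrange this into $j\cong i_T\circ j'$. Here $i_T:T\rightarrowtail S'$ is the ``outer'' inclusion built from those $\delta^{v,h}_0,\delta^v_{m+1},\delta^h_{n+1}$ generators that commute past $\overline{\gamma}$ all the way to the outside, and $j':S\rightarrowtail T$ is the residual inert morphism, whose own canonical form admits no further outer extraction. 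The candidate initial object is the factorization $(T,j',i_T)$.

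For the universal property, suppose we are given another factorization $j\cong i\circ j''$ with $i:U\rightarrowtail S'$ an inclusion and $j'':S\rightarrowtail U$ inert. Then the canonical decomposition of $i$ must consist of $\delta$-generators that are extractable from $j$ to the outside, and is therefore a sub-composition of the decomposition that yielded $i_T$. Geometrically this means that $T$ sits inside $U$ as a contiguous substring of $S'$, which in turn determines a unique inclusion $i':T\rightarrowtail U$ with $i\circ i'=i_T$. The remaining relation $i'\circ j'=j''$ is then forced, since both sides furnish inert factorizations of $j$ through the composite inclusion $i\circ i'$ and such factorizations are unique.

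The main technical obstacle is showing that the outer extraction defining $i_T$ is canonical, i.e., independent of the order in which the relations \cref{eq:four} and \cref{eq:five} are applied. I would address this by appealing to the uniqueness of the normal form from \cref{prop:generators}: the extractable $\delta$-generators can be characterized intrinsically as those whose indices lie outside the support of the permutation part $\overline{\gamma}$ in the canonical form of $j$, and this characterization is preserved under all the commutation reductions. Granting this canonicity, both existence and uniqueness of the required factorizations follow directly from the construction.
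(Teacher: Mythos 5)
Your strategy is genuinely different from the paper's. The paper constructs the initial object geometrically: it exhibits $S_i\subseteq S'$ as the substring between two explicitly described points $(x,y)$ and $(z,w)$ of $S'$, read off from the coordinates of $j(0,0)$ and $j(k,l)$, so that both the existence of the factorization $j\cong i\circ j_i$ and the minimality of $S_i$ (hence initiality) are visible without any manipulation of generators. You instead start from the canonical form $j\cong\overline{\gamma}\circ\overline{\delta}$ of \cref{prop:generators} and try to produce the initial object by rewriting, extracting the ``outer'' inclusion $i_T$ past $\overline{\gamma}$. This could in principle work, but the proof as written has a real gap, and you have located it yourself: the canonicity of the outer extraction. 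The justification you offer --- that the extractable $\delta$-generators are exactly those whose indices lie outside the support of $\overline{\gamma}$, and that this is preserved by the reductions --- does not survive contact with the actual relations. The identities in \cref{eq:four} are not plain commutations: they shift indices and can replace one $\gamma$ by two (e.g.\ $\gamma_{i+1,j}\circ \gamma_{i,j}\circ \delta^h_i\cong\delta^h_i\circ \gamma_{i,j}$), so ``the support of $\overline{\gamma}$'' is not an invariant of the rewriting; and \cref{eq:five} shows that a $\gamma$ can be wholly absorbed into a pair of boundary $\delta$'s, so a generator whose index does meet the support of some $\gamma$ at an intermediate stage may nevertheless be extractable. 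Whether a given boundary $\delta$ can be pushed to the outside is exactly the combinatorial content of the proposition, and it is asserted rather than proved.

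Two further claims in your universal-property argument also carry real content and are left unargued: (i) that the inclusion $i$ in \emph{any} factorization $j\cong i\circ j''$ is a sub-composition of the one defining $i_T$ (equivalently, that the prefix and suffix of $S'$ added by $i$ are contained in those added by $i_T$), and (ii) that all extractable generators can be extracted \emph{simultaneously}. Together (i) and (ii) are essentially equivalent to the existence of the initial object, so ``granting this canonicity'' begs the question. (The final cancellation step is fine: $i\circ(i'\circ j')\cong i\circ j''$ does force $i'\circ j'\cong j''$, since by \cref{lem:restriction} inclusions are monomorphisms in $\cfact^\op$.) To repair the argument you would either have to carry out the index bookkeeping for \cref{eq:four} and \cref{eq:five} in full, or switch to the paper's route and identify $T$ directly inside $S'$ by the endpoint coordinates of the image of $j$, which bypasses the rewriting entirely.
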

\begin{proof}
The required initial object is given by the inclusion $i:S_i\hookrightarrow S'$ of a minimal substring of $S'$ such that all the morphism in the image of $j$ in $\sq(S')$ admit a factorization which only includes morphisms from $S_i$. More explicitly it can be constructed as follows: assume that $S$ (resp. $S'$) is a maximal composable chain of morphisms in $[k]\times[l]$ (resp. in $[k']\times[l']$), assume that $j(0,0)=(s,t)$ and $j(k,l)=(u,v)$. We then define $(x,y)$ (resp. $(z,w)$) to be the points in $\sq(S')$ satisfying the following properties:
\begin{itemize}
    \item $(x,y)$ (resp. $(z,w)$) lies on $S'$;
    \item $x=s$ (resp. $w=v$);
    \item there are no points in $\sq(S')$ that satisfy the two conditions above and have lower (resp. higher) horizontal or vertical coordinates.
\end{itemize}
We then define $S_i$ to be the substring of $S'$ lying between $(x,y)$ and $(z,w)$. Since the image of $\sq(S)$ under $j$ lies in $\sq(S_i)\subset\sq(S')$ by construction, we also have a morphism $j_i:\sq(S)\rightarrowtail\sq(S_i)$ such that $j\cong i\circ j_i$.
\end{proof}
\begin{definition}\label{def:coverings}
We will call an inert morphism $j$ in $\cfact$ a \textit{covering} if the initial object of $\cfact^\inc/j$ described in \cref{prop:coverings} is given by the identity.
\end{definition}
\begin{prop}\label{prop:coverings_factorization}
Coverings and inclusions form a factorization system in $\cfact^\inrt$.
\end{prop}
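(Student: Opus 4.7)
The plan is to leverage \cref{prop:coverings} as the main input. Given an inert morphism $j: S \rightarrowtail S'$, that proposition supplies the canonical factorization $j \cong i \circ j_i$ with $i: S_i \hookrightarrow S'$ an inclusion, initial among such. The first step is to verify that $j_i$ is automatically a covering: if its initial inclusion $i_0: S_0 \hookrightarrow S_i$ were proper, the composite factorization $j \cong (i \circ i_0) \circ j_0$ would give an object of $\cfact^\inc/j$ strictly below $(j_i, i)$, and initiality of the latter would produce an inclusion $S_i \hookrightarrow S_0$; combined with $i_0$ this forces $S_0 = S_i$, contradicting properness.

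For uniqueness, suppose $j \cong i' \circ j'$ is any factorization with $i': S'' \hookrightarrow S'$ an inclusion and $j': S \rightarrowtail S''$ a covering. The universal property of $(j_i, i)$ provides a unique inclusion $k: S_i \hookrightarrow S''$ satisfying $i \cong i' \circ k$ and $j' \cong k \circ j_i$. Then $(j_i, k)$ is an object of $\cfact^\inc/j'$, and since $j'$ is a covering the initial object of this category is $(j', \id)$, so by initiality there is an inclusion $S'' \hookrightarrow S_i$. Combined with $k$, this forces $S_i = S''$ and $k = \id$, showing that the two factorizations coincide.

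The subcategory axioms for the two classes are easy: identities are trivially both coverings and inclusions, and inclusions form a subcategory by construction (see \cref{not:factorization_arities}). To show that coverings are closed under composition, I would identify coverings with \emph{permutations}, i.e.\ inert morphisms whose canonical decomposition contains only $\gamma_{i,j}$'s. A permutation is a covering because it is essentially bijective on elementary morphisms, so its image fills the codomain. Conversely, writing a covering $j$ in canonical form as $\bar\gamma \circ \bar i$ with $\bar i$ an inclusion and $\bar\gamma$ a permutation, one shows $\bar i$ must be the identity: otherwise the image of $j$ is a substring of $S'$ of length $|S| < |S_0| = |S'|$, contradicting covering. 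Closure then follows because compositions of $\gamma_{i,j}$'s remain compositions of $\gamma_{i,j}$'s by the relations in \cref{constr:elementary morphisms}.

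The main subtlety is the ``cancellation'' step used both in existence and in uniqueness, where one concludes that two inclusions going in opposite directions between $S_i$ and $S_0$ (or $S_i$ and $S''$) must both be identities. This rests on the fact that an inclusion in $\cfact^\inrt$ literally witnesses the domain as a substring of the codomain, so two such inclusions in opposite directions force the underlying substrings, and hence the morphisms, to be equal.
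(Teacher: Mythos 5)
The existence and uniqueness parts of your argument are sound (and in fact more explicit than the paper, which only notes that \cref{prop:coverings} supplies the factorization and then reduces everything to showing that coverings are closed under composition). The gap is in that last step: the identification of coverings with permutations is false, and with it your proof of closure under composition collapses. A covering need not preserve the length of the string. Consider the inert morphism $j\colon h\rightarrow hv$ with canonical decomposition $\gamma_{1,0}\circ\delta^v_0$, which sends the unique horizontal segment of $h$ to the morphism $(0,1)\rightarrow(1,1)$ of $\sq(hv)$, i.e.\ to the horizontal component of the factorization of the composite of the two segments of $hv$. To express this morphism by compositions and factorizations of morphisms of a substring you need \emph{both} segments of $hv$ (neither $\sq$ of the initial $h$ nor $\sq$ of the final $v$ contains $(0,1)\rightarrow(1,1)$), so the initial object of $\cfact^\inc/j$ is the identity and $j$ is a covering; yet $j$ is not a permutation, since its source and target have different lengths. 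Your justification — ``otherwise the image of $j$ is a substring of $S'$ of length $|S|<|S'|$'' — conflates the literal image of $j$ with the minimal substring needed to generate that image under composition and factorization, and these differ precisely in examples like this one. (The paper itself treats $\gamma_{1,0}\circ\delta^v_0$ as a covering in the remark following the proposition, where it is used to show that active morphisms composed with coverings do \emph{not} form a subcategory; if coverings were permutations, that class would coincide with $\cfact^\ap$, which \emph{is} a subcategory by \cref{cor:active_permutation}.)

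The paper closes the gap differently: it reads off from the explicit construction in \cref{prop:coverings} that being a covering is equivalent to a coordinate condition on the endpoints — $j(0,0)$ must share its horizontal coordinate with the extremal point of $S'$ at the start, and dually for $j(k,l)$ at the end — and then observes that this condition is preserved under composition because inert morphisms send vertical morphisms to vertical morphisms and horizontal to horizontal, so $j_1\circ j_0(0,0)$ and $j_1(0,0)$ automatically share the relevant coordinate when $j_0$ is a covering. If you replace your permutation argument with this coordinate argument (or any direct verification that the minimal substring of a composite of coverings is everything), the rest of your proof goes through.
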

\begin{proof}
\Cref{prop:coverings} provides us with a decomposition $j\cong i\circ j_i$ for an arbitrary inert morphism $j$, in which $i$ is an inclusion and $j_i$ is a covering, so to prove our claim it would suffice to prove that coverings form a subcategory of $\cfact^\inrt$. Since identity morphisms are obviously coverings, it remains to show that they are closed under composition. So assume that we have a composable chain $S_0\xinert{j_0}S_1\xinert{j_1}S_2$ in $\cfact$ in which both $J_0$ and $J_1$ are coverings, we need to prove that $j_1\circ j_0$ is a covering as well. Assume that $\sq(S_i)\subset [k_i]\times [l_i]$ for $i\in\{0,1,2\}$, then it follows from the definition that $j_1\circ j_0 (0,0)$ and $j_1(0,0)$ (resp. $j_1\circ j_0 (k_0,l_0)$ and $j_1(k_1,l_1)$) have the same horizontal (resp. vertical) coordinate, and so it follows from the construction of $j_i$ in \cref{prop:coverings} that if $j_1$ is a covering, then so is $j_1\circ j_0$.
\end{proof}
\begin{remark}
The morphisms given by a composition of an active morphism and a covering do \textit{not} form a subcategory of $\cfact$. For example, both $\gamma_{1,0}\circ \delta^v_0\circ \delta^h_1$ and $\sigma^v_0$ satisfy this property, however
\[\sigma^v_0\circ\gamma_{1,0}\circ \delta^v_0\circ \delta^h_1\cong \sigma^v_0\circ \delta^v_0\circ \delta^h_1\cong \delta^h_1\]
does not. It follows that the subcategory of inclusions in $\cfact$ is not a part of a factorization system.
\end{remark}
\begin{remark}
Observe that $\fact$ is a localization of $\cP(\cfact^\op)$ at the set of morphisms
\[\underset{(S\rightarrowtail e)\cfact^\el_{S/}}{\colim}h_e\rightarrow h_S,\]
where $S$ is an arbitrary object of $\cfact^\op$. In particular. it is an accessible localization of a presheaf category and it follows from \cite[Proposition 5.5.4.15.]{lurie2009higher} that it is presentable.
\end{remark}
\begin{prop}\label{prop:subcategories}
There is a functor $h:\ccat\rightarrow\cfact$ (resp. $v:\ccat\rightarrow\cfact$, $t:\ccat\rightarrow\cfact$) that sends $[0]$ to the unique string of length zero and sends $[1]$ to the string $h$ (resp. $v$, $vh$). Moreover, it induces a functor $h^*:\fact\rightarrow\cat$ (resp. $v^*:\fact\rightarrow\cat$, $t^*:\fact\rightarrow\cat$).
\end{prop}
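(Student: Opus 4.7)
The plan is to construct the three functors on objects and morphisms explicitly, then verify that restriction along each preserves the Segal condition. On objects, $h$, $v$, and $t$ send $[n]$ to the strings $h^n$, $v^n$, and $(vh)^n$ respectively (meaning $n$ copies of the corresponding letters, in the evident order); explicitly, $\sq(h^n)\cong\sq(v^n)\cong[n]$ with all morphisms respectively horizontal or vertical, while $\sq((vh)^n)\cong\{(i,j):0\leq i\leq j\leq n\}$ is the upper-triangular subposet of $[n]\times[n]$. On morphisms, a monotone map $\alpha:[n]\to[m]$ in $\Delta$ (representing a morphism $[m]\to[n]$ in $\ccat=\Delta^\op$) should act as $\alpha$ itself for $h$ and $v$, and as $(i,j)\mapsto(\alpha(i),\alpha(j))$ for $t$. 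Each such map preserves horizontal and vertical morphisms, because these correspond to fixing respectively the second or first coordinate, so it gives a well-defined morphism in $\cfact^\op$. Functoriality then follows immediately.

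For the induced functors on models, I will use the Segal condition for $\cfact$ directly. For $F\in\fact$,
\[ (h^*F)([n])=F(h^n)\cong\lim_{e\in\cfact^\el_{h^n/}}F(e), \]
and the slice $\cfact^\el_{h^n/}$ is equivalent to $\ccat^\el_{[n]/}$: it consists of the $n+1$ vertex inclusions to $*$ and the $n$ horizontal edge inclusions to $h$ (no inclusions to $v$ appear, since $v$ does not occur in $h^n$). The limit thus equals $F(h)^{\times_{F(*)}n}=(h^*F)([1])^{\times_{(h^*F)([0])}n}$, which is the Segal condition for $h^*F$; the argument for $v^*$ is identical.

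For $t^*$, the slice $\cfact^\el_{(vh)^n/}$ has $2n+1$ vertex inclusions together with $n$ vertical plus $n$ horizontal edge inclusions and the obvious incidences, so the Segal condition for $\cfact$ yields
\[ F((vh)^n)\cong F(v)\times_{F(*)}F(h)\times_{F(*)}\cdots\times_{F(*)}F(v)\times_{F(*)}F(h). \]
Applying the Segal identity $F(vh)\cong F(v)\times_{F(*)}F(h)$ a second time, together with associativity of pullback, rewrites this iterated fiber product as $F(vh)^{\times_{F(*)}n}=(t^*F)([1])^{\times_{(t^*F)([0])}n}$, which verifies the Segal condition for $t^*F$. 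The only conceptual point is identifying the elementary slices $\cfact^\el_{S/}$ correctly; this follows directly from the definitions of inclusion and elementary object, and I do not anticipate any substantial obstacle.
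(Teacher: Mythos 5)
Your proposal is correct, and the second half (the Segal-condition verification via the identification of the elementary slices $\cfact^\el_{h^n/}\cong\ccat^\el_{[n]/}$ and the regrouping $\cF(v)\times_{\cF(*)}\cF(h)\cong\cF(vh)$ by associativity of fiber products) is essentially the paper's argument. Where you diverge is in the construction of the functors themselves: the paper defines $h$, $v$, $t$ only on the generators $\sigma_j$, $\delta_i$ of $\Delta$ (with $t(\delta_i)=\gamma_{i,i}\circ\delta^v_i\circ\delta^h_i$ for inner faces) and then verifies the simplicial identities by hand from the relations of \cref{constr:elementary morphisms}, which for $t$ requires a genuine computation with the $\gamma_{i,i}$. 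You instead use the definition of morphisms in $\cfact^\op$ as factorization-preserving functors $\sq(S)\rightarrow\sq(S')$ and the explicit poset descriptions $\sq(h^n)\cong[n]$ and $\sq((vh)^n)\cong\{(i,j):0\leq i\leq j\leq n\}$, so that a monotone map $\alpha$ acts by $\alpha$ (resp.\ $(i,j)\mapsto(\alpha(i),\alpha(j))$) and functoriality is automatic. This is cleaner and avoids the generators-and-relations bookkeeping entirely; the trade-off is that the paper's formulas for $t(\delta_i)$ and $t(\sigma_j)$ in terms of $\sigma^{v,h}$, $\delta^{v,h}$, $\gamma$ are used later (e.g.\ in the proof of \cref{thm:factorization_model}), so one would still want to check that your geometric $t$ agrees with the generator-level one --- which it does, since both restrict to the same map on the string $S^t_n\subset\sq(S^t_n)$ and \cref{lem:restriction} then forces them to coincide.
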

\begin{proof}
The construction of $h$ and $v$ is similar and we only define the first one. By definition, $h$ sends $[n]$ to the string $S^h_n$ consisting of $n$ consecutive horizontal morphisms. To define $h$ on morphisms it suffices to describe $h(\delta_i)$ and $h(\sigma_j)$, so we define 
\[h(\sigma_j)\bydef \sigma^h_j,h(\delta_i)\bydef\delta^h_i.\]
To prove that this is indeed a functor it suffices to demonstrate that $h(\delta_i)$ and $h(\sigma_j)$ satisfy simplicial identities, however this follows directly from \cref{eq:one}. To prove the second claim, observe that for every $\cF\in\fact$ we have
\[\cF(S^h_n)\cong\overbrace{\cF(h)\times_{\cF(*)}\cF(h)\times_{\cF(*)}...\times_{\cF(*)}\cF(h)}^\text{$n$ times}\]
and so the Segal condition for $h^*\cF$ follows.\par
The case of $t$ is somewhat more difficult. We define $t([n])$ to be the string 
\[S^t_n\bydef\overbrace{vhvh...vh}^\text{$n$ times}.\]
A morphism of the form $\sigma_j$ is then sent to $\sigma^v_j\circ \sigma^h_j$, the morphisms $\delta_0$ and $\delta_{n+1}$ to $\delta^v_0\circ \delta^h_0$ and $\delta^h_{n+1}\circ \delta^v_{n+1}$ respectively and the morphisms of the form $\delta_i$ for $i\neq0,n+1$ to the composition
\[vh...vh...vh\xrightarrow{\delta^v_i\circ\delta^h_i}vh...vvhh...vh\xrightarrow{\gamma_{i,i}}vh...vhvh...vh.\]
The simplicial identities then follow relatively easily from the relations in \cref{constr:elementary morphisms}, we will only demonstrate the least trivial ones:
\begin{align*}
    t(\sigma_i\circ\delta_i)\cong\sigma^v_i\circ\sigma^h_i\circ\gamma_{i,i}\circ \delta^v_i\circ\delta^h_i\cong \sigma^h_i\circ\sigma^v_i\circ\gamma_{i,i}\circ \delta^v_i\circ\delta^h_i\cong \sigma^h_i\circ\sigma^v_i\circ\delta^v_i\circ\delta^h_i\cong \id\\
    t(\sigma_{i-1}\circ \delta_{i})\cong \sigma^v_{i-1}\circ\sigma^h_{i-1}\circ\gamma_{i,i}\circ \delta^v_i\circ\delta^h_i\cong\sigma^v_{i-1}\circ\sigma^h_{i} \circ \delta^v_i\circ\delta^h_i\cong \sigma^v_{i-1}\circ\sigma^h_{i} \circ \delta^h_i\circ\delta^v_i\cong \id.
\end{align*}
To prove the second claim in this case, observe that for every $\cF\in\fact$ we have
\begin{align*}
    \cF(S^t_n)\cong& \overbrace{\cF(v)\times_{\cF(*)}\cF(h)\times_{\cF(*)}\cF(v)\times_{\cF(*)}\cF(h)\times_{\cF(*)}...\times_{\cF(*)}\cF(v)\times_{\cF(*)}\cF(h)}^\text{$n$ times}\\
    \cong& \overbrace{(\cF(v)\times_{\cF(*)}\cF(h))\times_{\cF(*)}(\cF(v)\times_{\cF(*)}\cF(h))\times_{\cF(*)}...\times_{\cF(*)}(\cF(v)\times_{\cF(*)}\cF(h))}^\text{$n$ times},
\end{align*}
and so the Segal condition for $t^*\cF$ follows.
\end{proof}
\begin{theorem}\label{thm:factorization_model}
There is an equivalence between the category $\fact$ and the category whose objects are  categories with factorization systems in the sense of \cref{def:factorization_categories} and morphisms are morphisms preserving the factorization system. Moreover, for an object $\cF\in\fact$ the Segal spaces $h^*\cF$ and $v^*\cF$ can be identified under the above equivalence with the horizontal and vertical subcategories of $\cF$ respectively in the sense of \cref{def:factorization_categories}.
\end{theorem}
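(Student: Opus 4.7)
The plan is to construct mutually inverse functors between $\fact$ and the category of $\infty$-categories with factorization systems in the sense of \cref{def:factorization_categories}.

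In the forward direction, for $\cF\in\fact$ I set $\Phi(\cF)\bydef(t^*\cF,h^*\cF,v^*\cF)$. By \cref{prop:subcategories} each piece is a Segal space. The natural transformations $t\Rightarrow h$ and $t\Rightarrow v$ of functors $\ccat\to\cfact$ (obtained by iterated $\sigma^v$, respectively $\sigma^h$, that collapse the vertical/horizontal strokes of $S^t_n=(vh)^n$) induce functors $h^*\cF\to t^*\cF$ and $v^*\cF\to t^*\cF$. These are identities on degree zero (all three pieces agree with $\cF(*)$) and summand inclusions in higher degrees via the elementary Segal identification $\cF(vh)\cong\cF(v)\times_{\cF(*)}\cF(h)$. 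Condition \eqref{cond:factorization} is tautological, since $t^*\cF([1])\cong\cF(v)\times_{\cF(*)}\cF(h)$ already parametrises the morphisms of $t^*\cF$ together with their unique vertical/horizontal factorizations.

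For the inverse, I observe that each $\sq(S)$ is a finite ordinary $1$-category carrying a factorization system, so I set $\Psi(\cC,\cH,\cV)(S)$ to be the space of factorization-preserving functors $\sq(S)\to\cC$, with $\cfact^\op$-functoriality by precomposition. The content to verify is the Segal isomorphism $\Psi(\cC)(S)\cong\lim_{(S\rightarrowtail e)\in\cfact^\el_{S/}}\Psi(\cC)(e)$. The plan is to upgrade \cref{lem:restriction} to the homotopy-coherent setting: a factorization-preserving functor $\sq(S)\to\cC$ is determined up to contractible choice by its restriction to the spine $S\subset\sq(S)$, because at each step of the induction $\sq(S)_{\leq n}\subset \sq(S)_{\leq n+1}$ the freshly added vertex appears as the middle term of a factorization square and is hence parametrised by the contractible space afforded by \eqref{cond:factorization}. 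Once the problem is reduced to the spine, the Segal limit unfolds into a straightforward iterated fibre product, using the Segal conditions for $\cH$ and $\cV$ along maximal monochromatic substrings of $S$.

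The two constructions are mutually inverse. On elementary objects, $\Psi(\cC,\cH,\cV)(*)$ is the space of objects of $\cC$, and $\Psi(\cC,\cH,\cV)(h)=\cH([1])$, $\Psi(\cC,\cH,\cV)(v)=\cV([1])$ by direct inspection of $\sq(*),\sq(h),\sq(v)$; the subcategory inclusions match by construction, so $\Phi\Psi\simeq\id$. Conversely, $\Psi\Phi(\cF)$ agrees with $\cF$ on $\cfact^\el$ for the same reasons and therefore agrees on all of $\cfact$ because both lie in $\fact$. The identification of $h^*\cF$ and $v^*\cF$ with the horizontal and vertical subcategories of $\Phi(\cF)$ is built into the definition of $\Phi$ and transported back through $\Psi$, yielding the last clause of the statement.

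The main obstacle is the $\infty$-categorical strengthening of \cref{lem:restriction}. The classical argument uses strict uniqueness of the extension across each new grid vertex; in the homotopy-coherent setting I must instead show that the space of factorization-preserving extensions of a diagram on $\sq(S)_{\leq n}$ to $\sq(S)_{\leq n+1}$ is contractible, which requires identifying it with a product of factorization spaces of composites in $\cC$ and invoking \eqref{cond:factorization}. Once this homotopical input is secured, the compatibility with the algebraic relations of \cref{constr:elementary morphisms} and the resulting Segal computation become formal.
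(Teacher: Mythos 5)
Your proposal follows essentially the same route as the paper: the forward functor is $t^*$, the inverse sends $(\cC,\cH,\cV)$ to $S\mapsto\mor(\sq(S),\cC)$, and the crux in both is the homotopy-coherent upgrade of \cref{lem:restriction} (extension from the spine $S$ to all of $\sq(S)$ via the contractibility in \eqref{cond:factorization}), which you correctly single out as the main obstacle. The only difference is bookkeeping: the paper assembles the presheaf from its active and inert restrictions and then checks the relations of \cref{constr:elementary morphisms} by hand (reducing them to statements about factorization squares in $\cC$), whereas your direct mapping-space definition makes that functoriality automatic at the cost of having to establish the spine-restriction equivalence up front.
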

\begin{proof}
We temporarily denote by $\widetilde{\fact}$ the category described in the statement of the proposition. Observe that, for given an object $\cF\in\fact$, the Segal space $t^*\cF$ of \cref{prop:subcategories} admits a factorization system by definition. This defines a functor $F:\fact\rightarrow\widetilde{\fact}$.\par
Now assume that $\cC$ is a category with a factorization system $(\cH,\cV)$. We can consider it as a functor \[(\cC_\act:\cfact^\act\rightarrow\cS):(i_0|j_0|...|n-i_k|m-i_k)\mapsto\cH_{i_0}\times_{\cC_0}\cV_{j_0}\times_{\cC_0}...\times_{\cC_0}\cH_{n-i_k}\times_{\cC_0}\cV_{m-j_k}\]
for which every morphism of the form $\sigma^{v,h}_j$ or $\delta^{v,h}_i$ acts as a corresponding unit or multiplication map for $\cV$ and $\cH$ respectively. By the unique factorization property any morphism $f:S\rightarrow \cC$ that sends horizontal segments of $S$ to horizontal morphisms in $\cC$ and vertical segments to vertical morphisms admits a unique extension to the morphism $\widetilde{f}:\sq(S)\rightarrow\cC$ in $\widetilde{\fact}$ (using the same construction as in \cref{lem:restriction}). Observe that all inert morphisms $i:S\rightarrowtail S'$ in $\cfact^\op$ induce an inclusion of categories $\sq(S)\overset{i}{\hookrightarrow}\sq(S')$. This allows us to consider $\cC$ as a functor $\cC_\inrt:\cfact^\inrt\rightarrow\cS$ that sends every inert morphism $i$ as above to the morphism
\[(i^*:\mor(S',\cC)\rightarrow\mor(S,\cC)):(f:S'\rightarrow\cC)\mapsto(S\hookrightarrow\sq(S)\overset{i}{\hookrightarrow}\sq(S')\xrightarrow{\widetilde{f}}\cC).\]
We now need to show that $\cC_\act$ and $\cC_\inrt$ glue together to define a functor $\cC:\cfact\rightarrow\cS$. For that it suffices to show that for any factorization square
\[
\begin{tikzcd}[row sep=huge, column sep=huge]
S_0\arrow[r, two heads, "a"]\arrow[d, tail, "\widetilde{i}"]&S_1\arrow[d, tail, "i"]\\
S_2\arrow[r, two heads,"\widetilde{a}"]&S_3
\end{tikzcd}
\]
in $\cfact^\op$ we have $\widetilde{i}^*\circ \widetilde{a}^*\cong a^*\circ i^*$. Using \cref{prop:generators} we can express each of the morphism in this diagram as a composition of the elementary morphisms described in \cref{constr:elementary morphisms}, so it suffices to prove this statement under the assumption that $\widetilde{a}$ and $\widetilde{i}$ are elementary morphisms. In this case the claim reduces to checking the relations described in the \cref{constr:elementary morphisms}. Observe that if $\widetilde{i}$ is an elementary inert morphism of the form $\sigma^{v,h}_0$ or $\sigma^{v,h}_{m+1,n+1}$, then the relations we need to check reduce to simplicial identities, which hold since $\cH$ and $\cV$ were assumed to be subcategories of $\cC$. It remains to prove the claim for $\widetilde{i}=\gamma_{i,j}$. In this case we are reduced to checking the equivalences in \cref{eq:four}. Observe that the third and fourth as well as the seventh and eights equivalences in \cref{eq:four} are trivial. Proving that $\sigma^{h,*}_i\cong \sigma^{h,*}_{i-1}\circ \gamma^*_{j,i}$ amounts to proving that the following diagram
\[
\begin{tikzcd}[row sep=huge, column sep=huge]
a\arrow[r, equal]&b\\
c\arrow[u, rightharpoondown, "v"]\arrow[r, equal]&d\arrow[u, rightharpoondown, "v"]
\end{tikzcd}
\]
is a factorization square for any vertical morphism $v$, which is obvious. The isomorphism $\sigma^{v,*}_j\cong \sigma^{v,*}_{j}\circ \gamma^*_{j,i}$ reduces to an analogous statement for the horizontal subcategory. Similarly, the proving isomorphism $\gamma^*_{i+1,j}\circ \gamma^*_{i,j}\circ \delta^{h,*}_i\cong\delta^{h,*}_i\circ \gamma^*_{i,j}$ amounts to proving that for a pair of factorization squares
\[
\begin{tikzcd}[row sep=huge, column sep=huge]
a\arrow[r, rightharpoonup,"h'_1"]&b\arrow[r, rightharpoonup, "h'_2"]&c\\
d\arrow[u, rightharpoondown, "v''"]\arrow[r, rightharpoonup,"h_1"]&e\arrow[u, rightharpoondown, "v'"]\arrow[r, rightharpoonup,"h_2"]&f\arrow[u, rightharpoondown, "v"]
\end{tikzcd}
\]
in $\cC$ the diagram 
\[
\begin{tikzcd}[row sep=huge, column sep=huge]
a\arrow[r, rightharpoonup,"h'_2\circ h'_1"]&c\\
d\arrow[u, rightharpoondown, "v''"]\arrow[r, rightharpoonup,"h_2\circ h_1"]&f\arrow[u, rightharpoondown, "v"]
\end{tikzcd}
\]
is also a factorization square, which follows easily from the uniqueness of factorization. Thus we have defined a functor $G:\widetilde{\fact}\rightarrow\fact$.\par
Finally, the fact that $F$ and $G$ are inverse to each other as well as the last claim of the proposition follow easily from the construction.
\end{proof}
\begin{prop}\label{lem:taotal_cofinality}
The morphism $t:\ccat\rightarrow \cfact$ of \cref{prop:subcategories} is cofinal.
\end{prop}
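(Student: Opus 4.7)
The plan is to apply Joyal's characterization of cofinality \cite[Theorem 4.1.3.1]{lurie2009higher}: the functor $t$ is cofinal if and only if for every object $S \in \cfact$ the comma category
\[\cC_S \;:=\; \ccat \times_{\cfact} (\cfact)_{S/}\]
has a weakly contractible classifying space. The strategy will be to identify $\cC_S$ with the nerve of $\sq(S)$, which is contractible because $\sq(S)$ has an initial object.

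First, $(\cfact)_{S/} \to \cfact$ is a left fibration, so the pullback $\pi \colon \cC_S \to \ccat$ is a left fibration as well. By straightening, $\pi$ corresponds to a functor $X_\bullet \colon \ccat \to \cS$, i.e.\ a simplicial space, with $X_n = \mor_{\cfact}(S, t([n]))$, and $|\cC_S|$ is computed as the geometric realization $|X_\bullet|$.

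The main step is to identify $X_\bullet$ with $N(\sq(S))$. By \cref{lem:restriction}, a morphism $f \colon S \to t([n])$ in $\cfact$ is the same datum as a factorization-preserving functor $\sq(t([n])) \to \sq(S)$, and is uniquely determined by its restriction to the underlying chain $t([n]) = (vh)^n \subset \sq(t([n]))$. Such a restriction is an alternating composable chain $p_0 \to p_1 \to \cdots \to p_{2n}$ in $\sq(S)$ with vertical arrows at the odd positions and horizontal arrows at the even positions. Because $\sq(S)$ inherits the unique-factorization property from the ambient poset $[k]\times[l]$, each consecutive $(v,h)$-pair corresponds bijectively to its composite in $\sq(S)$; hence such an alternating chain is in natural bijection with an ordinary $n$-chain of morphisms $q_0 \to q_1 \to \cdots \to q_n$ in $\sq(S)$ (with $q_i := p_{2i}$). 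Unwinding the formulas for $t$ given in \cref{prop:subcategories} together with the behaviour of the elementary morphisms of \cref{constr:elementary morphisms}, one verifies that this bijection identifies the simplicial structure on $X_\bullet$ with the standard nerve structure on $N(\sq(S))$, so that $X_\bullet \cong N(\sq(S))$.

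To conclude, observe that $\sq(S)$ is a full subposet of some $[k]\times[l]$ containing $(0,0)$, which is therefore an initial object of $\sq(S)$. It follows that $|X_\bullet| \cong B\sq(S)$ is contractible, and hence so is $|\cC_S|$, completing the verification of cofinality. The only genuinely technical point is the matching of simplicial structures on $X_\bullet$ and $N(\sq(S))$ in the third step; this is the main obstacle, but it reduces to checking the action of $t$ on simplicial faces and degeneracies via the identities of \cref{constr:elementary morphisms}.
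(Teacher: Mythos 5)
Your proof is correct, and the second half takes a genuinely different route from the paper's. Both arguments share the same crucial first step: using \cref{lem:restriction} and the unique $\overline h\circ\overline v$ factorization in $\sq(S)$, a morphism $S\to t([n])$ in $\cfact$ is identified with a functor $[n]\to\sq(S)$, i.e.\ an $n$-simplex of $N(\sq(S))$ (note that the intermediate object of a vertical-then-horizontal factorization always stays in $\sq(S)$, so this works). After that the paper stays inside the comma category $(S/t)$ -- which is exactly the category of simplices of $N(\sq(S))$ -- and contracts it by hand: it passes to the cofinal subcategory of injective (nondegenerate) simplices, then to those chains containing both the initial and final vertex via an explicit left adjoint, and finally exhibits an initial object there. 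You instead recognize $(S/t)\to\Delta^\op$ as the left fibration classified by the simplicial space $X_\bullet\cong N(\sq(S))$, so that $|(S/t)|\simeq\colim_{\Delta^\op}N(\sq(S))\simeq B\sq(S)$, which is contractible because $(0,0)$ is initial in $\sq(S)$. Your route outsources the combinatorics to the standard facts that the total space of a left fibration realizes the colimit of its straightening and that the realization of a nerve is the classifying space, and it only needs one (co)terminal object of $\sq(S)$ rather than both; the paper's route is more elementary and self-contained. The one point you defer ("one verifies" that the simplicial structures match) is a genuine but routine check, and it is precisely where the formulas $t(\sigma_j)=\sigma^v_j\circ\sigma^h_j$ and $t(\delta_i)=\gamma_{i,i}\circ\delta^v_i\circ\delta^h_i$ from \cref{prop:subcategories} enter: the inner face maps of $N(\sq(S))$ correspond to composing adjacent $h\circ v$ pairs after refactoring $v\circ h$ through $\gamma_{i,i}$.
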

\begin{proof}
To prove the claim we need to show that for every object $S\in\cfact$ the category $(S/t)$ is contractible. The objects of this category are given by morphisms $t([n])\xrightarrow{f}\sq(S)$ in $\fact$. Since every morphism in $\sq(S)$ admits a unique decomposition $f\cong h'\circ v'$ with a horizontal morphism $h'$ and a vertical morphism $v'$, we see that a morphism $t([n])\rightarrow\sq(S)$ in $\fact$ can be identified with a morphism $[n]\rightarrow\sq(S)$ in $\cat$. Every morphism $[n]\xrightarrow{f}\sq(S)$ admits a unique factorization
\[[n]\xactive{f_s}[n_i]\xrightarrow{f_i}\sq(S)\]
in which $f_s$ is a surjective morphism and $f_i$ is injective. It follows that the full subcategory $(S/t)^\inj\hookrightarrow(S/t)$ on injective morphisms is cofinal. This in turn implies that the classifying spaces for both categories are homotopy equivalent, so it now suffices to prove that $(S/t)^\inj$ is contractible.\par
An object of $(S/t)^\inj$ can be identified with a sequence of distinct points $(a_1,a_2,...,a_n)$ in $\sq(S)$ with non-decreasing horizontal and vertical coordinates. There is a unique morphism from $(a_1,a_2,...,a_n)$ to $(b_1,b_2,...,b_m)$ if $\{a_1,...,a_n\}$ is a subset of $\{b_1,...,b_m\}$. Denote by $(S/t)^\act$ the full subcategory of $(S/t)^\inj$ on those sequences that contain both initial and final objects of $\sq(S)$. For an arbitrary object $\overline{a}$ of $(S/t)^\inj$ denote by $L\overline{a}$ the sequence obtained from $\overline{a}$ by adding the final and initial objects if they do not belong to $\overline{a}$ already. It is obvious from the description of $(S/t)^\inj$ above that $L$ is a left adjoint to the inclusion $(S/t)^\act\hookrightarrow(S/t)^\inj$. In particular, the classifying spaces of $(S/t)^\inj$ and $(S/t)^\act$ are also equivalent. Finally, observe that $(S/t)^\act$ contains an initial object given by the sequence containing only the initial object and the final object of $\sq(S)$, so $(S/t)^\act$ is contractible.
\end{proof}
\begin{lemma}\label{lem:trivial_factorization}
Both $h_!$ and $v_!$ of \cref{prop:subcategories} restrict to functors 
\[h_!,v_!:\cat\rightarrow\fact.\]
\end{lemma}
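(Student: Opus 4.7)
The plan is to compute $h_!\cX$ pointwise for a Segal space $\cX$ and verify the Segal condition directly. The argument for $v_!$ is entirely symmetric, so I focus on $h_!$.

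First I will show that for each $S \in \cfact$ the comma category $(h \downarrow S)$ indexing the pointwise Kan extension admits a terminal object. Writing $n_S$ for the total number of horizontal symbols in $S$, a morphism $h([m]) = S^h_m \to S$ in $\cfact$ is by definition a factorization-preserving functor $\sq(S) \to \sq(S^h_m) = [m]$; since $[m]$ carries no nontrivial vertical morphisms, such a functor must send each vertical morphism of $\sq(S)$ to an identity. The subcategory $\sq(S)$ is column-convex in $[n]\times[m]$, so its localization at the vertical morphisms is the poset $[n_S]$, and the universal collapse map yields a morphism $\phi_S \colon h([n_S]) \to S$ in $\cfact$ through which every $f \colon h([m]) \to S$ factors uniquely via a morphism $[m] \to [n_S]$ in $\ccat$. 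Hence $(\phi_S,[n_S])$ is terminal in $(h \downarrow S)$, and
\[ h_!\cX(S) \simeq \cX([n_S]); \]
in particular $h_!\cX(h) = \cX([1])$ and $h_!\cX(v) = h_!\cX(*) = \cX([0])$.

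Next I will verify the Segal condition for $h_!\cX$. For $S = (i_0|j_0|\dots|n-i_k|m-j_k)$, the limit $\lim_{(S \rightarrowtail e) \in \cfact^{\el}_{S/}} h_!\cX(e)$ unfolds as an iterated pullback over $\cX([0])$ of $n = n_S$ copies of $\cX([1])$ and $m$ copies of $\cX([0])$. The vertical factors are absorbed, reducing the limit to $\cX([1])^{\times_{\cX([0])} n}$, which is equivalent to $\cX([n]) \simeq h_!\cX(S)$ by the Segal condition for $\cX$.

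The main obstacle I anticipate is verifying that the canonical Segal comparison map for $h_!\cX$ coincides, under the identification $h_!\cX(-) \simeq \cX([n_{-}])$, with the Segal comparison map for $\cX$ itself: a priori one only knows that the two sides agree as spaces. The key input will be naturality of the family $\phi_{-}$: for each inclusion $j \colon S \rightarrowtail e$ in $\cfact$, the uniqueness clause of the universal property of $\phi_e$ applied to the composite $h([n_S]) \xrightarrow{\phi_S} S \xrightarrow{j} e$ produces a canonical morphism $[n_S] \to [n_e]$ in $\ccat$ making the relevant triangles commute. Once this compatibility is in hand, the Segal condition for $h_!\cX$ reduces directly to that of $\cX$.
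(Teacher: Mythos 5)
Your proof is correct and follows essentially the same route as the paper: you identify the terminal object of the comma category $(h/S)$ given by contracting the vertical morphisms of $\sq(S)$, conclude $h_!\cX(S)\cong\cX([n_S])$, and then deduce the Segal condition from that of $\cX$. The paper leaves the Segal verification and the naturality of the collapse maps implicit ("follows elementary"), whereas you spell both out, but there is no substantive difference in approach.
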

\begin{proof}
We will only prove the statement for $h$ leaving the other case to the reader. By definition for $\cC\in\cat$ we have
\[h_!\cC(S)\cong\underset{(\sq(S)\rightarrow h([n])\in(h/S)}{\colim}\cC([n]).\]
Observe that every morphism $\sq(S)\rightarrow h(n)$ uniquely factors as 
\[\sq(S)\rightarrow h([n_S])\xrightarrow{h(f)}[n],\]
where $[n_S]$ is an interval obtained from $S$ by contracting all vertical morphisms and $f$ is a morphism in $\ccat$. It follows that 
\[h_!\cC(S)\cong \cC([n_S]).\]
The fact that it satisfies the Segal condition now follows elementary.
\end{proof}
\begin{remark}\label{rem:products}
It follows from \cref{lem:trivial_factorization} that we have a morphism 
\[(h_!,v_!):\cat\times\cat\rightarrow\fact.\]
Explicitly, for a pair $(\cC,\cD)\in\cat\times\cat$ and $\sq(S)\subset[n]\times[m]$ we have
\[(h_!,v_!)(\cC,\cD)(S)\cong \cC_n\times\cD_m.\]
This factorization system can be identified with a natural factorization system on $\cC\times\cD$ having horizontal morphisms of the form $(f,\id_d)$ and vertical morphisms of the form $(\id_c,g)$, and so we will often implicitly identify $\cC\times\cD$ with an object of $\fact$.
\end{remark}
\section{Lax functors and distributive laws}\label{sect:three}
The goal of this section is to define lax functors from categories with factorization systems and prove their basic properties. We define this notion in a few stages. First, we define the notion of a bicategory with a factorization system in \cref{def:bicategory_factorizatoin} and prove that the category $2-\fact$ of bicategories with factorization systems is also equivalent to a category of models of a certain theory in \cref{prop:bicategory_comparison}. We then construct an endofunctor $\cL$ of $2-\fact$ in \cref{constr:lax_factorization}. The actual construction is somewhat complicated, but roughly speaking $\cL\cF$ is obtained from $\cF$ by adding 2-morphisms $\gamma_{h,v}:v\circ h\rightarrow \widetilde{h}\circ \widetilde{v}$ for every factorization $v\circ h\cong \widetilde{h}\circ \widetilde{v}$ in $\cF$. Our next step is the construction of $L^\lax:\fact\rightarrow 2-\fact$ in \cref{constr:lax_functors1}. This functor adds the "compositor" and "unitor" 2-morphisms to all morphisms and objects in the vertical and horizontal subcategories of $\cF$. Finally, we define $\cL^\lax$ to be the composition $\cL L^\lax$ and define a lax functor from $\cF$ to a bicategory $\cB$ to be a morphism $\cL^\lax\cF\rightarrow \cB$.\par
The main result of this section is \cref{prop:lax_for_objects_of_fact}, which gives an explicit description of $\cL^\lax\sq(S)$ for objects $S\in\cfact$. Unfortunately, the proof is quite technical and takes up most of the section. Near the end of the section we use it to define a distributive law between monads in \cref{def:distributive_law}.
\begin{construction}\label{constr:2-fact}
Denote by $2-\cfact$ the category $\cfact\times \Delta^\op$. Observe that it has a natural active/inert factorization for which a morphism is active (resp. inert) if both of its components in $\cfact$ and $\Delta^\op$ are active (resp. inert) . We also declare a morphism an inclusion if its projection to $\cfact$ is an inclusion and its projection to $\Delta^\op$ is inert, we denote by $j:2-\cfact^\inc\hookrightarrow2-\cfact$ the subcategory with the same objects and inclusions as morphisms. Finally, we declare an object to be elementary if it is of the form $(v,[i])$ for $i\in\{0,1\}$, $(h,[j])$ for $j\in\{0,1\}$ or $(*,[0])$, we denote by $i:2-\cfact^\el\hookrightarrow2-\cfact^\inc$ the inclusion of the full subcategory on elementary objects. Finally, we denote by $2-\fact$ the full subcategory of $\mor_\cat(\cfact,\cS)$ on those $\cF$ for which
\[j^*\cF\cong i_*i^*j^*\cF.\]
\end{construction}
\begin{definition}\label{def:bicategory_factorizatoin}
For a given bicategory $\cB$ and a positive integer $n$ denote by $\arr^{n,\id}(\cB)$ the Segal space $i_n^*\cB$, where $i_n:\{n\}\times\Delta^\op\hookrightarrow\Delta^\op\times\Delta^\op$ is the natural inclusion. In other words, the objects of $\arr^{n,\id}(\cB)$ are given by composable sequences 
\[b_0\xrightarrow{f_1}b_1\xrightarrow{f_2}b_2\xrightarrow{f_3}...\xrightarrow{f_n}b_n\]
and morphisms by diagrams
\[
\begin{tikzcd}
b_0\arrow[r, "f_1"{name=O1, swap}, bend right=30]\arrow[r, "g_1"{name=C1}, bend left=30]&b_1\arrow[r, "f_2"{name=O2, swap}, bend right=30]\arrow[r, "g_2"{name=C2}, bend left=30]&b_2\arrow[r, "..."{name=O3, description}, bend right=30]\arrow[r, "..."{name=C3, description}, bend left=30]&b_n\arrow[Rightarrow, from=C1,to=O1, "\gamma_1"]\arrow[Rightarrow, from=C2,to=O2, "\gamma_2"]\arrow[Rightarrow, from=C3,to=O3]
\end{tikzcd}.
\]
Assume that $\cB$ has a pair of subcategories $(\cH,\cV)$, denote by $\arr^\fact(\cH,\cV)$ the  subcategory of $\arr^{2,\id}(\cB)$ on objects of the form $h\circ v$ with $v\in\cV$ and $h\in\cH$ and morphisms of the form $\gamma_h*\gamma_v$ with $\gamma_h\in\cH$ and $\gamma_v\in\cV$. Observe that the composition of morphisms induces a functor \[m:\arr^\fact(\cH,\cV)\rightarrow\arr^\id(\cB).\]
We will say that $\cH$ and $\cV$ form a \textit{factorization system} on $\cB$ if $m$ is an isomorphism. If $\cE$ is another bicategory with factorization system $(\cL,\cM)$ and $F:\cB\rightarrow\cE$ is a functor, we say that $F$ preserves the factorization system if it sends $\cH$ to $\cL$ and $\cV$ to $\cM$. We will denote by $\widetilde{2-\fact}$ the subcategory of $2-\cat$ on  bicategories with factorization systems and functors that preserve the factorization systems.
\end{definition}
\begin{prop}\label{prop:bicategory_comparison}
There is an equivalence
\[\widetilde{2-\fact}\xrightarrow{\sim}2-\fact.\]
\end{prop}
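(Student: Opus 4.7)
The plan is to adapt the argument of \cref{thm:factorization_model} to the two-dimensional setting, exploiting the product decomposition $2-\cfact \cong \cfact \times \Delta^\op$. A model $\cF \in 2-\fact$ is equivalently a simplicial object $\cF_\bullet : \Delta^\op \to \fact$ satisfying the Segal condition in the simplicial direction, or alternatively a functor $\cfact \to \cat$ satisfying the factorization Segal condition at each simplicial level. This reformulation reduces the problem to showing that the bicategorical structure coming from the simplicial direction is compatible with the factorization system coming from the $\cfact$ direction.

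To define the functor $2-\fact \to \widetilde{2-\fact}$, I would first apply \cref{thm:factorization_model} levelwise, so that each $\cF_n$ corresponds to a category with factorization system $(\cF_n, \cH_n, \cV_n)$ with $\cH_n \cong h^*\cF_n$ and $\cV_n \cong v^*\cF_n$ in the notation of \cref{prop:subcategories}. The Segal condition in the $\Delta^\op$ direction then promotes $\cF_\bullet$ to a Segal object in $\fact$, i.e. the data of a bicategory $\cB$ equipped with subcategories $\cH, \cV \subset \cB$ containing all objects. The key point is to verify that $(\cH, \cV)$ forms a factorization system on $\cB$ in the sense of \cref{def:bicategory_factorizatoin}, i.e. that $m : \arr^\fact(\cH, \cV) \to \arr^{2,\id}(\cB)$ is an equivalence. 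Evaluating $\cF$ at the object $(vh, [n]) \in 2-\cfact$ computes $\arr^\fact(\cH, \cV)_n$, while evaluating at $(*, [n])$ together with the Segal condition computes the analogous space for $\arr^{2,\id}(\cB)$, and the factorization Segal condition in the $\cfact$ direction provides the required isomorphism.

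For the inverse functor, given $(\cB, \cH, \cV) \in \widetilde{2-\fact}$, I would set $\cF(S, [n])$ to be the space of $\sq(S)$-indexed diagrams in $\cB$ sending horizontal and vertical morphisms of $\sq(S)$ into $\cH$ and $\cV$ respectively, enhanced by $n$ stacked layers of $2$-morphisms between such diagrams. The analogue of \cref{lem:restriction} based on the factorization system on $\cB$ shows that such a diagram is determined by its restriction to $S$ together with the $2$-morphism data. The two Segal conditions then follow, respectively, from the fact that $(\cH, \cV)$ is a factorization system on $\cB$ (for the $\cfact$ direction) and from the Segal structure of $\cB$ as a bicategory (for the simplicial direction). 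The verification that the two constructions are mutually inverse reduces to unwinding the definitions.

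The main obstacle is the compatibility check in the second paragraph: one must verify that horizontal and vertical whiskering of $2$-morphisms respects the factorization strongly enough that the interchange law in $\cB$ matches the commutation relations in the product $\cfact \times \Delta^\op$. This reduces, via the generators-and-relations description of \cref{prop:generators}, to checking a finite list of coherence diagrams of essentially the same type that appeared in the proof of \cref{thm:factorization_model} (the naturality squares coming from $\gamma_{i,j}$ applied to $2$-cells), each of which is formal once the equivalence $m : \arr^\fact(\cH, \cV) \xrightarrow{\sim} \arr^{2,\id}(\cB)$ has been extracted at every simplicial level.
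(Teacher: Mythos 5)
Your proposal follows essentially the same route as the paper: the forward functor is restriction along $(t,\id):\Delta^\op\times\Delta^\op\to\cfact\times\Delta^\op$ (your levelwise application of \cref{thm:factorization_model}), with the factorization condition on the resulting bicategory extracted from the Segal isomorphism $\cF(vh,[n])\cong\cF(v,[n])\times_{\cF(*,[0])}\cF(h,[n])$, and the inverse sends $(\cB,\cH,\cV)$ to $(S,l)\mapsto\mor_{\widetilde{2-\fact}}(\sq(S,l),\cB)$, which is exactly your ``$\sq(S)$-indexed diagrams with $l$ stacked layers of $2$-morphisms.'' The argument is correct and matches the paper's proof.
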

\begin{proof}
First, we define a functor $G:2-\fact\rightarrow\widetilde{2-\fact}$. The construction is similar to \cref{prop:subcategories}: we denote by $\widetilde{t}$ the morphism 
\[(t,\id):\Delta^\op\times\Delta^\op\rightarrow \cfact\times \Delta^\op\cong 2-\cfact.\]
It is easy to check that $\widetilde{t}^*$ restricts to a functor $2-\fact\rightarrow2-\cat$ and moreover for any $\cF\in2-\fact$ we have an isomorphism of Segal spaces
\[\widetilde{t}^*\cF_1\cong \cF_v\times_{\cF_0}\cF_h,\]
where $\cF_h$ (resp. $\cF_v$) is a Segal space given by the restriction of $\cF$ to $\{h\}\times\Delta^\op\subset2-\fact$ (resp. $\{v\}\times\Delta^\op\subset2-\fact$). It follows that $\widetilde{t}^*\cF$ is an object of $\widetilde{2-\fact}$.\par
We now construct a functor $H:\widetilde{2-\fact}\rightarrow 2-\fact$. First, for any object $(S,l)$ of $2-\cfact$, where $S$ is a string with $n$ horizontal and $m$ vertical segments, denote by $\sq(S,l)$ the the full subcategory of $\theta_{n,l}\times\theta_{m.l}$ (where $\theta_{n,l}$ denotes the category $\theta(n;l,l,...,l)$ in the notation of \cref{constr:theta_factorization}) on objects that lie above $S$, these bicategories can then be considered as objects of $\widetilde{2-\fact}$. Observe that every morphism $f:(S,l)\rightarrow(S',l')$ in $2-\cfact^\op$ induces a morphism $\sq(S,l)\rightarrow\sq(S',l')$, in particular for every $\cF\in\widetilde{2-\fact}$ we can define $H(\cF):2-\cfact\rightarrow\cS$ by
\[H(\cF):(S,l)\mapsto\mor_{\widetilde{2-\fact}}(\sq(S,l),\cF).\]
Observe that every morphism $\sq(S,l)\rightarrow\cF$ in $\widetilde{2-\fact}$ is uniquely determined by its restriction to $(S,l)\subset\sq(S,l)$ (where $(S,l)$ denotes the full subcategory on objects belonging to $S\subset [n]\times[m]$), it follows easily from this that $H(\cF)$ satisfies the Segal condition for $2-\fact$. Finally, it is elementary to check that $H$ and $G$ are inverse to each other. 
\end{proof}
\begin{remark}\label{rem:biproducts}
A similar argument to \cref{lem:trivial_factorization} shows that for morphisms \[\widetilde{h}\bydef(h,\id):\Delta^\op\times\Delta^\op\rightarrow2-\cfact\]
and
\[\widetilde{v}\bydef(v,\id):\Delta^\op\times\Delta^\op\rightarrow2-\cfact\]
the morphisms $\widetilde{h}_!$ and $\widetilde{v}_!$ restrict to morphisms from $2-\cat$ to $2-\fact$. In particular, just like in \cref{rem:products} we can consider \[(\widetilde{h}_!,\widetilde{v}_!):2-\cat\times2-\cat\rightarrow2-\fact\]
which endows a product $\cB\times\cE$ of bicategories with the natural factorization system.
\end{remark}
\begin{construction}\label{constr:lax_factorization}
Given an object $(S,l)\in2-\cfact$, denote by $\sq^\lax(S,l)$ the full subcategory of $\theta_{n,l}\otimes\theta_{m,l}$ on objects that lie above $S$ considered as a path from $(0,0)$ to $(n,m)$ in $[n]\times[m]$. It is easy to see that every morphism $(S,l)\rightarrow(S',l')$ induces a functor $\sq^\lax(S,l)\rightarrow\sq^\lax(S',l')$, so $\sq^\lax(-)$ defines a functor $2-\cfact^\op\rightarrow 2-\cat$. We can consider this as a correspondence $\widetilde{\cL}:2-\cfact^\op\nrightarrow \Delta^\op\times \Delta^\op$ by viewing $2-\cat$ as a full subcategory of $\cP(\Delta\times\Delta)$ on the presheaves satisfying the Segal condition. Finally, we denote by $\cL$ the composition 
\[2-\fact\xrightarrow{\widetilde{\cL}}\cP(\Delta\times\Delta)\xrightarrow{L}2-\cat, \]
where $L$ denotes the left adjoint to the fully faithful inclusion $2-\cat\hookrightarrow \cP(\Delta\times\Delta)$ (which exists since $2-\cat$ is an accessible localization of $\cP(\Delta\times\Delta)$).
\end{construction}
\begin{lemma}\label{lem:colimit_lax_factorization}
For any $\cF\in2-\fact$ and $\cB\in2-\cat$ we have
\[\mor_{2-\cat}(\cL\cF,\cB)\cong\underset{(\sq(S,l)\rightarrow \cF)\in2-\cfact/\cF}{\lim}\mor_{2-\cat}(\sq^\lax(S),\cB).\]
\end{lemma}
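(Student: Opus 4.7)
The strategy is to identify $\cL\cF$ with the colimit $\colim_{(S,l)\to \cF \in 2-\cfact/\cF} \sq^\lax(S,l)$ computed in $2-\cat$; the lemma then follows immediately by applying $\mor_{2-\cat}(-,\cB)$, which converts this colimit into the claimed limit.

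To produce such a colimit formula, I would first exploit the fact that $2-\fact$ is a Segal-type accessible localization of $\cP(2-\cfact^\op)$, in which (via \cref{prop:bicategory_comparison}) the representable at $(S,l)\in 2-\cfact$ is identified with the object $\sq(S,l)$. The co-Yoneda lemma combined with this reflection yields the canonical presentation
\[\cF \;\cong\; \colim_{(S,l)\to\cF \in 2-\cfact/\cF} \sq(S,l)\]
as a colimit in $2-\fact$. I would then apply $\cL = L\circ \widetilde{\cL}$. The reflector $L\colon \cP(\Delta\times\Delta) \to 2-\cat$ is a left adjoint, hence preserves all colimits; and $\widetilde{\cL}$ is by construction the correspondence extension of $(S,l)\mapsto\sq^\lax(S,l)$, so that $\widetilde{\cL}\sq(S,l) \cong \sq^\lax(S,l)$. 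Granting that $\cL$ preserves the displayed colimit, and using that $\sq^\lax(S,l)$ is already a $2$-category (it is a full subcategory of the Gray tensor product $\theta_{n,l}\otimes\theta_{m,l}$), one has $L\sq^\lax(S,l)\cong \sq^\lax(S,l)$ and therefore
\[\cL\cF \;\cong\; \colim_{(S,l)\to\cF \in 2-\cfact/\cF} \sq^\lax(S,l)\]
in $2-\cat$, as desired.

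The main obstacle is precisely the colimit-preservation step for $\widetilde{\cL}$ restricted to $2-\fact$. Since the displayed colimit in $2-\fact$ is the Segalification of the corresponding colimit in $\cP(2-\cfact^\op)$, one must verify that $\widetilde{\cL}$ sends the Segal-type relations cutting out $2-\fact\subset \cP(2-\cfact^\op)$ to relations in $\cP(\Delta\times\Delta)$ that become isomorphisms after applying $L$. Equivalently, one checks that $L\circ \sq^\lax \colon 2-\cfact \to 2-\cat$ extends uniquely to a colimit-preserving functor $2-\fact \to 2-\cat$ that agrees with $\cL$. I would carry out this verification by reducing, via the factorization/permutation decomposition of \cref{prop:generators} and the elementary pieces of \cref{constr:2-fact}, to a direct inspection of $\sq^\lax$ on inert coverings and elementary factorization squares; once this combinatorial compatibility is established, the rest of the argument is a formal manipulation of universal properties of left adjoints.
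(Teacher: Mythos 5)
Your overall strategy coincides with the paper's: both arguments rest on the density presentation of $\cF$ as a colimit of representables indexed by $2-\cfact/\cF$, the fact that $\widetilde{\cL}$ is defined as a correspondence (a colimit-preserving extension of $(S,l)\mapsto\sq^\lax(S,l)$ out of the presheaf category), and the adjunction between the reflector $L$ and the inclusion $2-\cat\hookrightarrow\cP(\Delta\times\Delta)$. The one substantive difference is that what you single out as ``the main obstacle'' --- verifying that $\cL$ preserves the colimit $\cF\cong\colim_{2-\cfact/\cF}\sq(S,l)$ formed \emph{in} $2-\fact$ --- is not actually an obstacle, and the combinatorial verification you propose (inspecting $\sq^\lax$ on inert coverings and elementary factorization squares via \cref{prop:generators}) is unnecessary. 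For the canonical density diagram of a local object $\cF$, the colimit computed already in the presheaf category $\cP(2-\cfact^\op)$ is $\cF$ itself, so no Segalification intervenes and the question of whether $\widetilde{\cL}$ carries Segal-type local equivalences to $L$-equivalences never arises. The paper exploits exactly this: it never forms $\colim\,\sq^\lax(S,l)$ in $2-\cat$, but instead passes immediately to mapping spaces, writing $\mor_{2-\cat}(\cL\cF,\cB)\cong\mor_{\cP(\Delta\times\Delta)}(\widetilde{\cL}\cF,\cB)$ by the adjunction, substituting the presheaf-level presentation $\cF\cong\colim_{2-\cfact/\cF}\cF(S)\times h_{\sq(S)}$, commuting $\widetilde{\cL}$ past that presheaf colimit, converting the colimit into a limit of mapping spaces, and finally using that each $\sq^\lax(S)$ and $\cB$ already lie in $2-\cat$ to replace $\mor_{\cP(\Delta\times\Delta)}$ by $\mor_{2-\cat}$. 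If you replace your final paragraph with this observation, your argument closes and is essentially identical to the paper's.
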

\begin{proof}
First, observe that we have the following isomorphism in $\cP(2-\cfact^\op)$:
\[\cF\cong \underset{(\sq(S,l)\rightarrow \cF)\in2-\cfact/\cF}{\colim}\cF(S)\times h_{\sq(S)}.\]
Now the result follows from the following sequence of isomorphisms
\begin{align*}
    \mor_{2-\cat}(\cL\cF,\cB)\cong& \mor_{2-\cat}(L\widetilde{\cL}\cF,\cB)\\
    \cong&\mor_{\cP(\Delta\times\Delta)}(\widetilde{\cL}\cF,\cB)\\
    \cong&\mor_{\cP(\Delta\times\Delta)}(\widetilde{\cL}\underset{(\sq(S,l)\rightarrow \cF)\in2-\cfact/\cF}{\colim}\cF(S)\times h_{\sq(S)},\cB)\\
    \cong&\mor_{\cP(\Delta\times\Delta)}(\underset{(\sq(S,l)\rightarrow \cF)\in2-\cfact/\cF}{\colim}\cF(S)\times \sq^\lax(S),\cB)\\
    \cong&\underset{(\sq(S,l)\rightarrow \cF)\in2-\cfact/\cF}{\lim}\mor_{\cP(\Delta\times\Delta)}(\sq^\lax(S),\cB)\\
    \cong&\underset{(\sq(S,l)\rightarrow \cF)\in2-\cfact/\cF}{\lim}\mor_{2-\cat}(\sq^\lax(S),\cB).
\end{align*}
\end{proof}
\begin{notation}\label{not:gray_product}
For a pair of twofold Segal spaces $(\cB,\cE)\in(2-\cat)^{\times2}$ denote by $\cB\otimes\cE$ the twofold Segal space $\cL(\cB\times\cE)$, where we consider $\cB\times\cE$ to be an object of $2-\fact$ through the use of \cref{rem:biproducts}.
\end{notation}
\begin{cor}\label{cor:gray_colimit}
For $(\cB,\cE)\in(2-\cat)^{\times2}$ we have an isomorphism
\[\cB\otimes\cE\cong\underset{(\theta_{n,m}\rightarrow\cB)\in2-\ccat_{/\cB}}{\colim}\underset{(\theta_{s,t}\rightarrow\cE)\in2-\ccat_{/\cE}}{\colim}(\cB_{n,m}\times\cE_{s,t})\times(\theta_{n,m}\otimes\theta_{s,t}).\]
\end{cor}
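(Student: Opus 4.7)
By Notation \ref{not:gray_product}, we have $\cB\otimes\cE = \cL(\cB\times\cE)$ where the product is taken in $2-\fact$ via the functor $(\widetilde{h}_!,\widetilde{v}_!)$ of Remark \ref{rem:biproducts}. My plan is to express $\cB\otimes\cE$ as a colimit by writing both $\cL$ and the product $(\cB,\cE)\mapsto \cB\times\cE$ in $2-\fact$ as cocontinuous operations, and then expanding $\cB$ and $\cE$ using the standard density presentation of a twofold Segal space.

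Applying the Yoneda lemma to the isomorphism of Lemma \ref{lem:colimit_lax_factorization} yields the explicit formula
\[
\cL\cF \cong \underset{(\sq(S,l)\rightarrow\cF)\in 2-\cfact_{/\cF}}{\colim} \sq^\lax(S,l),
\]
from which cocontinuity of $\cL$ is manifest (equivalently, this follows from its construction $\cL = L\circ\widetilde{\cL}$ with $L$ a left adjoint and $\widetilde{\cL}$ a left Kan extension). In parallel, the product in $2-\fact$, viewed as a functor $2-\cat\times 2-\cat\rightarrow 2-\fact$ via $(\widetilde{h}_!,\widetilde{v}_!)$, preserves colimits in each variable separately: this follows from the fact that $\widetilde{h}_!$ and $\widetilde{v}_!$ are each left adjoints, combined with Cartesian closedness of $\cP(2-\cfact)$ and the cocontinuity of the reflection onto $2-\fact$.

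The final step uses the density presentation $\cB \cong \colim_{(\theta_{n,m}\rightarrow\cB)\in 2-\ccat_{/\cB}} \cB_{n,m}\times\theta_{n,m}$ (where $\cB_{n,m}$ enters as a copower with a space), and similarly for $\cE$. Substituting these into $\cL(\cB\times\cE)$ and commuting the resulting double colimit outside both the product in $2-\fact$ and the functor $\cL$, one obtains
\[
\cB\otimes\cE \cong \underset{(\theta_{n,m}\rightarrow\cB)}{\colim}\underset{(\theta_{s,t}\rightarrow\cE)}{\colim} (\cB_{n,m}\times\cE_{s,t})\times \cL(\theta_{n,m}\times\theta_{s,t}),
\]
after which $\cL(\theta_{n,m}\times\theta_{s,t})$ is identified with $\theta_{n,m}\otimes\theta_{s,t}$ directly from Notation \ref{not:gray_product}. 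The main obstacle is the careful bookkeeping of the two cocontinuity statements — in particular, verifying that the product in $2-\fact$ distributes over colimits in each variable despite the Segal reflection — while the remaining manipulations are formal.
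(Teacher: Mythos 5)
Your argument is correct and follows essentially the same route as the paper's (one-line) proof: unwind the definition $\cB\otimes\cE=\cL(\cB\times\cE)$, use \cref{lem:colimit_lax_factorization} to see that everything in sight is cocontinuous in each variable, and conclude by the fact that $2-\cat\times2-\cat$ is generated under colimits by the $\theta_{n,m}\times\theta_{s,t}$. You merely make explicit a point the paper leaves implicit, namely that the product functor $(\widetilde{h}_!,\widetilde{v}_!)$ commutes with the colimit decomposition through the Segal reflection.
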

\begin{proof}
Indeed, this follows from the definition of $\cB\otimes\cE$, \cref{lem:colimit_lax_factorization} and the fact that $2-\cat\times2-\cat$ is generated under colimits by $\theta_{n,m}\times\theta_{s,t}$.
\end{proof}
\begin{remark}
Since $\cB\otimes\cE$ commutes with colimits in both variables by \cref{cor:gray_colimit} and coincides with the usual Gray product (constructed for $\infty$-categories in \cite{gagna2020gray}) on $(\theta_{n,m},\theta_{s,t})$, it follows that it coincides with the usual Gray tensor product for all pairs $(\cB,\cE)$.
\end{remark}
\begin{prop}\label{prop:gray_universal_property}
For any pair $(\cB,\cE)\in(2-\cat)^{\times2}$ and any $\cT\in2-\cat$ there is an isomorphism
\[\mor_{2-\cat}^\lax(\cB\otimes\cE,\cT)\cong \mor_{2-\cat}^\lax(\cB,\mor_{2-\cat}^\lax(\cE,\cT))\]
in the notation of \cite[Definition 4.6.]{kositsyn2021completeness}.
\end{prop}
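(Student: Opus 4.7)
The plan is to reduce the claim to the case where $\cB$ and $\cE$ are of the form $\theta_{n,m}$ and $\theta_{s,t}$ respectively, and then to verify the universal property on these generating cells.

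First I would observe that both sides of the claimed isomorphism are contravariant in $\cB$ and in $\cE$ and convert colimits to limits. For the left-hand side this follows from \cref{cor:gray_colimit}, which exhibits $\cB\otimes\cE$ as a colimit weighted by $\cB_{n,m}\times\cE_{s,t}$ of the cells $\theta_{n,m}\otimes\theta_{s,t}$; applying $\mor_{2-\cat}^\lax(-,\cT)$ then converts it to a limit, using the fact that the lax mapping space sends colimits in its first argument to limits, as established in \cite{kositsyn2021completeness}. The right-hand side has the analogous behaviour because it is built by applying $\mor^\lax(-,-)$ twice, once in each variable. Since $2-\cat$ is generated under colimits by the cells $\theta_{n,m}$, it then suffices to prove the isomorphism in the case $\cB = \theta_{n,m}$ and $\cE = \theta_{s,t}$.

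For these cells, the construction of $\otimes$ coincides with the classical Gray tensor product by the remark following \cref{cor:gray_colimit}, so the left-hand side is the space of lax maps out of the standard Gray cell. Unfolding the definition of the internal lax hom from \cite{kositsyn2021completeness} gives an explicit description of the right-hand side as the same combinatorial data: an object of $\mor^\lax(\theta_{n,m},\mor^\lax(\theta_{s,t},\cT))$ amounts to a lax bi-pasting diagram in $\cT$ indexed by the product of cells, and the coherence 2-cells in either presentation correspond, via the description of $\cL$ in \cref{constr:lax_factorization}, to the cells $\gamma_{h,v}$ that encode commutation between the horizontal and vertical directions.

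The hard part is precisely this last identification: matching the coherence data of a lax functor $\theta_{n,m}\otimes\theta_{s,t}\rightarrow\cT$ with the lax-naturality data of the currying into $\mor^\lax(\theta_{s,t},\cT)$. The combinatorial content is controlled by the generators and relations of \cref{prop:generators} applied to the bicategorical version $2-\cfact$ from \cref{constr:2-fact}, so in principle one has an explicit presentation on both sides. Translating between the two requires care with the associativity and unit coherences of the outer lax structure, since one must check that the factorizations of vertical composites on one side produce exactly the lax compositors on the other. I expect this bookkeeping to constitute the bulk of the work, once the colimit reduction in the first two steps is in place.
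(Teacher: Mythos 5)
Your first reduction is exactly the one the paper makes: both sides convert colimits in each variable to limits, and $\cB\otimes\cE$ is the weighted colimit of $\theta_{n,m}\otimes\theta_{s,t}$ by \cref{cor:gray_colimit}, so it suffices to treat $(\cB,\cE)=(\theta_{n,m},\theta_{s,t})$. After that point, however, your proposal has a genuine gap: you describe the remaining verification as a direct matching of the coherence data of a lax functor out of $\theta_{n,m}\otimes\theta_{s,t}$ with the lax-naturality data of its currying, concede that this is ``the hard part'' and ``the bulk of the work,'' and then do not carry it out. A proof that defers its main content to unexecuted bookkeeping is not a proof, and the combinatorics you are gesturing at (compositors, unitors, and the interchange cells for general $n,m,s,t$) is substantial.

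The paper avoids this entirely by iterating the reduction one more step: by the colimit decomposition of \cite[Lemma 4.5.]{kositsyn2021completeness}, each $\theta_{n,m}$ is itself a colimit of elementary cells, so the same limit-preservation argument you already invoked reduces the claim to $(n,m,s,t)\in\{0,1\}^4$, where both sides can be computed by hand and the comparison is elementary. Note that you already have everything needed to make this second reduction --- you explicitly observe that both sides send colimits in each variable to limits --- but you stop applying it after one pass. If you push the decomposition down to the elementary cells, the combinatorial matching you were dreading collapses to a handful of trivially checkable cases, and the proof closes.
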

\begin{proof}
First, observe that it follows from \cref{cor:gray_colimit} that it suffices to prove the statement for pairs of the form $(\theta_{n,m},\theta_{s,t})$. Moreover, it follows from the colimit decomposition of \cite[Lemma 4.5.]{kositsyn2021completeness} that we can further restrict our attention to $(n,m,s,t)\in\{0,1\}^4$, in which case the result becomes elementary.
\end{proof}
\begin{construction}\label{constr:theta_factorization}
Recall from \cite{haugseng2018equivalence} the following description of the category $\Theta_2$: its objects are given by pairs $([n],a_1,a_2,...,a_n)$ where all $a_i$ are non-negative integers, a morphism from $([n],a_1,a_2,...,a_n)$ to $([m],b_1,b_2,...,b_m)$ is given by a morphism $f:[n]\rightarrow [m]$ in $\Delta$ together with morphisms $f_{i,j}:[a_i]\rightarrow[b_j]$ for $1\leq i\leq n$ and $f(i)\leq j\leq f(i+1)$. To every such object we can associate a bicategory $\theta(n;a_1,...,a_n)$ with $n+1$ objects such that $\mor_{\theta(n,a_1,...,a_n)}(i,j)\cong [a_i]\times[a_{i+1}]\times...\times[a_j]$ if $i\leq j$ and is empty otherwise. Now assume we have an object $S\in\cfact$ that contains $n$ horizontal segments and $m$ vertical segments together with non-negative integers $(h_1,..., h_n)$ and $(v_1,..., v_m)$, to this data we can associate a bicategory $\sq(S;h_1,...,h_n;v_1,...,v_m)$ which is by definition the full subcategory of $\theta(n;h_1,...,h_n)\times \theta(m,v_1,...,v_m)$ on objects lying above $S$ viewed as a path from $(0,0)$ to $(n,m)$. Observe that $\sq(S;h_1,...,h_n;v_1,...,v_m)$ has a natural structure of an object of $2-\fact$, we will denote by $(\Theta^\cfact_2)^\op$ the full subcategory of $2-\fact$ on objects of the form $\sq(S;h_1,...,h_n;v_1,...,v_m)$. If $S=e_1e_2...e_{n+m}$, where $e\in\{h,v\}$, we will also sometimes denote $(S;h_1,...,h_n;v_1,...,v_m)$ by $(S;l_1,l_2,...,l_{n+m})$, where $l_i$ is the number corresponding to the segment $e_k$.
\end{construction}
\begin{lemma}
The category $\Theta^\cfact_2$ admits an active/inert factorization system.
\end{lemma}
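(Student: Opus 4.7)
The plan is to combine the active/inert factorization on $\cfact$ from \cref{prop:factorization_theory} with the standard active/inert factorization on $\Delta$, applied componentwise to the integer labels. First, I would note that an object $\sq(S;h_1,\dots,h_n;v_1,\dots,v_m)$ of $\Theta^\cfact_2$ is determined by the datum $(S,(\vec h;\vec v))\in 2\text{-}\cfact$ via \cref{constr:theta_factorization}, so a morphism
\[
f:\sq(S;\vec h;\vec v)\longrightarrow\sq(S';\vec{h'};\vec{v'})
\]
in $\Theta^\cfact_2$ (equivalently, in $\widetilde{2-\fact}$ via \cref{prop:bicategory_comparison}) has as its underlying data a morphism $\phi:S\to S'$ in $\cfact^\op$ together with, for each segment of $S$ and its image in $S'$, a compatible morphism in $\Delta$ between the corresponding integer labels.

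I would declare $f$ to be \emph{active} if $\phi$ is active in $\cfact$ and every induced map on labels is active in $\Delta$, and \emph{inert} if both components are inert. To factor a general $f$, apply \cref{prop:factorization_theory} to decompose $\phi\cong \iota\circ \alpha$ in $\cfact$ with $\alpha$ active and $\iota$ inert, obtaining an intermediate shape $S''$. I would then equip $S''$ with integer labels pulled back from $(\vec{h'};\vec{v'})$ along the inert map $\iota$; after this choice the label components of $f$ split canonically in $\Delta$ through these intermediate integers by the standard active/inert factorization on $\Delta$ applied in each segment. Assembling the two pieces yields a decomposition $f\cong f_\iota\circ f_\alpha$ in $\Theta^\cfact_2$ with $f_\alpha$ active and $f_\iota$ inert.

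What remains is to verify that active morphisms and inert morphisms are each closed under composition, and that the above factorization is unique. Both statements reduce immediately to the corresponding assertions on the two factors, which hold by \cref{prop:factorization_theory} and by the well-known factorization on $\Delta$. The main subtlety I expect is the interaction between the label data and the permutation generators $\gamma_{i,j}$ from \cref{constr:elementary morphisms}: since these morphisms only rearrange segments of $S$ without altering the integer attached to any individual segment, their appearance in the inert part of $\phi$ causes no obstruction, and the compatibility of labels under $\gamma_{i,j}$ is automatic. The only real content, then, is the bookkeeping needed to check that the two componentwise factorizations glue into globally defined morphisms of $\Theta^\cfact_2$, and this follows from \cref{lem:restriction} applied to both $S$ and $S''$.
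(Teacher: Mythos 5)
Your overall strategy is exactly the paper's: unpack a morphism of $\Theta_2^\cfact$ into a morphism $\overline{f}$ of $\cfact^\op$ together with a family of morphisms in $\Delta$ between the integer labels, declare active/inert componentwise, and reduce the factorization to the one on $\cfact^\op$ from \cref{prop:factorization_theory} and the standard one on $\Delta$. The paper's proof says precisely this and nothing more.

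One step of your construction as written would fail, though. You propose to equip the intermediate object $S''$ with labels \emph{pulled back} from $(\vec{h}';\vec{v}')$ along the inert map $\iota$. With that choice the inert piece $S''\rightarrowtail S'$ has identity label maps (harmless), but the would-be active piece $S\rightarrow S''$ then carries the original label maps $f_{i,j}:[h_i]\rightarrow[h'_j]$, which are not active in general, so what you produce is not an active-followed-by-inert decomposition. The correct intermediate labels are the middle objects of the active/inert factorizations in $\Delta$ of the individual label maps, $[h_i]\twoheadrightarrow[h''_j]\rightarrowtail[h'_j]$ (and likewise for the $g_{k,t}$) — which is presumably what your next clause about "the standard active/inert factorization on $\Delta$ applied in each segment" intends, but it contradicts the pullback prescription. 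Once the intermediate labels are defined this way, the rest of your argument (closure under composition and uniqueness, checked componentwise) goes through and coincides with the paper's; the final appeal to \cref{lem:restriction} is not needed.
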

\begin{proof}
First, observe that a morphism \[f:\sq(S;h_1,...,h_n;v_1,...,v_m)\rightarrow\sq(S';h'_1,...,h'_{n'};v'_1,...,v'_{m'})\]
in $(\Theta^\cfact_2)^\op$ can equivalently be described by the data of a morphism $\overline{f}:\sq(S)\rightarrow\sq(S')$ in $\cfact^\op$ together with morphisms $f_{i,j}:[h_i]\rightarrow [h'_j]$ for $0\leq i\leq n$ and $\overline{f}_h(i)\leq j\leq \overline{f}_h(i+1)$ and $g_{k,t}:[v_k]\rightarrow [v_t]$ for $0\leq k\leq m$ and $\overline{f}_v(k)\leq t\leq \overline{f}_v(k+1)$ (where $f_{h,v}$ denotes the horizontal and vertical projections of $f$). We declare a morphism $f$ to be inert (resp. active) if $\overline{f}$ is inert (resp. active) as a morphism in $\cfact^\op$ and all morphisms $f_{i,j}$ and $g_{s,t}$ are inert (resp. active) as morphisms in $\Delta$. The claim now follows from the existence of the active/inert factorization systems in $\Delta$ and $\cfact^\op$.
\end{proof}
\begin{notation}\label{not:theta_factorization_theory}
We will denote by $k:(\Theta_2^\cfact)^\inrt\hookrightarrow\Theta_2^\cfact$ the inclusion of the subcategory containing inert morphisms. We will call an inert morphism $(i;i_1,...,i_n;j_1,...,j_m)$ an inclusion if the morphism $i$ in $\cfact$ is an inclusion, we denote by $j:(\Theta_2^\cfact)^\inc\hookrightarrow(\Theta_2^\cfact)^\inrt$ the subcategory of inclusions. We declare an object $(e;s)$ to be elementary if $e\in\cfact^\el$ and $s\in\{0,1\}$ and denote by $i:(\Theta_2^\cfact)^\el\hookrightarrow(\Theta_2^\cfact)^\inc$ the full subcategory on elementary objects. Finally, we denote by $\cE_{\Theta_2^\cfact}$ the full subcategory of $\mor_\cat((\Theta_2^\cfact)^\inrt,\cS)$ on those functors $\cF:(\Theta^\cfact_2)^\inrt\rightarrow \cS$ for which
\[j^*\cF\cong i_*i^*j^*\cF.\]
\end{notation}
\begin{lemma}\label{lem:double_active}
For a given $S\in\cfact$ denote by $\cfact^\act_{/S}$ the full subcategory of $\cfact_{/S}$ on active morphisms, then we have
\[\cfact^\act_{/S}\cong\underset{(S\rightarrowtail e)\in\cfact^\el_{/S}}{\lim}\cfact^\act_{/e}.\]
\end{lemma}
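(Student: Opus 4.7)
My plan is to reduce the statement to the corresponding known fact for the theory $\ccat$, mirroring and categorifying the limit/colimit manipulation already carried out at the space level in the proof of \cref{prop:factorization_theory}.

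First, I would upgrade the space-level decomposition of \eqref{eq:seven} to an equivalence of categories
\[\cfact^\act_{/S}\cong\ccat^\act_{/[n]}\times\ccat^\act_{/[m]},\]
where $\sq(S)\subset[n]\times[m]$. This follows from the canonical decomposition of \cref{prop:generators}: an active morphism in $\cfact$ is, by definition, generated by $\sigma^{h,v}_j$ and by $\delta^h_i$, $\delta^v_k$ with $i\ne 0,n+1$, $k\ne 0, m+1$, so it is determined by independent active morphisms on the horizontal and vertical indices, and the same is true for morphisms between two active morphisms into $S$.

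Second, I would read \eqref{eq:eight} as a presentation of $\cfact^\el_{S/}$ as an iterated pushout (in $\cat$) of the categories $\ccat^\el_{[i_{a+1}-i_a]/}$ and $\ccat^\el_{[j_{b+1}-j_b]/}$ glued along the corner elementary objects corresponding to $*\in\cfact^\el$. Since $\cfact^\act_{/*}\cong *$ is contractible, each pullback in the resulting limit decomposition collapses, and I obtain
\[\underset{(S\rightarrowtail e)\in\cfact^\el_{S/}}{\lim}\cfact^\act_{/e}\cong\Bigl(\prod_a\underset{\ccat^\el_{[i_{a+1}-i_a]/}}{\lim}\ccat^\act_{/(-)}\Bigr)\times\Bigl(\prod_b\underset{\ccat^\el_{[j_{b+1}-j_b]/}}{\lim}\ccat^\act_{/(-)}\Bigr).\]

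Third, for each factor I would invoke the categorical Segal decomposition for the theory $\ccat$,
\[\underset{([k]\rightarrowtail e)\in\ccat^\el_{[k]/}}{\lim}\ccat^\act_{/e}\cong\ccat^\act_{/[k]},\]
which is a standard property of the theory of Segal spaces (essentially \cite[Corollary 7.17]{chu2019homotopy} applied to $\ccat$). Reassembling the resulting product via the basic identification $\ccat^\act_{/[k]}\cong(\ccat^\act_{/[1]})^{\times k}$ regroups the horizontal segments into $\ccat^\act_{/[n]}$ and the vertical ones into $\ccat^\act_{/[m]}$, and combining with the first step yields the desired isomorphism.

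The main obstacle will be the bookkeeping in the second step: making precise the sense in which \eqref{eq:eight} is a pushout diagram of categories with the right functoriality of the assignment $e\mapsto\cfact^\act_{/e}$, and checking that contractibility of the value at the gluing corners really does reduce the limit pullback to a plain product. Once this is in place, the rest of the argument is a formal combination of the decompositions and the known result for $\ccat$.
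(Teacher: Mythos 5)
Your proof is correct, but it follows a genuinely different route from the paper's. The paper reduces the asserted equivalence of categories to equivalences of the spaces of objects and of morphisms: the space of objects is $\Act_\cfact(S)$, for which the decomposition was already established in \cref{prop:factorization_theory}, and the space of morphisms is identified with the space $\Act^2_\cfact(S)$ of composable pairs of active morphisms, whose Segal decomposition is then proved by writing it as $\underset{(S'\twoheadrightarrow S)\in\Act_\cfact(S)}{\colim}\Act_\cfact(S')$ and commuting this colimit past the limit over $\cfact^\el_{S/}$ via the relative Segal condition and \cite[Corollary 7.17]{chu2019homotopy}. You instead categorify \cref{eq:seven} to a product decomposition $\cfact^\act_{/S}\cong\ccat^\act_{/[n]}\times\ccat^\act_{/[m]}$ and reduce everything to the analogous statement for $\Delta^\op$. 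Both routes work. Yours is more concrete and avoids the colimit manipulation entirely, at the price of having to justify the categorical (not merely space-level) product decomposition; this does hold, because a morphism of $\cfact^\act_{/S}$ between two active morphisms is itself active by the cancellation property of the factorization system and therefore splits into independent horizontal and vertical components. The paper's argument, by contrast, reuses the machinery of \cref{prop:factorization_theory} verbatim and would apply to patterns lacking such a product structure. One caveat on citations: the identity $\underset{([k]\rightarrowtail e)\in\ccat^\el_{[k]/}}{\lim}\ccat^\act_{/e}\cong\ccat^\act_{/[k]}$ you invoke is indeed standard (it amounts to $\ccat^\act_{/[k]}\cong(\ccat^\act_{/[1]})^{\times k}$, i.e.\ the extendability of the pattern $\Delta^\op$), but \cite[Corollary 7.17]{chu2019homotopy} is a limit/colimit interchange statement and is not the right reference for it.
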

\begin{proof}
First, observe that $\Act_{\cfact}(S)$ is the underlying space of $\cfact^\act_{/S}$ and we have already proved the claim for $\Act_\cfact(S)$ in \cref{prop:factorization_theory}. It would now suffice to provide an isomorphism on morphisms. Observe that a morphism in $\cfact^\act_{/S}$ is by definition a diagram of the form 
\[
\begin{tikzcd}[row sep=huge, column sep=huge]
S_1\arrow[dr, "a_1"]\arrow[rr, "a"]&{}&S_2\arrow[dl, "a_2" swap]\\
{}&S
\end{tikzcd}.
\]
This diagram can be identified with an object of $\Act^2_\cfact(S)$ - the space of composable pairs of active morphisms in $\cfact$, so it suffices to prove that it is also a Segal space. This follows from the following sequence of isomorphisms
\begin{align*}
    \Act^2_\cfact(S)\cong&\underset{(S'\twoheadrightarrow S)\in\Act_\cfact(S)}{\colim}\Act_\cfact(S')\\
    \cong&\underset{\underset{(S\overset{i}{\rightarrowtail} e)}{\lim}(i_!S'\twoheadrightarrow e)\in\Act_\cfact(e)}{\colim}\Act_\cfact(S')\\
    \cong&\underset{\underset{(S\overset{i}{\rightarrowtail} e)}{\lim}(i_!S'\twoheadrightarrow e)\in\Act_\cfact(e)}{\colim}\underset{(S\overset{i}{\rightarrowtail} e)}{\lim}\Act_\cfact(i_! e)\\
    \cong & \underset{(S\overset{i}{\rightarrowtail} e)}{\lim} \underset{(S''\twoheadrightarrow e)\in\Act_\cfact(e)}{\colim}\Act_\cfact(S'')\\
    \cong&\underset{(S\overset{i}{\rightarrowtail} e)}{\lim} \Act^2_{\cfact}(e),
\end{align*}
where the first isomorphism follows from the definition of $\Act^2_\cfact(-)$, the second since $\Act_\cfact(-)$ satisfies the Segal condition, the third from the relative Segal condition of \cref{prop:factorization_theory}, the fourth from \cite[Corollary 7.17.]{chu2019homotopy} and the last again from the definition.
\end{proof}
\begin{prop}\label{prop:model_equivalence}
The constructions of \cref{not:theta_factorization_theory} endow $\Theta_2^\cfact$ with a structure of a theory with arities in the sense of \cite[Definition 6.2.]{kositsyn2021completeness}, moreover the category of models for $\Theta^\cfact_2$ is isomorphic to $2-\cfact$.
\end{prop}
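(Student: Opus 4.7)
My plan is to handle the two claims of the proposition separately. For the first, I would verify directly that $\Theta_2^\cfact$ satisfies the axioms of a theory with arities, closely following the template of \cref{prop:factorization_theory}. Since the active/inert factorization is established in the preceding lemma, the remaining task is to show that the monad $k^*k_!$ on $\mor_\cat((\Theta_2^\cfact)^\inrt,\cS)$ preserves $\cE_{\Theta_2^\cfact}$.

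The key combinatorial inputs I would set up first are the product decomposition
\[
\Act_{\Theta_2^\cfact}(X)\cong\Act_\cfact(S)\times\prod_{i=1}^{n}\Act_\ccat([h_i])\times\prod_{j=1}^{m}\Act_\ccat([v_j])
\]
for $X\bydef(S;h_1,\ldots,h_n;v_1,\ldots,v_m)$, and an analogous iterated pushout presentation of the elementary undercategory $(\Theta_2^\cfact)^\el_{X/}$ built from $\cfact^\el_{S/}$ and the $\ccat^\el_{[h_i]/}$, $\ccat^\el_{[v_j]/}$, in the spirit of \cref{eq:eight}. With these in hand, the central chain of isomorphisms would parallel the display in the proof of \cref{prop:factorization_theory}, combining three relative Segal-type statements --- \cref{lem:double_active} for $\cfact$, its classical simplicial analogue for $\ccat$, and the relative Segal condition proved inside \cref{prop:factorization_theory} itself --- and concluding by exchanging the outer limit and colimit via \cite[Corollary 7.17.]{chu2019homotopy}. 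This yields $k^*k_!\cF\cong i_*i^*k^*k_!\cF$ on inclusions and establishes the theory-with-arities structure.

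For the second claim, I would introduce the functor $\iota:2-\cfact\rightarrow\Theta_2^\cfact$ sending $(S,[l])$ to the object $(S;l,\ldots,l;l,\ldots,l)$ with all height parameters equal to $l$. The elementary objects of \cref{not:theta_factorization_theory} are in canonical bijection with those of \cref{constr:2-fact}, and this bijection is induced by $\iota$. Since both $\modl_{\Theta_2^\cfact}(\cS)$ and $2-\fact$ are, by the respective Segal conditions, determined by right Kan extension from their elementary subcategories along $i$ and subsequent reconstruction via $k^*k_!$ along active morphisms, restriction along $\iota$ would produce the required equivalence $\modl_{\Theta_2^\cfact}(\cS)\xrightarrow{\sim}2-\fact$.

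I expect the main technical obstacle to be the bookkeeping required to run the Segal-condition argument in the presence of two independent factorization directions (the $\cfact$ factorization on $S$ and the simplicial factorization on each of the height parameters $h_i$, $v_j$), since one must carefully interchange several nested limits and colimits while tracking which pieces of the elementary undercategory contribute to each summand of the pushout. Conceptually, however, no essentially new ingredient beyond the mechanism already exercised in \cref{prop:factorization_theory} should be required.
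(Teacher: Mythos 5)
Your overall architecture matches the paper's (imitate \cref{prop:factorization_theory} for the arities claim; compare models via the "diagonal" functor $(S,[l])\mapsto(S;l,\ldots,l)$, which is exactly the paper's $q$), but both halves rest on a step that does not hold as stated. The first problem is your key combinatorial input
\[
\Act_{\Theta_2^\cfact}(X)\cong\Act_\cfact(S)\times\prod_{i=1}^{n}\Act_\ccat([h_i])\times\prod_{j=1}^{m}\Act_\ccat([v_j]),
\]
which is false. A morphism in $(\Theta_2^\cfact)^\op$ consists of a morphism $\overline{f}$ in $\cfact^\op$ together with interval maps $f_{i,j}$ indexed over the subdivision that $\overline{f}$ itself creates, so the $2$-cell data is a family whose indexing set depends on the $1$-cell datum: the space of active morphisms is a dependent sum over the base, not a product with it. Already for $X=(h;1)$ your formula gives $\Act_\ccat([1])\times\Act_\ccat([1])$, whereas an active morphism $(S';\overline{h}')\twoheadrightarrow(h;1)$ is the choice of an active $[1]\twoheadrightarrow[s]$ in $\ccat$ together with one active interval map for \emph{each} of the resulting $s$ segments; these spaces are not equivalent. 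This is the wreath-product structure of $\Theta_2$, and the paper's proof is organized around it: it first identifies $\Act_{\Theta_2^\cfact}(e;l)$ with the space of pairs $(a:[1]\twoheadrightarrow[s],\{b_i:[l]\twoheadrightarrow[t_i]\}_{i\leq s})$ and only then decomposes $\Act_{\Theta_2^\cfact}(S;\overline{l})$ into such elementary pieces via the Segal condition. Your chain of limit/colimit interchanges therefore starts from a wrong identity; the strategy survives, but the bookkeeping you flag as the "main technical obstacle" is precisely where the dependent (rather than product) structure must be used.

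The second problem is the model comparison. That the elementary objects of the two theories are in bijection, and that models are right Kan extensions from elementaries along the inclusions, determines the \emph{values} of a model on every object but not the action of the active morphisms, and $\iota=q$ is far from essentially surjective: no object $(S;\overline{l})$ with non-constant $\overline{l}$ lies in its image. So "restriction along $\iota$ would produce the required equivalence" is the statement to be proved, not a consequence of matching elementaries. The paper's proof establishes that the unit and counit of $q_!\dashv q^*$ are equivalences on models, and the essential computation is $q_!q^*\cF(S;\overline{l})\cong\cF(S;\overline{l})$: one shows that the full subcategory $(q/(S;\overline{l}))^\ep$ of morphisms whose underlying $\cfact^\op$-morphism is active is cofinal in $(q/(S;\overline{l}))$, decomposes it as a product over the elementary segments of $S$ using \cref{lem:double_active}, reduces via the Segal condition to strings of length one, and there exhibits a final object of the comma category. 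Your proposal supplies no substitute for this cofinality argument, so as written the second claim is unproved.
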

\begin{proof}
To prove the first claim we need to show that the endomorphism $k^*k_!$ of $\mor_\cat((\Theta_2^\cfact)^\inrt,\cS)$ sends $\cE_{\Theta^\cfact_2}$ to itself. Observe that since active and inert morphisms form a factorization system on $\Theta_2^\cfact$ we have
\[k^*k_!\cF(S;\overline{n};\overline{m})\cong \underset{((S';\overline{n}';\overline{m}')\rightarrow(S;\overline{n};\overline{m}))\in(k/(S;\overline{n};\overline{m}))}{\colim}\cF(S';\overline{n}';\overline{m}')\cong \underset{((S';\overline{n}';\overline{m}')\twoheadrightarrow(S;\overline{n};\overline{m}))\in\Act_{\Theta_2^\cfact}(S;\overline{n};\overline{m})}{\colim}\cF(S';\overline{n}';\overline{m}').\]
Assume $S=e_1e_2...e_{n+m}$, then it is easy to see that
\begin{align}\label{eq:nine}
    (\Theta^\cfact_2)^\el_{(S;l_1,...,l_{n+m})/}\cong \ccat^\el_{[l_1]/}\coprod_* \ccat^\el_{[l_1]/}\coprod_*...\coprod_*\ccat^\el_{[l_{n+m}]/}.
\end{align}
Observe that by the definition of an active morphism $\Act_{\Theta^\cfact_2}(e,l)$ for $e\in\{h,v\}$ is isomorphic to the space of pairs $(a:[1]\twoheadrightarrow[s],\{b_i:[l]\twoheadrightarrow [t_i]\}|_{i\in\{1,2,...,s\}})$. Now observe that, since both $\Act_\cfact(S)$ and $\Act_\ccat([n])$ satisfy the respective Segal conditions, we have
\[\Act_{\Theta_2^\cfact}(S;\overline{l})\cong \Act_{\Theta_2^\cfact}(e_1,1)^{\times l_1}\times \Act_{\Theta_2^\cfact}(e_2,1)^{\times l_2}\times...\times \Act_{\Theta_2^\cfact}(e_{n+m},1)^{\times l_{n+m}}.\]
It follows from \cref{eq:nine} that this is indeed a $\Theta_2^\cfact$ Segal space. Now assume that 
\[
\begin{tikzcd}[row sep=huge, column sep=huge]
(S';\overline{l}')\arrow[r, two heads, "a"]\arrow[d, tail, "a^* i"]&(S;\overline{l})\arrow[d, tail, "i"]\\
i_!(S';\overline{l}')\arrow[r, two heads,"i_! a"]&(e;1)
\end{tikzcd}
\]
is a factorization square in $\Theta_2^\cfact$ in which $i$ corresponds to the inclusion of an elementary 2-cell in $S$. Assume that the restriction of $a$ to $e$ is given by a pair $([1]\twoheadrightarrow[s];\{[1]\twoheadrightarrow [t_i]\}|_{i\in\{1,2,...,s\}})$ and that $a\circ i(0)=j$, then $i_!(S';\overline{l}')$ is a subcategory of $(S';\overline{l}')$ containing all objects between $j$ and $j+n$ and all 2-cells between $j+i$ and $j+i+1$ that lie in the subinterval $[t_i]\subset[l_{j+i}]$. Those subcategories cover $(S';\overline{l}')$ and only intersect along 0-cells and 1-cells. It follows from \cref{eq:nine} that the natural morphism
\[\underset{((S;\overline{l})\overset{i}{\rightarrowtail} e)\in(\Theta^\cfact_2)^\el_{(S;\overline{l})/}}{\lim}\underset{(i_!(S';\overline{l}')\rightarrowtail e_0)\in(\Theta^\cfact_2)^\el_{i_!(S';\overline{l}')/}}{\lim}\cF(e_0)\rightarrow\underset{((S';\overline{l}')\rightarrowtail e')\in(\Theta^\cfact_2)^\el_{(S';\overline{l}')/}}{\lim}\cF(e)\]
is an isomorphism for all functor $\cF:\Theta_2^\cfact\rightarrow\cS$. With this we can conclude the proof of the first claim just like in \cref{prop:factorization_theory}.\par
To prove the claim about the category of models, we first observe that there is a natural functor $q:2-\cfact\rightarrow \Theta^\cfact_2$ that sends $(S,n)$ to \[(S;\overbrace{n,n,...,n}^\text{n+m times}).\]
We will prove that $q_!$ restricts to an isomorphism $\fact\xrightarrow{\sim}\modl_{\Theta^\cfact_2}(\cS)$. We first prove that for every $\cF\in\modl_{\Theta^\cfact_2}(\cS)$ we have $q_!q^*\cF\cong\cF$. By definition we have
\[q_!q^*\cF(S;\overline{l})\cong\underset{(q(S',n')\rightarrow (S;\overline{l}))\in(q/(S;\overline{l}))}{\colim}\cF(S',n').\]
Denote by $(q/(S;\overline{l}))^\ep$ the full subcategory of $(q/(S;\overline{l}))$ on those morphisms $(S;\overline{l})\xrightarrow{f}q(S,n)$ in $(\Theta_2^\cfact)^\op$ for which the underlying morphism in $\cfact^\op$ is active. Observe that every morphism $(S;\overline{l})\xrightarrow{s}q(S',n')$ in $(\Theta_2^\cfact)^\op$ uniquely decomposes as
\[(S;\overline{l})\xrightarrow{f}q(S'',n'')\xrightarrow{q(i,\id)}q(S',n'),\]
where $f\in(q/(S;\overline{l}))^\ep$ and $(i,\id)$ is  morphism in $(2-\cfact)^\op\cong\cfact^\op\times\Delta$ for which $i$ is an inert morphism in $\cfact^\op$. It follows that $(q/(S;\overline{l}))^\ep\hookrightarrow(q/(S;\overline{l}))$ is cofinal. Also observe that it follows from \cref{lem:double_active} that
\[(q/(S;\overline{l}))^\ep\cong(q/(e_1;l_1))^\ep\times(q/(e_2;l_2))^\ep\times...\times(q/(e_{n+m};l_{n+m}))^\ep.\]
Every active morphism $a:S\twoheadrightarrow S''$ in $\cfact^\op$ defines a decomposition of $S''$ into substrings $S''_i$ for $i\in\{1,2,...,l(S)\}$ where each $S''_i$ is given by the image of the $i$th elementary segment of S and $l(S)$ denotes the length of the string $S$. Moreover, it follows from the Segal condition that
\[\cF(q(S'',n''))\cong\cF(q(S_1,n''))\times_{\cF(*)}\cF(q(S_2,n''))\times_{\cF(*)}...\times_{\cF(*)}\cF(q(S_{n+m},n'')).\]
Putting everything together we get the following string of isomorphisms
\begin{align*}
    q_!q^*\cF(S;\overline{l})\cong&\underset{(q(S',n')\rightarrow (S;\overline{l}))\in(q/(S;\overline{l}))}{\colim}\cF(S',n')\\
\cong&\underset{(q(S'',n'')\rightarrow (S;\overline{l}))\in(q/(S;\overline{l}))^\ep}{\colim}\cF(S'',n'')\\
\cong&\underset{\bigtimes_{i=1}^{l(S)}(q/(e_i;l_i))^\ep}{\colim}\cF(q(S_1,n''))\times_{\cF(*)}\cF(q(S_2,n''))\times_{\cF(*)}...\times_{\cF(*)}\cF(q(S_{n+m},n''))\\
\cong& \underset{(q/(e_1;l_1))^\ep}{\colim}\cF(q(S_1,n''))\times_{\cF(*)}...\times_{\cF(*)}\underset{(q/(e_{l(S)};l_{l(S)}))^\ep}{\colim}\cF(q(S_{l(S)},n''))\\
\cong&q_!q^*\cF(e_1;l_1)\times_{q_!q^*\cF(*)}q_!q^*\cF(e_2;l_2)\times_{q_!q^*\cF(*)}...\times_{q_!q^*\cF(*)}q_!q^*\cF(e_{l(S)};l_{l(S)}).
\end{align*}
It follows that it suffices to prove the isomorphism for strings $(e;l)$ of length one. However, observe that in this case $(q/(e;l))^\ep$ has a final object given by the identity morphism and the claim follows. Finally, observe that the calculations above also imply that $p^*p_!\cF\cong \cF$ for $\cF\in2-\fact$ thus concluding the proof.
\end{proof}
\begin{lemma}\label{lem:cofinality}
The functor $q:2-\cfact\rightarrow\Theta^\cfact_2$ constructed in \cref{prop:model_equivalence} is cofinal.
\end{lemma}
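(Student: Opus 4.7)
The plan is to show that the slice category $(y/q)$ is contractible for every $y = (S; l_1, \ldots, l_{n+m}) \in \Theta_2^\cfact$, following the general strategy of \cref{lem:taotal_cofinality}. The key structural observation is that $q$ fits into a commutative triangle over $\cfact$:
\[
\begin{tikzcd}
2-\cfact \arrow[rr, "q"] \arrow[dr] && \Theta_2^\cfact \arrow[dl] \\
& \cfact &
\end{tikzcd}
\]
where the left slanted arrow is the product projection $\cfact \times \Delta^\op \to \cfact$ and the right one forgets the $\Delta$-heights. The fibers over $S \in \cfact$ are $\Delta^\op$ and $(\Delta^\op)^k$ respectively (with $k$ the number of segments of $S$), and fiberwise $q$ restricts to the diagonal embedding.

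I would then analyze the induced functor $\pi: (y/q) \to (S/\cfact)$ sending $((S', n'), f)$ to $(S', \bar{f})$. For each fixed $(S', \bar{f}) \in S/\cfact$, the fiber of $\pi$ is naturally identified with the slice $(l_{i(1)}, \ldots, l_{i(N)})/\mathrm{diag}^\op$ of the diagonal embedding $\Delta^\op \to (\Delta^\op)^N$, where $N$ and the indices $i(k) \in \{1, \ldots, n+m\}$ are determined by $\bar{f}$ via the indexing of the $\Delta$-maps $f_{i,j}: [l_i] \to [n']$ appearing in the description of morphisms in $\Theta_2^\cfact$. This fiber is contractible by the classical siftedness of $\Delta^\op$ (equivalently, by the fact that geometric realization of simplicial sets commutes with finite products).

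The next step is to use fiberwise contractibility to conclude $(y/q)$ itself is contractible. I would verify that $\pi$ is a cocartesian fibration, so that fiberwise contractibility implies $\pi$ is a weak equivalence on classifying spaces. Combined with the observation that $S/\cfact$ is contractible (having $\id_S$ as an initial object), this yields $|(y/q)| \simeq |S/\cfact| \simeq *$.

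The main obstacle is verifying the cocartesian fibration property for $\pi$. Given a morphism $\bar{g}: S' \to S''$ in $\cfact$ and an object over $(S', \bar{f})$, constructing a cocartesian lift requires a careful choice of how the $\Delta$-height data transforms under $\bar{g}$; this is delicate because active $\cfact$-morphisms can merge or split segments via $\sigma^{v,h}_j$ and $\delta^{v,h}_i$. The construction would proceed by using the canonical decomposition of \cref{prop:generators} to build lifts on each generating morphism and then checking that the relations in \cref{constr:elementary morphisms} are respected. As a fallback, one could invoke an $\infty$-categorical version of Quillen's Theorem B applied to $\pi$, reducing the claim to a contractibility statement for certain comma categories that should follow inductively from the active/inert factorization on $\cfact$ together with the siftedness of $\Delta^\op$.
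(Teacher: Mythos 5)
Your overall skeleton matches the paper's: fiber the slice over its $\cfact$-component, identify the fibers with comma categories of the diagonal $\Delta^\op\to(\Delta^\op)^N$, and conclude by siftedness of $\Delta^\op$ together with contractibility of the base. But there is a genuine gap at exactly the point you flag: you need $\pi:(y/q)\to(S/\cfact)$ to be a coCartesian fibration over the \emph{full} slice, including morphisms whose underlying $\cfact$-component is active, and you neither prove this nor give a workable substitute. Fiberwise contractibility alone says nothing for a general functor, and your proposed verification ``on generators, then check relations'' is not how one certifies coCartesian lifts: the universal property of a lift over a degeneracy $\sigma^{v,h}_j$ or a face $\delta^{v,h}_i$ has to be tested against arbitrary morphisms, which forces you to unwind the $\Theta_2$-style composition of the height maps $f_{i,j}$ along active morphisms that merge and split segments. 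The Theorem~B fallback has the same problem in a different guise, since one must check that all the comma categories are carried to equivalences along the structure maps, which is again not done.

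The missing idea is a preliminary cofinal reduction that removes the active morphisms from the base before fibering. Every morphism $q(S',r)\to(S;\overline{n};\overline{m})$ in $(\Theta_2^\cfact)^\op$ factors uniquely as $(q(a),\id)$ for an active $a$ in $\cfact^\op$ followed by a morphism whose underlying $\cfact^\op$-morphism is inert; hence the full subcategory $((S;\overline{n};\overline{m})/q)^\fip$ on such objects is cofinal in the slice, and one may compute its classifying space instead. The forgetful functor from this subcategory to $\cfact^\inrt_{S/}$ \emph{is} visibly a coCartesian fibration, because an inert morphism merely restricts the tuple of height data, and its fibers are exactly the diagonal comma categories you describe. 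With this reduction in place your siftedness argument goes through verbatim; without it, the central claim of your proof remains unestablished.
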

\begin{proof}
We need to prove that for every object $(S;\overline{n};\overline{m})$ of $\Theta_2^\cfact$ the category $((S;\overline{n};\overline{m})/q)$ is contractible. An object of this category can be identified with a morphism $q(S',r)\xrightarrow{f}(S;\overline{n};\overline{m})$ in $(\Theta_2^\cfact)^\op$. Denote by $((S;\overline{n};\overline{m})/q)^\fip$ the full subcategory of $((S;\overline{n};\overline{m})/q)$ on those morphisms $q(S',r)\xrightarrow{i}(S;\overline{n};\overline{m})$ for which the underlying morphism $S'\rightarrow S$ in $\cfact^\op$ is inert. Observe that we can uniquely factor any morphism $q(S',r)\xrightarrow{f}(S;\overline{n};\overline{m})$ as
\[q(S,r)\xrightarrow{(q(a),\id)}q(S',r')\xrightarrow{i}(S;\overline{n};\overline{m}),\]
where $a$ is an active morphism in $\cfact^\op$ and $i$ belongs to $((S;\overline{n};\overline{m})/q)^\fip$. It follows that $((S;\overline{n};\overline{m})/q)^\fip\hookrightarrow((S;\overline{n};\overline{m})/q)$ is cofinal, in particular it induces an equivalence on classifying spaces.\par
Observe that there is a natural forgetful functor $U:((S;\overline{n};\overline{m})/q)^\fip\rightarrow \cfact^\inrt_{S/}$ which is moreover a coCartesian fibration. Since $\cfact^\inrt_{S/}$ is obviously contractible, to prove the claim it suffices to demonstrate that the fiber of $U$ over a given $S'\overset{i}{\rightarrowtail}S$ in $\cfact^\op$ is contractible. Assume that $S'$ has $n'$ horizontal segments and $m'$ vertical segments and that $i_h(0)=j_0$, $i_h(n')=j_1$, $i_v(0)=k_0$ and $i_v(m')=k_1$. Observe that an object of $U^{-1}(i)$ can be identified with an object $[l]\in\Delta$ together with morphisms $f_i:[l]\rightarrow[n_i]$ for $i\in\{j_0,j_0+1,...,j_1\}$ and $g_s:[n]\rightarrow [m_s]$ for $s\in\{k_0,k_0+1,...,k_1\}$. A morphism in $U^{-1}(i)$ is given by a morphism $[l]\rightarrow[l']$ making an obvious diagram commute. In other words, $U^{-1}(i)$ is isomorphic to $((\Delta^\op)^{k_1-k_0+j_1-j_0}/\Delta)$, where $\Delta:\Delta^\op\rightarrow(\Delta^\op)^{k_1-k_0+j_1-j_0}$ is the diagonal morphism. Finally, observe that $((\Delta^\op)^{k_1-k_0+j_1-j_0}/\Delta)$ is contractible since $\Delta^\op$ is sifted \cite[Lemma 5.5.8.4.]{lurie2009higher}.
\end{proof}
\begin{construction}
Given  pair of objects $\theta(n;a_1,...,a_n)$ and $\theta(m;b_1,...,b_m)$ we can consider a bicategory $\theta(n;a_1,..,a_n)\otimes \theta(m;b_1,...,b_m)$, where $\otimes$ denotes the Gray product of \cref{not:gray_product}. More explicitly, the objects of $\theta(n;a_1,..,a_n)\otimes \theta(m;b_1,...,b_m)$ are given by pairs $(i,j)$ with $0\leq i\leq n$ and $0\leq j\leq m$, a morphism from $(i,j)$ to $(i',j')$ is given by the data of a morphism $f:i\rightarrow i'$ in $\theta(n;\overline{a})$, a morphism $s:j\rightarrow j'$ in $\theta(m;\overline{b})$ and $i'-i$ marked points $j\leq j_1\leq...\leq j_{i'-i}\leq j'$. Now, given an object $(S;h_1,...,h_n;v_1,...,v_m)$ of $\Theta^\cfact_2$ we can associate to it a bicategory $\sq^\lax(S;\overline{h};\overline{v})$ given by a full subcategory of $\theta(n;h_1,..,h_n)\otimes \theta(m;v_1,...,v_m)$ on objects lying above $S$. This defines a functor $\Theta_2^\cfact\rightarrow 2-\cat$ which we can view as a correspondence $\widehat{\cL}:\Theta_2^\cfact\nrightarrow \Delta^\op\times \Delta^\op$.
\end{construction}
\begin{cor}\label{cor:L-equivalence}
Given an object $\cF\in\cfact$, \cref{prop:model_equivalence} allows us to view it both as a functor $\cF_0:2-\cfact\rightarrow\cS$ and as a functor $\cF_1:\Theta_2^\cfact\rightarrow\cS$. Then there is an isomorphism
\[\widetilde{\cL}\cF_0\cong\widehat{\cL}\cF_1.\]
\end{cor}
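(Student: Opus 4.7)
The plan is to express both $\widetilde{\cL}\cF_0$ and $\widehat{\cL}\cF_1$ as colimits of $\sq^\lax$ over appropriate categories of elements, and then to deduce the claimed isomorphism from the cofinality of $q$ established in \cref{lem:cofinality}. First, repeating the argument in the proof of \cref{lem:colimit_lax_factorization}, since both $\widetilde{\cL}$ and $\widehat{\cL}$ are left Kan extensions of their defining functors along the respective Yoneda embeddings, one obtains the presentations
\[
\widetilde{\cL}\cF_0\cong\underset{(\sq(S,l)\rightarrow\cF_0)\in 2-\cfact/\cF_0}{\colim}\sq^\lax(S,l),\qquad \widehat{\cL}\cF_1\cong\underset{((S;\overline{h};\overline{v})\rightarrow\cF_1)\in \Theta_2^\cfact/\cF_1}{\colim}\sq^\lax(S;\overline{h};\overline{v}),
\]
as objects of $\cP(\Delta\times\Delta)$.

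Second, I would observe that the functor $\sq^\lax$ on $2-\cfact^\op$ factors through $q:2-\cfact\rightarrow\Theta_2^\cfact$: since $q(S,l)=(S;l,\ldots,l)$ and by \cref{constr:theta_factorization} we have $\theta_{n,l}=\theta(n;l,\ldots,l)$, the bicategory $\sq^\lax(S,l)\subset\theta_{n,l}\otimes\theta_{m,l}$ agrees on the nose with $\sq^\lax(q(S,l))$, with matching functoriality. Together with the identification $\cF_0\cong q^*\cF_1$ from \cref{prop:model_equivalence}, this identifies the slice $2-\cfact/\cF_0$ with the pullback $2-\cfact\times_{\Theta_2^\cfact}(\Theta_2^\cfact/\cF_1)$ and realizes the first colimit above as the restriction along the projection $\pi$ from this pullback to $\Theta_2^\cfact/\cF_1$ of the diagram appearing in the second colimit.

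It therefore remains to verify that $\pi$ is cofinal. For $(X,g)\in\Theta_2^\cfact/\cF_1$, since $\Theta_2^\cfact/\cF_1\rightarrow\Theta_2^\cfact$ is a left fibration, the under-slice $(\Theta_2^\cfact/\cF_1)_{(X,g)/}$ is equivalent to $\Theta_2^\cfact_{X/}$, and so the comma category controlling cofinality of $\pi$ at $(X,g)$ simplifies to $2-\cfact\times_{\Theta_2^\cfact}\Theta_2^\cfact_{X/}\cong (X/q)$, which is contractible by \cref{lem:cofinality}. This completes the proof, and the two colimits above then agree to yield $\widetilde{\cL}\cF_0\cong\widehat{\cL}\cF_1$. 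The main subtlety is precisely this last step: one must track carefully how cofinality of $q$ base-changes to the slice $\Theta_2^\cfact/\cF_1$, a routine manipulation of pullbacks of left fibrations that is nonetheless the least formal step in the argument.
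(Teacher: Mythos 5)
Your proposal is correct and follows essentially the same route as the paper: both arguments express the two sides as colimits of $\sq^\lax(-)$ governed by $\cF_0\cong q^*\cF_1$ and then invoke the cofinality of $q$ from \cref{lem:cofinality}. The only difference is cosmetic --- you index the colimits over slice categories and base-change the cofinality statement along the left fibration $\Theta_2^\cfact/\cF_1\rightarrow\Theta_2^\cfact$, whereas the paper works with the weighted colimit over $\Theta_2^\cfact$ directly.
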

\begin{proof}
By definition we have
\[\widehat{\cL}\cF_1(s,t)\cong\underset{\sq^\lax(S;\overline{h};\overline{v})\in\Theta^\cfact_2)}{\colim}\mor_{2-\cat}(\theta(s;\overline{a}), \sq^\lax(S;\overline{h};\overline{v}))\times \cF_1(S;\overline{h};\overline{v}).\]
Recall from \cref{lem:cofinality} that the natural functor $q:2-\cfact\rightarrow \Theta_2^\cfact$ is cofinal, so we can replace the colimit over $\Theta^\cfact_2$ by a colimit over $2-\cfact$. The claim now follows, since by definition $\cF_0$ and $\cF_1$ agree on $2-\cfact$.
\end{proof}
\begin{construction}\label{constr:lax_functors1}
Given objects $\cF\in\fact$ and $\cM\in2-\fact$, we can consider both as coCartesian fibrations over $\cfact$ that satisfy the Segal conditions. We will call a \textit{categorical lax functor} from $\cF$ to $\cM$ a morphism of categories over $\cfact$ that takes coCartesian morphisms over inert morphisms in $\cfact$ to coCartesian morphisms. We will denote by $\mor^\lax_\cat(\cF,\cM)$ the space of categorical lax functors.
\end{construction}
\begin{prop}\label{prop:lax_functor1}
There is a bicategory with a factorization system $L^\lax\cF$ such that
\[\mor_{2-\fact}(L^\lax\cF,\cM)\cong\mor^\lax_\cat(\cF,\cM).\]
Moreover, we have an isomorphism 
\[L^\lax\cF\cong \cF\times_{\cfact}\arr^\act(\cfact),\]
where $\arr^\act(\cfact)$ denotes the coCartesian fibration over $\cfact$ sending $S$ to $\cfact^\act_{/S}$.
\end{prop}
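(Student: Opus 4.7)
The plan is to realize $L^\lax\cF$ explicitly via the claimed pullback, verify it lies in $2-\fact$, and then establish the universal property by direct construction of both directions of the bijection.

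First I would define $L^\lax\cF$ as a presheaf on $2-\cfact$ whose value on $(S,[n])$ is the space of pairs consisting of an $n$-chain of active morphisms $S = S_0 \twoheadleftarrow S_1 \twoheadleftarrow \cdots \twoheadleftarrow S_n$ in $\cfact$ together with an element $x \in \cF(S_n)$. Functoriality in the $\Delta^\op$-direction is by composition of active morphisms (well-defined since active morphisms compose), and functoriality along a morphism $f : S \to T$ in $\cfact$ is obtained by pulling back chains using the active/inert factorization of composites. Unstraightening this presheaf yields the pullback description: the fiber over $S\in\cfact$ is canonically $\cF(S) \times \cfact^\act_{/S}$, which matches $\cF \times_\cfact \arr^\act(\cfact)$ on fibers, hence globally.

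Next I would verify that $L^\lax\cF$ lies in $2-\fact$ per \cref{constr:2-fact}. The Segal condition in the $\Delta^\op$-direction is immediate from the nerve description of chains. The Segal condition in the $\cfact$-direction reduces, via the product form of the fibers, to two ingredients: the Segal condition for $\cF$ itself, and a Segal-type condition for the assignment $S \mapsto \cfact^\act_{/S}$, which is essentially the content of \cref{lem:double_active} combined with \cref{prop:factorization_theory}.

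For the universal property, I would give direct constructions in both directions. A strict morphism $L^\lax\cF \to \cM$ in $2-\fact$ restricts along the tautological inclusion $\cF \hookrightarrow L^\lax\cF$ (given by the trivial $0$-chain of identity active morphisms) to yield a morphism of left fibrations over $\cfact$; preservation of coCartesian morphisms over inert maps is automatic because inert morphisms act via the identity on the $\arr^\act$-component. Conversely, given a categorical lax functor $F : \cF \to \cM$ over $\cfact$, send a chain $(a_1,\dots,a_n,x) \in L^\lax\cF(S,[n])$ to the chain of $1$-morphisms in the fiber $\cM_S$ obtained by applying $F$ to successive coCartesian lifts of the $a_i$ at $x$.

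The main obstacle will be verifying that these two constructions are mutually inverse, which amounts to showing that a categorical lax functor is rigidly determined by its restriction to $\cF$ together with its values on coCartesian lifts of active morphisms. This follows from the active/inert factorization of \cref{prop:generators}: every morphism in the total category of $\cF$ decomposes uniquely into an inert part, which is rigid by coCartesian-preservation, and an active part, where the genuine lax data resides.
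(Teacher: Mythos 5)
Your construction is correct in substance and lands on the same formula and the same key input (the Segal condition via \cref{lem:double_active}), but it takes a more self-contained route than the paper. The paper's proof is essentially a citation: lax functors are identified with morphisms in a category of fibrations with restricted coCartesian-preservation, the existence of the left adjoint $L^\lax$ is imported from Proposition 4.11 of the companion paper, the formula $\cF\times_{\cfact}\arr^\act(\cfact)$ from the argument of Proposition 4.23 there, and only the Segal condition is checked on the spot via \cref{lem:double_active}. You instead build the presheaf on $2-\cfact$ explicitly as nerves of chains of active morphisms and verify the universal property by hand. What the paper's route buys is that the adjunction (unit, counit, triangle identities, and the fact that the comparison is an equivalence of mapping \emph{spaces} rather than a bijection on equivalence classes) is established once and for all by an abstract argument; what your route buys is transparency about what $L^\lax\cF$ actually is. The one place your sketch is thin is exactly where the abstract argument earns its keep: the claim that the two constructions are ``mutually inverse'' because a categorical lax functor is ``rigidly determined'' by its inert and active data. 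In the $\infty$-categorical setting this determination is not a pointwise uniqueness statement but a contractibility-of-choices statement, and making it precise requires either a cofinality argument over the active/inert factorization or the adjoint-functor-theorem formulation the paper cites. Also note a small terminological slip: the restriction of a strict map along the unit $\cF\hookrightarrow L^\lax\cF$ is not a morphism of left fibrations over $\cfact$ (the unit itself only commutes with pushforward along inert morphisms, since the pushforward of an identity arrow along an active $f$ is $f$ itself, not an identity); it is precisely a categorical lax functor, which is what you in fact need and what your subsequent sentence correctly asserts.
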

\begin{proof}
A lax functor can be identified with a morphism $\cF\rightarrow \cM$ in $\cart^{\cfact^\inrt}(\cfact)$ in the notation of \cite[Construction 4.10.]{kositsyn2021completeness}, the fact that the required left adjoint exists now follows from \cite[Proposition 4.11.]{kositsyn2021completeness}. Since active and inert morphisms form a factorization system on $\cfact$, the same argument as in \cite[Proposition 4.23.]{kositsyn2021completeness} shows that the formula for $L^\lax\cF$ is correct. To finish the proof it only remains to demonstrate that $\cF\times_{\cfact}\arr^\act(\cfact)$ satisfies the Segal condition, however this follows from \cref{lem:double_active}.
\end{proof}
\begin{notation}
Given an object $\cF\in\fact$ we will denote by $\cL^\lax\cF$ the composition $\cL L^\lax\cF$ of functors of \cref{constr:lax_functors1} and \cref{constr:lax_factorization}. For an arbitrary twofold Segal space $\cB$ we will introduce the following notation 
\[\mor^\lax_\fact(\cF,\cB)\bydef\mor^\lax_{2-\cat}(\cL^\lax\cF,\cB)\]
(where $\mor^\lax_{2-\cat}(\cL^\lax\cF,\cB)$ is the twofold Segal space of functors and lax natural transformations of \cite[Definition 4.6.]{kositsyn2021completeness}) and call $\mor^\lax_\fact(\cF,\cB)$ the \textit{ bicategory of lax functors}. We refer to objects of $\mor^\lax_{2-\cat}(\cL^\lax\cF,\cB)$ as \textit{lax functors} and denote them by $F:\cF\rightsquigarrow \cB$.
\end{notation}
\begin{prop}\label{prop:lax_functors_respect_colimits}
For any $\cF\in\fact$ and $\cB\in2-\cat$ we have
\[\mor^\lax_{\fact}(\cF,\cB)\cong\underset{(\sq(S)\rightarrow \cF)\in\cfact/\cF}{\lim}\mor^\lax_{\fact}(\sq(S),\cB).\]
\end{prop}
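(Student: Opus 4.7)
The plan is to express both sides via a chain of adjunction/colimit arguments and then identify them term-by-term.

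First, I would present $\cF$ as the canonical colimit of representables:
\[\cF\;\cong\;\underset{(\sq(S)\rightarrow\cF)\in\cfact_{/\cF}}{\colim}\sq(S)\]
computed in $\fact$. This is legitimate since $\fact$ is an accessible localization of $\cP(\cfact^\op)$, so the Yoneda-style colimit decomposition survives the reflector.

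Next I would transport this colimit through $\cL^\lax=\cL L^\lax$. The functor $L^\lax:\fact\rightarrow 2-\fact$ is a left adjoint by \cref{prop:lax_functor1}, so it preserves the colimit. For $\cL$, the proof of \cref{lem:colimit_lax_factorization} shows that $\widetilde{\cL}$ is (tautologically) a colimit-preserving left Kan extension from the representables $\sq(S,l)\mapsto\sq^\lax(S,l)$, and $L$ is a left adjoint to the inclusion $2-\cat\hookrightarrow\cP(\Delta\times\Delta)$; hence $\cL=L\widetilde{\cL}$ preserves colimits as well. Combining these gives
\[\cL^\lax\cF\;\cong\;\underset{\cfact_{/\cF}}{\colim}\;\cL^\lax\sq(S)\]
in $2-\cat$.

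Finally, I would convert this colimit into the asserted limit by applying $\mor^\lax_{2-\cat}(-,\cB)$. The bicategory $\mor^\lax_{2-\cat}(\cA,\cB)$ has $(n,m)$-cells of the form $\mor_{2-\cat}(\cA\otimes\theta_{n,m},\cB)$ (this is the level-wise content of \cref{prop:gray_universal_property}). Since the Gray product preserves colimits in each variable by \cref{cor:gray_colimit}, and $\mor_{2-\cat}(-,\cB)$ sends colimits to limits, applying the argument in each bidegree $(n,m)$ yields
\[\mor^\lax_\fact(\cF,\cB)=\mor^\lax_{2-\cat}(\cL^\lax\cF,\cB)\cong\underset{\cfact_{/\cF}}{\lim}\mor^\lax_{2-\cat}(\cL^\lax\sq(S),\cB)=\underset{\cfact_{/\cF}}{\lim}\mor^\lax_\fact(\sq(S),\cB).\]

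The main obstacle is the last step: cleanly identifying the $(n,m)$-bisimplices of the lax mapping bicategory with hom-sets out of a Gray product with $\theta_{n,m}$, so that one can legitimately apply $-\otimes\theta_{n,m}$ bidegree-wise and appeal to \cref{cor:gray_colimit}. Once this identification is in place, the remainder is a formal consequence of colimit preservation of $L^\lax$, $\cL$, and $-\otimes\theta_{n,m}$, together with the fact that $\mor_{2-\cat}(-,\cB)$ converts colimits to limits.
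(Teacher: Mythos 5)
Your proposal is correct and takes essentially the same route as the paper: the paper's proof is a one-line citation of \cref{lem:colimit_lax_factorization} together with "the same argument as in [Proposition 4.34] of \cite{kositsyn2021completeness}", and the two ingredients you supply — the colimit decomposition of $\cF$ pushed through the left adjoints $L^\lax$ and $\cL$, plus the level-wise Gray-product argument showing that $\mor^\lax_{2-\cat}(-,\cB)$ converts colimits to limits — are precisely what that citation is standing in for. The step you flag as the main obstacle is indeed the content of the external reference, and your resolution via \cref{prop:gray_universal_property} and \cref{cor:gray_colimit} is the intended one.
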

\begin{proof}
This follows from \cref{lem:colimit_lax_factorization} and he same argument as in \cite[Proposition 4.34]{kositsyn2021completeness}.
\end{proof}
\begin{construction}\label{constr:orientals}
Fix an integer $n\geq0$, we will construct of a certain discrete $n$-category $\orr(n)$ called the \textit{nth oriental}. This construction was originally presented in \cite{street1987algebra}, however numerous other expositions exist, including \cite{steiner2006orientals},\cite{street1991parity},\cite{kapranov1991combinatorial},\cite{verity2008weak}. The $k$-morphisms of $\orr(n)$ can be identified with certain sets of faces of the $n$-simplex of dimension $\leq k$ (by a face of dimension $l$ we mean an injective morphism $[l]\rightarrow [n]$), so we first introduce some notation: given a $k$-face $u:[k]\rightarrow [n]$ we denote by $\partial_+ u$ (resp. $\partial_- u$) the union of even (resp. odd) $(k-1)$-faces of $u$, in other words
\[\partial_+u\bydef \bigcup_{0\leq i\leq n, i\; \mathrm{even}} u\circ \delta_i,\;\;\;\;\; \partial_-u\bydef \bigcup_{0\leq i\leq n, i \;\mathrm{odd}} u\circ \delta_i.\]
For a set $S\bydef\{u_j\}_{j\in J}$ of $k$-faces we define $\partial_\pm$ to be the union of $\partial_\pm u_j$ over $J$.\par
Now we define the set of $k$-morphisms to be the set of sequences 
\[x\bydef(x_0^+,x_0^-|x_1^-, x_1^+|...|x_k^+,x_k^-)\]
in which $x_i^\pm$ are (possibly empty) sets of $i$-faces of $[n]$ such that
\begin{itemize}
    \item $x_0^\pm$ are singletons;
    \item No object $u\in x_i^\pm$ belongs to $\partial_+ u_1$ and $\partial_+ u_2$ (resp. $\partial_- u_1$ and $\partial_- u_2$) for a pair of distinct objects  $u_{1,2}$ of $x_{i+1}^+$ or $x_{i+1}^-$;\par
    \item $\partial_+x_{i+1}^+\bigcup\partial_- x_{i+1}^+\backslash (\partial_+x_{i+1}^+\bigcap\partial_- x_{i+1}^+)\cong \partial_+x_{i+1}^-\bigcup\partial_- x_{i+1}^-\backslash (\partial_+x_{i+1}^-\bigcap\partial_- x_{i+1}^-)\cong x_i^+\bigcup x_i^-$.
\end{itemize}
For a positive number $p\leq k$ we define the $p$th source $d_p^- x$  (resp. $p$th target $d_p^+x$) to be given by 
\[(x_0^+,x_0^-|x_1^-, x_1^+|...|x_{p-1}^+,x_{p-1}^-|x_p^-,x_p^-)\]
(resp. by \[(x_0^+,x_0^-|x_1^-, x_1^+|...|x_{p-1}^+,x_{p-1}^-|x_p^+,x_p^+)).\]
If $d^+_p x\cong d^-_p y\cong w$, then we define $y*_p x$ to be given by 
\[(w_0^-,w_1^+|...|w_{p-1}^-,w_{p-1}^+|x_p^-\cup y_p^-,x_p^+\cup y_p^+|...|x_k^-\cup y_k^-,x_k^+\cup y_k^+).\]\par
Now, assume that we are given an $n$-truncated simplicial space $X_\bullet:\Delta^\op_{\leq n}\rightarrow\cS$, we will associate to it an $n$-category $n-\cat(X_\bullet)$. To do this, first observe that to any $k$-morphism $x$ of $\orr(n)$ we can associate a simplicial set $s(x)$ given by the union of all faces contained in $x$. Similarly, for any composable sequence of morphisms $a\in \orr(n)_{\overline{m}}$ for $[\overline{m}]\bydef([m_1],[m_2],...,[m_n])\in(\Delta^\op)^n$ (here we view $\orr(n)$ as a functor $(\Delta^\op)^n\rightarrow\cS$) we define $s(a)$ to be the simplicial set associated to their composition. Given a $p$-composable pair of $k$-morphism $x_1$ and $x_2$ observe that
\[s(x_1*_p x_2)\cong s(x_1) \coprod_{s(d+_p(x_1))} s(x_2).\]
Iterating this observation we get that for $a\in \orr(n)_{\overline{m}}$ we have
\[s(a)\cong\underset{(i:[\overline{m}]\rightarrowtail e)\in(\Delta^{\op, n})^\el_{[\overline{m}]/}}{\colim} s(i_![\overline{m}]).\]
Now consider a functor $F:(\Delta^\op)^n\rightarrow\cat$ sending $[\overline{m}]$ to the category whose objects are given by $a\in \orr(n)_{\overline{m}}$ and whose morphisms are given by \textit{surjective} morphisms between $s(a)$. Observe that, if for a $p$-composable pair of $k$-morphisms $x_{1,2}$ we are given surjective morphisms $f:s(x_1)\rightarrow s(y_1)$ and $g:s(x_2)\rightarrow s(y_2)$ that agree on $d^+_p x_1$, we can define a surjective morphism
\[f\sqcup g:s(x_1)\coprod_{s(d+_p(x_1))} s(x_2)\rightarrow s(y_1)\coprod_{s(d+_p(y_1))} s(y_2) .\]
It follows that \[F([1],...,\overset{p}{[2]},...,\overset{k}{[1]},...,[0])\cong [F([1],...,\overset{p}{[1]},...,\overset{k}{[1]}...,[0])\times_{[F([1],...,\overset{p}{[1]},...,[0])}[F([1],...,\overset{p}{[1]},...,\overset{k}{[1]},...,[0]).\]
By iterating this statement we get 
\[F([\overline{m}])\cong\underset{([\overline{m}]\rightarrowtail e)\in(\Delta^{\op, n})^\el_{[\overline{m}]/}}{\lim} F(e).\]
Finally, we define the functor $n-\cat(X_\bullet):(\Delta^\op)^n\rightarrow \cS$ by 
\[n-\cat(X_\bullet)([\overline{m}])\bydef \underset{a\in F([\overline{m}])^\op}{\colim}\mor_{\cP(\Delta_{\leq n})}(s(a), X_\bullet).\]
Combining the observations made above we see that
\begin{align*}
    n-\cat(X_\bullet)([\overline{m}])\bydef& \underset{a\in F([\overline{m}])^\op}{\colim}\mor_{\cP(\Delta_{\leq n})}(s(a), X_\bullet)\\
    \cong& \underset{a\in \underset{([\overline{m}]\rightarrowtail e)\in(\Delta^{\op, n})^\el_{[\overline{m}]/}}{\lim} F(e)^\op}{\colim}\mor_{\cP(\Delta_{\leq n})}(\underset{(i:[\overline{m}]\rightarrowtail e)\in(\Delta^{\op, n})^\el_{[\overline{m}]/}}{\colim} s(i_![\overline{m}]), X_\bullet)\\
    \cong & \underset{a\in \underset{([\overline{m}]\rightarrowtail e)\in(\Delta^{\op, n})^\el_{[\overline{m}]/}}{\lim} F(e)^\op}{\colim}\underset{(i:[\overline{m}]\rightarrowtail e)\in(\Delta^{\op, n})^\el_{[\overline{m}]/}}{\lim}\mor_{\cP(\Delta_{\leq n})}( s(i_![\overline{m}]), X_\bullet)\\
    \cong &\underset{(i:[\overline{m}]\rightarrowtail e)\in(\Delta^{\op, n})^\el_{[\overline{m}]/}}{\lim} \underset{a\in F(e)^\op}{\colim}\mor_{\cP(\Delta_{\leq n})}(s(i_!a), X_\bullet)\\
    \cong& \underset{(i:[\overline{m}]\rightarrowtail e)\in(\Delta^{\op, n})^\el_{[\overline{m}]/}}{\lim} n-\cat(X_\bullet)(e).
\end{align*}
It follows that $n-\cat(X_\bullet)$ is indeed an $n$-category.\par
Observe that for every morphism $X_\bullet\rightarrow Y_\bullet$ of $n$-truncated simplicial spaces there is an induced morphism $n-\cat(X_\bullet)\rightarrow n-\cat(Y\bullet)$. In particular, for a simplicial object $X_\bullet:\Delta^\op\rightarrow\cS$ we have morphisms $n-\cat(X_{\leq s})\rightarrow n-\cat(X_{\leq s+1})$ for all $s\geq 0$, and we define
\[\infty-\cat(X_\bullet)\bydef\colim (0-\cat(X_{\leq0})\rightarrow1-\cat(X_{\leq1})\rightarrow...s-\cat(X_{\leq s})\rightarrow...),\]
where the colimit is taken in the $\infty-\cat$.
\end{construction}
\begin{remark}
The $\infty$-category $\infty-\cat(X_\bullet)$ can essentially be described as a free $\infty$-category on a simplicial space: its objects are the elements of $X_0$, its (non-unital) morphisms are given by composable sequences of nondegenerate elements in $X_1$, for every nondegenerate $x_2\in X_2$ there is a 2-morphism $\delta_2 x_2\circ \delta_0 x_2\rightarrow \delta_1 x_2$ etc. In general, $k$-morphisms in $\infty-\cat(X_\bullet)$ are generated under composition by $k$-simplices of $X_\bullet$, with degenerate $k$-simplices corresponding to identity morphisms.
\end{remark}
\begin{notation}\label{not:localization}
Denote by $p_1:(\Delta^\op)^\infty\rightarrow\Delta^\op$ the projection to the first coordinate. For a simplicial space $X_\bullet$ we denote 
\[\widetilde{L_\cat}(X_\bullet)\bydef p_{1,!} (\infty-\cat (X_\bullet)).\]
In particular, we see that
\[\widetilde{L_\cat}(X_\bullet)_0\cong \underset{(\Delta^\op)^\infty}{\colim}X_0\cong X_0.\]
More generally, observe that we have the following sequence of isomorphisms
\begin{align*}
    \widetilde{L_\cat}(X_\bullet)_n\cong&\underset{[\overline{m}]\in\Delta^{\infty,\op}}{\colim}\infty-\cat(X_\bullet)([n],[\overline{m}])\\
    \cong &\underset{[\overline{m}]\in\Delta^{\infty,\op}}{\colim}\infty-\cat(X_\bullet)([1],[\overline{m}])\times_{X_0}...\times_{X_0}\infty-\cat(X_\bullet)([1],[\overline{m}])\\
    \cong & \underset{([\overline{m}_i])|_{i\in\{1,2,...,n\}}\in(\Delta^{\infty,\op})^n}{\colim}\infty-\cat(X_\bullet)([1],[\overline{m}_1])\times_{X_0}...\times_{X_0}\infty-\cat(X_\bullet)([1],[\overline{m}_n])\\
    \cong & \underset{[\overline{m}_1]\in\Delta^{\infty,\op}}{\colim}\infty-\cat(X_\bullet)([1],[\overline{m}_1])\times_{X_0}...;\times_{X_0}\underset{[\overline{m}_n]\in\Delta^{\infty,\op}}{\colim}\infty-\cat(X_\bullet)([1],[\overline{m}_n])\\
    \cong & \widetilde{L_\cat}(X_\bullet)_1\times_{\widetilde{L_\cat}(X_\bullet)_0}...\times_{\widetilde{L_\cat}(X_\bullet)_0}\widetilde{L_\cat}(X_\bullet)_1,
\end{align*}
so it follows that $\widetilde{L_\cat}(X_\bullet)$ is a Segal space.
\end{notation}
\begin{prop}\label{prop:localization}
Denote by $L_\cat:\mor_\cat(\Delta^\op,\cS)\rightarrow\cat$ the localization functor, then there is an isomorphism
\[L_\cat\cong\widetilde{L_\cat}.\]
\end{prop}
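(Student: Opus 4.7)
The plan is to verify that $\widetilde{L_\cat}$ satisfies the defining property of the Segal localization, namely that it is left adjoint to the inclusion $\cat \hookrightarrow \mor_\cat(\Delta^\op,\cS)$. Since $\widetilde{L_\cat}(X_\bullet)$ has already been shown in \cref{not:localization} to be a Segal space, the remaining content is to exhibit a unit $\eta_X:X_\bullet\to\widetilde{L_\cat}(X_\bullet)$ and to check that precomposition with $\eta_X$ yields an equivalence $\mor(\widetilde{L_\cat}(X_\bullet),Y)\xrightarrow{\sim}\mor(X_\bullet,Y)$ for every Segal space $Y$. By uniqueness of adjoints this suffices to identify $\widetilde{L_\cat}$ with $L_\cat$, naturally in $X_\bullet$.

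The unit map is produced directly from the construction of $\infty-\cat(X_\bullet)$ in \cref{constr:orientals}: each $n$-simplex $x\in X_n$ gives rise to a generating $n$-morphism in the free $\infty$-category, and the spine of its $1$-dimensional faces assembles into an $n$-composable sequence. Projecting out all but the first coordinate via $p_{1,!}$ then gives a map of simplicial spaces that agrees with the identity on $X_0$, as recorded in \cref{not:localization}.

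For the universal property I would use the adjunction $p_{1,!}\dashv p_1^*$ to obtain
\[
\mor_\cat(\widetilde{L_\cat}(X_\bullet),Y)\cong \mor(\infty-\cat(X_\bullet),\,p_1^*Y),
\]
and then exploit the fact that $p_1^*Y$ is constant in the higher coordinates, so that mapping into it records only the $1$-dimensional data of the target. The main technical lemma is then the claim that for every $k\geq 0$ and every Segal space $Y$ the natural comparison
\[
\mor_{\infty-\cat}(\orr(k),\,p_1^*Y)\xrightarrow{\sim} Y_k
\]
is an equivalence. Granted this, the inductive construction of $n-\cat(X_{\leq n})$ as a colimit of orientals glued along their $d^\pm_p$-boundary (cf.\ \cref{constr:orientals}) immediately yields $\mor(n-\cat(X_{\leq n}),p_1^*Y)\cong \mor(X_{\leq n},Y_{\leq n})$, and taking the colimit in $n$ produces the desired identification.

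The hard part is the displayed lemma about $\orr(k)$. The content is that higher-dimensional oriental cells impose no additional constraint when the target is a Segal space, because in such a target any parallel pair of $j$-morphisms for $j\geq 2$ has a contractible space of fillers. I expect to prove it by induction on $k$, using the pushout decomposition of $\orr(k)$ implicit in the source/target operators $d^\pm_p$ of \cref{constr:orientals} to reduce the statement for $\orr(k)$ to the statements for lower orientals, and invoking the truncation behaviour of $p_1^*$ on Segal spaces to kill the higher cells at each step. Once this lemma is in place, the chain of equivalences above is manifestly natural in $Y$, and so canonically identifies $\widetilde{L_\cat}$ with $L_\cat$.
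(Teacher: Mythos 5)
Your route is genuinely different from the paper's. You propose to verify the universal property of the reflection directly: construct a unit $\eta_X\colon X_\bullet\to\widetilde{L_\cat}(X_\bullet)$, pass through the adjunction $p_{1,!}\dashv p_1^*$, and reduce everything to the comparison $\mor_{\infty-\cat}(\orr(k),p_1^*Y)\xrightarrow{\sim}Y_k$ for Segal spaces $Y$. The paper instead argues abstractly: $L_\cat$ is an idempotent monad, and an idempotent monad is uniquely determined by its essential image as the reflection onto that full subcategory; so it suffices to show that $\widetilde{L_\cat}$ lands in $\cat$ (already done in \cref{not:localization}) and that $\widetilde{L_\cat}\cC\cong\cC$ for $\cC\in\cat$. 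The latter is checked by inspecting $\infty-\cat(\cC)([1],[\overline{m}])$ directly: its mapping spaces are orientals $\orr(s-2)$, which are contractible, so every composable string admits an essentially unique map to its composite and the colimit over $[\overline{m}]$ collapses onto $\cC_1$. The payoff of the paper's approach is that it never needs to compute mapping spaces \emph{out of} orientals into an arbitrary Segal space, only the contractibility of the orientals themselves; your approach, if completed, would yield the sharper and independently useful statement that the oriental nerve of a Segal space recovers its underlying simplicial space.

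That said, as written your argument has a real gap concentrated exactly where you say the hard part is. Two steps are asserted rather than proved. First, you treat $n-\cat(X_{\leq n})$ as "a colimit of orientals glued along their $d_p^\pm$-boundary", but \cref{constr:orientals} defines it by the formula $n-\cat(X_\bullet)([\overline{m}])\cong\colim_{a\in F([\overline{m}])^\op}\mor_{\cP(\Delta_{\leq n})}(s(a),X_\bullet)$; identifying this with the left Kan extension $\colim_{\Delta^k\to X_\bullet}\orr(k)$ is plausible but requires an argument. Second, the lemma $\mor_{\infty-\cat}(\orr(k),p_1^*Y)\simeq Y_k$ is the entire content of the proposition in disguise, and "induction on $k$ using the pushout decomposition of $\orr(k)$" is not a proof: the orientals do not decompose as simple pushouts of lower orientals along spheres, and one must actually track which cells of $\orr(k)$ impose conditions versus contractible data when the target is constant in the higher simplicial directions. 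Until that lemma is established, the chain of equivalences does not close. If you want to keep your strategy, the most economical fix is to import the paper's observation: for a Segal space target the relevant indexing categories have contractible mapping spaces given by orientals, and contractibility of $\orr(m)$ is what you should isolate and use, rather than a general mapping-space computation out of $\orr(k)$.
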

\begin{proof}
The functor $L_\cat$ is an idempotent monad on $\mor_\cat(\Delta^\op,\cS)$. The category of algebras for an idempotent monad $L$ on a category $\cC$ is a full reflective subcategory $i:\cI\hookrightarrow\cC$ on objects of the form $Lc$, and moreover, the monad itself can be uniquely reconstructed from $\cI$ as the left adjoint to the inclusion $i$. In the argument that follows we will prove that $\widetilde{L_\cat}$ is an idempotent monad and that its category of algebras is isomorphic to $\cat$, thus proving the claim in the proposition.\par
First, by the observations made at the end of \cref{not:localization} we have that \[\widetilde{L_\cat}\mor(\Delta^\op,\cS)\subset\cat.\]
We will now prove that $\widetilde{L_\cat}\cC\cong\cC$ for $\cC\in\cat$. This will conclude the proof of the proposition since it would follow that $\widetilde{L_\cat}$ is idempotent and that 
\[\cat\subset \widetilde{L_\cat}\mor(\Delta^\op,\cS).\]
Now, observe that $\infty-\cat(\cC)([1],[\overline{m}])$ is an $\infty$-category whose objects are identity arrows and composable sequences \[(c_0\xrightarrow{f_1}c_1\xrightarrow{f_2}...\xrightarrow{f_m}c_m)\]
of non-unital morphisms in $\cC$ and 
\[\mor_{\infty-\cat(\cC)([1],[\overline{m}])}((c_0\xrightarrow{f_1}...\xrightarrow{f_m}c_m),(c_0\xrightarrow{f_1}...\xrightarrow{f_k}c_k\xrightarrow{f_{k+s-1}\circ...\circ f_k}c_{k+s}\xrightarrow{f_{k+s}}...\xrightarrow{f_m}c_m))\cong \orr(s-2).\]
It follows that $\cC_1$ is a full subcategory of $\infty-\cat(\cC)([1],[\overline{m}])$ and moreover (since all $\orr(m)$ are contractible) every object $(c_0\xrightarrow{f_1}...\xrightarrow{f_m}c_m)$ admits a unique-up-to-homotopy morphism to $(c_1\xrightarrow{f_m\circ...\circ f_1}c_m)\in\cC_1$, and so we indeed have $\widetilde{L_\cat}\cC\cong\cC$.
\end{proof}
\begin{construction}\label{construction:simplicial_square}
For a given $S\in\cfact$ we will denote by $\sq^\Delta(S)$ the following simplicial set: its set of $m$-simplices is the set of morphisms $[m]\times[m]\rightarrow \sq(S)$ in $\fact$. More explicitly, it can be identified with a sequence of morphisms \[x_0\xhorizontal{h_1}x_1\xhorizontal{h_2}...\xhorizontal{h_m}x_m\xvertical{v_1}x_{m+1}\xvertical{v_2}...\xvertical{v_m}x_{2m}\]
in $\sq(S)$ considered as a category with a factorization system. The degeneracy maps $\sigma_j$ then act by sending $(h_1,...,h_m;v_1,...,v_m)$ to $(h_1,...,h_j,\id,..., h_m;v_1,...,v_j,\id,...,v_m)$ and $\delta_i$ sends it to $(h_1,...,h_{i+1}\circ h_i,..., h_m;v_1,...,v_{i+1}\circ v_i,...,v_m)$.
\end{construction}
\begin{prop}\label{prop:square_contractibility}
$\sq^\Delta(S)$ are contractible simplicial sets.
\end{prop}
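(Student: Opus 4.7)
The plan is to identify $\sq^\Delta(S)$ with a concrete subsimplicial set of a product of standard simplices and then contract its geometric realization by a straight-line homotopy. Let $n_S$ and $m_S$ denote the total numbers of horizontal and vertical elementary morphisms appearing in $S$, so that $\sq(S)\subset[n_S]\times[m_S]$, and let $\varphi:[n_S]\to[m_S]$ be the corresponding height function, i.e.\ $\sq(S)=\{(i,j):\varphi(i)\leq j\}$.

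The first step is to observe that any morphism $F:[m]\times[m]\to\sq(S)$ in $\fact$ must factor coordinatewise as $F(i,j)=(f(i),g(j))$ for monotone maps $f:[m]\to[n_S]$ and $g:[m]\to[m_S]$. Indeed, via \cref{rem:products} the horizontal morphisms of $[m]\times[m]$ only change the first coordinate and the vertical morphisms only change the second; preservation of the factorization system under $F$ then forces the same property in $\sq(S)\subset[n_S]\times[m_S]$. By monotonicity of $\varphi$, $f$, and $g$, the pointwise membership condition $(f(i),g(j))\in\sq(S)$ reduces to the single inequality $\varphi(f(m))\leq g(0)$. This identifies $\sq^\Delta(S)$ with the subsimplicial set of $\Delta^{n_S}\times\Delta^{m_S}$ cut out by this inequality, and face and degeneracy maps manifestly preserve it.

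Passing to geometric realizations, $|\sq^\Delta(S)|$ becomes the subspace of $|\Delta^{n_S}|\times|\Delta^{m_S}|$ consisting of those pairs $(p,q)$ satisfying $\varphi(\max\mathrm{supp}(p))\leq\min\mathrm{supp}(q)$, where $\mathrm{supp}$ denotes the set of indices at which the barycentric coordinates of the given point are strictly positive. The vertex $(e_0,e_{m_S})$ lies in $|\sq^\Delta(S)|$ since $\varphi(0)=0\leq m_S$, and the plan is to contract $|\sq^\Delta(S)|$ onto it by the straight-line homotopy
\[H\bigl((p,q),t\bigr)\bydef\bigl((1-t)p+te_0,\,(1-t)q+te_{m_S}\bigr).\]
The key verification is that $H$ stays inside $|\sq^\Delta(S)|$ at every time $t\in[0,1]$: for $t\in(0,1)$, adjoining the vertex $0$ to $\mathrm{supp}(p)$ does not raise its maximum, and adjoining $m_S$ to $\mathrm{supp}(q)$ does not lower its minimum, so the defining inequality is preserved throughout the homotopy.

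The main algebraic obstacle is establishing the coordinatewise description of morphisms in the first step, which really uses the specific shape of the factorization systems on both $[m]\times[m]$ and $\sq(S)\subset[n_S]\times[m_S]$; once this reduction is in place, the topological contraction is essentially automatic.
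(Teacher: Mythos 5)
Your proof is correct, and it takes a genuinely different route from the paper's. You describe $\sq^\Delta(S)$ once and for all as the simplicial subset of $\Delta^{n_S}\times\Delta^{m_S}$ cut out by the single inequality $\varphi(f(m))\leq g(0)$ (the coordinatewise splitting $F(i,j)=(f(i),g(j))$ is indeed forced by preservation of the factorization system, and is essentially the ``more explicit'' description already given in \cref{construction:simplicial_square}), and then contract the realization by a straight-line homotopy onto the top-left vertex $(e_0,e_{m_S})$; the key observation that convex combination only enlarges supports, so that $\max\mathrm{supp}(p)$ cannot increase upon adjoining $0$ and $\min\mathrm{supp}(q)$ cannot decrease upon adjoining $m_S$, checks out, and the choice of the top-left rather than the final vertex is exactly what makes the inequality stable under the homotopy. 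The one step you state without proof --- that a point $(p,q)$ of $|\Delta^{n_S}|\times|\Delta^{m_S}|$ lies in $|\sq^\Delta(S)|$ if and only if $\varphi(\max\mathrm{supp}(p))\leq\min\mathrm{supp}(q)$ --- is correct and deserves a sentence: the carrier of $(p,q)$ in the standard triangulation of the product is a chain whose coordinate projections are precisely $\mathrm{supp}(p)$ and $\mathrm{supp}(q)$, and membership of a chain in $\sq^\Delta(S)$ depends only on the maximum of the first projection and the minimum of the second. The paper argues differently: it first identifies $\sq^\Delta([n]\times[l])$ with the nerve $N([n]\times[l])$, contractible because of the final object, and then for general $S$ runs a chain-level computation in simplicial homology, taking a cycle in $C_\bullet(\sq^\Delta(S))$, bounding it in the ambient contractible complex, and correcting the bounding chain term by term so that it lies in $C_\bullet(\sq^\Delta(S))$, before invoking the Hurewicz theorem. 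Your argument is shorter, produces an explicit contraction rather than mere acyclicity, and in particular does not need the simple-connectivity hypothesis that a literal application of Hurewicz requires and which the paper's proof leaves implicit; the paper's approach stays entirely combinatorial at the level of chains, which one might prefer if geometric realizations are to be avoided.
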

\begin{proof}
We will first prove the claim in the case $\sq(S)\cong[n]\times [l]$ for some $n\geq0$ and $l\geq 0$. Denote by $N([n]\times[l])$ the simplicial nerve of the (ordinary) category $[n]\times[l]$. Using the fact that $[n]\times[l]$ admits a factorization system, we can identify its set of $m$-simplices with the set of sequences 
\[y_0\xhorizontal{\widetilde{h}_1}y_1\xvertical{\widetilde{v}_1}y_2\xhorizontal{\widetilde{h}_2}...\xhorizontal{\widetilde{h}_m}y_{2m-1}\xvertical{\widetilde{v}_m}y_{2m}\]
in $[n]\times[l]$. The set of such sequences can be equivalently described as the set of sequences of positive integers of the form $(0\leq i_1\leq...\leq i_m\leq n,0\leq j_1\leq ...\leq j_m\leq l)$. However, it is easy to see that exactly the same data also describes an element of $\sq^\Delta([n]\times[l])_m$. Moreover, an elementary inspection reveals that $\delta_i$ and $\sigma_j$ act the same way on $\sq^\Delta([n]\times[l])$ and $N([n]\times[l])$, so those simplicial sets are isomorphic. Finally, observe that $N([n]\times[l])$ is contractible since $[n]\times[l]$ admits a final object $(n,l)$.\par
Now assume that $S$ is a general object of $\cfact$. By definition there exist $n\geq 0$ and $l\geq0$ such that $S$ can be identified with a path from $(0,0)$ to $(n,l)$ in $[n]\times[l]$ and $\sq(S)$ can be identified with the full subcategory of $[n]\times[l]$ on objects lying above $S$. We will prove that the simplicial homology $H_i(\sq^\Delta(S))$ vanishes for $i>0$, the claim would then follow from the Hurewicz theorem. Observe that for $n$ and $l$ as above the simplicial chain complex $C_\bullet(\sq^\Delta(S))$ can be identified with a subcomplex of $C_\bullet(\sq^\Delta([n]\times[l]))$. In particular, since $\sq^\Delta([n]\times[l])$ is contractible we see that for every chain $c\in C_m(\sq^\Delta(S))$ such that $\partial c=0$ there exists $c'\in C_{m+1}(\sq^\Delta([n]\times[l]))$ such that $\partial c'=c$. We will temporarily denote for $k\geq0$ by $C_k^+$ the subcomplex of $C_{k}(\sq^\Delta([n]\times[l]))$ on those $f:[k]\times[k]\rightarrow[n]\times[l]$ whose image belongs to $\sq(S)$ (so $C_k^+$ is isomorphic to $ C_k(\sq^\Delta(S))$) and by $C_k^-$ the subcomplex on those $f$ whose image does not belong to $\sq(S)$. We then have a decomposition $c'=c'_+ +c'_-$ with $c'_\pm\in C_{m+1}^\pm$. Our goal now is to prove that , by possibly replacing $c'$ with $c''\bydef c'+\partial \widetilde{c}$ (so that $\partial c''=\partial c'$), we can ensure $\partial c''_-=0$, since if this is true, then we can further replace $c''$ with $(c''-c''_-)\in C_{m+1}(\sq^\Delta(S))$ which will prove that $H_m(\sq^\Delta(S))=0$.\par
We can further decompose $\partial c'_\pm=(\partial c'_\pm)_+ +(\partial c'_\pm)_-$. Observe that $(\partial c'_+)_-=0$. Since $c\in c_m^+$, we see that 
\[(\partial c')_-=(\partial c'_+)_- +(\partial c'_-)_-=(\partial c'_-)_-=0.\]
It follows that it suffices to make $(\partial c''_-)_+=0$. Recall that by definition $\partial c'_-=\sum_{i=0}^{m+1} \delta_i c'_-$. Observe that, if $c'\in C_{m+1}^-$, then $\delta_i c'\in C_m^-$ for $0<i<m+1$. Indeed, an element $f:[m]\times[m]\rightarrow[n]\times[l]$ belongs to $C_m^-$ if and only if $f(m,0)$ lies below $S$ in $[n]\times[l]$ and it is easy to see that $f(m,0)=\delta_i f(m,0)$ for $0<i<m+1$. We denote by $C^l_{m+1}$ (resp. $C^r_{m+1}$) the subcomplex of $C^-_{m+1}$ on those $a$ for which $\delta_0 a $ (resp. $\delta_{m+1}a$) belongs to $C^+_m$ and by $C^l_{m+1}(c')$ (resp. $C^r_{m+1}(c')$) the set of those basis elements $a\in C^r_{m+1}$ (resp. $a\in C^l_{m+1}$) that appear in $c'$ with nonzero coefficient, then we have
\[(\partial c'_-)_+ =\sum_{a\in C^l_{m+1}(c')} c_a \delta_0 a + (-1)^{m+1}\sum_{a'\in C^r_{m+1}(c')} c_{a'} \delta_{m+1} a',\]
where $c_a\in\bZ$ are coefficients. We will finish our proof by showing that by replacing $c'$ with $c''= c'+\partial \widetilde{c}$ we can reduce the number of elements of $C^l_{m+1}(c'')\bigcup C^r_{m+1}(c'')$ to zero. Indeed, assume that $a\in C^l_{m+1}(c')$, then denote $\widetilde{a}\bydef \sigma_0 a\in C_{m+2}([n]\times[l])$. Observe that $\delta_0 \widetilde{a}=a$ and $\delta_i \widetilde{a}$ do not lie in $C^l_{m+1}$ for $i>0$. It follows that for $\widehat{c}\bydef c'-\partial(c_a a)$ the set $C^l_{m+1}(\widehat{c})$ has one less element than $C^l_{m+1}(c')$. Also observe that if $a\notin C^r_{m+1}$ then none of the elements $\delta_i \widetilde{a}$ are either, so we can use the procedure above to remove elements from $C^l_{m+1}(c')$ without enlarging $C^r_{m+1}(c')$. Finally, observe that, after we are done with $C^l_{m+1}(c')$, we can reduce the size of $C^r_{m+1}(c')$ to zero using a very similar procedure except for setting $\widetilde{a}\bydef \sigma_{m+1}a$.
\end{proof}
\begin{construction}\label{constr:lax_square}
For $m\geq1$ we will define certain $m$-categories $\openbox^\lax_m$. We first denote by $\cW_m$ the walking $m$-morphism. More explicitly, we define $\cW_1$ to be the category $[1]$ and inductively define $\cW_m$ to be the category with two objects $0$ and $1$ with no non-trivial endomorphisms such that
\[\mor_{\cW_m}(0,1)\cong \cW_{m-1}.\]
We will also denote by $\cW_0$ the singleton category $\{0\}$.\par
Now we begin describing $\openbox^\lax_m$. First, we set $\openbox^\lax_0\bydef [1]$. We then define $\openbox^\lax_1$ to be the category with four objects $\{(i,j)\}_{i,j\in\{0,1\}}$ that have no non-trivial endomorphisms and such that \[\mor_{\openbox^\lax_1}((0,0),(0,1))\cong\mor_{\openbox^\lax_1}((1,0),(1,1))\cong \mor_{\openbox^\lax_1}((0,0),(1,0))\cong \mor_{\openbox^\lax_1}((0,1),(1,1))\cong \pt\]
and $\mor_{\square^\lax_1}((0,0),(1,1))\cong[1]$. The composition operation sends the unique composable pair of morphisms from $(0,0)$ to $(1,1)$ passing through $(0,1)$ to $\{0\}\in[1]$ and the one passing through $(1,0)$ to $\{1\}$. We also define the morphisms $k^1_{i}:\cW_1\cong[1]\rightarrow \openbox^\lax_1$ for $i\in\{0,1\}$ to be the inclusions of the full subcategories generated by $(i,0)$ and $(i,1)$. Now assume that we have already defined $\openbox^\lax_{s}$ for $s<m$ as well as inclusions $k^s_i:\cW_i\hookrightarrow\openbox^\lax_s$, we define $\openbox^\lax_m$ to be the $m$-category with four objects such that
\[\mor_{\openbox^\lax_m}((0,0),(1,0))\cong \mor_{\openbox^\lax_m}((0,1),(1,1))\cong \pt,\]
\[\mor_{\openbox^\lax_m}((0,0),(0,1))\cong \mor_{\openbox^\lax_m}((1,0),(1,1))\cong \cW_{m-1}\]
and $\mor_{\openbox^\lax_m}((0,0),(1,1))\cong \openbox^\lax_{m-1}$, with the composition operators sending the upper (resp. lower) copy of $\cW_{m-1}$ into $\openbox^\lax_{m-1}$ by means of $k^{m-1}_0$ (resp. $k^{m-1}_1$). We define $k^m_{0,1}:\cW_m\hookrightarrow \openbox^\lax_m$ to be the inclusions of the full subcategories on $(0,0)$ and $(0,1)$ and on $(1,0)$ and $(1,1)$. \par
We will also fix notation for certain natural inclusions among $\cW_i$ and $\openbox^\lax_j$. Denote by $j^1_k$ for $k\in\{0,1\}$ the natural inclusions $\cW_0\cong\{k\}\hookrightarrow \cW_1\cong [1]$; we claim that it can be extended to a system of morphisms $j^m_k:\cW_m\rightarrow \cW_{m+1}$. Indeed, assuming we have already defined $j^{i<m}_k$, we define $j^m_k$ to be identity on objects and to be given by $j^{m-1}_k$ on the category of morphisms from $0$ to $1$. Observe that we also have natural inclusions $i^0_k:\openbox^\lax_0\hookrightarrow \openbox^\lax_1$ for $k\in\{0,1\}$ sending $[1]$ isomorphically onto a full subcategory of $\openbox^\lax_1$ containing $(k,0)$ and $(k,1)$. Now, define $i^m_k$ to be given by the identity on objects and morphisms from $(0,0)$ to $(1,0)$ and from $(0,1)$ to $(1,1)$, by $j^{m-1}_k$ on morphisms from $(0,0)$ to $(0,1)$ and from $(1,0)$ to $(1,1)$ and by $i^{m-1}_k$ on morphisms from $(0,0)$ to $(1,1)$. Finally, we denote by $p^m_k$ the natural inclusions $[1]\hookrightarrow\openbox^\lax_m$ mapping $[1]$ isomorphically onto the full subcategory generated by $(0,k)$ and $(1,k)$.
\end{construction}
\begin{construction}\label{constr:elementary_lax}
For a given object $S\in\cfact$ we will temporarily denote by $\widetilde{\cL^\lax\sq(S)}$ the following bicategory: its objects are the objects of $\sq(S)$, a morphism from $(i,j)$ to $(i',j')$ is given by a morphism $f:\sq(S')\rightarrow\sq(S)$ in $\cfact^\op$ such that $f(0,0)=(i,j)$ and $f(n',m')=(i',j')$. A 2-morphism is given by a commutative diagram
\[
\begin{tikzcd}[row sep=huge, column sep=huge]
\sq(S')\arrow[dr, "f"]\arrow[rr, "s"]&{}&\sq(S'')\arrow[dl, "g" swap]\\
{}&\sq(S)
\end{tikzcd}
\]
in $\cfact^\op$ in which $s$ is a morphism in $\cfact^\ap$ in the notation of \cref{cor:active_permutation}.
\end{construction}
\begin{prop}\label{prop:lax_for_objects_of_fact}
For every $S\in\cfact$ there is an equivalence
\[\widetilde{\cL^\lax\sq(S)}\cong \cL^\lax\sq(S)\]
\end{prop}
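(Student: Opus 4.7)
The plan is to combine the explicit formula $L^\lax\sq(S) \cong \sq(S) \times_{\cfact} \arr^\act(\cfact)$ of \cref{prop:lax_functor1} with the colimit description of $\cL$ provided by \cref{lem:colimit_lax_factorization} and \cref{cor:L-equivalence}, and then to match the resulting bicategory level-by-level against the explicit model $\widetilde{\cL^\lax\sq(S)}$. Since both constructions are manifestly functorial in $S$ and agree on objects (neither $L^\lax$ nor $\cL$ alters the underlying space of 0-cells), the work is entirely about identifying 1- and 2-morphism spaces.

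First I would pick objects $(i,j),(i',j')\in\sq(S)$ and use Corollary~\ref{cor:L-equivalence} to rewrite the hom-category $\mor_{\cL^\lax\sq(S)}((i,j),(i',j'))$ as a colimit, indexed by $(S',l)\in\Theta_2^{\cfact}$ together with a map $\sq(S',l)\to L^\lax\sq(S)$ sending the initial and final vertices of $S'$ to $(i,j)$ and $(i',j')$ respectively, of the mapping spaces into $\sq^\lax(S';\overline h;\overline v)$. Unwinding $L^\lax\sq(S)\cong\sq(S)\times_{\cfact}\arr^\act(\cfact)$, such a datum is precisely a morphism $\sq(S')\to\sq(S)$ in $\cfact^{\op}$ decorated with an active factorization. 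Using \cref{lem:double_active} and cofinality of the identity active morphism, the active factorization decoration contracts away, leaving us with exactly the morphisms of $\widetilde{\cL^\lax\sq(S)}$ as 1-cells.

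Next I would identify the 2-cells. A 2-cell in $\cL^\lax\sq(S)$ between two such $f,g:\sq(S')\rightrightarrows\sq(S)$ comes, by the colimit description, from a zig-zag inside the indexing diagram, hence from a morphism $s:\sq(S')\to\sq(S'')$ in $\cfact^{\op}$ that is compatible with the endpoint data. The structure of $\sq^\lax$ together with the Gray product decomposition from \cref{not:gray_product} and \cref{cor:gray_colimit} then shows that the contribution of $s$ to the 2-morphism space is nontrivial exactly when the inert part of the active/inert factorization of $s$ is a permutation, i.e.\ when $s\in\cfact^{\ap}$ in the sense of \cref{cor:active_permutation}. The active part of $s$ corresponds to a surjective composition step, which in $\sq^\lax$ produces an identity 2-cell, while the permutation $\gamma_{i,j}$'s encode exactly the Gray-type commutator 2-cells that $\cL$ adjoins on top of $L^\lax\sq(S)$. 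This matches the definition of 2-cells in $\widetilde{\cL^\lax\sq(S)}$.

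The main obstacle will be verifying that composition of 1- and 2-cells matches on both sides. Composition in $\widetilde{\cL^\lax\sq(S)}$ is by gluing strings along matched endpoints, whereas on the $\cL^\lax$ side it is controlled by the cocartesian pushforward structure of $L^\lax\sq(S)\to\cfact$ combined with the colimit from \cref{lem:colimit_lax_factorization}. Using \cref{prop:generators} to reduce to elementary generators $\sigma^{v,h}_j,\delta^{v,h}_i,\gamma_{i,j}$ and then invoking the relations \eqref{eq:one}--\eqref{eq:five} together with the relative Segal condition of \cref{prop:factorization_theory}, this compatibility check reduces to finitely many explicit square identities, each of which is routine once the generator calculus is in place. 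Assembling these checks yields the desired equivalence $\widetilde{\cL^\lax\sq(S)}\cong\cL^\lax\sq(S)$.
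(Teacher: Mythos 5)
Your first steps track the paper's argument: reducing via \cref{cor:L-equivalence} to a colimit over $\Theta_2^\cfact$, cutting down to a cofinal subcategory of active morphisms, and reducing the Segal-type verification to cells of the form $\theta(1;t)$ is exactly how the paper begins. The identification of the colimit at level $\theta(1;t)$ with strings of permutations and active morphisms (i.e.\ with data living in $\cfact^\ap$) is also correct.

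However, there is a genuine gap at the central step. The presheaf $\widehat{\cL}L^\lax\sq(S)$ obtained from the colimit is \emph{not} a twofold Segal space -- the paper says so explicitly -- and $\cL$ is defined as the composite of $\widetilde{\cL}$ with the reflection $L:\cP(\Delta\times\Delta)\rightarrow 2-\cat$. So after you have "read off" the 1- and 2-cells from the colimit, you are still looking at the un-localized presheaf, not at $\cL^\lax\sq(S)$. Your proposal never addresses this localization. Concretely: the 1-cells of the localization are a priori \emph{formal composites} $((i_1;a_1),\dots,(i_n;a_n),f)$ of the generating cells, and the whole content of the theorem is that each such composite is canonically equivalent to a single pair $(i;a)$ with $i$ a permutation and $a$ active, with coherent higher cells. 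The paper establishes this by computing the localization via the oriental machinery of \cref{constr:orientals} and \cref{prop:localization}, then building an explicit pair of functors $F$ and $G$ together with a homotopy $GF\simeq\id$; that homotopy is constructed inductively on cell dimension using the lax cubes $\openbox^\lax_n$ of \cref{constr:lax_square} and, crucially, the contractibility of the simplicial sets $\sq^\Delta(S)$ proved in \cref{prop:square_contractibility}. Your final paragraph, which asserts that compatibility of compositions "reduces to finitely many explicit square identities, each of which is routine," is precisely where this machinery is needed and cannot be dismissed as routine: without an argument that the relevant coherence data is contractible (which is what \cref{prop:square_contractibility} supplies), one cannot conclude that the reflection into $2-\cat$ produces the bicategory $\widetilde{\cL^\lax\sq(S)}$ rather than something with larger mapping spaces.
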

\begin{proof}
We begin by calculating $\widehat{\cL}L^\lax\sq(S)$. By definition we have 
\[\widehat{\cL}L^\lax\sq(S)(\theta(s;\overline{t}))\cong\underset{(\theta(s;\overline{t})\rightarrow\sq^\lax(S';\overline{h};\overline{v}))\in(\theta(s;\overline{t})/\widehat{\cL})}{\colim}L^\lax \sq(S) (S;\overline{h};\overline{v}).\]
A morphism $\theta(s;\overline{t})\rightarrow\sq^\lax(S';\overline{h};\overline{v})$ is given by the following data: $(s+1)$ marked points $(i_k,j_k)|_{k\in\{0,1,...,s\}}$ in $\sq(S')$, $t_k$ paths $S_{k,l}|_{k\in\{0,1,...s-1\},l\in\{0,1,...,t_k-1\}}$ between $(i_k,j_k)$ and $(i_{k+1},j_{k+1})$ in $\sq(S')$ for every $k$, $(i_k-i_{k-1})$ morphisms $f_p:[t_k]\rightarrow [h_{i_k-i_{k-1}+p}]$ for $p\in\{0,1,...i_k-i_{k-1}-1\}$ and $(j_k-j_{k-1}) $ morphisms $g_q:[t_k]\rightarrow [v_{j_k-j_{k-1}+q}]$ for $q\in\{0,1,...j_{k}-j_{k-1}-1\}$. We will call a morphism $a:\theta(s;\overline{t})\rightarrow \sq^\lax(S;\overline{h};\overline{v})$ active if $i_0=j_0=0$, $i_s=n'$, $j_s=m'$, the paths $S_{k,0}$ lie on $S'\in\sq(S')$ and all $f_p$ and $g_q$ are active; we will denote by $(\theta(s;\overline{t})/\widehat{\cL})^\act$ the full subcategory of $(\theta(s;\overline{t})/\widehat{\cL})$ on active morphisms. Observe that every morphism between objects in $(\theta(s;\overline{t})/\widehat{\cL})^\act$ is of the from $\widehat{\cL}(b)$ for an active morphism $b$ of $\Theta^\cfact_2$. It follows from the above description that every morphism $f:\theta(s;\overline{t})\rightarrow\sq^\lax(S';\overline{h};\overline{v})$ uniquely factors as
\[\theta(s;\overline{t})\overset{a}{\twoheadrightarrow}\sq^\lax(S'';\overline{h}';\overline{v}')\overset{\widehat{\cL}(i)}{\rightarrowtail}\lax(S';\overline{h};\overline{v}),\]
where $a$ is active and $i$ is an inert morphism in $(\Theta_2^\cfact)^\op$. It follows that $(\theta(s;\overline{t})/\widehat{\cL})^\act$ is a cofinal subcategory of $(\theta(s;\overline{t})/\widehat{\cL})$.\par
Observe that we have the following isomorphism
\[(\theta(s;\overline{t})/\widehat{\cL})^\act\cong\overbrace{(\theta(1;t_1)/\widehat{\cL})^\act\times(\theta(1;t_2)/\widehat{\cL})^\act\times...\times(\theta(1;t_s)/\widehat{\cL})^\act}^\text{$s$ times}\]
obtained by sending morphisms $a_i:\theta(1;t_i)\rightarrow\sq^\lax(S'_i;\overline{h}_i;\overline{v}_i)$ to a morphism $a:\theta(s;\overline{t})\rightarrow\sq^\lax(S';\overline{h};\overline{v})$, where $\sq^\lax(S';\overline{h};\overline{v})$ is obtained by joining $\sq^\lax(S'_i;\overline{h}_i;\overline{v}_i)$ along the endpoints of $S_i$. Similarly, observe that every active morphism $a:\theta(s;\overline{t})\rightarrow\sq^\lax(S';\overline{h};\overline{v})$ induces a partition of $(S';\overline{h};\overline{v})$ into segments $(S'_k;\overline{h}_k;\overline{v}_k)$ from $(i_{k-1},j_{k-1})$ to $(i_k,j_k)$ for $1\leq i\leq s$. Moreover, since $L^\lax\sq(S)$ satisfies the Segal condition we have
\[L^\lax\sq(S)(S';\overline{h};\overline{v})\cong L^\lax\sq(S)(S_1';\overline{h}_1;\overline{v}_1)\times_{L^\lax\sq(S)(*)}...\times_{L^\lax\sq(S)(*)}L^\lax\sq(S)(S_s';\overline{h}_s;\overline{v}_s).\]
Altogether we get the following string of isomorphisms:
\begin{align*}
    \widehat{\cL}L^\lax\sq(S)(\theta(s;\overline{t}))\cong&\underset{(\theta(s;\overline{t})\rightarrow\sq^\lax(S';\overline{h};\overline{v}))\in(\theta(s;\overline{t})/\widehat{\cL})}{\colim}L^\lax \sq(S) (S;\overline{h};\overline{v})\\
    \cong &\underset{(\theta(s;\overline{t})\rightarrow\sq^\lax(S';\overline{h};\overline{v}))\in(\theta(s;\overline{t})/\widehat{\cL})^\act}{\colim}L^\lax \sq(S) (S;\overline{h};\overline{v})\\
    \cong& \underset{\bigtimes_{k=1}^s(\theta(1;t_k)/\widehat{\cL})^\act}{\colim}L^\lax\sq(S)(S_1';\overline{h}_1;\overline{v}_1)\times_{L^\lax\sq(S)(*)}...\times_{L^\lax\sq(S)(*)}L^\lax\sq(S)(S_s';\overline{h}_s;\overline{v}_s)\\
    \cong&\underset{(\theta(1;t_1)/\widehat{\cL})^\act}{\colim}L^\lax\sq(S)(S_1';\overline{h}_1;\overline{v}_1)\times_{L^\lax\sq(S)(*)}...\times_{L^\lax\sq(S)(*)}\underset{(\theta(1;t_s)/\widehat{\cL})^\act}{\colim}L^\lax\sq(S)(S_s';\overline{h}_s;\overline{v}_s)\\
    \cong& \widehat{\cL}L^\lax\sq(S)(\theta(1;t_1))\times_{\widehat{\cL}L^\lax\sq(S)(*)}...\times_{\widehat{\cL}L^\lax\sq(S)(*)}\widehat{\cL}L^\lax\sq(S)(\theta(1;t_s)).
\end{align*}
It follows that it suffice to prove the isomorphism $\widehat{\cL}L^\lax\sq(S)\cong \widetilde{\cL^\lax\sq(S)}$ on objects of the form $\theta(1;t)$.\par
An object of $(\theta(1;t)/\widehat{\cL})^\act$ can be identified with an object $(S';\overline{h};\overline{v})$ of $\Theta_2^\cfact$, $t$ composable permutation morphisms 
\[S'\overset{i_1}{\rightarrowtail}S'_1\overset{i_2}{\rightarrowtail}...\overset{i_{t-1}}{\rightarrowtail}S'_t\]
in $\cfact$ as well as active morphisms $a_k:[t]\twoheadrightarrow [h_k]$ for $1\leq k\leq n'$ and $b_l:[t]\twoheadrightarrow[v_l]$ for $1\leq l\leq m'$. The data of active morphisms $a_k$ and $b_l$ can equivalently be described as an active morphism $a_{S'}:(S';\overline{h};\overline{v})\twoheadrightarrow q(S',t)$ for which the underlying morphism $S'\twoheadrightarrow S'$ is an identity, where $q$ is the functor described in \cref{prop:model_equivalence}. Denote by $(\theta(1;t)/\widehat{\cL})^q$ the full subcategory of $(\theta(1;t)/\widehat{\cL})^\act$ on objects for which $a_{S'}=\id$, then it follows from the description above that $(\theta(1;t)/\widehat{\cL})^q$ is cofinal. Explicitly, an object of $(\theta(1;t)/\widehat{\cL})^q$ can be identified with a string $(S'_1\overset{i_1}{\rightarrowtail}...\overset{i_{t-2}}{\rightarrowtail}S'_{t-1}\overset{i_{t-1}}{\rightarrowtail}S')$ of permutation morphisms, and a morphism in $(\theta(1;t)/\widehat{\cL})^q$ is given by a commutative diagram 
\[
\begin{tikzcd}[row sep=huge, column sep=huge]
S''_1\arrow[r, tail, "i'_1"]&S''_2\arrow[r, tail, "..."description]&S''_{t-1}\arrow[r, tail, "i'_{t-1}"]&S''\\
S_1' \arrow[u, two heads, "a_1"]\arrow[r, tail, "i_1"]&S'_2\arrow[r, tail, "..." description]\arrow[u, two heads, "a_2"]&S'_{t-1}\arrow[u, two heads, "a_{t-1}"]\arrow[r, tail, "i_{t-1}"]&S'\arrow[u, two heads, "a"]
\end{tikzcd}
\]
in $\cfact^\op$. Finally, the value of $L^\lax\sq(S)$ on $q(S',t)$ is given by the set of strings of morphisms
\[S'\overset{a_1}{\twoheadrightarrow}\widetilde{S}'_{1}\overset{a_{2}}{\twoheadrightarrow}\widetilde{S}'_2\overset{a_{3}}{\twoheadrightarrow}...\overset{a_{t+1}}{\twoheadrightarrow}\widetilde{S}'_{t+1}\xrightarrow{f}S\]
in $\cfact^\op$. It follows that the value 
\[\widehat{\cL}L^\lax\sq(S)(\theta(1;t))\cong \underset{(\theta(1;t)/\widehat{\cL})^q}{\colim}L^\lax\sq(S) (q(S',t))\]
is given by the classifying space of the category, which we denote $\widehat{\cL}_S$, whose objects are strings 
\[S'_1\overset{i_1}{\rightarrowtail}S'_{2}\overset{i_{2}}{\rightarrowtail}...\overset{i_{t-1}}{\rightarrowtail}S'\overset{a_1}{\twoheadrightarrow}\widetilde{S}'_{1}\overset{a_{2}}{\twoheadrightarrow}...\overset{a_{t+1}}{\twoheadrightarrow}\widetilde{S}'_{t+1}\xrightarrow{f}S\]
 of morphisms in $\cfact^\op$ and morphisms are given by active morphisms $b:S'\twoheadrightarrow S''$ in $\cfact^\op$ making the following diagrams commute:
\[
\begin{tikzcd}[row sep=huge, column sep=huge]
S'_1\arrow[r, tail, "i_1"]\arrow[d, two heads, "b_1"]&S'_{2}\arrow[d, two heads, "b_{2}"]\arrow[r, tail, "..." description]&S'\arrow[d, two heads, "b"]\arrow[r, two heads, "a_1"]&\widetilde{S}'_1\arrow[r, two heads, "a_2"]\arrow[d, equal]&\widetilde{S}'_2\arrow[d, equal]\arrow[r, two heads, "..." description]&\widetilde{S}'_{t+1}\arrow[d, equal]\arrow[dr, "f"]\\
S''_1\arrow[r, tail, "i'_1"]&S_{2}'' \arrow[r, tail, "..." description]&S''\arrow[r, two heads, "a'_1"]&\widetilde{S}''_1\arrow[r, two heads, "a'_2"]&\widetilde{S}''_2\arrow[r, two heads, "..." description]&\widetilde{S}''_{t+1}\arrow[r, "g"]&s
\end{tikzcd}.
\]
Denote by $\widehat{\cL}^\id_S$ the full subcategory on those objects for which $a_1\cong \id$. Observe that $\widehat{\cL}^{\id}_{S}$ is both a cofinal subcategory of $\widehat{\cL}_S$ and a set, so it follows that the required classifying space is isomorphic to $\widehat{\cL}^\id_S$. In other words, we see that $\widehat{\cL}L^\lax\sq(S)(\theta(1;t))$ is given by the set of strings
\begin{align}\label{eq:ten}
    S'_1\overset{i_1}{\rightarrowtail}S'_{2}\overset{i_{2}}{\rightarrowtail}...\overset{i_{t-1}}{\rightarrowtail}S'\overset{a_1}{\twoheadrightarrow}\widetilde{S}'_{2}\overset{a_{2}}{\twoheadrightarrow}...\overset{a_{t}}{\twoheadrightarrow}\widetilde{S}'_{t}\xrightarrow{f}S
\end{align}
in $\cfact^\op$. \par
Observe that $\widehat{\cL}L^\lax\sq(S)(\theta(1;t))$ is \textit{not} a Segal space, so we will have to calculate its localization \[L_\cat(\widehat{\cL}L^\lax\sq(S)(\theta(1;t))).\] 
We will do this using \cref{prop:localization}, however before that we will describe the structure of $\widehat{\cL}L^\lax\sq(S)(\theta(1;t))$ as a simplicial space. Every morphism in $\Delta$ can be written as a composition of $\sigma_j$ and $\delta_i$, so it suffices to describe the action of these morphisms. For $0\leq j\leq t$ the morphism $\sigma_j$ sends 
the string (\ref{eq:ten}) to 
\[S'_1\overset{i_1}{\rightarrowtail}...\overset{i_j}{\rightarrowtail}S'_{j+1}=\joinrel=S'_{j+1}\overset{i_{j+1}}{\rightarrowtail}...\overset{i_{t-1}}{\rightarrowtail}S'\overset{a_1}{\twoheadrightarrow}...\overset{a_j}{\twoheadrightarrow}\widetilde{S}'_{j+1}=\joinrel=\widetilde{S}'_{j+1}\overset{a_{j+1}}{\twoheadrightarrow}...\overset{a_{t}}{\twoheadrightarrow}\widetilde{S}'_{t}\xrightarrow{f}S.\]
For $0<k<t$ the morphism $\delta_i$ sends (\ref{eq:ten}) to 
\[S'_1\overset{i_1}{\rightarrowtail}...\overset{i_{k-1}}{\rightarrowtail}S'_{k}\overset{i_{k+1}\circ i_k}{\rightarrowtail}S'_{k+2}\overset{i_{k+2}}{\rightarrowtail}...\overset{i_{t-1}}{\rightarrowtail}S'\overset{a_1}{\twoheadrightarrow}...\overset{a_{k-1}}{\twoheadrightarrow}\widetilde{S}'_{k}\overset{a_{k+1}\circ a_k}{\twoheadrightarrow}\widetilde{S}'_{k+2}\overset{a_{k+2}}{\twoheadrightarrow}...\overset{a_{t}}{\twoheadrightarrow}\widetilde{S}'_{t}\xrightarrow{f}S.\]
It now remains to describe the action of $\delta_t$ and $\delta_0$; The morphism $\delta_0$ sends (\ref{eq:ten}) to the string
\[S''_2\xinert{i_2'}S_3''\xinert{i_3'}...\xinert{i_{t-1}'}\widetilde{S}'_2\xactive{a_2}\widetilde{S}_3'\xactive{a_3}...\xactive{a_t}\widetilde{S}'_t\xrightarrow{f}S\]
appearing in the diagram
\[
\begin{tikzcd}[row sep=huge, column sep=huge]
{}&{}&{}&\widetilde{S}'_t\arrow[r, "f"]&S\\
{}&{}&{}&\widetilde{S}_3'\arrow[u, two heads, "..."description]\\
S_1''\arrow[r, tail, "i_1'"]&S_2''\arrow[r, tail, "i_2'"]&S_3''\arrow[r, tail, "..." description]&\widetilde{S}'_2\arrow[u, two heads, "a_2"]\\
S_1'\arrow[r,tail, "i_1"]\arrow[u, two heads]&S'_2\arrow[u, two heads]\arrow[r, tail, "i_2"]&S'_3\arrow[u, two heads]\arrow[r, tail, "..."description]&S'\arrow[u, two heads, "a_1"]
\end{tikzcd}
\]
in which all the squares at the bottom are factorization squares in $\cfact^\op$. Similarly, $\delta_t$ sends (\ref{eq:ten}) to the string
\[S_1'\xinert{i_1}S_2'\xinert{i_2}...\xinert{i_{t-2}}S'_{t-1}\xactive{a_1'}\widetilde{S}'_2\xactive{a'_2}...\xactive{a'_{t-2}}\widetilde{S}''_{t-1}\xrightarrow{f\circ a_{t-1}\circ j_{t-1}}S\]
appearing in the diagram
\[
\begin{tikzcd}[row sep=huge, column sep=huge]
{}&{}&\widetilde{S}''_t\arrow[r, tail, "j_t"]&\widetilde{S}'_t\arrow[r, "f"]&S\\
{}&{}&\widetilde{S}''_{t-1}\arrow[r, tail, "j_{t-1}"]\arrow[u, two heads, "a'_{t-1}"]&\widetilde{S}'_{t-1}\arrow[u, two heads, "a_{t-1}"]\\
{}&{}&\widetilde{S}''_2\arrow[u, two heads, "..." description]\arrow[r, tail, "j_2"]&\widetilde{S}'_2\arrow[u, two heads,"..." description]\\
S'_1\arrow[r, tail, "i_1"]&S_2'\arrow[r, tail, "..."description]&S'_{t-1}\arrow[u, two heads, "a'_1"]\arrow[r, tail, "i_{t-1}"]&S'\arrow[u, two heads, "a_1"]
\end{tikzcd}.
\]
With this description we can now prove that 
\[L_\cat(\widehat{\cL}L^\lax\sq(S)(\theta(1;t)))([1])\cong \widetilde{\cL^\lax\sq(S)}(\theta_{1,1}).\] 
The space $L_\cat(\widehat{\cL}L^\lax\sq(S)(\theta(1;t)))([1])$ is the classifying space of the $\infty$-category
\[\cL^\lax(S)\bydef(\infty-\cat(\widehat{\cL}L^\lax\sq(S)(\theta(1;t))))([1]).\]
The objects of $\cL^\lax(S)$ are strings of morphisms in $\cfact^\op$ of the form 
 \begin{equation}\label{eq:eleven}
 \begin{tikzcd}[row sep=huge, column sep=huge]
{}&S_1'\arrow[dr, two heads, "a_1"]&{}&S_2'\arrow[dr, two heads, "..." description]&{}&S_n'\arrow[dr, two heads, "a_n"]&S\\
S_1\arrow[ur, tail, "i_1"]&{}&S_2\arrow[ur, tail, "i_2"]&{}&S_n\arrow[ur, tail, "i_n"]&{}&S_{n+1}\arrow[u, "f" swap]
\end{tikzcd}  
\end{equation}
in which all $i_j$ are permutations. We will denote a string like this by $((i_1;a_1),(i_2,a_2),...,(i_n;a_n),f)$. Elementary $p$-morphisms in $\cL^\lax(S)$ are given by object in $\widehat{\cL}L^\lax\sq(S)(\theta(1;p+1))$, in particular, an elementary 1-morphism is given by a diagram
\[
\begin{tikzcd}
{}&{}&S''\arrow[dr, two heads, "a_3"]\\
{}&S'_1\arrow[dr, two heads, "a_1"]\arrow[ur, tail, "i_3"]&{}&S_2'\arrow[dr, two heads, "a_2"]\\
S_1\arrow[ur, tail, "i_1"]&{}&S_2\arrow[ur, tail, "i_2"]&{}&S_3\arrow[r, "f"]&S
\end{tikzcd},
\]
the target of this morphism is then the string $((i_3\circ i_1; a_2\circ a_3),f)$  and the source is the string $((i_1;a_1),(i_2;a_2),f)$. We will denote this morphism by $((i_2,i_1;a_2,a_3),f)$ and we will more generally denote by \[((i_{n+1},i_{n},...,i_1;a_{n+1},a_{n},...a_1),f)\]
an elementary $n$-morphism represented by the string \[(S_0\xinert{i_1}S_1\xinert{i_2}...\xinert{i_{n+1}}S_{n+1}\xactive{a_1}S_{n+2}\xactive{a_2}...\xactive{a_{n+1}}S_{2n+2}\xrightarrow{f}S).\]
We will sometimes omit the morphism $f$ from the notation if it is obvious from the context.\par 
Before moving on, we need to introduce some notation. By \cref{cor:active_permutation} the morphisms in $\cfact$ given by a composition of a permutation and an active morphism form a subcategory $\cfact^\ap$. This category admits an obvious factorization system (given by permutations and active morphisms), and so we can view it as a functor $\cfact^\ap:\cfact\rightarrow\cS$. An elementary $n$-morphism in $\cL^\lax(S)$ can then be identified with a pair $(s,g)$ consisting of an element $s\in\cfact^\ap([n+1]\times[n+1])$ and a morphism $g:s(n+1,n+1)\rightarrow S$. Now, assume that we have a morphism $h:\sq(S)\rightarrow [n+1]\times[n+1]$ in $\fact$, where we assume that $S\subset [l]\times[k]$ is a string with $k$ horizontal and $l$ vertical morphisms, then we will denote by $h^*(s,f)$ the pair consisting of an element $h^*s\in \cfact^\ap(S)$ together with a morphism 
\[h^*g:h^*s(k,l)\cong s(h(k),h(l))\xrightarrow{p} s(n+1,n+1)\xrightarrow{g}S,\]
in which $p$ is a morphism given by composing all morphism in $s$ along some path between $h(k,l)$ and $(n+1,n+1)$ in $[n+1]\times[n+1]$.\par
We will now define a morphism 
\[F:\cL^\lax(S)\rightarrow \widetilde{\cL^\lax(S)}\bydef \infty-\cat(\widetilde{\cL^\lax\sq(S)}(\theta(1;a)))([1]).\]
The objects of $\widetilde{\cL^\lax(S)}$ are formal compositions of morphisms in $\widetilde{\cL^\lax(\sq(S))}$, and those are of the form $S_0\xactive{a} S_1\xinert{i}S_2\xrightarrow{f}S$. Given an object $x\bydef((i_1;a_1),...,(i_l,a_l),f)$ of $\cL^\lax(S)$, we define $F(x)$ to be the composition of $f$ and $\widetilde{i}_k\circ\widetilde{a}_k$, where $\widetilde{i}_k\circ \widetilde{a_k}$ is the factorization of $a_k\circ i_k$ in $\cfact^\ap$. The elementary $n$-morphisms of $\widetilde{\cL^\lax(S)}$ are given by strings
\[S_0\xactive{a_1}S_1\xinert{i_1}S_2\xactive{a_2}S_3...\xinert{i_{n+1}}S_{2n+2}\xrightarrow{f}S\]
of morphisms in $\cfact^\op$ in which all $i_j$ are permutation. Thus an elementary $n$-morphism in $\widetilde{\cL^\lax(S)}$ can be identified with a pair $(s,f)$, where $s$ is an element of $\cfact^\ap(S'_{n+1})$, where $S'_{n+1}$ is a string of the form \[\overbrace{vhvh...vh}^\text{$(n+1)$ times}\]
and $f:S_{2n}\rightarrow S$ is a morphism in $\cfact$. To define the required morphism $F$ it suffices to give its values on elementary morphisms. Observe that there is a natural morphism 
\[\gamma:\sq(S'_{n+1})\rightarrow [n+1]\times[n+1]\]
in $\fact$, and we denote the required morphism by sending an elementary $n$-morphism in $\cL^\lax(S)$ of the form $(s,g)$ with $s\in\cfact^\ap([n]\times[n])$ to the elementary morphism in $\widetilde{\cL^\lax(S)}$ given by $\gamma^*(s,g)$.\par
We will now construct a morphism $G:N(\widetilde{\cL^\lax(S)})\rightarrow N(\cL^\lax(S))$, where $N(-)$ denotes the classifying space of an $\infty$-category. Once again, it suffices to define it on objects and elementary morphisms. We first define it on objects: an object of $\widetilde{\cL^\lax(S)}$ is a formal composition of the form $f\circ (i_l\circ a_l)\circ(i_{l-1}\circ a_{l-1})\circ...\circ(i_1\circ a_1)$; the functor $G$ then sends it to the object $((\id;a_1),(i_1;\id),...,(\id;i_l),(a_l;\id),f)$ of $\cL^\lax(S)$. Before we describe the action of $G$ on morphisms, we make the following observation: given an elementary $n$-morphism in $\widetilde{\cL^\lax(S)}$ corresponding to a pair $(s,f)$ as above, to every morphism $h:[m+1]\times[m+1]\rightarrow S'_{n+1}$ we can associate $h^*(s,f)$, which we can identify with an elementary $m$-morphism in $\cL^\lax(S)$. It follows that to every elementary $n$-morphism (which we can consider as a morphism $[n]\xrightarrow{(s,f)}N(\widetilde{\cL^\lax(S)})$) we can associate a morphism $\beta_{(s,f)}:\sq^\Delta(S'_n)\rightarrow N(\cL^\lax(S))$. Now assume that we are given an elementary $n$-morphism in $\widetilde{\cL^\lax(S)}$ corresponding to the string $((i_1;a_1),...,(i_{n+1},a_{n+1}),f)$. Denote by $S''_{n+1}$ the following string
\[\overbrace{hh....h}^\text{$(n+1)$ times}\overbrace{vv....v}^\text{$(n+1)$ times}\]
considered as an object of $\cfact$. Observe that there is a natural morphism \[b:\sq(S''_{n+1})\rightarrow\sq(S'_{n+1})\]
in $\fact$ given by the inclusion of the top left corner of $\sq(S'_{n+1})$. Assume that the value of $b^* s$ is given by the string $i'_{n+1}\circ i'_n\circ... i'_m\circ a'_{n+1}\circ...\circ a'_1$. We now define $G(s,f)$ to be given by the following string of morphisms in $N(\cL^\lax(S))$:
\begin{align*}
&((\id;a_1),(i_1;\id),(\id;a_2)...,(\id;a_{n+1}),(i_{n+1};\id),f)\xrightarrow{(i_1,\id;\id,a_1)*\id}((\id;a_1),(i_1;a_2),...,(a_l;\id),(\id;i_l),f)\rightarrow\\
&\xrightarrow{((\id,i_1;a_2,\id)*\id)^{-1}}((\id;a_1),(\id;\widetilde{a}_2),(\widetilde{i}_1;\id),(\id;a_3),...,f)\xrightarrow{(\widetilde{i}_1,\id;\id,a_3)*\id}((\id;a_1),(\id;\widetilde{a}_2),(\widetilde{i}_1;a_3),...,f)\xrightarrow{...}\\
&\xrightarrow{...}((\id;a'_1),...,(\id;a'_{n+1}),(i'_1;\id),...,(i'_{n+1};\id),f)\xrightarrow{(\id,...,\id;a'_1,...,a'_{n+1})*(i'_1,...,i'_{n+1};\id,...,\id)}((a'_{n+1}\circ...\circ a'_1;\\&i'_{n+1}\circ...\circ i'_1),f),
\end{align*}
where $a_2\circ i_1\cong \widetilde{i}_1\circ \widetilde{a}_2$ is the active/inert factorization in $\cfact^\ap$. Before moving on, we need to prove that it actually defines a functor. In order to do this it suffices to prove that for every elementary $n$-morphism $(s,f)$ as above and $0\leq q<n$ we have
\[d_q^\pm (G(s,f))\cong G(d_q^\pm (s,f)).\]
To do this, observe that by their definition both $d_q^\pm (G(s,f))$ and $G(d_q^\pm (s,f))$ are $q$-morphisms in the image of $\beta_{(s,f)}:\sq^\Delta(S'_n)\rightarrow N(\cL^\lax(S))$. Since by \cref{prop:square_contractibility} all $\sq^\Delta(S'_n)$ are contractible, we see that all $q$-morphisms in this image are isomorphic.\par
Observe that $FG$ sends an object of $N(\widetilde{\cL^\lax(S)})$ of the form $((a_1;i_1),...,(a_l;i_l),f)$ to \[((a_1;\id),(\id;i_1),...,(a_l;\id),(\id;i_l),f).\] Since $N(\widetilde{\cL^\lax(S)})\cong \widetilde{\cL^\lax(\sq(S))}(\theta_{1,1})$, we see that $FG\cong \id$. We will now provide a homotopy $\eta:GF\xrightarrow{\sim}\id$. More specifically, we will provide the following data:
\begin{itemize}
    \item For every object $x\in \cL^\lax(S)$ a morphism $\eta_x:x\rightarrow GF x$;
    \item For every (possibly non-commutative) diagram
    \[
    \begin{tikzcd}[row sep=huge, column sep=huge]
    x\arrow[r, bend right=15, "(\epsilon)" ]\arrow[r, bend left = 15]&y\\
    GFx\arrow[u, "\eta_x"]\arrow[r, bend right=15, "(GF\epsilon)"]\arrow[r, bend left = 15]&y\arrow[u, "\eta_y"]
    \end{tikzcd}
    \]
    in which $\epsilon$ is an $n$-morphism a morphism $\alpha_\epsilon:\openbox^\lax_n\rightarrow N(\cL^\lax(S))$ such that $\alpha_\epsilon\circ p^n_{0,1} \cong \eta_{x,y}$, $\alpha_\epsilon\circ k^n_0 \cong \epsilon$, $\alpha_\epsilon\circ k^n_1 \cong GF\epsilon$ and $\alpha_\epsilon\circ i^n_{0,1}\cong \alpha_{d^\pm_{n-1}\epsilon}$ in the notation of \cref{constr:lax_square};
\end{itemize}
such that $\alpha_\bullet$ is additionally compatible with the compositions of $n$-morphisms. Now, since the $n$-morphisms of $\cL^\lax(S)$ are freely generated under composition by elementary morphisms, it suffices to provide $\alpha_\epsilon$ only for elementary morphisms $\epsilon$. \par
We first describe the 1-morphisms $\eta_x$. First, assume that $x$ is represented by a morphism $((i;a),f)$ in $\infty-\cat(\widehat{\cL}L^\lax\sq(S)(\theta(1;a)))$, in that case we set 
\[\eta_x\bydef ((\id,i;a,\id),f).\]
The target of this morphism is $x$ and the source is the string $((\id;\widetilde{a})(\widetilde{i};\id),f)$, where $\widetilde{i}\circ \widetilde{a}\cong a\circ i$ is the active/inert decomposition in $\cfact^\ap$, which is equal to $GFx$. For a more general object of the form $y\bydef((i_1;a_1),...,(i_l,a_l),f)$ we set
\[\eta_y\bydef \eta_{(i_1;a_1)}*\eta_{(i_2;a_2)}*...*\eta_{i_l;a_l}.\]
Before we move on to constructing $\alpha_\epsilon$ we make the following observation: to every morphism elementary $n$-morphism $(s',g)$ in $\cL^\lax(S)$ we can associate a morphism $\gamma_{(s',g)}:\sq^\Delta([n+1]\times[n+1])\rightarrow N(\cL^\lax(S))$ in the same manner we have constructed $\beta_{(s,f)}$ earlier. Now, we will construct the required morphisms $\alpha_\epsilon$ by induction on the dimension of $\epsilon$, starting with the case of a $1$-morphism. Assume that $\epsilon$ is an elementary 1-morphism given by $((i_1,i_2;a_1,a_2),f)$, moreover assume that we have the following factorization diagram in $\cfact^\ap$:
\[
\begin{tikzcd}[row sep=huge,column sep=huge]
\bullet\arrow[r, tail, "i_5"]&\bullet\arrow[r, tail, "i_6"]&\bullet\\
\bullet\arrow[u, two heads, "a_6"]\arrow[r, tail, "i_3"]&\bullet\arrow[r, tail, "i_4"]\arrow[u, two heads, "a_4"]&\bullet\arrow[u, two heads, "a_2"]\\\
\bullet\arrow[u, two heads, "a_5"]\arrow[r, tail, "i_1"]&\bullet\arrow[r, tail, "i_2"]\arrow[u, two heads, "a_3"]&\bullet\arrow[u, two heads, "a_1"]
\end{tikzcd}.
\]
Then by untangling definitions we see that $\alpha_{(i_1,i_2;a_1,a_2),f}$ should fit into the following diagram:
\[
\begin{tikzcd}[row sep=huge, column sep=huge]
((i_1;a_3),(i_4;a_2))\arrow[r, "{(i_1,i_2;a_1,a_2)}"]&(i_2\circ i_1;a_2\circ a_1)\\
((\id;a_5),(i_3;\id),(\id;a_4),(i_6;\id))\arrow[u, "{(\id,i_1;a_3,\id)*(\id,i_4;a_2,\id)}"]\arrow[d, "{(i_3,\id;\id,a_4)}" swap]&((\id;a_6\circ a_5),(i_6\circ i_5;\id))\arrow[u, "{(\id,i_2\circ i_1;a_2\circ a_1,\id)}" swap]\\
((\id;a_5),(i_3;a_4),(i_6;\id))\arrow[r, "{(\id,i_3;a_4,\id)^{-1}}"]&((\id;a_5),(\id;a_6),(i_5\id),(i_6;\id))\arrow[u, "{(\id,\id;a_5,a_6)*(i_5,i_6;\id,\id)}" swap]
\end{tikzcd}.
\]
However, observe that all the morphisms in this diagram are lie in the image of $\sq^\Delta([2]\times[2])$ under $\gamma_{((i_1,i_2;a_1,a_2),f)}$. Since $\sq^\Delta([2]\times[2])$ is contractible by \cref{prop:square_contractibility}, there is a $2$-morphism connecting the two composites in the above diagram moreover it may be chosen to lie in the image of $\gamma_{((i_1,i_2;a_1,a_2),f)}$, we define $\alpha_{(i_1,i_2;a_1,a_2),f}$ to be this 2-morphism. Now assume that we have already defined $\alpha_{\epsilon}$ for all elementary morphisms of dimension $<n$, and moreover that all $\alpha_\epsilon$ lie in the image of $\gamma_\epsilon$. Assume we are given an elementary $n$-morphism $\epsilon_n$. In this case we see that $\alpha_{d^\pm_p \epsilon_n}$  for $0\leq p<n$ collectively define a morphism $\alpha_{\partial\epsilon_n}:\partial \openbox^\lax_n\rightarrow N(\cL^\lax(S))$ and moreover by our assumptions it factors as
\[
\begin{tikzcd}[row sep=huge, column sep=huge]
\partial \openbox^\lax_n\arrow[r, "\widetilde{\alpha}_{\partial\epsilon_n}"]\arrow[dr, "\alpha_{\partial\epsilon_n}"]&\sq^\Delta([n+1]\times[n+1])\arrow[d, "\gamma_{\epsilon_n}"]\\
{}&N(\cL^\lax(S))
\end{tikzcd}.
\]
Finally, observe that since $\sq^\Delta([n+1]\times[n+1])$ is contractible and $\partial \openbox^\lax_n$ is homotopy equivalent to an $n$-sphere we see that $\alpha_{\partial\epsilon_n}$ extends to $\alpha_{\epsilon_n}:\openbox^\lax_n\rightarrow N(\cL^\lax(S))$ and moreover it factors through $\gamma_{\epsilon_n}$.
\end{proof}
\begin{definition}\label{def:distributive_law}
Assume we are given monads $T_{1,2}:\rB \Delta_a\rightarrow\cB$. Observe that there are natural inclusions $i_{1,2}:\rB\Delta\rightarrow \rB\Delta_a\otimes\rB\Delta_a$ given by sending $[1]$ to $[1]\otimes[0]$ and $[0]\otimes[1]$ respectively. We say that a morphism $D:\rB\Delta_a\otimes\rB\Delta_a\rightarrow\cB$ is a \textit{distributive law} between $T_1$ and $T_2$ if $D\circ i_1\cong T_1$ and $D\circ i_2\cong T_2$. 
\end{definition}
\begin{remark}
Using the description of $L^\lax\cF$ in \cref{prop:lax_functor1} and the equivalence $\Delta^{\act,\op}\cong \Delta_a$ of \cite[Lemma 4.29.]{kositsyn2021completeness}, we see that $L^\lax*\cong \rB\Delta_a\times \rB\Delta_a$. It follows that $\rB\Delta_a\otimes \rB\Delta_a\cong \cL^\lax*$, so a distributive law in a bicategory $\cB$ can e equivalently described as a lax functor from $*$ viewed as a category with a factorization system. Using \cref{prop:lax_for_objects_of_fact} we see that $\cL^\lax*$ is a category with a single object $*$ whose category of endomorphisms is equivalent to $\cfact^\ap$ with the monoidal structure given by concatenation of strings. Observe that the morphism $t:\ccat\rightarrow\cfact$ of \cref{prop:subcategories} induces a functor $t^\act:\ccat^\act\rightarrow\cfact^\ap$. Once again using the equivalence $\Delta^{\act,\op}\cong \Delta_a$ we see that $t^\act$ induces a functor $\rB \Delta_a\rightarrow\cL^\lax*\cong\rB\Delta_a\otimes \rB\Delta_a$. Assuming we have a distributive law $D:\rB\Delta_a\otimes \rB\Delta_a\rightarrow\cB$ we see that $t$ sends the morphism $[1]$ in $\rB\Delta_a$ to $T_1 T_2$. It follows that, if we are given a distributive law between monads  $T_1$ and $T_2$, their composition also admits a structure of a monad.
\end{remark}
\begin{remark}\label{rem:distributive_equivalence}
Using \cref{prop:gray_universal_property}, we see that giving a distributive law $D$ in $\cB$ is equivalent to giving a monad in the category of monads. This is the characterization of distributive laws used for example in section 6 of \cite{street1972formal}.
\end{remark}
\section{Completeness for categories with factorization systems}\label{sect:four}
The goal of this section is to develop the notion of a groupoid object in $\fact$ and a complete object in $\fact$, similarly to the corresponding notions in $\cat$. We define these concepts in \cref{def:completeness}. We prove that the inclusion $\fact^\gpd\hookrightarrow\fact$ admits a right adjoint in \cref{prop:max_subgroupoid}, which is analogous to the "maximal subgroupoid" functor in $\cat$. However, most of this section is dedicated to constructing an auxiliary theory $\cfact^\wrr$ and describing its category of models. The theory $\cfact^\wrr$ is defined in \cref{constr:wrr_factorization}. Its category of models $\fact^\wrr$ turns out to be isomorphic to a category of morphisms $(F:\cG\rightarrow\cF)$ in $\fact$ that induce an isomorphism on the space of objects, which is proved in \cref{wrr_models} and \cref{prop:wrr_comparison}. Finally, we use this category to construct a completion functor for categories with factorization systems in \cref{cor:completion}.
\begin{lemma}\label{lem:pushout}
For an arbitrary active morphism $a:S\twoheadrightarrow S'$ and a surjective active morphism $s:S\twoheadrightarrow S''$ in $\cfact^\op_{/\cF}$ there exists an object $S_0\in\cfact^\op$ fitting in the following pushout diagram
\[
\begin{tikzcd}[row sep=huge, column sep=huge]
S\arrow[r, two heads, "a"]\arrow[d, two heads, "s"]&S'\arrow[d, two heads, "p"]\\
S''\arrow[r, two heads,"b"]&S_0
\end{tikzcd}
\]
in $\cfact^\op$.
\end{lemma}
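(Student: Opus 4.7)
The plan is to construct $S_0$ directly from the combinatorial data of $s$ and $a$, then verify the pushout property. By \cref{prop:generators}, the surjective active morphism $s$ is determined by the subset $D \subseteq \{1, \ldots, l(S)\}$ of elementary segments of $S$ that it contracts to identities. Since $a$ preserves the factorization system, it sends each $i$-th elementary segment of $S$ (of type $e_i \in \{h,v\}$) to a composable sequence of $e_i$-type elementary segments in $S'$, possibly empty if $a$ itself contracts that segment. Let $D' \subseteq \{1,\ldots,l(S')\}$ be the union of the indices of all these image segments over $i\in D$. I define $S_0$ to be the string obtained from $S'$ by deleting the segments indexed by $D'$, with $p: S' \twoheadrightarrow S_0$ the corresponding composition of $\sigma^{v,h}_j$'s; the morphism $b: S'' \twoheadrightarrow S_0$ is then defined at the level of elementary segments by sending, for each $i \notin D$, the corresponding segment of $S''$ to the composable sequence $p \circ a(e_i)$ in $S_0$.

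The main verifications are two. First, that $b$ extends to a well-defined factorization-preserving functor $\sq(S'') \to \sq(S_0)$ reduces, via \cref{lem:restriction}, to checking that images of consecutive elementary segments in $S''$ compose in $\sq(S_0)$; this holds because any intermediate segments of $S$ lying in $D$ are sent by $p \circ a$ to identities, so vertex images match automatically. Commutativity $p \circ a = b \circ s$ is then immediate on each elementary segment of $S$ by construction, and activity of $b$ follows directly from the construction, since its elementary form inherits from $p$ and $a$ and respects endpoints.

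For the universal property, suppose $p': S' \to T$ and $b': S'' \to T$ in $\cfact^\op$ satisfy $p' \circ a = b' \circ s$. For each $i \in D$, the equation $p' \circ a(e_i) = b' \circ s(e_i) = \id$, combined with the fact that $\sq(T)$ is a poset (so a composition of morphisms is the identity iff every factor is), forces $p'$ to contract every elementary segment in $D'$. Hence $p'$ factors uniquely through $p$ as $p' = u \circ p$ for a unique $u: S_0 \to T$; the identity $u \circ b = b'$ then holds on each elementary segment, since for $i \notin D$ we have $u \circ b$ applied to the image of $e_i$ in $S''$ equal to $u \circ p \circ a(e_i) = p' \circ a(e_i) = b' \circ s(e_i)$. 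The main subtlety I anticipate is the combinatorial bookkeeping when $a$ inserts segments via non-boundary $\delta$'s adjacent to segments contracted by $s$; here the exclusion of $\delta^{v,h}_0$ and $\delta^{v,h}_{n+1,m+1}$ from the canonical decomposition of active morphisms keeps index shifts tractable, and the identities in \cref{constr:elementary morphisms} ensure the requisite squares commute.
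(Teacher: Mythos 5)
Your proposal is correct and follows essentially the same route as the paper: you build $S_0$ by deleting exactly those elementary segments of $S'$ lying in the image under $a$ of segments contracted by $s$, define $b$ on each segment of $S''$ via $p\circ a$ applied to its unique preimage, and verify the universal property by observing that a cocone forces every such image segment to be contracted (the poset structure of $\sq(T)$ making "composite is identity" equivalent to "every factor is identity"). The only difference is presentational — you make the index bookkeeping and the uniqueness of the induced map $u$ more explicit than the paper does.
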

\begin{proof}
Since $s$ is a surjective morphism, the string $S''$ is obtained from $S$ by contracting some of its elementary segments. Since $a$ is an active morphism, every elementary segment of $S'$ is in the image of a unique elementary segment of $S$. We define $p:S'\twoheadrightarrow S_0$ to be the surjective morphism that contracts all elementary segments that lie in the image of an elementary segment contracted by $s$. This also uniquely determines the morphism $b$: indeed, since $s$ is surjective, every elementary segment of $S''$ is in the image of a unique elementary segment of $S$, and we define $b$ to send an elementary segment of $S''$ to the image under $p\circ a$ of its preimage in $S$. \par
To prove that this is indeed a pushout, observe that a cocone over this pushout diagram can be identified with a morphism $f:S'\rightarrow S_1$ in $\cfact^\op$ such that if $e\in S$ is an elementary segment such that $s(e)$ is an identity morphism and $a(e)\cong e'_n\circ e'_{n-1}\circ...\circ e'_1$, then $f(e'_n\circ e'_{n-1}\circ...\circ e'_1)\cong \id$. However, this is only possible if all $e'_i$ map to identities, so $f$ must factor through $S_0$. 
\end{proof}
\begin{warning*}
The pushout described in \cref{lem:pushout} generally does not remain a pushout in $\fact$.
\end{warning*}
\begin{construction}\label{constr:wrr_factorization}
We denote by $\cfact^\wrr$ the opposite of the following category: its objects can be identified with morphisms $\sq(S')\xactive{s}\sq(S)$ in which $s$ is a surjective active morphism and a morphism from $(s_0,f_0)$ to $(s_1,f_1)$ is given by a commutative diagram of the form
\begin{equation}\label{eq:thirteen}
\begin{tikzcd}[row sep=huge, column sep=huge]
\sq(S_0)\arrow[d, two heads, "s_0"]\arrow[r, "h"]&\sq(S_1)\arrow[d, two heads, "s_1"]\\
\sq(S'_0)\arrow[r, "h'"]&\sq(S'_1)
\end{tikzcd}
\end{equation}
in $\fact$, such that $h$ additionally satisfies the following conditions:
\begin{itemize}
    \item if $x_0\in S'_0$ is such that $h'(x_0)\notin S'_1\subset\sq(S'_1)$, then for every elementary segment $i:e\rightarrowtail S_0$ such that $s_0\circ i= \id_{x_0}$ the morphism $h\circ i$ is a constant functor;
    \item if $h'(x_0)\in S_1'$, then the following condition is satisfied: assume that  $i:e\rightarrowtail S_0$ is an elementary segment of $S_0$ such that $s_0\circ i= \id_{x_0}$, denote by $h\circ i\cong j\circ c\circ a$ the unique decomposition in which $a$ is active, $c$ is a covering morphism and $j$ is an inclusion. Then the image of $s_1\circ j$ in $\sq(S'_1)$ is equal to $\{h'(x_0)\}$. 
\end{itemize} 
\par We will call a morphism inert (resp. an inclusion) if the morphism $h$ is inert (resp. an inclusion) and denote the corresponding subcategory of $\cfact^\wrr$ by $(\cfact^\wrr)^\inrt$ (resp. $(\cfact^\wrr)^\inc$). We will denote by $k:(\cfact^\wrr)^\inrt\hookrightarrow\cfact^\wrr$ and  $j:(\cfact^\wrr)^\inc\hookrightarrow(\cfact^\wrr)^\inrt$ the inclusions of the corresponding subcategories. We will call a morphism $(h,h')$ active if $h$ is an active morphism in $\cfact^\op$ and the commutative diagram (\ref{eq:thirteen}) is a pushout square.\par
We will declare an object elementary if it either has the from $e\twoheadrightarrow*$ or $e=\joinrel= e$ for an elementary object $e$ of $\cfact$, we will denote by $i:(\cfact^\wrr)^\el\hookrightarrow(\cfact^\wrr)^\inc$ the full subcategory of $(\cfact^\wrr)^\inc$ on elementary objects. For an object $s$ of $\cfact^\wrr$ we will denote by $(\cfact^\wrr)^\el_{s/}$ the category $s/i$. Denote by $\cE_{\cfact^\wrr}$ the full subcategory of $\mor_\cat((\cfact^\wrr)^\inrt,\cS)$ on those functors $\cG:(\cfact^\wrr)^\inrt\rightarrow\cS$ for which
\[j*\cG\cong i_*i^* j^*\cG.\]
\end{construction}
\begin{remark}
To an object $s_0$ of $\cfact^\wrr$ we can associate a pair consisting of the object $\sq(S_0)$ of $\fact$ and the factorization subcategory $\cI_{s_0}\hookrightarrow \sq(S_0)$ that contains all objects of $\sq(S_0)$, but only those morphisms that are given by factorizations and compositions of elementary morphisms $e\in S_0$ that are sent to identity morphisms by $s_0$. A morphism $h:\sq(S_0)\rightarrow \sq(S_1)$ as in the diagram above obtains a structure of a morphism in $\cfact^\wrr$ if and only if it takes $\cI_{s_0}$ to $\cI_{s_1}$.
\end{remark}
\begin{prop}\label{prop:wrr_theory}
The active and inert morphisms form a factorization system on $\cfact^\wrr$.
\end{prop}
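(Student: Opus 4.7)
The plan is to lift the active/inert factorization in $\cfact^\op$ from \cref{prop:factorization_theory} to the category $\cfact^\wrr$, using \cref{lem:pushout} to produce the intermediate object. Given a morphism $(h,h'):s_0\to s_1$ in $\cfact^\wrr$, I first factor the top component $h$ in $\cfact^\op$ as $h\cong i_h\circ a_h$ with $a_h$ active and $i_h$ inert. Applying \cref{lem:pushout} to $a_h:\sq(S_0)\twoheadrightarrow \sq(S_m)$ and $s_0:\sq(S_0)\twoheadrightarrow\sq(S_0')$ produces a pushout square in $\cfact^\op$
\[
\begin{tikzcd}[row sep=large, column sep=large]
\sq(S_0)\arrow[r,two heads,"a_h"]\arrow[d,two heads,"s_0"]&\sq(S_m)\arrow[d,two heads,"s_m"]\\
\sq(S_0')\arrow[r,two heads,"a'_h"]&\sq(S_m')
\end{tikzcd}
\]
with $s_m$ a surjective active morphism (hence an object of $\cfact^\wrr$). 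Since $h'\circ s_0 = s_1\circ h = (s_1\circ i_h)\circ a_h$, the universal property of the pushout yields a unique $i'_h:\sq(S_m')\to\sq(S_1')$ making the right-hand square commute and factoring $h'\cong i'_h\circ a'_h$.

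Next I would check that $(a_h,a'_h)$ is a valid active morphism and that $(i_h,i'_h)$ is a valid inert morphism in $\cfact^\wrr$. For the active part, the left square is a pushout by construction, so the only content is that $(a_h,a'_h)$ satisfies the two bulleted conditions of \cref{constr:wrr_factorization}; this is automatic because $a_h$ sends every elementary segment to a single elementary segment and pushouts of surjective actives in $\cfact^\op$ preserve the identification of fibers. For the inert part, the key observation is that if $x_m\in S_m'$ and $i'_h(x_m)\notin S_1'$, then tracing through the pushout shows every elementary $e\rightarrowtail S_m$ with $s_m$-image $x_m$ lifts to such a segment in $S_0$, and since $h$ already satisfies the conditions, we deduce the corresponding elementary decomposition of $i_h$ is compatible. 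The analogous argument for the second bullet uses that $i_h$ is inert, so its canonical decomposition only involves $\gamma_{i,j}$'s, $\delta^{v,h}_0$'s, and $\delta^{v,h}_{m+1,n+1}$'s (\cref{not:factorization_arities}), each of which takes an elementary segment to a composition of elementary segments whose image under $s_1$ is contained in $\{h'(x_0)\}$.

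For uniqueness, suppose $(h,h')\cong (i,i')\circ(a,a')$ is another active/inert factorization through some $\widetilde{s}_m$. Projecting to $\cfact^\op$, the top-row factorization $h\cong i\circ a$ must agree with $h\cong i_h\circ a_h$ by uniqueness of the active/inert factorization on $\cfact^\op$. Then the defining property of the active part (that its square is a pushout in $\cfact^\op$) together with the universal property forces $\widetilde{s}_m\cong s_m$ and the corresponding identifications of $a'$ with $a'_h$ and $i'$ with $i'_h$.

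The main obstacle I expect is bookkeeping around the two bulleted conditions in \cref{constr:wrr_factorization}: they are geometric conditions on how $h$ interacts with the $\cI_{s_0}$--subcategory, and verifying that both factors inherit them requires carefully tracking which elementary segments of $S_m$ arise from segments of $S_0$ versus those created by the active portion $a_h$. The pushout description of \cref{lem:pushout} is exactly tailored for this, but the verification is where the proof spends most of its effort.
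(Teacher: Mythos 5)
Your proposal follows essentially the same route as the paper: factor the top component $h$ in $\cfact^\op$, use \cref{lem:pushout} to build the intermediate object with its surjective active structure map, obtain the bottom factorization from the universal property of the pushout, verify the two conditions of \cref{constr:wrr_factorization} for each square (the paper does the inert case by restricting to the fibers $S_{x_0}$ over points of $S'$, exactly as you sketch), and deduce uniqueness from uniqueness in $\cfact$ plus the pushout's universal property. The only slip is your claim that an active morphism sends each elementary segment to a single elementary segment — it sends it to an identity or to a composition of several — but this does not affect the argument.
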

\begin{proof}
Assume that the morphism $h$ in diagram \ref{eq:thirteen} has the active/inert factorization of the form $(a,i)$. By \cref{lem:pushout} we then have the following commutative diagram in $\cfact^\op$
\[
\begin{tikzcd}[row sep=huge, column sep=huge]
\sq(S_0)\arrow[d, two heads, "s_0"]\arrow[r, two heads, "a"]&\sq(S_a)\arrow[r, tail, "i"]\arrow[d, two heads, "s_a"]&\sq(S_1)\arrow[d, two heads, "s_1"]\\
\sq(S'_0)\arrow[r, two heads, "a'"]&\sq(S'_a)\arrow[r, "i'"]&\sq(S'_1)
\end{tikzcd}
\]
in which the left square is a pushout diagram and the morphism $i'$ is obtained by the universal property of the pushout from $(s_1\circ i, h')$. We now need to prove that the morphisms given by this diagram actually lie in $\cfact^\wrr$. Since an active morphism either takes an elementary morphism to an identity morphism or to a composition of elementary morphisms, we see that the conditions of \cref{constr:wrr_factorization} are satisfied for the left square in the diagram, so it remains to prove that the right square represents a morphism in $\cfact^\wrr$. Recall that 
\[\Act_\cfact(S)\cong \prod_{e\in S} \Act_\cfact(e),\]
where the product runs over all elementary segments of $S$. The morphism $a'$ corresponds to the element $(a_{s_0^{-1}(e_1)},a_{s_0^{-1}(e_2)},...,a_{s_0^{-1}(e_m)})$ of $\Act_\cfact(S'_0)$, where $s_0^{-1}(e_j)$ is the unique elementary segment of $S_0$ that maps to $e_i\in S'_0$ (which exists since $s_0$ is surjective) and $a_{s_0^{-1}(e_j)}$ is an element of $\Act_\cfact(e_j)$ corresponding to $a$. Assume that $x\in S'_a$ does not lie in the image of $a'$. In this case the fiber $s_a^{-1}(x)=*$, so the conditions of \cref{constr:wrr_factorization} are satisfied automatically. Now assume that $x=a(x_0)$. Denote by $S_{x_0}$ the substring of $S_0$ on those elementary segments that are sent to $x_0$ by $s_0$ and by $a_{x_0}:\sq(S_{x_0})\twoheadrightarrow \sq(S_{a,x_0})$ the active morphism corresponding to the restriction of $a$ to $S_{x_0}$, then the elementary segment of $S_a$ lying in $s_a^{-1}(x)$ are given by the elementary segments of $S_{a,x_0}$. However, all of them are in the image under $a_{x_0}$ of the elementary segments in $S_{x_0}$, and it is easy to see that the conditions of \cref{constr:wrr_factorization} are satisfied for them since they are satisfied for the elementary morphisms in $S_{x_0}$ by assumption. It follows that the right square indeed corresponds to a morphism in $\cfact^\wrr$ and the whole diagram provides the required factorization. The fact that it is unique follows from the uniqueness of the active/inert factorization in $\cfact$ and the universal property of the pushout.
\end{proof}
\begin{lemma}\label{lem:coinitiality}
For an object $s\in\cfact^\wrr$ denote by $(\cfact^\wrr)^\el_{0,s/}$ the full subcategory of $(\cfact^\wrr)^\el_{s/}$ on those morphisms $s\xinert{(i,i')} e$ for which both $i$ and $i'$ are inert in $\cfact$. Then $(\cfact^\wrr)^\el_{0,s/}$ is coinitial in $(\cfact^\wrr)^\el_{s/}$.
\end{lemma}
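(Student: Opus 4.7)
The plan is to prove coinitiality by showing that for every object $d = ((h, h') \colon s \rightarrowtail \bar e)$ of $(\cfact^\wrr)^\el_{s/}$ the comma category $\cC_d \bydef (\cfact^\wrr)^\el_{0, s/} \downarrow d$ admits a terminal object. By definition $\bar e$ is of one of the forms $(\bar S \twoheadrightarrow *)$ or $(\bar S =\joinrel= \bar S)$ with $\bar S$ elementary in $\cfact$, i.e.\ $\bar S \in \{*, h, v\}$. Since $h$ is an inclusion in $\cfact$ and $\bar S$ is elementary, the image $h(\sq(\bar S))$ sits inside $\sq(S_s)$ as either a single object or a single elementary segment, and commutativity of the defining square together with the two conditions of \cref{constr:wrr_factorization} determines $h'$ completely: in the first form $h' \colon * \to \sq(S'_s)$ is the point-pick at $s(h(\sq(\bar S)))$ (well-defined since the conditions force $s$ to collapse the image of $h$), and in the second form $h' = s \circ h$.

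In almost all situations one checks directly that $h'$ is already inert, so that $(h,h')$ lies in $(\cfact^\wrr)^\el_{0,s/}$ and $(d, \id_d)$ is a terminal object of $\cC_d$. Specifically, when $\bar e = (\bar S \twoheadrightarrow *)$ the morphism $h'$ is a point-pick, and any point-pick is a composition of the endpoint-adding morphisms $\delta_0^{v,h}$ and $\delta_{n+1,m+1}^{v,h}$ (by the same argument already used in \cref{prop:subcategories}); and when $\bar e = (\bar S =\joinrel= \bar S)$ either $\bar S = *$ (so $h' = s \circ h$ is again a point-pick) or $\bar S \in \{h, v\}$ and the unique elementary segment in the image of $h$ is not collapsed by $s$, in which case $s \circ h$ is another inclusion.

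The single remaining case is $\bar e = (\bar S =\joinrel= \bar S)$ with $\bar S \in \{h, v\}$ and $s$ collapsing the elementary segment $h(\sq(\bar S))$ to a point $p \in \sq(S'_s)$; then $h' = s \circ h$ is the constant functor at $p$ and is not inert. I would build the terminal object explicitly: set $\bar e_0 \bydef (\bar S \twoheadrightarrow *)$, let $g \colon \bar e_0 \rightarrowtail \bar e$ be the canonical inclusion in $\cfact^\wrr$ represented by the square with identity on top and the unique map $\sq(\bar S) \to *$ on the bottom, and let $(j, j') \colon s \rightarrowtail \bar e_0$ consist of $j \bydef h$ together with $j' \colon * \to \sq(S'_s)$ the inclusion picking $p$. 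Both $j$ and $j'$ are inclusions in $\cfact$, the two conditions of \cref{constr:wrr_factorization} follow immediately from the fact that $s$ collapses the image of $h$, and one verifies directly that $g \circ (j,j') = (h,h')$. The key obstacle is the uniqueness claim: for an arbitrary object $((j_1, j_1') \colon s \rightarrowtail \bar e_1, \, g_1 \colon \bar e_1 \rightarrowtail \bar e)$ of $\cC_d$, I would enumerate the inclusions between elementary objects of $\cfact^\wrr$ and use that elementary objects in $\cfact$ have length at most $1$ while $\bar S$ already has length $1$; this forces $\bar e_1 \in \{\bar e, \bar e_0\}$. The option $\bar e_1 = \bar e$ is excluded because it would require $(j_1, j_1') = (h, h') \notin (\cfact^\wrr)^\el_{0,s/}$, and the option $\bar e_1 = \bar e_0$ forces $g_1 = g$ and then compatibility with $d$ forces $(j_1, j_1') = (j, j')$. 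Hence the constructed triple is the unique object of $\cC_d$ and in particular a terminal one, completing the proof of coinitiality.
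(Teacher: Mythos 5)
Your proof is correct and follows essentially the same route as the paper's: you identify the only objects of $(\cfact^\wrr)^\el_{s/}$ lying outside the $0$-subcategory as those with target $(\bar S=\joinrel=\bar S)$ for $\bar S$ of length one whose image segment is collapsed by $s$, and you factor these uniquely through $(\bar S\twoheadrightarrow *)$, which is exactly the factorization the paper exhibits. You are in fact somewhat more explicit than the paper about why this yields coinitiality (terminal objects in each comma category, and the enumeration showing the comma category over a non-$0$ object has a single object), a step the paper leaves implicit.
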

\begin{proof}
Indeed, by an easy inspection the only objects of $(\cfact^\wrr)^\el_{s/}$ that do \textit{not} belong to $(\cfact^\wrr)^\el_{0,s/}$ are given by diagrams of the form 
\[
\begin{tikzcd}[row sep=huge, column sep=huge]
e\arrow[d, equal]\arrow[r, tail, "i"]&\sq(S)\arrow[d, two heads, "s"]\\
e\arrow[r, "\{x\}"]&\sq(S')
\end{tikzcd}
\]
in $\cfact^\op$ in which the top row is an inclusion of an elementary segment of $S$ that is sent to an object $x\in\sq(S')$ by the surjective morphism $s$. However, observe that this morphism uniquely factors as
\[
\begin{tikzcd}[row sep=huge, column sep=huge]
e\arrow[d, equal]\arrow[r, equal]&e\arrow[r, tail, "i"]\arrow[d, two heads, "p"]&\sq(S)\arrow[d, two heads, "s"]\\
e\arrow[r, two heads, "p"]&*\arrow[r, "\{x\}"]&\sq(S'),
\end{tikzcd}
\]
where $p$ is the unique surjective morphism from $e$ to $*$. This concludes the proof since both morphisms in this diagram are inert in $\cfact^\wrr$ and the right square obviously belongs to $(\cfact^\wrr)^\el_{0,s/}$.
\end{proof}
\begin{prop}\label{wrr_theory}
The subcategory $\cE_{\cfact^\wrr}$ endows $k:(\cfact^\wrr)^\inrt\hookrightarrow\cfact^\wrr$ with the structure of a theory with arities.
\end{prop}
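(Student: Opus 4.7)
The plan is to mirror the proof of \cref{prop:factorization_theory}. Since \cref{prop:wrr_theory} already provides the active/inert factorization system on $\cfact^\wrr$, what remains is to show that the monad $k^*k_!$ on $\mor_\cat((\cfact^\wrr)^\inrt,\cS)$ preserves $\cE_{\cfact^\wrr}$. As in \cref{prop:factorization_theory}, the factorization system ensures that active morphisms are cofinal in the undercategory $k/s$, so for any $\cG\in\cE_{\cfact^\wrr}$,
\[
k^*k_!\cG(s) \cong \underset{(s'\twoheadrightarrow s)\in \Act_{\cfact^\wrr}(s)}{\colim}\cG(s').
\]

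The key combinatorial input I need is the analogue of identity \eqref{eq:seven} from \cref{prop:factorization_theory}. Given an object $s_0:\sq(S_0)\twoheadrightarrow \sq(S'_0)$ of $\cfact^\wrr$, the surjectivity of $s_0$ partitions $S_0$ into substrings $S_0(e)\subset S_0$, one for each elementary segment $e$ of $S'_0$. By \cref{lem:pushout}, an active morphism out of $s_0$ is uniquely specified by a choice of active morphism out of each $S_0(e)$ in $\cfact$, yielding a product decomposition
\[
\Act_{\cfact^\wrr}(s_0) \cong \prod_{e\in S'_0}\Act_\cfact(S_0(e)).
\]
Combining this with \cref{lem:coinitiality}, which lets me replace $(\cfact^\wrr)^\el_{s_0/}$ with the cofinal subcategory $(\cfact^\wrr)^\el_{0,s_0/}$ of inert-inert morphisms and thereby obtain a decomposition analogous to \eqref{eq:eight}, together with the Segal condition for $\Act_\cfact$ already established in \cref{prop:factorization_theory}, gives
\[
\Act_{\cfact^\wrr}(s_0) \cong \underset{(s_0\rightarrowtail e')\in(\cfact^\wrr)^\el_{s_0/}}{\lim}\Act_{\cfact^\wrr}(e').
\]
Next I would verify the relative Segal condition: for any factorization square in $\cfact^\wrr$ with $e$ elementary and $i$ an inclusion, and for any $\cF:(\cfact^\wrr)^\el\rightarrow\cS$, there is an isomorphism
\[
\underset{(s'\rightarrowtail e')\in(\cfact^\wrr)^\el_{s'/}}{\lim}\cF(e')\cong \underset{(s\xinert{i}e)\in(\cfact^\wrr)^\el_{s/}}{\lim}\ \underset{(i_!s'\rightarrowtail e_0)\in(\cfact^\wrr)^\el_{i_!s'/}}{\lim}\cF(e_0).
\]
Using \cref{lem:coinitiality}, this again reduces to the analogous relative Segal condition for the base $\cfact$, together with the compatibility of the pushout construction of \cref{lem:pushout} with restriction along inclusions.

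With these two ingredients in place, the proof concludes by the same six-line interchange computation as at the end of \cref{prop:factorization_theory}, applying \cite[Corollary 7.17]{chu2019homotopy} to swap the outer colimit with an inner limit. The main obstacle is establishing the product decomposition of $\Act_{\cfact^\wrr}(s_0)$ and verifying its naturality with respect to factorization squares in $\cfact^\wrr$: the defining conditions of \cref{constr:wrr_factorization} couple the behaviour of an active morphism on $S_0$ with the behaviour of its pushout on $S'_0$, so one must carefully check that these constraints are preserved under the pushout-along-inclusion operation $(-)\mapsto i_!(-)$. Once this bookkeeping is done, however, every remaining step is formal and transcribes the proof of \cref{prop:factorization_theory} verbatim.
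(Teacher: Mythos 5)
Your proposal is correct and follows essentially the same route as the paper: reduce to showing $k^*k_!$ preserves $\cE_{\cfact^\wrr}$, establish the Segal property of $\Act_{\cfact^\wrr}$ and the relative Segal condition using \cref{lem:coinitiality}, and conclude by the formal interchange argument of \cref{prop:factorization_theory}. The only difference is that the paper bypasses your product decomposition indexed over the segments of $S'_0$ (which is slightly awkward, since the contracted segments of $S_0$ sit over vertices rather than segments of $S'_0$) by simply noting that, thanks to \cref{lem:pushout}, an active morphism out of $s_0$ is determined by its top component, so $\Act_{\cfact^\wrr}(s_0)\cong\Act_\cfact(S_0)$ and the Segal property already proved in \cref{prop:factorization_theory} applies directly.
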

\begin{proof}
We need to prove that the monad $k_!k^*$ on $\mor_\cat((\cfact^\wrr)^\inrt,\cS)$ preserves the image of $\cE_{\cfact^\wrr}$. We first prove that for an object $s:\sq(S)\twoheadrightarrow\sq(S')$ of $\cfact^\wrr$ we have
\[\Act_{\cfact^\wrr}(s)\cong \underset{(s\rightarrowtail e)\in(\cfact^\wrr)^\el_{0,s/}}{\lim}\Act_{\cfact^\wrr}(e).\]
To do this, we first make the structure of $(\cfact^\wrr)^\el_{0,s/}$ more explicit: its objects are can be identified with pairs $(j:e\rightarrowtail sq(S),i)$, where $e$ is an elementary object of $\cfact$, $j$ is an inert morphisms in $\cfact^\op$ and $i\in\{0,1\}$ is equal to $0$ if $s\circ j$ sends $e$ to a single point and to $1$ if $s\circ j$ is an inert morphism. Morphisms between objects of $(\cfact^\wrr)^\el_{0,s/}$ are given by morphisms in $\cfact^\el_{S/}$. Now observe that by definition $\Act_{\cfact^\wrr}(s)\cong\Act_\cfact (S)$, and so we have
\begin{align*}
    \Act_{\cfact^\wrr}(s)\cong&\Act_\cfact (S)\\
    \cong & \underset{(j:S\rightarrowtail e)\in \cfact^\el_{S/}}{\lim}\Act_\cfact(e)\\
    \cong & \underset{(j:S\rightarrowtail e, i)\in \cfact^\el_{0,S/}}{\lim}\Act_\cfact(e)\\
    \cong &\underset{(j:S\rightarrowtail e, i)\in \cfact^\el_{0,S/}}{\lim}\Act_{\cfact^\wrr}(e)\\
    \cong \cong &\underset{(j:s\rightarrowtail e)\in \cfact^\el_{S/}}{\lim}\Act_{\cfact^\wrr}(e),
\end{align*}
in which the last isomorphism follows from \cref{lem:coinitiality}.\par
We will now prove that for any factorization square 
\[
\begin{tikzcd}[row sep=huge, column sep=huge]
s_0\arrow[r, two heads, "a"]\arrow[d, tail, "a^* i"]&s_1\arrow[d, tail, "i"]\\
i_!s_0\arrow[r, two heads,"i_! a"]&e
\end{tikzcd}
\]
in $\cfact^\wrr$ we have 
\[(\cfact^\wrr)^\el_{0,s_0/}\cong \underset{i\in(\cfact^\wrr)^\el_{0,s_1/}}{\lim}(\cfact^\wrr)^\el_{0,i_!s_0/}.\]
Observe that $(\cfact^\wrr)^\el_{0,i_!s_0/}$ is a full subcategory of $(\cfact^\wrr)^\el_{0,s_0/}$ on those elementary segments that lie in the image of $e$ under $a$ (considered as a morphism in $(\cfact^\wrr)^\op$). Since $a$ is an active morphism, we see that those images cover $s_0$ and only possibly intersect along boundary points, which easily implies our claim. Now we can finish the proof of the proposition in exactly the same way as in \cref{prop:factorization_theory}.
\end{proof}
\begin{construction}\label{constr:wrr_alternative_construction}
We will endow the category $[1]\times\cfact$ with the structure of a theory as follows: we declare $([1]\times\cfact)^\inrt\bydef[1]\times\cfact^\inrt$, $([1]\times\cfact)^\inc\bydef[1]\times\cfact^\inc$ and $([1]\times\cfact)^\act\bydef\cfact^\act\coprod\cfact^\act$. We declare an object $(i,e)\in[1]\times\cfact$ elementary if $e$ is an elementary object of $\cfact$. Finally, we define $\cE_{[1]\times\cfact}$ to be the full subcategory of $\mor_\cat([1]\times\cfact^\inrt,\cS)$ on those functors whose restriction to $([1]\times\cfact)^\inc$ is the right Kan extension of their restriction to $([1]\times\cfact)^\el$. \par
We define $\widehat{\cfact^\wrr}$ to be the full subcategory of $([1]\times\cfact)$ containing all objects except $(0,*)$. We define the structure of a theory on $\widehat{\cfact^\wrr}$ to be the one induced from $[1]\times\cfact$.
\end{construction}
\begin{prop}\label{wrr_models}
The category of models for $\widehat{\cfact^\wrr}$ is isomorphic to the category of triples $(\cG,\cF,F:\cG\rightarrow\cF)$, where $\cF$ and $\cG$ are objects of $\fact$ and $F$ induces an isomorphism on the space of objects.
\end{prop}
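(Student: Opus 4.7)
The plan is to construct mutually inverse functors $\Phi$ and $\Psi$ between $\modl_{\widehat{\cfact^\wrr}}(\cS)$ and the category of such triples.

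Given a model $M : \widehat{\cfact^\wrr}^\inrt \to \cS$, define $\Phi(M) = (\cG, \cF, F)$ as follows. The restriction of $M$ along the level-$1$ inclusion $\iota_1 : \cfact \hookrightarrow \widehat{\cfact^\wrr}$, $S \mapsto (1, S)$, preserves inerts, inclusions, and elementary objects (since $(1, *)$ remains), so $\cF := M \circ \iota_1$ is automatically a model of $\cfact$. Set $\cG(S) := M(0, S)$ for $S \neq *$ and $\cG(*) := M(1, *) = \cF(*)$; the functorial action of $\cG$ on any inert morphism $S \rightarrowtail *$ in $\cfact$ is given by the composite $M(0, S) \to M(1, S) \to M(1, *)$, where the first arrow is the image of the inert morphism $(0, S) \to (1, S)$ that replaces the forbidden direct arrow $(0, S) \to (0, *)$ removed from the theory. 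Finally, $F : \cG \to \cF$ is defined componentwise by the images under $M$ of the inert morphisms $(0, S) \to (1, S)$, with $F(*) = \id$ at $*$, so $F(*)$ is tautologically an equivalence.

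The nontrivial check is that $\cG$ is a model of $\cfact$. For $S$ of length $\geq 2$, the Segal condition for $M$ at $(0, S)$ reads
\[
M(0, S) \cong \lim_{((0,S) \rightarrowtail e) \in \widehat{\cfact^\wrr}^\el_{(0,S)/}} M(e).
\]
The slice category $\widehat{\cfact^\wrr}^\el_{(0,S)/}$ differs from $\cfact^\el_{S/}$ only in that every inclusion $S \rightarrowtail *$ in $\cfact$ (one for each vertex of $S$) is replaced by the composite $(0, S) \rightarrowtail (1, S) \rightarrowtail (1, *)$; all horizontal- and vertical-segment elementary inclusions remain at level $0$. Under the identification $\cG(*) = \cF(*)$ together with the definition of the boundary maps $\cG(h), \cG(v) \to \cG(*)$ as routed through $F$, the above limit coincides with the Segal-limit $\lim_{(S \rightarrowtail e) \in \cfact^\el_{S/}} \cG(e)$, establishing the Segal condition for $\cG$ and thus that $\cG \in \fact$.

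Conversely, given a triple $(\cG, \cF, F)$ with $F(*)$ an equivalence, construct $\Psi(\cG, \cF, F) = M$ by setting $M(1, S) := \cF(S)$ for all $S$ and $M(0, S) := \cG(S)$ for $S \neq *$; the image of $(0, S) \to (1, S)$ is $F(S)$ and every other morphism action is inherited from $\cG$, $\cF$, and the naturality of $F$. The Segal condition for $M$ at each $(i, S)$ reduces to the Segal condition for $\cF$ or $\cG$ via the same analysis of elementary slices, using the assumption that $F(*)$ is an equivalence to identify the fiber products over $\cF(*)$ appearing in $\widehat{\cfact^\wrr}^\el_{(0,S)/}$ with those over $\cG(*)$ appearing in $\cfact^\el_{S/}$. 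The compositions $\Phi \circ \Psi$ and $\Psi \circ \Phi$ are visibly the identity on objects, and both constructions extend to morphisms in the evident way.

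The main obstacle, and the only substantive point, is establishing the equivalence of slice categories $\widehat{\cfact^\wrr}^\el_{(0,S)/} \simeq \cfact^\el_{S/}$ under the rerouting-through-level-$1$ prescription above: one must verify it on objects, on morphisms (so that the limit cones under $M$ are correctly identified), and confirm that it intertwines the right-Kan-extension comparison defining $\cE_{\widehat{\cfact^\wrr}}$ with the one defining $\cE_\cfact$. Once this equivalence of arity diagrams is in hand, both the well-definedness of $\Phi$ (i.e.\ that $\cG \in \fact$) and of $\Psi$ (i.e.\ that $M$ is a genuine model) reduce to formal bookkeeping.
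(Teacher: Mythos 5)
Your overall strategy is the same as the paper's: the paper realizes $\widehat{\cfact^\wrr}$ as the full subcategory of $[1]\times\cfact$ obtained by deleting $(0,*)$, observes that models of $[1]\times\cfact$ are arbitrary triples $(\cG,\cF,F)$, and shows that restriction and right Kan extension along this inclusion give an equivalence onto the triples with $F$ an isomorphism on objects. Your $\Psi$ is the restriction $\widehat{i}^*$ and your $\Phi$ is meant to be $\widehat{i}_*$ written out by hand. The problem is that $\Phi$ as described does not actually produce a functor $\cG:\cfact\rightarrow\cS$. You specify the value $\cG(*):=M(1,*)$ and the vertex maps $\cG(S)\rightarrow\cG(*)$ (rerouted through level $1$), but a model of $\cfact$ also needs the degeneracies $\cG(*)\rightarrow\cG(S)$, i.e.\ the images of the morphisms $*\rightarrow S$ in $\cfact$ corresponding to the collapse functors $\sq(S)\rightarrow *$ in $\cfact^\op$. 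Since $[1]$ has no morphism $1\rightarrow 0$, the category $\widehat{\cfact^\wrr}$ contains no morphism from $(1,*)$ to any level-$0$ object, so these maps (and the higher coherence data a functor of $\infty$-categories requires) cannot be read off from $M$; declaring "$\cG(*):=M(1,*)$" does not supply them. This is precisely what the right Kan extension buys: $\widehat{i}_*M$ is by construction a functor on all of $[1]\times\cfact$, the degeneracy $\widehat{i}_*M(0,*)\rightarrow M(0,S)$ is a limit projection over the comma category $(0,*)\!\downarrow\!\widehat{i}$, and since $\{1\}\times\cfact$ is reflective in $\widehat{\cfact^\wrr}$ and $*$ is initial in $\cfact$ one computes $\widehat{i}_*M(0,*)\cong M(1,*)$, recovering your formula together with all the missing structure.

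A secondary imprecision: the slice $\widehat{\cfact^\wrr}^\el_{(0,S)/}$ is not literally $\cfact^\el_{S/}$ with the point-inclusions rerouted through level $1$; it also contains the objects $(0,S)\rightarrowtail(1,h)$ and $(0,S)\rightarrowtail(1,v)$ obtained by composing a segment inclusion with the level-change map. The limit comparison you need is therefore a coinitiality statement (the subcategory you describe is coinitial in the full elementary slice), not an isomorphism of slice categories. This is fixable, but since you flag this comparison as "the only substantive point" and then defer it, together with the missing degeneracies it leaves the well-definedness of $\Phi$ genuinely open. I would recommend replacing the hand construction by the Kan-extension argument, which disposes of both issues at once.
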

\begin{proof}
It follows easily from the definition that the category of models for $[1]\times\cfact$ is the category $[1]\times\fact$ of triples $(\cG,\cF,F:\cG\rightarrow\cF)$ (without any additional assumptions on $F$). We will denote by $\widehat{i}:\widehat{\cfact^\wrr}\hookrightarrow[1]\times\cfact$ the natural inclusion. Since $\widehat{i}$ is an inclusion of a full subcategory, \[\widehat{i}_*:\mor_\cat(\widehat{\cfact^\wrr},\cS)\rightarrow \mor_\cat([1]\times\cfact,\cS)\]
induces an isomorphism between $\mor_\cat(\widehat{\cfact^\wrr},\cS)$ and the full subcategory of $\mor_\cat([1]\times\cfact,\cS)$ on those $\cF$ for which $\widehat{i}_*\widehat{i}^*\cF\cong\cF$.\par
It now suffices to prove that $\widehat{i}_*$ induces a functor between the corresponding categories of models. Indeed, it is easy to see that for $\widehat{\cF}\in \mor_\cat(\widehat{\cfact^\wrr},\cS)$ we have $\widehat{i}_*\widehat{\cF}(i,S)\cong \widehat{\cF}(i,S)$ for $(i,S)\neq (0,*)$ and $\widehat{i}_*\widehat{\cF}(0,*)\cong\widehat{\cF}(1,*)$, so in particular $\widehat{i}_*\widehat{\cF}\in[1]\times\fact$. It follows that $\widehat{\fact^\wrr}$ is isomorphic to the full subcategory of $[1]\times\fact$ on those objects $X\bydef(\cG,\cF,F:\cG\rightarrow\cF)$ for which $F$ induces an isomorphism between $X(0,*)\cong \cF_0$ and $X(1,*)\cong \cG_0$, which is exactly what we needed to prove.
\end{proof}
\begin{prop}\label{prop:wrr_comparison}
The category of model for $\cfact^\wrr$ and $\widehat{\cfact^\wrr}$ are isomorphic.
\end{prop}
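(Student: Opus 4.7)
The plan is to construct an explicit equivalence of the two model categories, using \cref{wrr_models} to identify $\modl_{\widehat{\cfact^\wrr}}(\cS)$ with the category of triples $(\cG,\cF,F\colon\cG\to\cF)$ in which $F$ is an isomorphism on objects. We exhibit functors in both directions and check that they are mutually inverse.

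In the first direction, given a triple $(\cG,\cF,F)$ we define a functor $\cY_F\colon(\cfact^\wrr)^\op\to\cS$ by the pullback formula
\[
\cY_F\bigl(s\colon \sq(A)\twoheadrightarrow \sq(B)\bigr)\;\bydef\;\cG(A)\times_{\cF(A)}\cF(B),
\]
where the maps in the pullback are $F\colon\cG(A)\to\cF(A)$ and $s^{*}\colon\cF(B)\to\cF(A)$. On a morphism of $\cfact^\wrr$, represented by a commuting square as in \eqref{eq:thirteen}, one uses the induced pair $(h^{*},h'^{*})$ to produce a map between the corresponding pullback squares; the compatibility conditions imposed in \cref{constr:wrr_factorization} are precisely what is needed for this assignment to be functorial. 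To see that $\cY_F$ is a model we must verify the Segal condition of \cref{wrr_theory}, namely that $\cY_F(s)$ is the limit of $\cY_F$ over $(\cfact^\wrr)^\el_{0,s/}$. Using \cref{lem:coinitiality} we may restrict to the initial elementary objects $(i,i')$ that are inert in both coordinates, and then the Segal condition for $\cY_F$ decomposes into the Segal conditions for $\cG$ on $A$ and for $\cF$ on $B$ (pulled apart by the pullback), both of which hold because $\cG,\cF\in\fact$. This gives the functor $\Psi$.

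In the opposite direction, given a model $\cY$ of $\cfact^\wrr$, we define $\cG:=\iota^{*}\cY$ where $\iota\colon\cfact\to\cfact^\wrr$, $S\mapsto(S=S)$ is the tautological embedding, so that $\cG(S)=\cY(S=S)$. The Segal condition of \cref{wrr_theory} together with the identification $(\cfact^\wrr)^\el_{S=S/}\cong\cfact^\el_{S/}$ implies that $\cG$ is a model of $\cfact$. The morphism $F$ and the factorization system $\cF$ are reconstructed from the remaining elementary data of $\cY$ on objects of the form $(e\twoheadrightarrow *)$ together with their comparison morphisms to $(e=e)$ in $\cfact^\wrr$; these give, for every elementary $e$, a pair of maps fitting into a square that reveals $\cF(e)$ as the quotient (more precisely, pushout in $\fact$) along which $F$ factors, and the compatibility of these data as $e$ varies promotes them to a morphism in $\fact$.

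Finally, the two constructions are mutually inverse by direct unwinding: $\iota^{*}\cY_F=\cG$ tautologically, and the assignment $s\mapsto \cG(A)\times_{\cF(A)}\cF(B)$ agrees with the one produced by iterating the Segal decomposition of a general $\cY$ over elementary objects. The main obstacle is the verification that $\cY_F$ satisfies the Segal condition of \cref{wrr_theory}, especially across factorization squares as in the proof of \cref{prop:wrr_theory}; this requires a careful commutation of limits, pullbacks and the relative Segal property for $\cfact$ established in \cref{prop:factorization_theory}, using \cref{lem:pushout} to identify the pushouts appearing in the active/inert factorization of $\cfact^\wrr$ with the corresponding limit diagrams on which $\cG$ and $\cF$ are already known to be Segal.
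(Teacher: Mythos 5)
There is a genuine gap: the pullback formula $\cY_F(s\colon\sq(A)\twoheadrightarrow\sq(B))\bydef\cG(A)\times_{\cF(A)}\cF(B)$ does not define an equivalence, because it forgets $\cF$. Unwinding the formula against the Segal decomposition of \cref{wrr_theory}, your model assigns $\cG(e)$ to the elementary object $(e=e)$ and $\cG(e)\times_{\cF(e)}\cF_0$ to the elementary object $(e\twoheadrightarrow *)$; the latter only records which morphisms of $\cG$ become degenerate in $\cF$, not the morphism spaces of $\cF$ itself. Since a model of $\cfact^\wrr$ is determined by its elementary values together with the structure maps between them, the functor $\Psi$ identifies genuinely distinct triples. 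Concretely, let $C$ be the free category on two parallel arrows $f,g\colon x\rightrightarrows y$, let $D$ be its quotient identifying $f$ and $g$, and set $\cG\bydef h_!C$, $\cF\bydef h_!D$ with $F$ the induced map (an isomorphism on objects). Then $\cG(h)\times_{\cF(h)}\cF_0\cong\cG_0\cong\cG(h)\times_{\cG(h)}\cG_0$, with the same structure maps, so $\Psi(\cG,\cF,F)\simeq\Psi(\cG,\cG,\id)$ although the two triples are not equivalent. For the same reason the proposed inverse cannot work: $\cF$ is \emph{not} recoverable as a quotient of $\cG$ determined by the collapsed morphisms, since $F$ may identify non-degenerate morphisms without collapsing anything. (A smaller point: the pullback assignment is functorial on \emph{all} commuting squares, so the extra conditions of \cref{constr:wrr_factorization} are not ``precisely what is needed'' for functoriality; they play a different role.)

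The correct model attached to $(\cG,\cF,F)$ must have elementary values $\cF(e)$ and $\cG(e)$ at the two elementary objects, with $F_e$ appearing as the structure map induced by the morphism of $\cfact^\wrr$ relating $(e=e)$ and $(e\twoheadrightarrow *)$; on a general object $s$ the value is then the fiber product of $\cG(e)$ over the segments collapsed by $s$ and $\cF(e)$ over the remaining ones, which is not of the form $\cG(A)\times_{\cF(A)}\cF(B)$. The paper obtains this by exhibiting $\widehat{\cfact^\wrr}$ as a full subcategory $j\colon\widehat{\cfact^\wrr}\hookrightarrow\cfact^\wrr$ (sending $(0,S)$ to $S\rightarrow *$ and $(1,S)$ to $S=S$) that matches elementary objects, inert morphisms and inclusions on both sides, and then proving that the right Kan extension $j_*$ preserves the Segal condition by a coinitiality analysis of $(s/j)$ using the covering/inclusion factorization of \cref{prop:coverings_factorization}; the equivalence then follows from $j^*j_*\cong\id$ and $j_*j^*\cong\id$. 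Your overall strategy of routing through \cref{wrr_models} is compatible with this, but the explicit formula realizing the comparison has to be corrected as above.
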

\begin{proof}
We denote by $j:\widehat{\cfact^\wrr}\hookrightarrow\cfact^\wrr$ the functor that sends $(0,S)$ to the unique morphism $S\rightarrow*$, $(1,S)$ to $\id:S=\joinrel=S$ and sends the natural morphism $(0,S)\rightarrow(1,S)$ to the diagram
\[
\begin{tikzcd}[row sep=huge, column sep=huge]
S\arrow[d, equal]\arrow[r, equal]&S\arrow[d]\\
S\arrow[r]&*
\end{tikzcd}
\]
in $\cfact$. Observe that $j$ is an inclusion of a full subcategory. Moreover, it respects elementary objects, active and inert morphisms, and inclusions. We will prove that $j_*$ induces an isomorphism between the categories of models.\par
To do this we first need to show that it restricts to $j_*:\widehat{\fact^\wrr}\rightarrow\fact^\wrr$. Observe that \[\Act_{\widehat{\cfact^\wrr}}(i,S)\cong \Act_{\cfact^\wrr}(j(i,S))\cong\Act_{\cfact}(S).\]
It follows that $j$ uniquely lifts active morphisms. In particular, if for a given $s\in\cfact^\wrr$ we denote by $(s/j)^\inrt\hookrightarrow (s/j)$ the full subcategory on inert morphisms $s\rightarrowtail j(x)$, then it is coinitial. Moreover, observe that any inert morphism $s\xinert{k} j(x)$ admits a further unique factorization of the form $j(c)\circ i$, where $i$ is an inclusion in $\cfact^\wrr$ and $c$ is a covering morphism. Namely, for a morphism $s\xinert{k'}j(S'_0,1)$ given by the diagram
\[
\begin{tikzcd}[row sep=huge, column sep=huge]
\sq(S'_0)\arrow[d, equal]\arrow[r, tail, "k'"]&\sq(S)\arrow[d, two heads, "s"]\\
\sq(S'_0)\arrow[r, "s\circ k'"]&\sq(S')
\end{tikzcd}
\]
the factorization is given by
\[
\begin{tikzcd}[row sep=huge, column sep=huge]
\sq(S'_0)\arrow[d, equal]\arrow[r, tail, "c'"]&\sq(S'_1)\arrow[d, equal]\arrow[r, tail, "i'"]&\sq(S)\arrow[d, two heads, "s"]\\
\sq(S'_0)\arrow[r, tail, "c'"]&\sq(S'_1)\arrow[r, "s\circ i'"]&\sq(S')
\end{tikzcd}.
\]
For a morphism $s\xinert{k}j(S_0,0)$ represented by the diagram
\[
\begin{tikzcd}[row sep=huge, column sep=huge]
\sq(S_0)\arrow[d, two heads]\arrow[r, tail, "k"]&\sq(S)\arrow[d, two heads, "s"]\\
*\arrow[r, "\{x\}"]&\sq(S')
\end{tikzcd}
\]
in which $\sq(S_0)\neq*$ (which is covered by the previous case) it follows directly from the conditions of \cref{constr:wrr_factorization} and the fact that $j$ is inert (and in particular injective, so it does not send non-trivial elementary morphisms to identities) that we have the following diagram
\[
\begin{tikzcd}[row sep=huge, column sep=huge]
\sq(S_0)\arrow[d, two heads]\arrow[r, tail, "c"]&\sq(S_1)\arrow[r, tail, "i"]\arrow[d, two heads]&\sq(S)\arrow[d, two heads, "s"]\\
*\arrow[r, equal]&*\arrow[r, "\{x\}"]&\sq(S')
\end{tikzcd},
\]
in which the top row is the factorization of \cref{prop:coverings_factorization}. 
It is obvious that in both cases this factorization is unique. It follows that the inclusion $(s/j)^\inc\hookrightarrow (s/j)^\inrt$ of the full subcategory on inclusions is also a coinitial morphism. Using this we compute for $\widehat{\cF}\in\widehat{\fact^\wrr}$
\begin{align*}
    j_*\widehat{\cF}(s)\cong& \underset{(s\xrightarrow{f}j(y))\in(s/j)}{\lim}\widehat{\cF}(y)\\
    \cong &\underset{(s\xinert{i}j(x))\in(s/j)^\inc}{\lim}\widehat{\cF}(x)\\
    \cong &\underset{(s\xinert{i}j(x))\in(s/j)^\inc}{\lim}\underset{(x\xinert{k}e)\in(\widehat{\cfact^\wrr})^\el_{x/}}{\lim}\widehat{\cF}(e)\\
    \cong & \underset{(s\xinert{i}e)\in(\cfact^\wrr)^\el_{s/}}{\lim}\underset{(e\xinert{k}j(x))\in(e/j)^\inc}{\lim}\widehat{\cF}(x)\\
    \cong& \underset{(s\xinert{i}e)\in(\cfact^\wrr)^\el_{s/}}{\lim}j_*\widehat{\cF}(e),
\end{align*}
in which the first isomorphism follows by definition, the second by the coinitiality of $(s/j)^\inc$, the third since $\widehat{\cF}$ is a model, the fourth by the naturality of right Kan extensions and the existence of a commutative diagram
\[
\begin{tikzcd}[row sep=huge, column sep=huge]
\widehat{\cfact^\wrr}^\el\arrow[d]\arrow[r]&(\cfact^\wrr)^\el\arrow[d]\\
\widehat{\cfact^\wrr}^\inc\arrow[r]&(\cfact^\wrr)^\inc
\end{tikzcd}.
\]
It follows that $j_*$ takes $\widehat{\fact^\wrr}$ to $\fact^\wrr$. Since $j$ is an inclusion of a full subcategory, we have $j^*j_*\cong \id$. It remains to prove that $j_*j^*\cong \id$. To do this, observe that since every elementary object is in the image of $j$ we have $j_*j^*\cF(e)\cong\cF(e)$ for every $\cF\in\fact^\wrr$. However, since both $\cF$ and $j_*j^*\cF(e)$ satisfy the Segal condition, it follows that they are isomorphic.
\end{proof}
\begin{definition}\label{def:completeness}
For a non-negative integer $n$ we will denote by $c_n$ the category given by the string of $n$ invertible morphisms. We can also describe it as a category with $(n+1)$ objects such that for any pair of objects $(i,j)$ in $c_n$ we have $\mor_{c_n}(i,j)\cong*$. For every $n,m\geq 0$ we will consider $c_m\times c_n$ as an object of $\fact$ using \cref{rem:products}. Observe that there is a morphism $i:t([1])\rightarrow c_1\times c_1$ given by the composition of inclusions 
\[t([1])\rightarrow[1]\times[1]\rightarrow c_1\times c_1.\] \par 
We will call an object $\cF\in\fact$ a \textit{groupoid} if every morphism $t([1])\rightarrow\cF$ uniquely factors through $i$. We will call it \textit{complete} if every morphism $c_1\times c_1\rightarrow \cF$ factors through $*$. We will denote by $\fact^\comp$ the full subcategory of $\fact$ on complete objects and by $\fact^\gpd$ the full subcategory of $\fact$ on groupoids.
\end{definition}
\begin{prop}\label{prop:groupoid_definition}
An object $\cF$ of $\fact$ is a groupoid if and only if its underlying category $t^*\cF$ is a groupoid.
\end{prop}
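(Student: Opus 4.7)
The plan is to unpack both conditions using \cref{thm:factorization_model}, which identifies $\cF$ with a category equipped with a factorization system $(\cH,\cV)$; I show that each condition is equivalent to the statement that every morphism in $\cF$ is invertible.

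First, by the construction in \cref{prop:subcategories}, $t^*\cF$ is the Segal space whose $n$-simplices are composable vh-sequences of length $n$, so a morphism in $t^*\cF$ is a pair $(v,h)$ with $v\in\cV$, $h\in\cH$, composable. The Segal composition in $t^*\cF$ sends a composable pair of such pairs to the (unique) vh-factorization of the resulting 4-fold composite in $\cF$, and from this one computes that $(v,h)$ is invertible in $t^*\cF$ iff the composite $h\circ v$ is invertible in $\cF$. Since every morphism in $\cF$ arises as such a composite, $t^*\cF$ is a groupoid iff $\cF$ is an $\infty$-groupoid.

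Next, via \cref{thm:factorization_model} and \cref{rem:products}, $c_1\times c_1\in\fact$ is the groupoid $c_1\times c_1$ with horizontal subcategory spanned by arrows of the form $(\phi,\id)$ and vertical by $(\id,\psi)$. A factorization-preserving functor $c_1\times c_1\to\cF$ is then a commutative square in $\cF$ with horizontal sides in $\cH$, vertical sides in $\cV$, all four sides invertible in $\cF$ and with their inverses also lying in the respective subcategories; the morphism $i$ picks out the vh-path around such a square. So the $\fact$-groupoid condition says: for every $(v,h)$ the space of such squares with prescribed vh-path $(v,h)$ is contractible.

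For $(\Rightarrow)$, specializing to $(\id_a,h)$ forces both $h$ and $h^{-1}$ to lie in $\cH$ as invertibles of $\cF$, and symmetrically for $\cV$; hence every morphism $h\circ v$ of $\cF$ is a composite of invertibles, $\cF$ is an $\infty$-groupoid, and $t^*\cF$ is a groupoid. For $(\Leftarrow)$, suppose $\cF$ is an $\infty$-groupoid. I first verify that $\cH$ is closed under inversion: given $h\in\cH\subset\cF$, take the canonical factorization $h^{-1}\cong\tilde h\circ\tilde v$; composing with $h$ yields $(h\tilde h)\circ\tilde v\cong\id$, and the uniqueness of the vh-factorization of the identity forces $\tilde v\cong\id$, whence $\tilde h\cong h^{-1}\in\cH$ (analogously for $\cV$). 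Now given any composable pair $(v,h)$, set $\phi:=h\circ v$ and factor its inverse canonically as $\phi^{-1}\cong\bar h\circ\bar v$; then $\phi\cong\bar v^{-1}\circ\bar h^{-1}$ with $\bar v^{-1}\in\cV$ and $\bar h^{-1}\in\cH$, supplying the missing two sides of the square with the required invertibility properties. Contractibility of the space of fillings follows from the contractibility of the vh-factorization space of $\phi^{-1}$ via the inversion equivalence $(b,h',v')\leftrightarrow(b,v'^{-1},h'^{-1})$.

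The main obstacle is translating the $\fact$-groupoid condition --- a statement about commutative squares with invertible sides drawn from $\cH$ and $\cV$ --- into the language of hv-order factorizations, and then relating these to the canonical vh-factorizations via inversion; everything else is a direct unpacking of \cref{thm:factorization_model}.
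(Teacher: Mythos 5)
Your proof is correct and follows essentially the same route as the paper: both directions reduce to the observation that $t^*\cF$ is the underlying category of the factorization system, and the nontrivial implication is obtained by taking the vh-factorization of the inverse of $h\circ v$ and inverting it to supply the other two sides of the square $c_1\times c_1\rightarrow\cF$. The only difference is that you explicitly verify that $\cH$ and $\cV$ are closed under inversion (via the unique factorization of the identity), a point the paper's proof leaves implicit when it writes the factorization of $v^{-1}\circ h^{-1}$ as $\widetilde{h}^{-1}\circ\widetilde{v}^{-1}$; this is a worthwhile clarification but not a different argument.
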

\begin{proof}
If $\cF$ is a groupoid, then the fact that $t^*\cF$ is a groupoid follows immediately from the definition. Assume that $t^*\cF$ is a groupoid, we need to show that every $f:t([1])\rightarrow\cF$ factors through $c_1\times c_1$. The morphism $f$ is essentially given by a composable pair $h\circ v$ of a vertical and a horizontal morphism in $\cF$. Since $t^*\cF$ is a groupoid, both $h$ and $v$ have inverses. Let $v^{-1}\circ h^{-1}\cong \widetilde{h}^{-1}\circ \widetilde{v}^{-1}$ be the horizontal/vertical factorization in $\cF$. Observe that we have $h\circ v\cong \widetilde{v}\circ \widetilde{h}$ since both morphisms are inverse to $v^{-1}\circ h^{-1}$ and any two inverses are isomorphic. It follows that $(v,h,\widetilde{v},\widetilde{h})$ define the required morphism $c_1\times c_1\rightarrow\cF$.
\end{proof}
\begin{construction}\label{constr:groupoid_theory}
Denote by $\cfact^\gpd$ the opposite of full subcategory of $\fact$ on objects of the form $c_n\times c_m$. For an object $S\in\cfact$ such that $S$ is a longest composable string of morphisms in $[n]\times[m]$ denote by $j(S)$ the category $c_n\times c_m$. Observe that $j$ extends to a functor from $\cfact$ to $\cfact^\gpd$. Indeed, by the same reasoning as in \cref{prop:groupoid_definition} we see that every morphism in $j(S)$ is obtained from taking factorizations of morphisms of $S$, taking inverses to those factorizations or by taking factorizations of those inverses. All those operations are preserved by morphisms in $\fact$, so it follows that a morphism $f:j(S)\rightarrow c_l\times c_t$ is uniquely determined by its restriction to $S$. With this in mind, for a morphism $f:\sq(S)\rightarrow\sq(S')$ we define $j(f)$ to correspond to the composition
\[S\hookrightarrow \sq(S)\xrightarrow{f}\sq(S')\hookrightarrow j(S').\]
We denote by $\widetilde{\fact^\gpd}$ the full subcategory of $\mor_\cat(\cfact^\gpd, \cS)$ on those functors $\cG$ that satisfy the following condition: for every maximal composable string $S\subset [n]\times[m]$ we have
\[\cG(c_n\times c_m)\cong \underset{(S\rightarrowtail e)\in\cfact^\el_{S/}}{\lim}\cG(j(e)).\]
\end{construction}
\begin{prop}\label{prop:max_subgroupoid}
The category $\widetilde{\fact^\gpd}$ of \cref{constr:groupoid_theory} is isomorphic to $\fact^\gpd$. Moreover, the natural inclusion $i:\fact^\gpd\hookrightarrow\fact$ admits a right adjoint.
\end{prop}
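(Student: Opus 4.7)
The proposition has two parts: first, the isomorphism $\widetilde{\fact^\gpd}\cong\fact^\gpd$; second, the existence of the right adjoint. My plan is to prove them in this order, so that the second can be phrased cleanly via the description of $\fact^\gpd$ as presheaves on $\cfact^\gpd$ satisfying the Segal-type condition of \cref{constr:groupoid_theory}.

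For the equivalence, I would define a restriction functor $\Psi:\widetilde{\fact^\gpd}\to\fact$ by $\Psi\cG\bydef\cG\circ j$, using the functor $j:\cfact\to\cfact^\gpd$. The Segal condition for $\Psi\cG$ at a string $S\in\cfact$, which is a limit over $\cfact^\el_{S/}$, matches precisely, via the construction of $j$, the defining condition of $\widetilde{\fact^\gpd}$ applied at $j(S)\cong c_n\times c_m$, so $\Psi\cG\in\fact$. To see $\Psi\cG\in\fact^\gpd$, I would invoke \cref{prop:groupoid_definition}: the underlying Segal space $t^*\Psi\cG$ has $(t^*\Psi\cG)_n\cong\cG(c_n\times c_n)$, and functoriality of $\cG$ along the inverse morphisms of $c_n\times c_n$ present in $\cfact^\gpd$ (but absent from $\cfact$) supplies the inverses making $t^*\Psi\cG$ a groupoid. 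Conversely, I would define $\Phi:\fact^\gpd\to\widetilde{\fact^\gpd}$ by $\Phi\cF(c_n\times c_m)\bydef\cF(S)$ for $S$ any maximal composable string in $[n]\times[m]$; the independence of the choice of $S$ is forced because any two maximal composable strings are related by permutation morphisms $\gamma_{i,j}$ of \cref{constr:elementary morphisms}, and these act invertibly on a groupoid. Direct inspection of the resulting assignments then gives $\Phi\Psi\cong\id$ and $\Psi\Phi\cong\id$.

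For the right adjoint, I would construct $R:\fact\to\fact^\gpd$ as a ``maximal subgroupoid'' functor. Concretely, for $\cF\in\fact$ define $R\cF(e)\subseteq\cF(e)$ on elementary objects $e\in\cfact^\el$ to be the subspace of invertible horizontal (resp.\ vertical) morphisms, and set $R\cF(\ast)=\cF(\ast)$; then extend to all of $\cfact$ by right Kan extension along $\cfact^\el\hookrightarrow\cfact$, i.e.\ by Segal limits. By construction $R\cF$ satisfies the Segal condition. That $R\cF\in\fact^\gpd$ follows from \cref{prop:groupoid_definition}: an arrow in $t^*R\cF$ is a composable pair of invertibles, which is invertible in $t^*\cF$ with inverse given by the composable pair of inverses. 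The adjunction $\mor_\fact(i\cG,\cF)\cong\mor_\fact(\cG,R\cF)$ for $\cG\in\fact^\gpd$ then follows because a morphism $\cG\to\cF$ sends the (all invertible) elementary arrows of $\cG$ to invertible arrows of $\cF$, hence factors through $R\cF$; the equivalence $\widetilde{\fact^\gpd}\cong\fact^\gpd$ ensures this factorization is detected by the Yoneda-like pairing with objects $c_n\times c_m$.

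\textbf{Main obstacle.} The subtle point is ensuring that $R\cF$ is actually closed under the factorization structure of $\cF$. A priori, given an invertible horizontal $h$ and invertible vertical $v$ with composite $v\circ h\cong\widetilde{h}\circ\widetilde{v}$ in $\cF$, the factoring pieces $\widetilde{h}$, $\widetilde{v}$ need not be invertible in a general factorization system. Showing that this closure nevertheless holds --- equivalently, that the factorization of an ``everywhere invertible'' morphism has everywhere invertible components --- is the heart of the argument and will have to be extracted carefully from the uniqueness clause of \cref{def:factorization_categories} combined with the characterization of groupoids in \cref{prop:groupoid_definition}.
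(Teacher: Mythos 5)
Your plan correctly identifies the two halves of the statement and the right objects to compare, but it stops short of a proof at precisely the point you yourself flag as the ``main obstacle,'' and that obstacle is not a deferrable technicality: it is the mathematical content of the proposition. Concretely, both of your constructions need the following closure statement: if $h$ is invertible in the horizontal category and $v$ is invertible in the vertical category, then the components $\widetilde h,\widetilde v$ of the factorization $v\circ h\cong\widetilde h\circ\widetilde v$ are again invertible in their respective subcategories. Without it, (i) your $R\cF$, built by right Kan extension from the elementary values, has no well-defined functoriality along the permutation morphisms $\gamma_{i,j}$ of \cref{constr:elementary morphisms}, so it is not even an object of $\fact$, let alone a groupoid; and (ii) your $\Phi$ is only defined on objects --- to make $\Phi\cF$ a functor on $\cfact^\gpd$ you must extend a map $\sq(S)\rightarrow\cF$ to $j(S)\cong c_n\times c_m$, which is the same closure statement again. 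The uniqueness clause of \cref{def:factorization_categories} by itself only yields one-sided inverses (refactoring the middle of $\id\cong h^{-1}\circ v^{-1}\circ\widetilde h\circ\widetilde v$ and comparing with the trivial factorization of the identity gives a left inverse for $\widetilde v$ and, dually, a right inverse for $\widetilde h$), so ``extracted carefully'' is doing real work that the proposal does not do. Note also that \cref{prop:groupoid_definition} cannot be invoked here: it assumes the ambient $t^*\cF$ is already a groupoid, whereas for the maximal subgroupoid you are working inside an arbitrary $\cF$.

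For comparison, the paper avoids building anything by hand on elementary objects: it sets $j_*\cF(c_n\times c_m)\cong\mor_\fact(c_n\times c_m,\cF)$, so that functoriality in $\cfact^\gpd$ is automatic, checks that $j_*$ lands in $\widetilde{\fact^\gpd}$ and that $j^*j_*$ is an idempotent comonad (whose value on elementary objects is exactly your space of invertible horizontal, resp.\ vertical, morphisms), and then obtains both claims at once by identifying $\fact^\gpd$ with the coalgebras of $j^*j_*$ via comonadicity of the adjunction $j^*\dashv j_*$. Your $R$ is in effect $j^*j_*$ computed from the wrong end: assembling it from its elementary values forces you to prove the gluing/closure lemma above, which the comonadic packaging sidesteps. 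If you wish to keep your route, you must first prove that lemma and also specify how $\Phi\cF$ acts on the morphisms of $\cfact^\gpd$ that do not come from $\cfact$.
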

\begin{proof}
Observe that by the definition of $\widetilde{\fact^\gpd}$ the morphism $J^*$ induces a morphism $j^*:\widetilde{\fact^\gpd}\rightarrow \fact$. We first prove that its right adjoint also restricts to a functor $j_*;\fact\rightarrow \widetilde{\fact^\gpd}$. To do this, first observe that every morphism $\sq(S)\rightarrow c_n\times c_m$ uniquely factors through $j(S)$. Using this fact we see that
\begin{align*}
    j_*\cF(c_n\times c_m)\cong &\underset{(j(S)\rightarrow c_n\times c_m)}{\lim}\cF(S)\\
    \cong &\underset{(\sq(S)\rightarrow c_n\times c_m)}{\lim}\cF(S)\\
    \cong &\underset{(\sq(S)\rightarrow c_n\times c_m)}{\lim}\mor_\fact(\sq(S),\cF)\\
    \cong &\mor_\fact(\underset{(\sq(S)\rightarrow c_n\times c_m)}{\colim}\sq(S),\cF)\\
    \cong &\mor_\fact(c_n\times c_m,\cF).
\end{align*}
We have already observed in \cref{constr:groupoid_theory} that any morphism $j(S)\rightarrow \cF$ is uniquely determined by its restriction to $S$. Since every maximal composable chain $S\in[n]\times [m]$ induces an isomorphism $j(S)\cong c_n\times c_m$ and a morphism from $S$ is uniquely determined by its restriction to the elementary segments we see that 
\[j_*\cF(c_n\times c_m)\cong\mor_\fact(c_n\times c_m,\cF)\cong\underset{(S\rightarrowtail e)\in\cfact^\el_{S/}}{\lim}\mor_\fact(j(e),\cF)\cong \underset{(S\rightarrowtail e)\in\cfact^\el_{S/}}{\lim}j_*\cF(j(e)).\]
We now demonstrate that $j^*j_*$ is an idempotent comonad on $\fact$. It suffices to prove that $(j^*j_*)^2\cF(e)\cong j^*j_*\cF(e)$ for all $e\in\cfact^\el$. The case of $e=*$ is trivial since $j^*j_*\cF(*)\cong \cF(*)$. The space $j^*j_*\cF(h)$ (resp. $j^*j_*\cF(v)$) is the space of invertible morphisms in the horizontal category $h^*\cF$ (resp. vertical category $v^*\cF$), and so the fact that $(j^*j_*)^2\cF(h)\cong j^*j_*\cF(h)$ (resp. $(j^*j_*)^2\cF(v)\cong j^*j_*\cF(v)$) follows since the maximal subgroupoid of a groupoid is equivalent to the groupoid itself.\par
The adjunction $j^*\dashv j_*$ is moreover comonadic. Indeed, it follows since $j^*$ preserves all limits and is conservative and \cite[Theorem 4.7.3.5.]{luriehigher}. To finish the proof, observe that the category of coalgebras for $j^*j_*$ is the category of $\cF\in\fact$ for which $j^*j_*\cF\cong \cF$. In other words, of those $\cF$ for which every morphism $\sq(S)\rightarrow\cF$ uniquely factors through $j(S)$. It is easy to see that this is the same category as $\fact^\gpd$, which proves our first claim, and the required right adjoint is then given by $j_*$.
\end{proof}
\begin{cor}\label{cor:completion}
The inclusion $\fact^\comp\hookrightarrow\fact$ admits a left adjoint.
\end{cor}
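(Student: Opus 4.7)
The plan is to realize $\fact^\comp$ as an accessible localization of the presentable $\infty$-category $\fact$, whereupon the existence of the left adjoint becomes a formal consequence of standard machinery.

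First, I would unwind \cref{def:completeness}: an object $\cF \in \fact$ is complete precisely when the restriction map
\[\mor_\fact(*, \cF) \longrightarrow \mor_\fact(c_1 \times c_1, \cF),\]
induced by the unique morphism $c_1 \times c_1 \to *$ in $\fact$, is an equivalence of spaces. Indeed, the source is the space of objects of $\cF$ and the target is the space of invertible $2\times 2$ factorization squares in $\cF$; the map sends an object to the constant square at that object, and the condition of \cref{def:completeness} is exactly the statement that every such square factors (essentially uniquely) through $*$. This identifies $\fact^\comp$ with the full subcategory of $\fact$ consisting of the objects local with respect to the singleton set $S = \{c_1 \times c_1 \to *\}$.

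Next, recall from the remark immediately following \cref{prop:coverings_factorization} that $\fact$ is a presentable $\infty$-category (being an accessible localization of a presheaf category). Standard results on localization in presentable $\infty$-categories—specifically \cite[Proposition 5.5.4.15]{lurie2009higher} together with the theory of reflective subcategories developed in \cite[Section 5.5.4]{lurie2009higher}—imply that the full subcategory of $S$-local objects in a presentable $\infty$-category is accessible and reflective for any small set of morphisms $S$, with the reflector providing the desired left adjoint. Applying this with the singleton $S$ above yields the corollary.

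Since the argument is an immediate application of well-known results, no serious obstacle is expected; the only point to verify is the identification of the combinatorial condition in \cref{def:completeness} with the locality condition against $c_1 \times c_1 \to *$, which is built into the formulation as soon as one interprets ``factors through'' in its homotopically correct sense (i.e.\ that the space of such factorizations is contractible).
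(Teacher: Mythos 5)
Your argument is correct as a proof of the bare existence statement, but it takes a genuinely different route from the paper. The paper does not invoke the theory of localization at a set of morphisms at all: it constructs the reflector explicitly. Concretely, it takes the maximal-subgroupoid comonad $j^*j_*$ of \cref{prop:max_subgroupoid}, regards the counit $j^*j_*\cF\rightarrow\cF$ as an object of the auxiliary category $\fact^\wrr$ of \cref{wrr_models} and \cref{prop:wrr_comparison} (morphisms inducing an isomorphism on spaces of objects), and defines the completion as $\widehat{\cF}\bydef Lp_!(j^*j_*\cF\rightarrow\cF)$, i.e.\ the universal quotient of $\cF$ collapsing the maximal factorization-compatible subgroupoid onto the space of objects; the adjunction identity $\mor_\fact(\widehat{\cF},\cG)\cong\mor_\fact(\cF,\cG)$ for complete $\cG$ is then verified directly. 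Your approach buys brevity --- it bypasses the entire $\cfact^\wrr$ apparatus of the preceding section --- at the cost of any explicit description of the completion functor, which the paper exploits in the remark immediately following the corollary (comparing completeness of $\widehat{\cF}$ with completeness of its underlying category). The one point where your argument leans on an interpretation is the identification of $\fact^\comp$ with the full subcategory of objects local with respect to $c_1\times c_1\rightarrow *$: if ``factors through $*$'' in \cref{def:completeness} were read as mere existence of a factorization, locality would not follow, since the retraction $*\rightarrow c_1\times c_1\rightarrow *$ forces injectivity on components but not contractibility of the fibers of $\mor_\fact(*,\cF)\rightarrow\mor_\fact(c_1\times c_1,\cF)$. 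You flag this explicitly, and the contractible-space-of-factorizations reading is the one consistent with the paper's conventions (compare condition $(*)$ in the introduction and the Rezk-style completeness for Segal spaces), so the gap is interpretive rather than mathematical.
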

\begin{proof}
Denote by $p:\cfact^\wrr\rightarrow\cfact$ the morphism that sends $\sq(S)\xactive{s}\sq(S)$ to $\sq(S)$. It is easy to see that $p^*$ induces a morphism $p^*:\fact\rightarrow\fact^\wrr$. Namely, under the identifications of \cref{wrr_models} and \cref{prop:wrr_comparison}, the morphism $p^*$ sends $\cF$ to the pair $(\cF_0\hookrightarrow \cF)$, where $\cF_0$ denotes the space of objects of $\cF$. Observe that it has a left adjoint given by $Lp_!$, where $L$ is the left adjoint to inclusion $\fact\hookrightarrow\mor_\cat(\cfact,\cS)$.\par
Observe that the counit morphism $(j^*j_*\cF\rightarrow\cF)$ is an element of $\fact^\wrr$. We claim that 
\[\widehat{\cF}\bydef Lp_!(j^*j_*\cF\rightarrow\cF)\]
is the required left adjoint. Indeed, a morphism $\widehat{\cF}\rightarrow\cG$ is the same thing as a morphism $f:\cF\rightarrow\cG$ such that the composition $j^*j_*\cF\rightarrow\cF\xrightarrow{f}\cG$ factors through $\cG_0$. However, since $\cG$ belongs to $\fact^\comp$, every morphism from a groupoid to $\cG$ factors through $\cG_0$, so $\mor_\fact(\widehat{\cF},\cG)\cong \mor_\fact(\cF,\cG)$.
\end{proof}
\begin{remark}
It is not true in general that the underlying category of $\widehat{\cF}$ is complete. This is because $j^*j_*\cF$ only contains invertible morphisms for which both the horizontal and vertical components of the factorization are also invertible, but there can be invertible morphisms in $t^*\cF$ for which this is not the case. However, the converse is true: if the underlying category of $\cF$ is complete, then $\cF$ itself is also complete as an object of $\fact$.
\end{remark}
\section{Lax functors to span}\label{sect:five}
This section contains our second main result, \cref{thm:overcategory}, that establishes an equivalence between categories with factorization systems over $\cF$ and lax functors to $\spanc$. To prove it, we first construct a theory $\cfact_/\cF$ in \cref{constr:overcategory} whose category of models is isomorphic to $\fact_{/\cF}$. We then characterize the category of lax functors from $\sq(S)$ to $\spanc$ as a subcategory of a certain presheaf category in \cref{prop:aux2}. Finally, we provide an equivalence between $\fact_{/\cF}$ and that category in \cref{thm:overcategory}. As a corollary, we can characterize the category $\fact$ itself as a category of distributive laws in $\spanc$.
\begin{definition}\label{def:slice_theory}
Assume we have a theory $\cT$ with arities $\cE\subset\mor_\cat(\cT_0,\cS)$ together with a model $X$. To $X$, viewed as a functor $X:\cT_1\rightarrow\cS$, we can associate a left fibration $p:X_1\rightarrow\cT_1$, we also denote by $X_0$ its restriction to $\cT_0$. Denote by $\cE_X$ the full subcategory of $\mor_{\cat}(X_0,\cS)$ on those objects $\cF$ for which $p_!\cF\in\cE$, this gives $(X_0\rightarrow X_1)$ the structure of a theory which we denote $\cT_{/X}$. Observe that under the equivalence
\[\mor_\cat(X_1,\cS)\cong\mor_\cat(\cT_1,\cS)_{/X}\]
of \cite[Proposition 3.12.]{kositsyn2021completeness} the category of models for $\cT_{/X}$ can be associated with the overcategory $\modl_\cT(\cS)_{/X}$.
\end{definition}
\begin{construction}\label{constr:overcategory}
Assume we are given an object $\cF\in\fact$. We can associate to it a left fibration $\cfact_{/\cF}\rightarrow\cfact$ as well as its restriction $\cfact_{/\cF}^\act$ (resp. $\cfact_{/\cF}^\inrt$, $\cfact_{/\cF}^\inc$, $\cfact_{/\cF}^\el$) to $\fact^\act$ (resp. $\fact^\inrt$, $\fact^\inc$, $\fact^\el$), denote by $i_\cF:\cfact_{/\cF}^\el\hookrightarrow\cfact_{/\cF}^\inc$ and $j_\cF:\cfact_{/\cF}^\inc\hookrightarrow\cfact_{/\cF}^\inrt$ the natural inclusions. We endow it with the structure of a theory by declaring $\cE_{\cfact_{/\cF}}$ to be the full subcategory on those $\cG:\cfact_{/\cF}^\inrt\rightarrow\cS$ whose restriction to $\cfact_{/\cF}^\inc$ is the right Kan extension of its restriction to $\cfact_{/\cF}^\el$.
\end{construction}
\begin{prop}\label{prop:model_for_overcategories}
The category of models for $\cfact_{/\cF}$ is equivalent to $\fact_{/\cF}$.
\end{prop}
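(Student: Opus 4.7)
The plan is to deduce the proposition as an instance of the general principle recorded in Definition \ref{def:slice_theory}: for any theory with arities $\cT$ and any model $X$, the slice $\cT_{/X}$ inherits a theory-with-arities structure whose category of models is $\modl_\cT(\cS)_{/X}$. Applied to $\cT=\cfact$ (which is a theory with arities by Proposition \ref{prop:factorization_theory}) and $X=\cF\in\fact$, this immediately produces an equivalence between the models of the resulting slice theory and $\fact_{/\cF}$. What remains is to identify that abstract slice theory with the $\cfact_{/\cF}$ of Construction \ref{constr:overcategory}.

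The identification of the underlying categorical data is tautological: the left fibration $\cfact_{/\cF}\to\cfact$ is by construction classified by $\cF$ viewed as a presheaf on $\cfact^\op$, which is exactly the $X_1\to\cT_1$ of Definition \ref{def:slice_theory}, and the subcategories $\cfact_{/\cF}^{\star}$ for $\star\in\{\act,\inrt,\inc,\el\}$ are defined as the pullbacks along $\cfact^{\star}\hookrightarrow\cfact$, so they agree with the corresponding pieces produced by the general construction.

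The only non-formal check is that the arity class $\cE_{\cfact_{/\cF}}$ defined intrinsically in Construction \ref{constr:overcategory} via a right-Kan-extension condition along $i_\cF$ and $j_\cF$ agrees with the $\cE_X$ of Definition \ref{def:slice_theory}, which is defined by the condition $p_!\cG\in\cE_\cfact$. I would verify this by base change in the pullback square
\[
\begin{tikzcd}
\cfact_{/\cF}^\el \arrow[r, hook, "i_\cF"] \arrow[d, "p^\el"'] & \cfact_{/\cF}^\inc \arrow[d, "p^\inc"] \\
\cfact^\el \arrow[r, hook, "i"'] & \cfact^\inc
\end{tikzcd}
\]
and its analogue for $j_\cF$: since the vertical maps are left fibrations (in particular smooth), the Beck--Chevalley transformations $i^{*}p^{\inc}_{!}\simeq p^{\el}_{!}(i_\cF)^{*}$ and $p^{\inc}_{!}(i_\cF)_{*}\simeq i_{*}p^{\el}_{!}$ are equivalences on the presheaves of interest, which converts the Kan-extension condition on $(i_\cF)^{*}(j_\cF)^{*}\cG\Rightarrow (j_\cF)^{*}\cG$ into the Kan-extension condition on $i^{*}j^{*}p_!\cG\Rightarrow j^{*}p_!\cG$, i.e.\ into membership in $\cE_\cfact$.

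This Beck--Chevalley verification is the single substantive step; I expect it to be the main obstacle, though it is entirely formal once one unwinds the fibered structure. With the two arity classes identified, the conclusion follows directly from Definition \ref{def:slice_theory} combined with the presheaf-overcategory equivalence of \cite[Proposition 3.12.]{kositsyn2021completeness}.
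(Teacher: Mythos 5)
Your proposal is correct and follows essentially the same route as the paper: both reduce, via the presheaf--overcategory equivalence of \cite[Proposition 3.12.]{kositsyn2021completeness}, to checking that the intrinsic right-Kan-extension condition of \cref{constr:overcategory} agrees with the condition $p_!\cG\in\cE_\cfact$, and both do so by commuting $p_!$ past $i^*$, $j^*$ and $i_*$ in the relevant squares of left fibrations. One caveat: the exchange $p^{\inc}_!(i_\cF)_*\simeq i_* p^{\el}_!$ is \emph{not} a formal consequence of smoothness (smoothness only yields base change of left Kan extensions against pullbacks, i.e.\ $i^*p^{\inc}_!\simeq p^{\el}_!(i_\cF)^*$); this commutation of a left Kan extension with a right Kan extension is exactly where the paper invokes \cite[Corollary 7.17]{chu2019homotopy}, and your argument needs the same non-formal input.
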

\begin{proof}
We need to prove that for a morphism $(\cG\xrightarrow{f}\cF)$ of presheaves on $\fact^\op$ the presheaf $\cG$ is an object of $\fact$ if and only if $f$ viewed as a functor $\cfact_{/\cF}\rightarrow\cS$ belongs to $\modl_{\cfact_{/\cF}}(\cS)$. First, assume that $\cG$ is an object of $\fact$ and fix $S\in\fact$ and $x\in\cF(S)$. Then we have the following pullback square
\[
\begin{tikzcd}[row sep=huge, column sep=huge]
\cG_x\arrow[r]\arrow[d]&\cG(S)\cong\underset{\cfact^\el_{S/}}{\lim}\cG(e)\arrow[d,"f"]\\
\underset{\cfact^\el_{S/}}{\lim}*\cong*\arrow[r,"x"]&\cF(S)\cong\underset{\cfact^\el_{S/}}{\lim}\cF(e)
\end{tikzcd}
\]
and the claim follows from the commutativity of limits. Now assume that $i_{\cF,*}i^*_\cF j^*_\cF f\cong j^*_\cF f$. Observe that by \cite[Corollary 7.17]{chu2019homotopy} and the reasoning in \cite[Proposition 2.1.]{kositsyn2021completeness} we have the following commutative diagram
\[
\begin{tikzcd}[row sep=huge, column sep=huge]
\cfact_{/\cF}^\el\arrow[r, "i_{\cF,*}"]\arrow[d, "p^\el_!"]&\cfact_{/\cF}^\inc\arrow[d, "p^\inc_!"]\\
\fact^\el\arrow[r, "i_*"]&\fact^\inc
\end{tikzcd},
\]
and so it follows that 
\[i_*i^*j^*p^\inrt_!f\cong i_*i^*p^\inc_! j^*_\cF f\cong i_* p^\el_! i^*_\cF j^*_\cF f\cong p^\inrt_! i_{\cF,*}i^*_\cF j^*_\cF f\cong p^\inc_! j^*_\cF f\cong j^*p^\inrt_! f,\]
where the first two and the last equivalence follow from the Beck-Chevalley isomorphism.
\end{proof}
\begin{cor}\label{cor:overcategories_and_colimits}
For $\cF\in\fact$ we have an equivalence
\[\fact_{/\cF}\cong\underset{(\sq(S)\xrightarrow{f}\cF)\in\cfact_{/\cF}}{\lim}\fact_{/\sq(S)}.\]
\end{cor}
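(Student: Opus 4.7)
The plan is to bootstrap from \cref{prop:model_for_overcategories}, which identifies $\fact_{/\cF}$ with $\modl_{\cfact_{/\cF}}(\cS)$, by exhibiting a colimit decomposition of the theory $\cfact_{/\cF}$ indexed by the category $\cfact_{/\cF}$ itself. Applying \cref{prop:model_for_overcategories} both to $\cF$ and to each $\sq(S)$ appearing in $\cfact_{/\cF}$, it suffices to establish an equivalence
\[
\modl_{\cfact_{/\cF}}(\cS)\cong\underset{(\sq(S)\to\cF)\in\cfact_{/\cF}}{\lim}\modl_{\cfact_{/\sq(S)}}(\cS).
\]

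The co-Yoneda presentation $\cF\cong\colim_{(\sq(S)\to\cF)}\sq(S)$ in $\mor_\cat(\cfact,\cS)$, applied at the level of the classifying left fibrations, yields an equivalence
\[
\cfact_{/\cF}\cong\underset{(\sq(S)\to\cF)\in\cfact_{/\cF}}{\colim}\cfact_{/\sq(S)}
\]
in $\cat_\infty$, where the transition functors are post-composition with the structure morphisms of the slice. Because the subcategories $\cfact_{/\cF}^\inrt$, $\cfact_{/\cF}^\inc$ and $\cfact_{/\cF}^\el$ are defined by pullback from $\cfact^\inrt$, $\cfact^\inc$ and $\cfact^\el$ along the projection $\cfact_{/\cF}\to\cfact$, the colimit decomposition above restricts to analogous decompositions on each of these subcategories. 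In particular every elementary object of $\cfact_{/\cF}$ lies (canonically, via a chosen basepoint in $\cF$) in some $\cfact_{/\sq(S)}$.

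Finally, since $\modl_\cT(\cS)$ is by definition the full subcategory of $\mor_\cat(\cT^\inrt,\cS)$ cut out by the pointwise right Kan extension condition of \cref{constr:overcategory}, and since $\mor_\cat(-,\cS)$ converts colimits of indexing $\infty$-categories into limits of functor categories, we obtain the desired equivalence. The only genuine verification is that the Segal-type condition is compatible with the colimit: a functor $\cG\colon\cfact_{/\cF}^\inrt\to\cS$ is a model iff each of its restrictions $\cG|_{\cfact_{/\sq(S)}^\inrt}$ is a model. This is where I anticipate the main obstacle, but it follows immediately from the pointwise nature of the Kan-extension condition and the fact just noted that every elementary object of $\cfact_{/\cF}$ is detected in some $\cfact_{/\sq(S)}$, so that the condition at each elementary object is witnessed by some term of the limit.
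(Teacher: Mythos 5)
Your proof is correct and follows essentially the same route as the paper: both arguments rest on the co-Yoneda decomposition $\cF\cong\colim_{(\sq(S)\to\cF)}\sq(S)$, convert it into a limit decomposition of presheaf categories (the paper via descent in the $\infty$-topos $\cP(\cfact^\op)$, i.e. $\cP(\cfact^\op)_{/\cF}\cong\lim\cP(\cfact^\op)_{/\sq(S)}$, you via colimit-preservation of the category-of-elements construction followed by $\mor_\cat(-,\cS)$ -- two phrasings of the same mechanism), and then observe that the Segal condition is detected termwise. Your closing verification that the Kan-extension condition at an object $(S,x)$ is witnessed inside $\cfact_{/\sq(S)}$ is in fact more detailed than the paper's ``it is easy to see''.
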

\begin{proof}
Indeed, since $\cP(\cfact^\op)$ is an $\infty$-topos and $\cF\cong\underset{(\sq(S)\xrightarrow{f}\cF)\in\cfact_{/\cF}}{\colim}\sq(S)$ we have 
\[\cP(\cfact_{/\cF}^\op)\cong\cP(\cfact^\op)_{/\cF}\cong \underset{(\sq(S)\xrightarrow{f}\cF)\in\cfact_{/\cF}}{\lim}\cP(\cfact^\op)_{/\sq(S)}\cong \underset{(\sq(S)\xrightarrow{f}\cF)\in\cfact_{/\cF}}{\lim}\cP(\cfact_{/\sq(S)}^\op).\]
Moreover, it is easy to see that this isomorphism respects the Segal condition on both sides, so it in fact restricts to the required isomorphism.
\end{proof}

\begin{construction}\label{constr:aux2}
Denote by $\widehat{\cfact_{/\sq(S)}}$ the following category: its objects are given by strings of morphisms $[m]\xinert{i}[n]\xrightarrow{s}\sq(S')\xrightarrow{f}\sq(S)$ in which $f$ is an arbitrary morphism in $\fact$, $s$ is a morphism in $\cat$ such that the image of $[n]$ under $s$ belongs to $S'\subset\sq(S')$, $s(0)$ is the initial object of $\sq(S')$ and $s(n)$ is the final object of $\sq(S')$ and $i$ is an inert morphism in $\ccat$. The morphisms are given by commutative diagrams of the form
\begin{equation}\label{eq:fourteen}
\begin{tikzcd}[row sep=huge, column sep=huge]
{[m_0]}\arrow[r, tail, "i_0"]\arrow[dd, "v"]&{[n_0]}\arrow[dd, "w"]\arrow[r, "s_0"]&\sq(S'_0)\arrow[dr, "f"]\arrow[dd, "u"]\\
{}&{}&{}&\sq(S)\\
{[m_1]}\arrow[r, tail, "i_1"]&{[n_1]}\arrow[r, "s_1"]&\sq(S'_1)\arrow[ur, "g"]
\end{tikzcd}
\end{equation}
in which $u$ belongs to $\cfact^\ap$.
\end{construction}
\begin{prop}\label{prop:aux2}
Denote by $\mor^{\lax,!}_\fact(\sq(S),\spanc)$ the category whose objects are lax functors $F:\sq(S)\rightsquigarrow \spanc$ and whose morphisms are given by lax natural transformations $\eta$ such that for every object $x\in\sq(S)$ the component $\eta_x$ is given by a morphism of the form $h_!$ for a morphism $h$ in $\cS$. Then there is an equivalence between $\mor^{\lax,!}_\fact(\sq(S),\spanc)$ and the category of functors $\cF:\widehat{\cfact_{/\sq(S)}}\rightarrow\cS$ satisfying the following conditions:
\begin{itemize}
    \item for every object $y\bydef([m]\xinert{i}[n]\xrightarrow{s}\sq(S')\xrightarrow{f}\sq(S))$ we have
    \[\cF(y)\cong \underset{e\in\widehat{\cfact_{/\sq(S)}}^\el_{y/}}{\lim}\cF(e),\]
    where $\widehat{\cfact_{/\sq(S)}}^\el_{y/}$ is the category whose objects are diagrams of the form 
    \[
    \begin{tikzcd}
    {[l]}\arrow[dd, tail, "j"]\arrow[dr, "i\circ j"]\\
    {}&{[n]}\arrow[r, "s"]&\sq(S')\arrow[r, "f"]&\sq(S)\\
    {[m]}\arrow[ur, tail, "i"]
    \end{tikzcd}
    \]
    in which $l\in\{0,1\}$ and $j$ is an inert morphism in $\ccat$ and morphisms are induced by inert morphisms in $\ccat^\el$;
    \item $\cF$ takes morphisms of the form
    \[
    \begin{tikzcd}[row sep=huge, column sep=huge]
    {[m]}\arrow[r, tail, "i"]\arrow[dd, equal]&{[n]}\arrow[dd, tail, "j"]\arrow[r, "s"]&\sq(S'_j)\arrow[dr, "f"]\arrow[dd, tail, "j'"]\\
    {}&{}&{}&\sq(S)\\
    {[m]}\arrow[r, tail, "j\circ i"]&{[n']}\arrow[r, "s'"]&\sq(S')\arrow[ur, "f"]
    \end{tikzcd},
    \]
    where $j'$ is an inclusion of the segment $S'_j$ of $S'$ between $s'\circ j(0)$ and $s'\circ j(n)$, and 
    \[
    \begin{tikzcd}[row sep=huge, column sep=huge]
    {[m]}\arrow[r, tail, "i"]\arrow[dd, two heads, "\widetilde{a}"]&{[n]}\arrow[dd, two heads, "a"]\arrow[r, "s"]&\sq(S')\arrow[dr, "f"]\arrow[dd, equal]\\
    {}&{}&{}&\sq(S)\\
    {[\widetilde{m}]}\arrow[r, tail, "\widetilde{i}"]&{[n']}\arrow[r, "s'"]&\sq(S')\arrow[ur, "f"]
    \end{tikzcd}
    \]
    to isomorphisms.
\end{itemize}
\end{prop}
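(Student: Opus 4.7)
The strategy is to construct mutually inverse functors between the two categories, using Proposition~\ref{prop:lax_for_objects_of_fact} to gain explicit control over the source and the Segal description of $\widehat{\cfact_{/\sq(S)}}$ on the target. The guiding intuition is that an object $y = ([m] \xinert{i} [n] \xrightarrow{s} \sq(S') \xrightarrow{f} \sq(S))$ of $\widehat{\cfact_{/\sq(S)}}$ encodes a ``marked path'' in $\sq(S)$: the composition $f \circ s$ provides a string of $n$ composable morphisms in $\sq(S)$, while $i$ marks a subset of $(m+1)$ vertices along this string. A lax functor $F : \sq(S) \rightsquigarrow \spanc$ sends such a string to an iterated span in $\cS$, which can be cut down to the marked vertices.

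In the forward direction, define $\cF_F : \widehat{\cfact_{/\sq(S)}} \to \cS$ by sending $y$ to the limit over the $l=1$ elementary sub-objects, i.e.\ to the space obtained by composing $F$ along the path $f \circ s$ and restricting to the vertices picked out by $i$. The first condition (Segal at $y$) holds by construction, and the second condition is exactly the statement that inclusions and active morphisms in the path-component act by isomorphism, which follows from the definition of $\spanc$: an inclusion $[n] \hookrightarrow [n']$ with endpoints unchanged does not affect the iterated span, while an active morphism in $\cfact^\ap$ corresponds, via the comparison $\gamma_{h,v}$ 2-cells of $\cL^\lax \sq(S)$, to a reindexing that is invertible after composition. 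The restriction to the subcategory $\mor^{\lax,!}_\fact(\sq(S),\spanc)$ of transformations with components of the form $h_!$ is precisely what guarantees that these 2-cells are isomorphisms of spaces (as opposed to general spans), producing a single-valued functor $\cF_F$ rather than a functor into a larger correspondence category.

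In the reverse direction, given $\cF$ satisfying both conditions, reconstruct a lax functor as follows. For $x \in \sq(S)$, set $F(x) \bydef \cF([0] \xinert{\id} [0] \xrightarrow{x} \sq(x) \to \sq(S))$. For an elementary morphism $v : x \to y$ in $\sq(S)$, the span $F(v)$ has apex $\cF([1] \xinert{\id} [1] \xrightarrow{v} \sq([1]) \to \sq(S))$ with legs induced by the two inert morphisms $[0] \rightarrowtail [1]$. The Segal condition extends this uniquely to all morphisms and to composable strings, and the comparison 2-cells for both composition and factorization $\gamma_{h,v}$ are produced from the action of the corresponding active and permutation morphisms in $\widehat{\cfact_{/\sq(S)}}$ under the isomorphism condition of the second bullet.

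The main obstacle will be verifying coherence: checking that the reconstructed $F$ is indeed a lax functor, i.e.\ that the associativity of composition 2-cells and the compatibility with the factorization 2-cells $\gamma_{h,v}$ that characterize $\cL^\lax\sq(S)$ in Proposition~\ref{prop:lax_for_objects_of_fact} translate correctly to the generating relations (\ref{eq:one})--(\ref{eq:five}) among active/permutation morphisms of $\widehat{\cfact_{/\sq(S)}}$. This amounts to showing that the classifying space of the category of morphisms of the form (\ref{eq:fourteen}) between two fixed endpoints coincides with the mapping space in $\cL^\lax\sq(S)$ described by Proposition~\ref{prop:lax_for_objects_of_fact}; both are controlled by the contractible simplicial sets $\sq^\Delta(-)$ of Proposition~\ref{prop:square_contractibility}, so the coherences can be verified by induction on dimension exactly as in the proof of \cref{prop:lax_for_objects_of_fact}, replacing $\cL^\lax(S)$ there by the appropriate presheaf category here. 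Finally, a direct inspection on generating objects shows that the two constructions are mutually inverse, completing the equivalence.
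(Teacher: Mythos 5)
Your overall shape (mutually inverse constructions, reading off the value of $\cF$ on a marked path from the iterated span $F$ assigns to it) matches the paper's intent, but there are two concrete gaps. First, you misattribute the role of the condition that the components $\eta_x$ be of the form $h_!$. That condition plays no role whatsoever in constructing the object-level correspondence $F\mapsto\cF_F$; it is needed only at the level of \emph{morphisms}, to ensure that a lax natural transformation between lax functors corresponds to an honest map of space-valued presheaves $\cF\rightarrow\cG$ (a commuting ladder of spaces over each span) rather than to a span of presheaves. Your claim that it is ``precisely what guarantees that these 2-cells are isomorphisms of spaces'' conflates it with the second bullet condition, which in fact comes from an entirely different source: the functoriality of $F$ over $\Delta^{\op}$, i.e.\ the compatibilities $a^*\circ F_{\widetilde{n}}\cong F_n\circ a^*$ and $i^*\circ F_{n'}\cong F_{\widetilde{n}}\circ i^*$ for active and inert maps in $\Delta$.

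Second, the real work in this proposition is not the coherence of the reconstructed lax functor (that is already packaged into \cref{prop:lax_for_objects_of_fact}, which you may use as a black box), but rather the functoriality of $\cF_F$ on the morphisms of $\widehat{\cfact_{/\sq(S)}}$ that do not lie over identities in $\Delta^{\op}$, i.e.\ general diagrams of the form (\ref{eq:fourteen}). Your proposal defines $\cF_F(y)$ as a limit over elementary subobjects and asserts the second condition ``follows from the definition of $\spanc$,'' but never explains how $\cF_F$ acts on such a morphism. The mechanism the paper uses is structural: writing $p:\widehat{\cfact_{/\sq(S)}}\rightarrow\Delta^{\op}$, one identifies $p^{-1}([n])\cong\widetilde{\cL^\lax\sq(S)}([n])\times\ccat^\inrt_{/[n]}$ and uses $\spanc([n])\cong\cat^\inrt_{/[n]}\subset\mor_\cat(\ccat^\inrt_{/[n]},\cS)$, so that each $F_n$ curries into exactly the restriction $\cF|_{p^{-1}([n])}$; a general morphism is then decomposed as in (\ref{eq:fifteen}) into a piece inside a single fiber (where $\cF$ is already defined) and active/inert comparison pieces (where the value is \emph{forced} to be an isomorphism by functoriality of $F$ over $\Delta^{\op}$, which is precisely the second bullet). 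Without this fiberwise identification and the decomposition of morphisms, your construction does not yet produce a functor on all of $\widehat{\cfact_{/\sq(S)}}$, and the proposed induction over the contractible $\sq^\Delta(-)$ addresses a coherence problem that has already been solved elsewhere rather than the one that remains.
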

\begin{proof}
We begin by providing an isomorphism on the underlying space of objects. First, assume we are given a functor $F:\sq(S)\rightsquigarrow\spanc$. Recall from \cref{prop:lax_for_objects_of_fact} that giving a lax functor $F$ is equivalent to giving a functor $\widetilde{\cL^\lax\sq(S)}\rightarrow\spanc$, where $\widetilde{\cL^\lax\sq(S)}$ is the category from \cref{constr:elementary_lax}. We can identify $\widetilde{\cL^\lax\sq(S)}$ with a coCartesian fibration over $\Delta^\op$: the value of the corresponding functor on $[n]$ is the category whose objects are strings of morphisms $[n]\xrightarrow{s}\sq(S')\xrightarrow{f}\sq(S)$, where $f$ is a morphism in $\fact$ and $s$ is a morphism in $\cat$ such that $s([n])\subset S'\subset\sq(S')$, $s(0)$ is the initial object of $\sq(S')$ and $s(n)$ is the final object of $\sq(S')$, and whose morphisms are diagrams of the form 
\[
\begin{tikzcd}[row sep=huge, column sep=huge]
{}&\sq(S')\arrow[dr, "f"]\arrow[dd, "h"]\\
{[n]}\arrow[ur, "s"]\arrow[dr, "t"]&{}&\sq(S)\\
{}&\sq(S'')\arrow[ur, "g"]
\end{tikzcd},
\]
where $h$ is a morphism in $\cfact^\ap$. An inert morphism $j:[n]\rightarrowtail[n']$ then induces $j^*:\widetilde{\cL^\lax(\sq(S))}([n'])\rightarrow\widetilde{\cL^\lax(\sq(S))}([n])$ given by sending $[n']\xrightarrow{s'}\sq(S')\xrightarrow{f}\sq(S)$ to $[n]\xrightarrow{s}\sq(S'_j)\xrightarrow{f}\sq(S)$ and an active morphism $a:[n]\twoheadrightarrow[n']$ induces $a^*:\widetilde{\cL^\lax(\sq(S))}([n'])\rightarrow\widetilde{\cL^\lax(\sq(S))}([n])$ given by sending $[n']\xrightarrow{s'}\sq(S')\xrightarrow{f}\sq(S)$ to $[n]\xrightarrow{s'\circ a}\sq(S')\xrightarrow{f}\sq(S)$. Recall from \cite[Corollary 2.16.]{kositsyn2021completeness} that $\spanc$, viewed as a coCartesian fibration over $\Delta^\op$, has $\spanc([n])\cong \cat^\inrt_{/[m]}$. Here $\ccat^\inrt_{/[n]}$ is the full subcategory of $\ccat_{/[n]}$ on inert morphisms and $\cat^\inrt_{/[n]}$ is the full subcategory of $\mor_\cat(\ccat^\inrt_{/[n]},\cS)$ on those functors $G$ for which 
\[G([m]\xinert{i}[n])\cong \underset{(j:[m]\rightarrowtail e)\in\ccat^\el_{[m]/}}{\lim}G(e\xinert{i\circ j}[n]).\]
For an inert morphism $[n]\xinert{j}[n']$ we have
\[j^*G([m]\xinert{i}[n])\cong G([m]\xinert{j\circ i}[n'])\]
and for an active morphism $[n]\xactive{a}[n']$ we have
\[a^*G([m]\xinert{i}[n])\cong G([\widetilde{m}]\xinert{\widetilde{i}}[n']),\]
where $a\circ i\cong \widetilde{i}\circ \widetilde{a}$ is the active/inert factorization in $\ccat$.\par
The functor $F$ can be identified with a morphism $F:\widetilde{\cL^\lax\sq(S)}\rightarrow\spanc$ of coCartesian fibrations over $\Delta^\op$. In particular, for every $n$ we have a morphism of categories $F_n:\widetilde{\cL^\lax\sq(S)}([n])\rightarrow\spanc([n])$. Using the definition of $\spanc$, we can view $F_n$ as an element of 
\[\mor_\cat(\widetilde{\cL^\lax\sq(S)}([n]), \mor_\cat(\ccat^\inrt_{/[n]},\cS))\cong \mor_\cat(\widetilde{\cL^\lax\sq(S)}([n])\times\ccat^\inrt_{/[n]}, \cS).\]
Observe that there is a forgetful functor $U:\widehat{\cfact_{/\sq(S)}}\rightarrow \widetilde{\cL^\lax(\sq(S))}$ that sends $([m]\xinert{i}[n]\xrightarrow{s}\sq(S')\xrightarrow{f}\sq(S))$ to $([n]\xrightarrow{s}\sq(S')\xrightarrow{f}\sq(S))$. Also denote by $p:\widehat{\cfact_{/\sq(S)}}\rightarrow\Delta^\op$ the morphism that sends $([m]\xinert{i}[n]\xrightarrow{s}\sq(S')\xrightarrow{f}\sq(S))$ to $[n]$. It is easy to see that $p^{-1}([n])\cong \widetilde{\cL^\lax\sq(S)}([n])\times\ccat^\inrt_{/[n]}$, so we can identify the functors $F_n$ defined above with a functor
\[\widehat{\cF}:\coprod_{n\geq 0} p^{-1}([n])\rightarrow\cS.\]
Also observe that, since all $F_n$ land in $\spanc([n])\subset \mor_\cat(\ccat^\inrt_{/[n]},\cS)$, $\widehat{\cF}$ satisfies the first condition in our claim. Now we need to extend $\widehat{\cF}$ to a morphism $\cF:\widehat{\cfact_{/\sq(S)}}\rightarrow\cS$, to do this we need to describe its value on the morphisms that do not lie over identity morphisms in $\Delta^\op$. Observe that the morphism in $\widehat{\cfact_{/\sq(S)}}$ represented by the diagram (\ref{eq:fourteen}) can be decomposed as 
\[
\begin{tikzcd}[row sep=huge, column sep=huge]
{[m_0]}\arrow[r, tail, "i_0"]\arrow[d, equal]&{[n_0]}\arrow[d, equal]\arrow[r, "s_0"]&\sq(S'_0)\arrow[d, "u'"]\arrow[dr, "f"]\\
{[m_0]}\arrow[d, "v"]\arrow[r, tail, "i_0"]&{[n_0]}\arrow[d, "w"]\arrow[r, "u'\circ s_0"]&\sq(S'_{0,u})\arrow[d, tail, "j_u"]&\sq(S)\\
{[m_1]}\arrow[r, tail, "i_1"]&{[n_1]}\arrow[r, "s_1"]&\sq(S'_1)\arrow[ur, "g"]
\end{tikzcd},
\]
where $j_u:\sq(S'_{0,u})\rightarrowtail\sq(S_1')$ is the inclusion of a subinterval between $s_1\circ w(0)$ and $s_1\circ w(n_0)$. Observe that the top morphism lies in $p^{-1}([n_0])$, so the value of $\cF$ on it is already well-defined. Assume that $w\cong i\circ a$ is the active/inert decomposition of $w$, then observe that we can further decompose the bottom morphism in the above diagram as
\begin{equation}\label{eq:fifteen}
\begin{tikzcd}[row sep=huge, column sep=huge]
{[m_0]}\arrow[r, tail, "i_0"]\arrow[d, two heads, "\widetilde{a}"]&{[n_0]}\arrow[d, two heads, "a"]\arrow[r, "u'\circ s_0"]&\sq(S'_{0,u})\arrow[d, equal]\arrow[r, "g\circ j_u"]&\sq(S)\\
{[\widetilde{m}_0]}\arrow[r, tail, "\widetilde{i}_0"]\arrow[d, equal]&{[\widetilde{n}_0]}\arrow[d, tail, "i"]\arrow[r, "\widetilde{s}_1"]&\sq(S'_{0,u})\arrow[d, tail, "j_u"]\\
{[\widetilde{m}_1]}\arrow[d, tail, "i_2"]\arrow[r, tail, "i\circ \widetilde{i}_0"]&{[n_1]}\arrow[r, "s_1"]\arrow[d, equal]&\sq(S'_1)\arrow[d, equal]\\
{[m_1]}\arrow[r, tail, "i_1"]&{[n_1]}\arrow[r, "s_1"]&\sq(S'_1)\arrow[uuur, "g"]
\end{tikzcd}.
\end{equation}
The bottom morphism lies in $p^{-1}([n_1])$, so $\cF$ is already defined on it too. So it suffices to define $\cF$ on the top two morphisms. However, observe that, since $F$ is a functor, those morphisms are sent to isomorphisms. More precisely, by untangling the identifications we made above we see that 
\[\cF([m_0]\xinert{i_0}[n_0]\xrightarrow{u\circ s_0}\sq(S'_{0,u})\xrightarrow{g\circ j_u}\sq(S))\]
can be identified with $F_{n_0}(a^*([\widetilde{n}_0]\xrightarrow{\widetilde{s}_1}\sq(S'_{0,u})\xrightarrow{g\circ j_u}\sq(S)))([m_0]\xinert{i_0}[n_0])$, while 
\[\cF([\widetilde{m}_0]\xinert{\widetilde{i}_0}[\widetilde{n}_0]\xrightarrow{\widetilde{s}_1}\sq(S'_{0,u})\xrightarrow{g\circ j_u}\sq(S))\]
can be identified with $a^*(F_{\widetilde{n}_0}([\widetilde{n}_0]\xrightarrow{\widetilde{s}_1}\sq(S'_{0,u})\xrightarrow{g\circ j_u}\sq(S)))([m_0]\xinert{i_0}[n_0])$. Since $F$ is assumed to be a functor, we have $a^*\circ F_{\widetilde{n}_0}\cong F_{n_0}\circ a^*$, which proves that these two spaces are isomorphic. Similarly, the value
\[\cF([\widetilde{m}_0]\xinert{\widetilde{i}_0}[\widetilde{n}_0]\xrightarrow{\widetilde{s}_1}\sq(S'_{0,u})\xrightarrow{g\circ j_u}\sq(S))\]
can be identified with $F_{\widetilde{n}_0}(i^*([n_1]\xrightarrow{s_1}\sq(S'_1)\xrightarrow{g}\sq(S)))([\widetilde{m}_0]\xinert{\widetilde{i}_0}[\widetilde{n}_0])$, while
\[\cF([\widetilde{m}_1]\xinert{i\circ \widetilde{i}_0}[n_1]\xrightarrow{s_1}\sq(S'_1)\xrightarrow{g}\sq(S))\]
can be identified with $i^*(F_{n_1}([n_1]\xrightarrow{s_1}\sq(S'_1)\xrightarrow{g}\sq(S)))([\widetilde{m}_0]\xinert{\widetilde{i}_0}[\widetilde{n}_0])$, those values are once again isomorphic by functoriality of $F$. We have thus defined the required morphism $\cF:\widehat{\cfact_{/\sq(S)}}\rightarrow\cS$, moreover, the morphisms $(\widetilde{a},a)$ and $(i,j_u)$ appearing in the diagram (\ref{eq:fifteen}) are precisely the morphisms mentioned in the second condition of \cref{prop:aux2}, so $\cF$ indeed satisfies it since it sends them to isomorphisms by construction. Conversely, given a functor $\cF$ as above that satisfies the conditions of the proposition, we can define $F:\widetilde{\cL^\lax(\sq(S))}\rightarrow\spanc$ by setting $F_n$ to be equal to the restriction $\cF|_{p^{-1}([n])}$. The fact that $F_n$ lands in $\spanc([n])\subset\mor_\cat(\ccat^\inrt_{/[n]},\cS)$ then follows from the first condition of the proposition and the fact that $F$ is a functor from the second. This establishes the required isomorphism on the spaces of objects.\par
To prove the isomorphism on morphisms, observe that a lax natural transformation $\alpha:F\rightarrow G$ amounts to giving a morphism $\alpha_X:F(x)\rightarrow G(x)$ for every object $x\in\sq(S)$ together with, for each morphism $v:x\rightarrow y$ represented by $([i]\rightarrowtail[1]\xrightarrow{s}\sq(S')\xrightarrow{f}\sq(S))$, morphisms
\[\alpha_v\in \mor_{\spanc(F(x),G(y))}(\alpha_{y,!}\circ F(v),G(v)\circ \alpha_{x,!})\cong \mor_{\cS_{/F(x)\times F(y)}}(F(v),\alpha^*_y\circ G(v)\circ \alpha_{x,!}),\]
where the isomorphism follows since $\alpha_{y,!}$ and $\alpha^*_y$ are adjoint and $\spanc(F(x),F(y))\cong \cS_{/F(x)\times F(y)}$, such that $\alpha_v$ are compatible with units and compositions. In other words, if we identify $F(v)$ and $G(v)$ with spans $(F(x)\leftarrow F(v)\rightarrow F(y))$ and $(G(x)\leftarrow G(v)\rightarrow G(y))$, then $\alpha_v$ is a morphism $F(v)\rightarrow G(v)$ making the following diagram commute
\[
\begin{tikzcd}[row sep=huge, column sep=huge]
F(x)\arrow[d, "\alpha_x"]&F(v)\arrow[r]\arrow[l]\arrow[d, "\alpha_v"]&F(y)\arrow[d, "\alpha_y"]\\
G(x)&G(v)\arrow[r]\arrow[l]&G(y)
\end{tikzcd}.
\]
This can be identified with a morphism from $F(v)$ to $G(v)$ in $\ccat^\inrt_{/[1]}$. The data of $\alpha_u$ for all morphisms $u$ can then be identified with a morphism $\widetilde{\alpha}_1:\cF|_{p^{-1}([1])}\rightarrow\cG|_{p^{-1}([1])}$, where $\cF$ and $\cG$ are the objects of $\mor_\cat(\widehat{\cfact_{/\sq(S)}},\cS)$ corresponding to $F$ and $G$, and the fact that $\alpha$ is a natural transformation then means that $\widetilde{\alpha}_1$ can be extended to $\widetilde{\alpha}:\cF\rightarrow\cG$. Conversely, for every $\widetilde{\alpha}:\cF\rightarrow\cG$ we can construct the corresponding lax natural transformation by setting $\alpha_x$ to be the restriction of $\widetilde{\alpha}$ to $([0]=\joinrel=[0]\rightarrow*\xrightarrow{x}\sq(S))$ and $\alpha_v$ to be the restriction of $\widetilde{\alpha}$ to $([1]=\joinrel=[1]\rightarrow\sq(S')\xrightarrow{v}\sq(S))$. This establishes the required equivalence.
\end{proof}
\begin{theorem}\label{thm:overcategory}
For any $\cF\in\fact$ we have an equivalence of categories
\[\mor_\fact^{\lax,!}(\cF,\spanc)\cong \fact_{/\cF}.\]
\end{theorem}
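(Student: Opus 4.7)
The plan is to reduce to the representable case $\cF \cong \sq(S)$ and then match the two sides using the explicit descriptions already obtained in \cref{prop:aux2} and \cref{prop:model_for_overcategories}. Both sides satisfy a limit decomposition over $\cfact_{/\cF}$: by \cref{prop:lax_functors_respect_colimits} we have
\[\mor^{\lax,!}_\fact(\cF,\spanc) \cong \underset{(\sq(S)\to\cF)\in\cfact_{/\cF}}{\lim}\mor^{\lax,!}_\fact(\sq(S),\spanc),\]
(the $!$-condition on natural transformations being local and hence preserved by the limit), while \cref{cor:overcategories_and_colimits} gives the analogous formula for $\fact_{/\cF}$. It therefore suffices to construct an equivalence $\mor^{\lax,!}_\fact(\sq(S),\spanc)\cong\fact_{/\sq(S)}$ that is natural in $S\in\cfact$.

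For the representable case, \cref{prop:aux2} identifies the left hand side with the full subcategory of $\mor_\cat(\widehat{\cfact_{/\sq(S)}},\cS)$ cut out by the Segal condition on $\widehat{\cfact_{/\sq(S)}}^\el_{y/}$ together with the inversion condition on active changes in the $[n]$ slot and segment inclusions in the $\sq(S')$ slot. On the other side, \cref{prop:model_for_overcategories} identifies $\fact_{/\sq(S)}$ with $\modl_{\cfact_{/\sq(S)}}(\cS)$, i.e. the subcategory of $\mor_\cat(\cfact_{/\sq(S)}^\inrt,\cS)$ defined by the relative Segal condition with respect to $\cfact_{/\sq(S)}^\el\hookrightarrow\cfact_{/\sq(S)}^\inc$.

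I would then produce an explicit comparison functor
\[q:\widehat{\cfact_{/\sq(S)}}\longrightarrow \cfact_{/\sq(S)}^\inrt\]
sending $([m]\xinert{i}[n]\xrightarrow{s}\sq(S')\xrightarrow{f}\sq(S))$ to the substring $S_i\subset S'$ cut out by the image of $s\circ i$, regarded as an inert subobject of $\sq(S')$ and composed with $f$ to give an element of $\cfact_{/\sq(S)}^\inrt$. The bottom-row morphisms of the two distinguished squares in \cref{prop:aux2} are exactly those sent by $q$ to identities (after taking the active/inert factorization in the second case), so the functors on $\widehat{\cfact_{/\sq(S)}}$ satisfying condition (2) of \cref{prop:aux2} correspond via $q^*$ to arbitrary functors on $\cfact_{/\sq(S)}^\inrt$. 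One then checks that under $q^*$ the $\widehat{\cfact_{/\sq(S)}}$-Segal condition becomes exactly the $\cfact_{/\sq(S)}^\inrt$-Segal condition, which is essentially a combinatorial matching of elementary objects on the two sides: an elementary object on the left (with $l\in\{0,1\}$) corresponds on the right either to a point of $\sq(S)$ or to an elementary horizontal/vertical morphism through $f$.

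The main obstacle is verifying this compatibility of Segal conditions, which requires carefully analyzing how $q$ restricts to the elementary/inclusion subcategories on both sides and checking that the right Kan extension appearing in the definition of $\modl_{\cfact_{/\sq(S)}}(\cS)$ coincides with the limit over $\widehat{\cfact_{/\sq(S)}}^\el_{y/}$ after transport along $q^*$. Once this matching is in hand, $q^*$ restricts to the desired equivalence, and naturality in $S$ is built into the construction. Taking $\cF=*$ (so that $\fact_{/*}\cong\fact$) yields, in view of the identification $\cL^\lax *\cong \rB\Delta_a\otimes\rB\Delta_a$ made in \cref{rem:distributive_equivalence}, the stated identification of $\fact$ with the category of distributive laws in $\spanc$.
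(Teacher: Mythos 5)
Your overall strategy coincides with the paper's: the same reduction to $\cF\cong\sq(S)$ via \cref{cor:overcategories_and_colimits} and \cref{prop:lax_functors_respect_colimits}, the same translation of the two sides via \cref{prop:aux2} and \cref{prop:model_for_overcategories}, and the same comparison functor (the paper's $\overline{p}$ is your $q$, except that it must land in the full category $\cfact_{/\sq(S)}$ rather than only its inert part, since a model is a functor on the whole theory and the active functoriality has to be matched as well).

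The genuine gap is the step where you claim that, because the morphisms of condition (2) of \cref{prop:aux2} are exactly those sent by $q$ to identities, the functors on $\widehat{\cfact_{/\sq(S)}}$ satisfying condition (2) correspond via $q^*$ to arbitrary functors on the target. Knowing that $K$ is precisely the preimage of the identities only gives a factorization of $q$ through the localization $\widehat{\cfact_{/\sq(S)}}[K^{-1}]$; it does not imply that the induced functor $p:\widehat{\cfact_{/\sq(S)}}[K^{-1}]\rightarrow\cfact_{/\sq(S)}$ is an equivalence. (Collapsing two parallel arrows to a point sends every morphism to an identity, yet the localization is $\rB\bZ$, not a point.) This is where essentially all of the work in the paper's proof goes: it constructs an explicit section $s:\cfact_{/\sq(S)}\rightarrow\widehat{\cfact_{/\sq(S)}}[K^{-1}]$ on objects and on the elementary generating morphisms of \cref{prop:generators}, checks $p\circ s\cong\id$, and then builds a natural equivalence $\eta:s\circ p\rightarrow\id$ by verifying, for each elementary morphism of $\widehat{\cfact_{/\sq(S)}}$ (decomposed into an active part and a segment inclusion), the commuting naturality squares, including producing an auxiliary diagonal morphism to handle the inclusion case. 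Your proposal locates the ``main obstacle'' in matching the Segal conditions, but that part is comparatively routine once the localization is identified; the missing content is the proof that inverting $K$ yields exactly $\cfact_{/\sq(S)}$.
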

\begin{proof}
By \cref{cor:overcategories_and_colimits} and \cref{prop:lax_functors_respect_colimits} it suffices to prove the claim for $\cF\cong\sq(S)$ for some $S\in\cfact$. In this case, we can further use \cref{prop:aux2} and \cref{prop:model_for_overcategories} to reduce our claim to the following simpler statement: there is an equivalence between the category of presheaves on $(\cfact_{/\sq(S)})^\op$ satisfying the Segal condition and the category of presheaves on $\widehat{\cfact_{/\sq(S)}}$ satisfying the conditions of \cref{prop:aux2}. To prove this, first denote by $\overline{p}:\widehat{\cfact_{/\sq(S)}}\rightarrow\cfact_{/\sq(S)}$ the morphism that sends $([m]\xinert{i}[n]\xrightarrow{s}\sq(S')\xrightarrow{f}\sq(S))$ to $(\sq(S'_i)\xrightarrow{f}\sq(S))$, where $S'_i$ is the substring of $S'$ on those segments that lie between $s\circ i(0)$ and $s\circ i(m)$ and sends a morphism represented by diagram (\ref{eq:fourteen}) to the induce morphism $u_i:\sq(S'_{i_0})\rightarrow\sq(S'_{i_1})$ over $\sq(S)$. Denote by $K\subset \widehat{\cfact_{/\sq(S)}}$ the subcategory generated by morphisms described in the second condition of \cref{prop:aux2} and denote by $\widehat{\cfact_{/\sq(S)}}[K^{-1}]$ the category obtained by inverting those morphisms. Observe that every functor $\cF:\widehat{\cfact_{/\sq(S)}}\rightarrow\cS$ that satisfies the conditions of \cref{prop:aux2} (particularly the second condition) factors uniquely through $\widehat{\cfact_{/\sq(S)}}[K^{-1}]$. Also observe that the morphism $\overline{p}$ sends all the morphisms in $K$ to identities, so it also factors as
\[
\begin{tikzcd}[row sep=huge, column sep=huge]
\widehat{\cfact_{/\sq(S)}}\arrow[d, "\overline{p}"]\arrow[r]&{\widehat{\cfact_{/\sq(S)}}[K^{-1}]}\arrow[dl, "p" swap]\\
\cfact_{/\sq(S)}
\end{tikzcd}.
\]
We will prove that the functor $p$ in the diagram above is in fact an equivalence of categories. This will imply that the category of presheaves on $(\cfact_{/\sq(S)})^\op$ is equivalent to the category of presheaves on $\widehat{\cfact_{/\sq(S)}}^\op$ satisfying the second condition of \cref{prop:aux2}. Moreover, it will be clear from the construction of the equivalence that the presheaves on $(\cfact_{/\sq(S)})^\op$ that satisfy the Segal condition would correspond to those presheaves on $\widehat{\cfact_{/\sq(S)}}^\op$ that additionally satisfy the first condition of \cref{prop:aux2}, thus concluding the proof of our claim.\par
In order to prove that those categories are equivalent we first construct a morphism $s:\cfact_{/\sq(S)}\rightarrow\widehat{\cfact_{/\sq(S)}}[K^{-1}]$. On objects $s$ acts by sending $\sq(S')\xrightarrow{f}\sq(S)$ to $([1]=\joinrel=[1]\xrightarrow{a}\sq(S')\xrightarrow{f}\sq(S))$, where $a$ is the morphism that sends $0$ to the initial and $1$ the final object of $\sq(S')$. By \cref{prop:generators} every morphism in $\cfact_{/\sq(S)}$ decomposes into elementary morphisms, so in order to describe the action of $s$ on morphisms it suffices to describe it on the elementary morphisms. Assume $e:\sq(S'_0)\rightarrow\sq(S'_1)$ is an elementary morphism over $\sq(S)$ that belongs to $\cfact^\ap$, in that case we define $s(e)$ to be given by the diagram 
\[
\begin{tikzcd}[row sep=huge, column sep=huge]
{[1]}\arrow[r, equal]\arrow[dd, equal]&{[1]}\arrow[dd, equal]\arrow[r, "a_0"]&\sq(S'_0)\arrow[dr, "f_0"]\arrow[dd, "e"]\\
{}&{}&{}&\sq(S)\\
{[1]}\arrow[r, equal]&{[1]}\arrow[r, "a_1"]&\sq(S'_1)\arrow[ur, "f_1"]
\end{tikzcd}.
\]
Now assume that the elementary morphism has the form $\delta^h_0$ (the same argument will work for any other elementary inclusion). This morphism is sent to 
\[
\begin{tikzcd}[row sep=huge, column sep=huge]
{[1]}\arrow[d, tail, "\delta_0"]\arrow[r, equal]&{[1]}\arrow[r, "a_0"]\arrow[d, tail, "\delta_0"]&\sq(S'_0)\arrow[dr, "f_0"]\arrow[d, "\delta^h_0"]\\
{[2]}\arrow[r, equal]&{[2]}\arrow[r, "a'"]&\sq(S'_1)\arrow[r, "f_1"]&\sq(S)\\
{[1]}\arrow[r, equal]\arrow[u, two heads, "\delta_1"]&{[1]}\arrow[u, two heads, "\delta_1"]\arrow[r, "a_1"]&\sq(S'_1)\arrow[u, equal]\arrow[ur, "f_1"]
\end{tikzcd}
\]
(observe that the bottom morphism in this diagram is invertible in $\widehat{\cfact_{/\sq(S)}}[K^{-1}]$, so in order to make a composable chain of morphisms from $s(f_0)$ to $s(f_1)$ we take the inverse of it), where $a'$ sends $1<2$ into $S'_0\subset S'_1$ using $a_0$ and $0<1$ is sent to the first segment of $S'_1$. It is easy to see that $p\circ s\cong \id$. In order to finish the proof we will construct an equivalence $\eta:s\circ p\xrightarrow{\sim}\id$. To do so we first need to define an isomorphism $\eta_x:x\xrightarrow{\sim}s\circ p(x)$ for every object $x\in\widehat{\cfact_{/\sq(S)}}$. Assume that $x$ corresponds to the string $([m]\xinert{i}[n]\xrightarrow{s}\sq(S')\xrightarrow{f}\sq(S))$, then we define $\eta_x$ to be given by the diagram
\[
\begin{tikzcd}[row sep=huge, column sep=huge]
{[1]}\arrow[r, equal]\arrow[d, two heads, "a'"]&{[1]}\arrow[d, two heads, "a'"]\arrow[r, "a"]&\sq(S'_i)\arrow[d, equal]\arrow[dr, "f_i"]\\
{[m]}\arrow[r, equal]\arrow[d, equal]&{[m]}\arrow[r, "s\circ i"]\arrow[d, tail, "i"]&\sq(S'_i)\arrow[d, tail, "j'"]\arrow[r, "f_i"]&\sq(S)\\
{[m]}\arrow[r, tail, "i"]&{[n]}\arrow[r, "s"]&\sq(S')\arrow[ur, "f"]
\end{tikzcd}.
\]
Observe that both of the morphisms appearing in the definition of $\eta_x$ lie in $K$, so is is indeed an isomorphism. To prove that $\eta$ defines a natural transformation we need to show that for any morphism $g:x\rightarrow y$ in $\widehat{\cfact_{/\sq(S)}}$ the following naturality square
\[
\begin{tikzcd}[row sep=huge, column sep=huge]
x\arrow[r, "g"]&y\\
s\circ p(x)\arrow[u, "\eta_x"]\arrow[r, "s\circ p(g)"]&s\circ p(y)\arrow[u, "\eta_y"]
\end{tikzcd}
\]
commutes. Assume that the morphism $g$ is represented by the diagram (\ref{eq:fourteen}). We begin by decomposing it as
\[
\begin{tikzcd}[row sep=huge, column sep=huge]
{[m_0]}\arrow[r, tail, "i_0"]\arrow[d, two heads, "\widetilde{a}"]&{[n_0]}\arrow[d, two heads, "a"]\arrow[r, "s_0"]&\sq(S_0')\arrow[d, "u'"]\arrow[dr, "f"]\\
{[\widetilde{m}_0]}\arrow[r, tail, "\widetilde{i}_0"]\arrow[d, tail, "i_2"]&{[\widetilde{n}_0]}\arrow[r, "\widetilde{s}_1"]\arrow[d, tail, "i"]&\sq(S'_{0,u})\arrow[d, tail, "j_u"]\arrow[r, "g\circ j_u"]&\sq(S)\\
{[m_1]}\arrow[r, tail, "i_1"]&{[n_1]}\arrow[r, "s_1"]&\sq(S'_1)\arrow[ur, "g"]
\end{tikzcd}
\]
using the notation from the proof of \cref{prop:aux2}. It suffices to prove the commutativity of the naturality squares separately for the top and bottom morphism, we start from the top. Observe that the image under $p$ of the top morphism belongs to $\cfact^\ap$. The relevant commutative diagram is a large three-dimensional diagram, for convenience we will replace it with three commutative squares given by the images of that diagram under $p_1:\widehat{\cfact_{/\sq(S)}}\rightarrow\ccat$, $p_2:\widehat{\cfact_{/\sq(S)}}\rightarrow\ccat$ and $p_3:\widehat{\cfact_{/\sq(S)}}\rightarrow\cfact$, where $p_1$ sends $([m]\xinert{i}[n]\xrightarrow{s}\sq(S')\xrightarrow{f}\sq(S))$ to $[m]$, $p_2$ to $[n]$ and $p_3$ to $\sq(S')$. Using this convention and the definition of $s$ on morphisms in $\cfact^\ap$, we see that the diagrams have the form
\[
\begin{tikzcd}[row sep=huge, column sep=huge]
{[1]}\arrow[r, equal]\arrow[d, two heads]&{[1]}\arrow[d, two heads]\\
{[m_0]}\arrow[r, two heads, "\widetilde{a}"]&{[\widetilde{m}_0]}
\end{tikzcd},
\begin{tikzcd}[row sep=huge, column sep=huge]
{[1]}\arrow[r, equal]\arrow[d, two heads]&{[1]}\arrow[d, two heads]\\
{[n_0]}\arrow[r, two heads, "a"]&{[\widetilde{n}_0]}
\end{tikzcd},
\begin{tikzcd}[row sep=huge, column sep=huge]
\sq(S'_{i_0})\arrow[r, "p(u')"]\arrow[d, tail, "j'_{i_0}"]&\sq(S'_{0,u,\widetilde{i}_0})\arrow[d, tail, "j'_{\widetilde{i}_0}"]\\
\sq(S'_0)\arrow[r, "u'"]&\sq(S'_{u,0})
\end{tikzcd}.
\]
The fact that these diagrams commute is obvious. Now we need to prove the commutativity of the naturality squares for the bottom morphism. Observe that we can further decompose $j_u$ into elementary inclusions, so we can assume that $j_u\cong\delta^h_0$. In this case $i\cong\delta_0$ and $i_2$ is either $\id$ or $\delta_0$. The case of $i_2\cong \id$ is trivial, so we can assume $i_2\cong \delta_0$. In this case the relevant diagrams are
\[
\begin{tikzcd}[row sep=huge, column sep=huge]
{[1]}\arrow[r, tail, "\delta_0"]\arrow[d, two heads]&{[2]}\arrow[dr, dotted]&{[1]}\arrow[l, two heads, "\delta_1" swap]\arrow[d, two heads]\\
{[\widetilde{m}_0]}\arrow[rr, tail, "\delta_0"]&{}&{[\widetilde{m}_0+1]}
\end{tikzcd},
\]
\[
\begin{tikzcd}[row sep=huge, column sep=huge]
{[1]}\arrow[r, tail, "\delta_0"]\arrow[d, two heads]&{[2]}\arrow[dr, dotted]&{[1]}\arrow[l, two heads, "\delta_1" swap]\arrow[d, two heads]\\
{[\widetilde{m}_0]}\arrow[rr, tail, "\delta_0"]\arrow[d, tail, "\widetilde{i}_0"]&{}&{[\widetilde{m}_0+1]}\arrow[d, tail, "i_1"]\\
{[\widetilde{n}_0]}\arrow[rr, tail, "\delta_0"]&{}&{[\widetilde{n}_0+1]}
\end{tikzcd}
\]
and
\[
\begin{tikzcd}[row sep=huge, column sep=huge]
\sq(S'_{0,u,\widetilde{i}_0})\arrow[d, tail, "j'_{\widetilde{i}_0}"]\arrow[r, tail, "\delta^h_0"]&\sq(S'_{1,i_1})\arrow[d, tail, "j'_{i_1}"]\\
\sq(S'_{0,u})\arrow[r, "j_u"]&\sq(S'_1)
\end{tikzcd}
\]
(we again implicitly invert the morphisms that are "pointing the wrong way"). The commutativity of the third diagram is obvious. To prove the commutativity of the outer rectangles in the first two diagrams it would suffice to provide the dotted arrow shown on the diagrams that would make the inner rectangle and the triangle on the right commute. This arrow should be given by a morphism from \[([2]=\joinrel=[2]\xrightarrow{a'}\sq(S'_{1,i_1})\xrightarrow{g'}\sq(S))\]
to 
\[([\widetilde{m}_0+1]\xinert{i_1}[\widetilde{n}_0+1]\xrightarrow{s_1}\sq(S'_1)\xrightarrow{g}\sq(S)),\]
where $a'$ is a morphism sending $1<2$ to $S'_{0,u,\widetilde{i}_0}$ and $0<1$ to the first segment in $S'_{1,i_1}$. We define this morphism to be given by the diagram
\[
\begin{tikzcd}[row sep=huge, column sep=huge]
{[2]}\arrow[r, equal]\arrow[d, two heads, "a''"]&{[2]}\arrow[d, two heads, "a''"]\arrow[r, "a'"]&\sq(S'_{1,i_1})\arrow[d, equal]\arrow[dr, "g'"]\\
{[\widetilde{m}_0+1]}\arrow[r, equal]\arrow[d, equal]&{[\widetilde{m}_0+1]}\arrow[r, "s_1\circ i_1"]\arrow[d, tail, "i_1"]&\sq(S'_{1,i_1})\arrow[r, "g'"]\arrow[d, tail, "j'_{i_1}"]&\sq(S)\\
{[\widetilde{m}_0+1]}\arrow[r, tail, "i_1"]&{[\widetilde{n}_0+1]}\arrow[r, "s_1"]&\sq(S'_1)\arrow[ur, "g"]
\end{tikzcd},
\]
where $a''$ denotes a morphism sending $0$ to $0$, $1$ to $1$ and $2$ to $(\widetilde{m}_0+1)$. That this morphism makes the relevant diagrams commute is elementary, and with that we have finished the proof of the claim.
\end{proof}
\begin{cor}\label{cor:distributive_laws_in_span}
The category $\fact$ is isomorphic to the category of distributive laws in $\spanc$.
\end{cor}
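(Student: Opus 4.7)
The plan is to derive the corollary as a direct consequence of \cref{thm:overcategory} applied to the terminal object. First I would verify that the singleton $*$ is indeed terminal in $\fact$: as a presheaf on $\cfact^\op$ that is constantly a point, it trivially satisfies the Segal condition and admits a unique morphism from any other object, so $\fact_{/*} \cong \fact$ canonically.

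Next I would unpack the definition of \emph{distributive law in $\spanc$}. By \cref{def:distributive_law} and the remark following it, a distributive law in a bicategory $\cB$ is exactly a lax functor $* \rightsquigarrow \cB$, where $*$ is viewed as an object of $\fact$ with its trivial factorization system. Thus the space of distributive laws in $\spanc$ is the space of objects of $\mor^\lax_\fact(*, \spanc)$. To upgrade this to a category, one must specify morphisms; here the relevant notion is the one built into the statement of \cref{thm:overcategory}, namely lax natural transformations whose components are left-pushforward morphisms in $\spanc$. This matches the classical convention (compare \cref{rem:distributive_equivalence}) that morphisms of distributive laws are morphisms of monads in the bicategory of monads, which at the level of $\spanc$ correspond precisely to maps of spans given by pushforward along a map on carriers.

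With these identifications in place, I would apply \cref{thm:overcategory} to $\cF \bydef *$, yielding
\[\mor_\fact^{\lax,!}(*, \spanc) \cong \fact_{/*} \cong \fact,\]
which is the desired equivalence. The routine but necessary step is to confirm that the equivalence of \cref{thm:overcategory} intertwines the monoidal/bicategorical structure one might wish to put on either side (in particular that composition of lax natural transformations with $h_!$-components corresponds to composition of morphisms in $\fact$); the construction in the proof of \cref{thm:overcategory} via $\widehat{\cfact_{/\sq(S)}}[K^{-1}] \simeq \cfact_{/\sq(S)}$ is manifestly functorial, so this compatibility is essentially automatic.

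The main conceptual obstacle, which is entirely absorbed into \cref{thm:overcategory}, is identifying lax functors into $\spanc$ with actual objects of $\fact$; once that theorem is available, the corollary reduces to the observation $\fact_{/*}\cong\fact$ combined with the reinterpretation of lax functors from $*$ as distributive laws. No separate computation is required.
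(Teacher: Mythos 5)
Your proposal is correct and takes the same route as the paper, whose entire proof is to apply \cref{thm:overcategory} with $\cF\cong *$; the additional unpacking you provide (that $*$ is terminal so $\fact_{/*}\cong\fact$, and that distributive laws in $\spanc$ are by \cref{def:distributive_law} exactly lax functors from $*$ viewed as an object of $\fact$) is exactly the implicit content of the paper's one-line argument.
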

\begin{proof}
It follows by applying \cref{thm:overcategory} to the case $\cF\cong*$.
\end{proof}
\medskip
\bibliographystyle{alpha}
\bibliography{ref.bib}

\begin{thebibliography}{BMW12}

\bibitem[Bec69]{beck1969distributive}
Jon Beck.
\newblock Distributive laws.
\newblock In {\em Seminar on triples and categorical homology theory}, pages
  119--140. Springer, 1969.

\bibitem[BMW12]{berger2012monads}
Clemens Berger, Paul-Andr{\'e} Mellies, and Mark Weber.
\newblock Monads with arities and their associated theories.
\newblock {\em Journal of Pure and Applied Algebra}, 216(8-9):2029--2048, 2012.

\bibitem[Bou77]{bousfield1977constructions}
Aldridge~K Bousfield.
\newblock Constructions of factorization systems in categories.
\newblock {\em Journal of Pure and Applied Algebra}, 9(2-3):207--220, 1977.

\bibitem[CH19]{chu2019homotopy}
Hongyi Chu and Rune Haugseng.
\newblock Homotopy-coherent algebra via segal conditions.
\newblock {\em arXiv preprint arXiv:1907.03977}, 2019.

\bibitem[FK72]{freyd1972categories}
Peter~J Freyd and G~Max Kelly.
\newblock Categories of continuous functors, i.
\newblock {\em Journal of pure and applied algebra}, 2(3):169--191, 1972.

\bibitem[FL16]{fiorenza2016t}
Domenico Fiorenza and Fosco Loregi{\`a}n.
\newblock t-structures are normal torsion theories.
\newblock {\em Applied Categorical Structures}, 24(2):181--208, 2016.

\bibitem[GHL20]{gagna2020gray}
Andrea Gagna, Yonatan Harpaz, and Edoardo Lanari.
\newblock Gray tensor products and lax functors of ($\infty$, 2)-categories.
\newblock {\em arXiv preprint arXiv:2006.14495}, 2020.

\bibitem[GR17]{gaitsgory2017study}
Dennis Gaitsgory and Nick Rozenblyum.
\newblock {\em A study in derived algebraic geometry}, volume~1.
\newblock American Mathematical Soc., 2017.

\bibitem[Hau18]{haugseng2018equivalence}
Rune Haugseng.
\newblock On the equivalence between $\theta_n$-spaces and iterated segal
  spaces.
\newblock {\em Proceedings of the American Mathematical Society},
  146(4):1401--1415, 2018.

\bibitem[Isb57]{isbell1957some}
John~R Isbell.
\newblock Some remarks concerning categories and subspaces.
\newblock {\em Canadian Journal of mathematics}, 9:563--577, 1957.

\bibitem[Joy08]{joyal2008notes}
Andr{\'e} Joyal.
\newblock Notes on quasi-categories.
\newblock {\em preprint}, 2008.

\bibitem[JT07]{joyal2007quasi}
Andr{\'e} Joyal and Myles Tierney.
\newblock Quasi-categories vs segal spaces.
\newblock {\em Contemporary Mathematics}, 431(277-326):10, 2007.

\bibitem[Kos21]{kositsyn2021completeness}
Roman Kositsyn.
\newblock Completeness for monads and theories.
\newblock {\em arXiv preprint arXiv:2104.00367}, 2021.

\bibitem[KT93]{korostenski1993factorization}
Mareli Korostenski and Walter Tholen.
\newblock Factorization systems as eilenberg-moore algebras.
\newblock {\em Journal of Pure and Applied Algebra}, 85(1):57--72, 1993.

\bibitem[KV91]{kapranov1991combinatorial}
Mikhail~M Kapranov and Vladimir~A Voevodsky.
\newblock Combinatorial-geometric aspects of polycategory theory: pasting
  schemes and higher bruhat orders (list of results).
\newblock {\em Cahiers de Topologie et G{\'e}om{\'e}trie diff{\'e}rentielle
  cat{\'e}goriques}, 32(1):11--27, 1991.

\bibitem[Lur]{luriehigher}
Jacob Lurie.
\newblock Higher algebra, september 2017.
\newblock {\em available at his webpage https://www. math. ias. edu/\~{}
  lurie}.

\bibitem[Lur09]{lurie2009higher}
Jacob Lurie.
\newblock {\em Higher topos theory}.
\newblock Princeton University Press, 2009.

\bibitem[LV17]{loregian2017factorization}
Fosco Loregian and Simone Virili.
\newblock Factorization systems on (stable) derivators.
\newblock {\em arXiv preprint arXiv:1705.08565}, 2017.

\bibitem[Ste06]{steiner2006orientals}
Richard Steiner.
\newblock Orientals.
\newblock {\em arXiv preprint math/0601383}, 2006.

\bibitem[Str72a]{street1972formal}
Ross Street.
\newblock The formal theory of monads.
\newblock {\em Journal of Pure and Applied Algebra}, 2(2):149--168, 1972.

\bibitem[Str72b]{street1972two}
Ross Street.
\newblock Two constructions on lax functors.
\newblock {\em Cahiers de topologie et g{\'e}om{\'e}trie diff{\'e}rentielle
  cat{\'e}goriques}, 13(3):217--264, 1972.

\bibitem[Str87]{street1987algebra}
Ross Street.
\newblock The algebra of oriented simplexes.
\newblock {\em Journal of Pure and Applied Algebra}, 49(3):283--335, 1987.

\bibitem[Str91]{street1991parity}
Ross Street.
\newblock Parity complexes.
\newblock {\em Cahiers de topologie et g{\'e}om{\'e}trie diff{\'e}rentielle
  cat{\'e}goriques}, 32(4):315--343, 1991.

\bibitem[Ver08]{verity2008weak}
Dominic~RB Verity.
\newblock Weak complicial sets i. basic homotopy theory.
\newblock {\em Advances in Mathematics}, 219(4):1081--1149, 2008.

\bibitem[Web04]{weber2004generic}
Mark Weber.
\newblock Generic morphisms, parametric representations and weakly cartesian
  monads.
\newblock {\em Theory Appl. Categ}, 13(14):191--234, 2004.

\end{thebibliography}
\bigskip
Roman Kositsyn, \textsc{ Department of Mathematics, National Research University Higher School of Economics, Moscow}, \href{mailto:roman.kositsin@gmail.com}{roman.kositsin@gmail.com}
\end{document}